\documentclass[12pt]{amsart}
\usepackage{txfonts}
\usepackage{amscd,amssymb,mathabx,subfigure}
\usepackage[all]{xy}
\usepackage{graphicx,calrsfs}
\usepackage{imakeidx}
\usepackage[colorlinks,plainpages,backref,urlcolor=blue,hyperindex=true]{hyperref}
\usepackage{tikz}
\usetikzlibrary{calc}
\usepackage{mdwlist}
\usetikzlibrary{graphs, positioning}

\topmargin=0.1in
\textwidth5.84in
\textheight7.98in
\oddsidemargin=0.3in
\evensidemargin=0.3in

\newtheorem{theorem}{Theorem}[section]
\newtheorem{corollary}[theorem]{Corollary}
\newtheorem{lemma}[theorem]{Lemma}
\newtheorem{prop}[theorem]{Proposition}

\theoremstyle{definition}

\newtheorem{example}[theorem]{Example}
\newtheorem{remark}[theorem]{Remark}
\newtheorem{conjecture}[theorem]{Conjecture}

\newtheorem{problem}[theorem]{Problem}
\newtheorem*{ack}{Acknowledgments}

\newcommand{\N}{\mathbb{N}}
\newcommand{\Z}{\mathbb{Z}}

\newcommand{\C}{\mathbb{C}}
\newcommand{\FF}{\mathbb{F}}

\newcommand{\PP}{\mathbb{P}}
\newcommand{\CP}{\mathbb{CP}}
\renewcommand{\k}{\Bbbk}

\DeclareMathAlphabet{\pazocal}{OMS}{zplm}{m}{n}
\newcommand{\A}{{\pazocal{A}}}
\newcommand{\B}{{\pazocal{B}}}
\newcommand{\XX}{{\pazocal X}}
\newcommand{\NN}{{\pazocal{N}}}
\newcommand{\HH}{{\pazocal H}}

\newcommand{\RR}{{\mathcal R}}
\newcommand{\VV}{{\mathcal V}}

\newcommand{\F}{{\mathcal{F}}}
\newcommand{\cP}{{\mathcal{P}}}
\newcommand{\cE}{{\mathcal{E}}}
\newcommand{\M}{{\mathcal{M}}}

\newcommand{\E}{{\rm Ess}}
\newcommand{\pA}{{\bar{\A}}}

\newcommand{\g}{{\mathfrak{g}}}
\newcommand{\h}{{\mathfrak{h}}}

\renewcommand{\sl}{{\mathfrak{sl}}}

\newcommand{\e}{{e}}
\newcommand{\ii}{\mathrm{i}}

\DeclareMathOperator{\rank}{rank}

\DeclareMathOperator{\im}{im}

\DeclareMathOperator{\id}{id}

\DeclareMathOperator{\Sym}{Sym}
\DeclareMathOperator{\ch}{char}
\DeclareMathOperator{\GL}{GL}
\DeclareMathOperator{\SL}{SL}

\DeclareMathOperator{\Hom}{{Hom}}
\DeclareMathOperator{\spn}{span}

\DeclareMathOperator{\proj}{pr}
\DeclareMathOperator{\ev}{ev}
\DeclareMathOperator{\Ev}{Ev}

\DeclareMathOperator{\Aut}{Aut}

\DeclareMathOperator{\Lie}{Lie}
\DeclareMathOperator{\supp}{supp}

\DeclareMathOperator{\reg}{reg}
\DeclareMathOperator{\mult}{mult}
\DeclareMathOperator{\depth}{depth}
\DeclareMathOperator{\bLie}{\overline{\Lie}}
\DeclareMathOperator{\Net}{Net}
\DeclareMathOperator{\AG}{AG}

\newcommand{\surj}{\twoheadrightarrow}
\newcommand{\inj}{\hookrightarrow}

\newcommand{\isom}{\xrightarrow{\,\simeq\,}}
\newcommand{\abs}[1]{\left| #1 \right|}
\def\set#1{{\{ #1\}}}

\def\dot{\mathchar"013A}  
\newcommand{\hdot}{{\raise1pt\hbox to0.35em{\Huge $\dot$}}} 

\newenvironment{romenum}
{ 

\begin{enumerate}}{\end{enumerate}}

\newcommand{\cdga}{\ensuremath{\mathsf{cdga}}}

\definecolor{dkgreen}{RGB}{0,100,0}
\definecolor{dkbrown}{RGB}{139,69,19}

\begin{document}

\title[Milnor fibrations, modular resonance, and algebraic monodromy]{%
The Milnor fibration of a hyperplane arrangement: from modular resonance
to algebraic monodromy}

\author[Stefan Papadima]{Stefan Papadima$^1$}
\address{Simion Stoilow Institute of Mathematics, 
P.O. Box 1-764,
RO-014700 Bucharest, Romania}
\email{\href{mailto:Stefan.Papadima@imar.ro}{Stefan.Papadima@imar.ro}}
\thanks{$^1$Partially supported by the Romanian Ministry of 
National Education, CNCS-UEFISCDI, grant PNII-ID-PCE-2012-4-0156}

\author[Alexander~I.~Suciu]{Alexander~I.~Suciu$^2$}
\address{Department of Mathematics,
Northeastern University,
Boston, MA 02115, USA}
\email{\href{mailto:a.suciu@neu.edu}{a.suciu@neu.edu}}
\urladdr{\href{http://www.northeastern.edu/suciu/}%
{www.northeastern.edu/suciu/}}
\thanks{$^2$Partially supported by 
NSF grant DMS--1010298, NSA grant H98230-13-1-0225, 
and Simons Foundation collaboration grant 354156.}

\subjclass[2010]{Primary
32S55,  
52C35;  
Secondary
05B35,  
14C21,  
14F35,  
32S22,  
55N25.  
}

\keywords{Milnor fibration, algebraic monodromy, hyperplane arrangement, 
simple matroid, resonance variety, characteristic variety, holonomy Lie algebra, 
flat connection, multinet, pencil.}

\begin{abstract}
A central question in arrangement theory is to determine whether the 
characteristic polynomial $\Delta_q$ of the algebraic monodromy acting 
on the homology group $H_q(F(\A),\C)$ of the Milnor fiber of a complex 
hyperplane arrangement $\A$ is determined by the intersection 
lattice $L(\A)$.  Under simple combinatorial conditions, we show 
that the multiplicities of the factors of $\Delta_1$ corresponding 
to certain eigenvalues of order a power of a prime $p$ are equal 
to the Aomoto--Betti numbers $\beta_p(\A)$, which in turn are 
extracted from $L(\A)$.  When $\A$ defines an arrangement of 
projective lines with only double and triple points, this leads 
to a combinatorial formula for the algebraic monodromy. 
To obtain these results, we relate nets on the underlying matroid of $\A$
to resonance varieties in positive characteristic. Using modular invariants
of nets, we find a new realizability obstruction (over $\C$) for matroids, and we 
estimate the number of essential components in the first 
complex resonance variety of $\A$. Our approach also reveals a 
rather unexpected connection of modular resonance
with the geometry of $\SL_2(\C)$-representation varieties, 
which are governed by the Maurer--Cartan equation.
\end{abstract}

\maketitle
\setcounter{tocdepth}{1}
\tableofcontents

\section{Introduction and statement of results}
\label{sect:intro}

\subsection{The Milnor fibration}
\label{subsec:mf}

In his seminal book on complex hypersurface singularities, 
Milnor \cite{Mi} introduced a fibration that soon became the 
central object of study in the field, and now bears his name. 
In its simplest manifestation, Milnor's construction associates 
to each homogeneous polynomial $Q\in \C[z_1,\dots, z_{\ell}]$ 
a smooth fibration over $\C^*$, by restricting the polynomial map  
$Q\colon \C^{\ell} \to \C$ to the complement of the zero-set of $Q$. 

The Milnor fiber of the polynomial, $F=Q^{-1}(1)$, is an affine manifold, 
and thus has the homotopy type of a  finite CW-complex of 
dimension $\ell-1$.  The monodromy of the fibration 
is the map $h\colon F\to F$, $z\mapsto e^{2\pi \ii/n} z$, 
where $n=\deg Q$.  The induced homomorphisms 
in homology, $h_q\colon H_q(F,\C)\to H_q(F,\C)$, are all 
diagonalizable, with $n$-th roots of unity as eigenvalues.  

A key question, then, is to compute the characteristic polynomials 
of these operators in terms of available data.  We will only address here 
the case $q=1$,  which is already far from solved if the 
polynomial $Q$ has a non-isolated singularity at $0$. 

\subsection{Hyperplane arrangements}
\label{subsec:hyp}
Arguably the simplest situation is when the polynomial $Q$ completely 
factors into distinct linear forms.  This situation is neatly described 
by a hyperplane arrangement, that is, a finite collection $\A$ of 
codi\-mension-$1$ linear subspaces in  $\C^{\ell}$. Choosing 
a linear form $f_H$ with kernel $H$ for each hyperplane $H\in \A$, 
we obtain a homogeneous polynomial, $Q=\prod_{H\in \A} f_H$, 
which in turn defines the Milnor fibration of the arrangement. 

To analyze this fibration, we turn to the rich combinatorial structure 
encoded in the intersection lattice of the arrangement, $L(\A)$, 
that is, the poset of all intersections of hyperplanes in $\A$ 
(also known as flats), ordered by reverse inclusion, and ranked 
by codimension.  We then have the following, much studied problem, 
which was raised in \cite[Problem 9A]{HR} and \cite[Problem 4.145]{Kb}, 
and still remains open.

\begin{problem}
\label{prob:mfa}
Given a hyperplane arrangement $\A$, is the characteristic 
polynomial of the algebraic monodromy of the Milnor fibration, 
$\Delta_{\A}(t)=\det(tI-h_1)$, determined by the intersection lattice 
$L(\A)$?  If so, give an explicit combinatorial formula to compute it. 
\end{problem}

Without essential loss of generality, we may assume that the 
ambient dimension is $\ell=3$, in which case the projectivization 
$\bar\A$ is an arrangement of lines in $\CP^2$.  In 
Theorem \ref{thm:main0}, we give a positive
answer to Problem \ref{prob:mfa} in the case 
when those lines intersect only in double or triple points. 
Despite its apparent simplicity, this case already poses 
quite a challenge.  It was previously attacked in a number of papers, 
including \cite{Di, DIM, Li}, but only partial answers 
were obtained as a result.  Our approach, though, provides 
a complete answer in this setting.

As the multiplicities of those intersection points increase, 
we still get some answers, albeit not complete ones. 
For instance, in Theorem \ref{thm:main1} we identify 
in combinatorial terms the number of times the cyclotomic 
factor $\Phi_3(t)$ appears in $\Delta_{\A}(t)$, under the 
assumption that $\bar\A$ has no intersection points of 
multiplicity $3r$, with $r>1$, while in Theorem \ref{thm:2main1} 
we treat the analogous problem for the cyclotomic factors 
$\Phi_2(t)$ and $\Phi_4(t)$.

\subsection{Combinatorics and the algebraic monodromy}
\label{subsec:char poly}
In order to describe our results in more detail, we need to introduce 
some notation. Let $M(\A)$ be the complement of the arrangement, 
and let $Q\colon M(\A) \to \C^*$ be the Milnor fibration, with Milnor 
fiber $F(\A)$. Set $\e_d(\A)=0$ if $d\not\mid n$.  The polynomial 
\begin{equation}
\label{eq:delta-arr}
\Delta_{\A}(t) = (t-1)^{n-1}  \cdot \prod_{1<d\mid n} \Phi_d(t)^{\e_d(\A)}
\end{equation}
encodes the structure of the vector space $H_1(F(\A),\C)$, 
viewed as a module over the group algebra $\C[\Z_n]$ via the action of 
the monodromy operator $h_1$.  More precisely, 
\begin{equation}
\label{eq:h1f}
H_1(F(\A),\C) = (\C[t]/(t-1))^{n-1} \oplus \bigoplus_{1<d\mid n} 
(\C[t]/\Phi_d(t))^{\e_d(\A)}.
\end{equation}
Therefore, Problem \ref{prob:mfa} amounts to deciding whether 
the integers $\e_d(\A)$ are combinatorially determined, and, if so, 
computing them explicitly.

Let $L_s(\A)$ be the set of codimension $s$ flats in $L(\A)$. For 
each such flat $X$, let $\A_X$ be the subarrangement consisting of 
all hyperplanes that contain $X$.  Finally, let $\mult(\A)$ be the set 
of integers  $q\ge 3$ for which there is a flat $X\in L_2(\A)$ such that 
$X$ has multiplicity $q$, i.e., $\abs{\A_X}=q$.  Not all divisors of $n$ 
appear in the above formulas.  Indeed, as shown by Libgober in \cite{Li82, Li02}, 
if $d$ does not divide one of the integers comprising $\mult (\A)$, the exponent 
$\e_d(\A)$ vanishes.  In particular, if $\mult(\A) \subseteq \{3\}$, then only 
$e_3(\A)$ may be non-zero.  Our first main result computes this integer 
under this assumption.

\begin{theorem}
\label{thm:main0}
Suppose $L_2(\A)$ has only flats of multiplicity $2$ and $3$.  Then 
the characteristic polynomial of the algebraic monodromy of the Milnor fibration 
is given by 
\begin{equation}
\label{eq:delta23}
\Delta_{\A}(t)=(t-1)^{\abs{\A}-1}\cdot (t^2+t+1)^{\beta_3(\A)},
\end{equation} 
where $\beta_3(\A)$ is an integer between $0$ and $2$ that
depends only on $L_{\le 2}(\A)$. 
\end{theorem}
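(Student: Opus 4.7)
The proof has three parts, and the plan is to combine vanishing of cyclotomic factors outside index~$3$, a modular-resonance identification of the surviving multiplicity as an Aomoto--Betti number, and a structural bound on that number coming from the triple-point hypothesis.

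First, I would apply the vanishing result of Libgober \cite{Li02} and M\u{a}cinic--Papadima \cite{MP} cited in the introduction: $\e_d(\A)=0$ whenever $d$ does not divide some element of $\mult(\A)$. Since by hypothesis $\mult(\A)\subseteq\{3\}$, only $d=3$ can give a non-trivial cyclotomic factor, and therefore
\[
\Delta_{\A}(t)=(t-1)^{\abs{\A}-1}\cdot \Phi_3(t)^{\e_3(\A)}.
\]
This reduces the statement to computing the single integer $\e_3(\A)$ and verifying that it is at most $2$.

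Next, I would invoke Theorem \ref{thm:main1}, whose hypothesis --- that no flat in $L_2(\A)$ has multiplicity of the form $3r$ with $r>1$ --- is automatic in the present setting, where multiplicities lie in $\{2,3\}$. That theorem identifies $\e_3(\A)=\beta_3(\A)$. The combinatorial nature of $\beta_3(\A)$ is transparent from its definition via the Aomoto complex $(A^\bullet(\A)\otimes\FF_3,\mu_\omega)$ with $\omega=\sum_H e_H$: only the pieces $A^0$, $A^1$, $A^2$ of the Orlik--Solomon algebra intervene in the computation of $H^1$, and each of these is determined by the truncated intersection poset $L_{\le 2}(\A)$.

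The main obstacle is the upper bound $\beta_3(\A)\le 2$. I would establish it using the dictionary between modular resonance and nets on the underlying matroid developed earlier in the paper: a non-zero $\FF_3$-resonance class through $\omega$ produces a $3$-net structure supported on a subarrangement of $\A$. Under the triple-point hypothesis, the blocks of such a net can meet each other only at the $3$-points of $\A$; this rigidity forces any two such nets to share a common incidence pattern and constrains the span of their classes in $H^1$ of the Aomoto complex to be at most two-dimensional. A short linear-algebra count on the images of distinct $3$-nets then yields $\beta_3(\A)\le 2$, a bound attained by the Hessian arrangement.
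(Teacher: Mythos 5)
Your first two steps coincide with the paper's own derivation of this theorem: the vanishing $\e_d(\A)=0$ for $d\neq 3$ is quoted from \cite{Li02, MP}, and the identification $\e_3(\A)=\beta_3(\A)$, together with the fact that $\beta_3$ only sees $L_{\le 2}(\A)$, is Theorem \ref{thm:main1}\eqref{a6}. The problem is your third step. Note first that the bound $\beta_3(\A)\le 2$ is itself part of Theorem \ref{thm:main1}\eqref{a5}, so if you are willing to invoke that theorem you need not reprove it; but the substitute argument you sketch for it does not work. The bound is emphatically not a consequence of combinatorial ``rigidity'' of $3$-nets under the triple-point hypothesis, nor of a short linear-algebra count on net classes: the paper constructs, for every $m$, a simple matroid $\M(m)$ all of whose rank-$2$ flats have multiplicity exactly $3$ (Lemma \ref{lem=mflats}), which supports many $3$-nets and satisfies $\beta_3(\M(m))=m$ (Proposition \ref{prop=mb3}). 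Thus, at the level of $L_{\le 2}$ and nets alone, the span of the classes $[\lambda_{\FF_3}(\NN)]$ in $Z_{\FF_3}/B_{\FF_3}$ can be arbitrarily large; the inequality $\beta_3(\A)\le 2$ is a realizability theorem over $\C$, not a matroidal one, and any proof must use complex (projective-geometric) input somewhere.

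What the paper actually does (Section \ref{sec:matr}, culminating in Theorem \ref{thm:main4}) is show that three $3$-nets with independent classes on a complex arrangement would lead to a contradiction with realizability: independence is upgraded to the intersection property (Propositions \ref{prop=indint2} and \ref{prop=indint3}), hence to the strong intersection and net properties (Lemma \ref{lem=upgr1}); the polynomials $Q_v=\prod_{H\in\A_v}f_H$ then realize $L_{\le 2}(\M(3))$ by points and lines in $\CP^2$ (Theorem \ref{thm=3mnets}); and $L_{\le 2}(\M(3))$ is not realizable over $\C$, by an argument resting on Yuzvinsky's result (Lemma \ref{lem:yuz}, Theorem \ref{thm=mnotr}). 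None of this appears in your sketch, and phrases such as ``any two such nets share a common incidence pattern'' do not survive scrutiny (two independent $3$-nets can be quite unrelated; what is ruled out is a third independent one). Finally, the extremal example is wrong: the bound $\beta_3(\A)=2$ is attained by the Ceva arrangement of Example \ref{ex:third}, whereas the Hessian arrangement violates the multiplicity hypothesis and has $\beta_3=0$ by Corollary \ref{cor:beta zero}; it is the extremal case for $\beta_2$, not $\beta_3$.
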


As we shall explain below, the combinatorial invariant $\beta_3(\A)$ is 
constructed from the mod $3$ cohomology ring of $M(\A)$. 
Our theorem implies at once a result of 
Libgober (\cite[Theorem 1.1]{Li}), which states that, under the same assumption 
on multiplicities, the question whether $\Delta_{\A}(t)$ equals $(t-1)^{\abs{\A}-1}$ 
or not can be decided combinatorially.  Our result is much stronger, in 
that it gives a completely combinatorial formula for the polynomial 
$\Delta_{\A}(t)$, by showing that the cyclotomic factor $\Phi_3(t)$ 
appears with multiplicity $e_3(\A)=\beta_3(\A)$, and also by showing that 
this multiplicity can be at most $2$. 

\subsection{Resonance varieties and multinets}
\label{subsec:intro-res}
Fix a commutative Noetherian ring $\k$.  A celebrated theorem 
of Orlik and Solomon \cite{OS} asserts that the cohomology ring 
$H^*(M(\A),\k)$ is isomorphic to the OS-algebra 
of the underlying matroid, $A^*(\A)\otimes \k$, 
and thus is determined by the (ranked) intersection poset $L(\A)$.  
Key to our approach are the resonance varieties of $\A$, which 
keep track in a subtle way of the vanishing cup products in this ring.  
For our purposes here, we will only be interested in resonance 
in degree $1$.  

For an element $\tau\in A^1(\A)\otimes \k= \k^{\A}$, left-multiplication 
by $\tau$ in the cohomology ring gives rise to a $\k$-cochain complex, 
$(A^*(\A)\otimes \k, \tau \cdot)$. The (first) resonance variety of $\A$ 
over $\k$, denoted $\RR_1(\A,\k)$, is the locus of those
elements $\tau$ for which the homology in degree $1$ 
of this complex  is non-zero.  When $\k$ is a field, this set 
is a homogeneous subvariety of the affine space $\k^{\A}$.
When $\k=\C$, all the irreducible components of 
$\RR_1(\A,\C)$ are linear subspaces, intersecting 
transversely at $0$, see \cite{CS99, LY}.  In positive characteristic, 
the components of $\RR_1(\A,\k)$ may be non-linear, or may have 
non-transverse intersection, see \cite{Fa07}. 
Very useful to us will be a result of Falk and Yuzvinsky \cite{FY} 
and Marco Buzun\'ariz \cite{MB}, 
which describes all components of $\RR_1(\A,\C)$ in terms of 
multinets on the arrangement $\A$ and its subarrangements. 

A {\em $k$-multinet}\/ on $\A$ is a partition of the arrangement into 
$k\ge 3$ subsets $\A_{\alpha}$, together with an assignment 
of multiplicities $m_H$ to each $H\in \A$, and a choice of rank $2$ 
flats, called the base locus.  All these data must satisfy certain 
compatibility conditions. For instance, any two hyperplanes from 
different parts of the partition intersect in the base locus, while the sum 
of the multiplicities over each part  is constant.  Furthermore, if 
$X$ is a flat in the base locus,  then the sum 
$n_{X}=\sum_{H\in\A_\alpha\cap \A_X} m_H$ is independent 
of $\alpha$.

A multinet as above is {\em reduced}\/ if all the multiplicities 
$m_H$ are equal to $1$.  If, moreover, all the numbers 
$n_X$ are equal to $1$, the multinet is, in fact, a 
{\em net}---a classical notion from combinatorial geometry.

Hyperplane arrangements may be viewed as simple matroids 
realizable over the field of complex numbers. (From now on, 
when we speak about {\em realizable}\/ matroids, we mean 
realizability over $\C$.)  For an arbitrary simple matroid $\M$, one may speak
about (reduced) multinets and nets, as well as resonance varieties $\RR_1(\M,\k)$
with arbitrary coefficients.  Let $B_{\k}(\M) \subseteq \k^{\M}$ be the 
constant functions, and let $\sigma \in B_{\k}(\M)$ be the function 
taking the constant value $1$. The {\em cocycle space}\/ 
$Z_{\k}(\M) \subseteq \k^{\M}$ is defined by the linear condition 
$\sigma\cdot\tau=0$.  Plainly, $\sigma\in \RR_1(\M,\k)$ if and 
only if $Z_{\k}(\M) \ne B_{\k}(\M)$. When $\k$ is a field, the 
{\em Aomoto--Betti number}\/ of the matroid is defined as 
\begin{equation}
\label{eq:betak}
\beta_{\k}(\M)= \dim_{\k} Z_{\k}(\M) / B_{\k}(\M).
\end{equation}
Clearly, this integer depends only on $p:=\ch (\k)$, and so will often be 
denoted simply by $\beta_{p}(\M)$.

Having multinets in mind, let us consider a finite set $\k$ with $k\ge 3$ elements, and
define $B_{\k}(\M) \subseteq \k^{\M}$ as before. The subset of `special' 
$\k$-cocycles, $Z'_{\k}(\M) \subseteq \k^{\M}$, consists of those functions $\tau$ with the
property that their restriction to an arbitrary flat from $L_2(\M)$ is either constant or bijective.
Given a partition, $\M =\coprod_{\alpha \in \k} \M_{\alpha}$, we associate to it the element
$\tau \in \k^{\M}$ defined by $\tau_u=\alpha$, for $u\in \M_{\alpha}$.

Our starting point is the following result, which relates nets to modular resonance, 
and which will be proved in \S\ref{subsec:special}. 

\begin{theorem}
\label{teo=lambdaintro}
Let $\M$ be a simple matroid, and let $k\ge 3$ be an integer. Then:
\begin{romenum}
\item \label{li1}
For any $k$-element set $\k$, the above construction induces a bijection,
\[
\xymatrix{\lambda_{\k} \colon \{\text{$k$-nets on $\M$}\}  \ar^(.52){\simeq}[r]&
Z'_{\k}(\M)\setminus B_{\k}(\M)} .
\]
\item \label{li2}
If $k \not\equiv 2\bmod 4$, there is a commutative ring $\k$ of cardinality 
$k$ such that $Z'_{\k}(\M) \subseteq Z_{\k}(\M)$. 
If, in fact, $k =p^s$, for some prime $p$, then $\k$ can be chosen to be the  
Galois field $\k=\FF_{p^s}$.
\end{romenum}
\end{theorem}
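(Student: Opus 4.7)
My plan is to handle the two parts separately. For part \ref{li1}, the map $\lambda_{\k}$ is manifestly injective because the partition can be recovered from $\tau$ as its fibers $\M_{\alpha} = \tau^{-1}(\alpha)$, so I only need to verify well-definedness and surjectivity. Given a $k$-net, on any rank-$2$ flat $X$ not in the base locus the restriction $\tau|_X$ is constant (all elements lie in one part), while a base-locus flat meets each part in exactly one element by the net axioms, so $\tau|_X$ is a bijection onto $\k$; both options place $\tau$ in $Z'_{\k}(\M)$, and the partition into $k\ge 3$ nonempty parts excludes $\tau$ from $B_{\k}(\M)$. Conversely, starting from $\tau\in Z'_{\k}(\M)\setminus B_{\k}(\M)$, I first check that $\tau$ is surjective: if some value were missed then no bijective restriction would be possible, so $\tau$ would be constant on every rank-$2$ flat, and since $\M$ is simple any two elements span such a flat, forcing global constancy $\tau\in B_{\k}(\M)$, a contradiction. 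The fibers then yield $k$ nonempty parts, and the net axioms follow easily except for equi-cardinality: fixing $H_0\in\M_{\alpha}$, the map $\M_{\beta}\to\{\text{base-locus flats through }H_0\}$, $H\mapsto X(H_0,H)$, is a bijection (each such flat has exactly one element in each other part), so $|\M_{\beta}|$ is independent of $\beta\ne\alpha$; running the same argument with an element from a third part (available because $k\ge 3$) then forces $|\M_{\alpha}|=|\M_{\beta}|$ for all $\alpha,\beta$.

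For part \ref{li2}, the strategy is first to reduce the quadratic condition $\sigma\cdot\tau = 0$ to a family of local conditions indexed by rank-$2$ flats. Fix an ordering of $\M$; for each rank-$2$ flat $X=\{H_1,\dots,H_q\}$ with $H_1$ minimal, the Orlik--Solomon relations give $e_{H_i}e_{H_j} = e_{H_1}e_{H_j} - e_{H_1}e_{H_i}$ for $1<i<j$, so $\{e_{H_1}e_{H_j} : 2\le j\le q\}$ spans the $X$-component of $A^2(\M)\otimes\k$. Writing $\sigma\cdot\tau = \sum_{H<H'}(\tau_{H'} - \tau_H)\,e_H e_{H'}$ and reducing the pairs $\{H_a,H_b\}\subseteq X$ with $a,b>1$ via the above relations, I expect a straightforward telescoping to show that the coefficient of $e_{H_1}e_{H_j}$ equals $q\tau_{H_j} - T_X$, where $T_X = \sum_{H\in X}\tau_H$. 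Consequently, $\tau\in Z_{\k}(\M)$ if and only if $q\tau_H = T_X$ for every $H$ in every rank-$2$ flat $X$ of size $q\ge 3$.

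Specializing to $\tau\in Z'_{\k}(\M)$, on each rank-$2$ flat $X$ the restriction is either constant, in which case $q\tau_H = qc = T_X$ holds automatically, or bijective onto $\k$, forcing $q=k$ and $T_X = \sum_{\alpha\in\k}\alpha$. Since any finite commutative ring has characteristic dividing its cardinality, $k=0$ in $\k$, so $k\tau_H=0$, and the local condition collapses to the single numerical criterion $\sum_{\alpha\in\k}\alpha = 0$. For $k=p^s$ the Galois field $\FF_{p^s}$ satisfies this: picking any nontrivial unit $u\in\FF_{p^s}^{\times}$ (which exists because $p^s\ge 3$) and reindexing shows $u\cdot\sum\alpha = \sum\alpha$, so $\sum\alpha=0$. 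For general $k\not\equiv 2\pmod 4$, writing $k=2^a m$ with $m$ odd and $a\ne 1$, the ring $\k = \FF_{2^a}\times\Z/m$ (read as $\Z/m$ when $a=0$) does the job, since the element-sum vanishes in each factor and hence componentwise in the product. The main technical hurdle is the Orlik--Solomon calculation yielding the clean coefficient $q\tau_{H_j}-T_X$; once that formula is in hand, both parts of the theorem follow from combinatorial bookkeeping and elementary finite ring theory.
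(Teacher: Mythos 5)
Your argument is correct and follows the same overall strategy as the paper's: part (\ref{li1}) is Lemma \ref{lema=lambdabij}, with the alternative characterization of $k$-nets (Lemma \ref{lem:lsq}), including the equi-cardinality argument via the bijection between a class and the set of base-locus flats through a fixed point, re-proved inline; part (\ref{li2}) is the paper's reduction of the special-cocycle condition to the single equation $\Sigma(\k)=\sum_{\alpha\in\k}\alpha=0$ (Lemma \ref{lema=sigmadich}), followed by exhibiting rings with vanishing element sum (Lemma \ref{lema=2mod4}). The one genuinely different ingredient is the key input for part (\ref{li2}): where the paper invokes Falk's linear description of $Z_{\k}(\M)$ by the equations $\sum_{u\in X}\tau_u=\abs{X}\cdot\tau_v$ (Lemma \ref{lem:eqs}, quoted from \cite{Farx}), you derive it from scratch using the nbc-type basis $\{e_{H_1}e_{H_j}\}$ of the $X$-component of $A^2(\M)\otimes\k$; your telescoping does come out as claimed, since the coefficient of $e_{H_1}e_{H_j}$ equals $\sum_{i\ne j}(\tau_{H_j}-\tau_{H_i})=q\tau_{H_j}-T_X$, so your treatment is self-contained where the paper's is not. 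Your ring $\FF_{2^a}\times\Z/m$ differs harmlessly from the paper's choice $\prod_p\FF_p^{v_p(k)}$, and the paper additionally proves statements you do not need (the converse when $k\equiv 2\bmod 4$, and the dichotomy $Z'_{\k}(\M)\cap Z_{\k}(\M)=B_{\k}(\M)$ when $\Sigma(\k)\ne 0$).

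One small slip: your stated criterion for $\tau\in Z_{\k}(\M)$ must also include the rank-$2$ flats of size $q=2$, where the condition $2\tau_H=T_X$ says precisely that $\tau$ is constant on $X$ and is not automatic; restricted to ``$q\ge 3$'' the equivalence is false in general (e.g., for a matroid all of whose $2$-flats are doubletons, it would give $Z_{\k}(\M)=\k^{\M}$). This does not affect your conclusion, because a special cocycle is automatically constant on two-element flats (a bijection onto a set with $k\ge 3$ elements being impossible), and your verification in the final step in fact runs over every rank-$2$ flat via the constant/bijective dichotomy; but the criterion should be stated for all $X\in L_2(\M)$.
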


\subsection{Matroid realizability and essential components}
\label{subsec:intr-mat}

It is well-known that non-trivial $k$-nets on simple matroids exist, for all $k\ge 3$.
For realizable matroids, the picture looks completely different: 
by a result of Yuzvinsky \cite{Yu09}, non-trivial $k$-nets exist only 
for $k=3$ or $4$; many examples of $3$-nets appear naturally, while 
the only known $4$-net comes from the famous Hessian configuration \cite{Yu04}. 

The difference between realizable and non-realizable matroids 
comes to the fore in \S\ref{sec:matr}.  Starting from the 
affine geometries $\AG (m, \k)$, where $\k=\FF_{p^s}$ is a Galois field 
with at least $3$ elements, we construct by a process 
of rank $3$ truncation a family of matroids with ground set $\k^m$, 
which we denote by $\M_\k(m)$.  
We then show in Proposition \ref{prop:mpm} that $\beta_p(\M_\k(m))\ge m$. 
Furthermore, we show in Corollary \ref{cor:strong ox} that, for $m\ge 2$, 
the lattice $L_{\le 2}(\AG (m, \k))$ is realizable over 
$\C$ if and only if $m=2$ and $\k=\FF_3$, thereby strengthening 
a classical result from matroid theory \cite{Ox}.   

Using a delicate analysis of $3$-nets supported by the matroids 
$\M(m)=\M_{\FF_3}(m)$ and a result of Yuzvinsky \cite{Yu04} in 
complex projective geometry, we establish in Corollary \ref{cor=t16gral} the 
following non-realizability criterion. 

\begin{theorem}
\label{thm:intro-mat}
Let $\M$ be a simple matroid, and suppose there are $3$-nets 
$\NN$, $\NN'$, and $\NN''$ on $\M$ such that $[\lambda_{\FF_3}(\NN)]$, 
$[\lambda_{\FF_3}(\NN')]$, and $[\lambda_{\FF_3}(\NN'')]$ are independent in 
$Z_{\FF_3}(\M)/ B_{\FF_3}(\M)$.  
Then $\M$ is not realizable over $\C$. 
\end{theorem}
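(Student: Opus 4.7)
First I would unpack what the hypothesis really says. By Theorem \ref{teo=lambdaintro}(i), the three cocycles $\lambda_{\FF_3}(\NN)$, $\lambda_{\FF_3}(\NN')$, $\lambda_{\FF_3}(\NN'')$ are pairwise distinct elements of $Z'_{\FF_3}(\M)\setminus B_{\FF_3}(\M)$, and by part (ii) they all lie in $Z_{\FF_3}(\M)$. Independence of their classes in $Z_{\FF_3}(\M)/B_{\FF_3}(\M)$ then forces any nonzero $\FF_3$-linear combination of them to be a non-constant element of $Z_{\FF_3}(\M)$. This is a genuinely three-parameter family of modular cocycles, not merely three isolated ones.

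Next I would convert this into a coordinate system on the ground set. After normalizing each $\lambda_{\FF_3}(\NN)$, $\lambda_{\FF_3}(\NN')$, $\lambda_{\FF_3}(\NN'')$ to vanish on a fixed element, their common value defines a map $\phi\colon \M \to \FF_3^3$. The special-cocycle condition says that on every rank-$2$ flat the restriction of each coordinate is either constant or bijective; combining the three coordinates, the partition of $\M$ into fibers of $\phi$ refines the rank-$2$ flats in a specific, tightly constrained way. Independence modulo constants ensures that $\phi(\M)$ spans an affine subspace of full dimension $m\ge 2$, and the flat structure inherited by $\phi(\M)$ coincides with that of the matroid $\M(m)$ on $\FF_3^m$ analyzed in the body of the paper. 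In other words, three independent $3$-nets on $\M$ produce a matroid quotient/weak map from $\M$ onto some $\M(m)$ with $m\ge 2$.

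Finally, I would apply Yuzvinsky's projective-geometric theorem from \cite{Yu04}. That result bounds, in very rigid terms, how multiple $3$-nets on a line arrangement in $\CP^2$ can coexist: their base loci are constrained by incidence and Bezout conditions to a handful of configurations, of which the Hessian is essentially the only one admitting simultaneous nets, and this configuration supports only \emph{two} independent $3$-net classes, not three. Via the Falk--Yuzvinsky correspondence between $3$-nets and components of $\RR_1(\A,\C)$, any realization $\M=\M(\A)$ would pull back $\phi$ to three pencils $M(\A)\to \C^*$ whose base loci violate precisely these constraints. Since a weak map from a realizable matroid onto $\M(m)$ with $m\ge 2$ would propagate the obstruction, $\M$ itself cannot be realizable over $\C$.

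The main obstacle I expect is the middle step, that is, showing rigorously that independence in $Z_{\FF_3}(\M)/B_{\FF_3}(\M)$ is strong enough to force the full $\M(m)$ pattern rather than some looser modular configuration that might evade Yuzvinsky's obstruction. One must verify that the nine-class common refinement of any two of the nets is actually realized by nontrivial fibers (no collapse), and that along each $X\in L_2(\M)$ the interaction of the three partitions is exactly of the type occurring in $\M(m)$. This is the "delicate analysis" flagged in the introduction; once established, invoking \cite{Yu04} closes the argument.
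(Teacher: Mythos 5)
Your high-level skeleton (encode the three nets as a map $\phi\colon\M\to\FF_3^3$, compare with the matroids $\M(m)$, and invoke Yuzvinsky plus a realizability obstruction) does parallel the paper, but the two steps that carry the actual content are missing or incorrect. First, the middle step: independence of the classes in $Z_{\FF_3}(\M)/B_{\FF_3}(\M)$ only says that no nontrivial affine-linear functional annihilates the image of $\phi$, i.e., that $\phi(\M)$ affinely spans $\FF_3^3$; it does \emph{not} say that $\phi$ is onto (all $27$ fibers $\A_v$ nonempty --- the paper's ``intersection property''), nor that the common refinement interacts with every $2$-flat exactly as in $\M(3)$ (the ``net property''). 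Establishing precisely this implication is the bulk of Section \ref{sec:matr}: the closure operator $\overline{C}$, Lemma \ref{lem=prelm2}, Proposition \ref{prop=indint2} for pairs of nets, and the key Lemma \ref{lem=ind3} (which exploits Proposition \ref{prop:ker1} on the kernels of the restriction maps $h_\alpha$), all feeding into Proposition \ref{prop=indint3}, after which Lemma \ref{lem=upgr1} upgrades to the strong intersection and net properties. You flag this as the expected obstacle but give no argument, so the proof is not actually there.

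Second, the endgame is wrong as stated. $\M(2)$ \emph{is} realizable --- it is the Ceva arrangement of Example \ref{ex:third} --- so a ``weak map onto $\M(m)$ with $m\ge 2$'' is no obstruction at all; you need $m=3$, and in any case realizability does not propagate along matroid quotients or weak maps in the way you assert. The paper instead proves (Theorem \ref{thm=3mnets}) that a complex realization of $\M$ together with the strong intersection and net properties produces an honest realization of $L_{\le 2}(\M(3))$ inside $S^d(3)^*$, via the products $Q_v=\prod_{H\in\A_v}f_H$ and \cite[Theorem 3.11]{FY}, and separately proves (Theorem \ref{thm=mnotr}) that $\M(3)$ is not realizable, using Yuzvinsky's Lemma \ref{lem:yuz}, which is a pencil-propagation statement for $(3,mn)$-nets, not a classification of arrangements carrying several $3$-nets. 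Your account of \cite{Yu04} (``the Hessian is essentially the only one admitting simultaneous nets'') is also factually off: the Hessian carries the known $4$-net, while the Ceva arrangement carries four distinct $3$-nets (since $\beta_3=2$), so the counting claim you lean on is not available in the form you need.
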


For an arrangement $\A$, the irreducible components of $\RR_1(\A,\C)$
corresponding to multinets on $\A$ are called {\em essential}. We denote 
those components  arising from $k$-nets by $\E_k (\A)$. 
By the the above discussion, $\E_k(\A)=\emptyset$ for $k\ge 5$. 
In \S\ref{ssec=55}, we use Theorem \ref{teo=lambdaintro}
to obtain a good estimate on the size of these sets in the 
remaining cases. 

\begin{theorem}
\label{thm:essintro}
Let $\A$ be an arrangement.  For $k=3$ or $4$, 
\begin{equation}
\label{eq=essboundintro}
\abs{\E_k(\A)} \le \frac{k^{\beta_{\k}(\A)}-1}{(k-1)!}, 
\end{equation}
where $\k =\FF_k$.  
Moreover, the sets 
$\E_3(\A)$ and $\E_4(\A)$ cannot be simultaneously non-empty. 
\end{theorem}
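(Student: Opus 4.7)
The plan is to bound $\abs{\E_k(\A)}$ by combining the Falk--Yuzvinsky description of essential components with the modular bijection of Theorem \ref{teo=lambdaintro}, and then to rule out the simultaneous existence of $3$- and $4$-nets by a base-locus rigidity argument.

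The Falk--Yuzvinsky theorem \cite{FY} puts the irreducible components of $\RR_1(\A,\C)$ arising from $k$-nets in bijection with the unordered $k$-nets supported on $\A$, so $\abs{\E_k(\A)}$ equals the number of unordered $k$-nets on $\A$. Setting $\k=\FF_k$ for $k\in\{3,4\}$, both values are prime powers with $k\not\equiv 2\bmod 4$, so Theorem \ref{teo=lambdaintro}(ii) gives $Z'_{\k}(\A)\subseteq Z_{\k}(\A)$, while part (i) identifies the labeled $k$-nets on $\A$ (partitions indexed by $\k$) with $Z'_{\k}(\A)\setminus B_{\k}(\A)$. Every unordered $k$-net lifts to exactly $k!$ labeled ones, via the bijections between its $k$ non-empty parts and $\k$. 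Since $Z_{\k}(\A)$ is an $\FF_k$-vector space of dimension $\beta_{\k}(\A)+1$ and $B_{\k}(\A)$ is a line, $\abs{Z_{\k}(\A)}=k^{\beta_{\k}(\A)+1}$ and $\abs{B_{\k}(\A)}=k$, whence
\[
\abs{\E_k(\A)} \le \frac{\abs{Z_{\k}(\A)\setminus B_{\k}(\A)}}{k!} = \frac{k(k^{\beta_{\k}(\A)}-1)}{k!} = \frac{k^{\beta_{\k}(\A)}-1}{(k-1)!},
\]
which is \eqref{eq=essboundintro}.

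For the non-coexistence claim, suppose toward a contradiction that $\A$ admits both a $3$-net and a $4$-net. A $k$-net has base flats of multiplicity exactly $k$, since the condition $n_X=1$ forces a single hyperplane from each part through each base flat; moreover, any two hyperplanes in distinct parts meet in a base flat. Picking $H$ and $H'$ in different parts of the $4$-net, their intersection flat has multiplicity $4$ and so cannot be a base flat of the $3$-net, forcing $H$ and $H'$ into the \emph{same} part of the $3$-net. Iterating this across the four parts of the $4$-net forces all of $\A$ into a single part of the $3$-net, contradicting the presence of three non-empty parts. The main obstacle, as I see it, is keeping the combinatorial bookkeeping of ordered versus unordered $k$-nets straight, so that Theorem \ref{teo=lambdaintro}(i)--(ii) yields precisely the numerator $k(k^{\beta_{\k}(\A)}-1)$ before dividing by $k!$; the base-locus rigidity in the final step is clean once that setup is in place.
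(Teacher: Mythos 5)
Your proof is correct. The counting half is essentially the paper's own argument: label the nets by elements of $\k=\FF_k$ via $\lambda_{\k}$ (Theorem \ref{teo=lambdaintro}(i)), use $Z'_{\k}(\A)\subseteq Z_{\k}(\A)$ from part (ii), compute $\abs{Z_{\k}(\A)\setminus B_{\k}(\A)}=k(k^{\beta_{\k}(\A)}-1)$, and divide by the free $\Sigma_k$-action. One small attribution point: your opening claim that $\abs{\E_k(\A)}$ \emph{equals} the number of unordered $k$-nets is not just Falk--Yuzvinsky; the injectivity of the orbit map is the paper's Lemma \ref{lem=fibres}, proved via Arapura theory. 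But since $\E_k(\A)$ is by definition the image of the $k$-nets under $\Psi$, only the (trivial) surjectivity and $\Sigma_k$-invariance are needed for the upper bound, so this does not affect your argument. Where you genuinely diverge is the non-coexistence claim: the paper deduces it in one line from Proposition \ref{prop:betapknet}, since a $3$-net forces $\beta_2(\A)=0$ while a $4$-net forces $\beta_2(\A)\ge 2$. You instead argue directly with base-locus multiplicities: in a $k$-net every multi-colored rank-$2$ flat has multiplicity exactly $k$, so two hyperplanes in distinct parts of a $4$-net span a flat of multiplicity $4$ and hence must lie in a common part of any $3$-net, which propagates across the four (non-empty) parts of the $4$-net and collapses the $3$-net partition --- a contradiction. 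This is a valid and more elementary route, purely at the level of $L_{\le 2}$ and with no appeal to mod-$2$ resonance; the paper's route is shorter given the machinery already developed and stays within the modular-resonance framework that organizes the rest of the argument, but both apply verbatim to arbitrary simple matroids.
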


\subsection{Modular bounds}
\label{subsec:intro-bound}

Work of Cohen and Orlik \cite[Theorem 1.3]{CO}, 
as sharpened by Papadima and Suciu \cite[Theorem 11.3]{PS-tams}, 
gives the following inequalities:
\begin{equation}
\label{eq:bound}
\text{$\e_{p^s} (\A) \le \beta_p(\A)$, for all $s\ge 1$}.
\end{equation}

In other words, the exponents of prime-power order $p^s$ are bounded above by the  
(combinatorially defined) $\beta_p$-invariants of the 
arrangement. As shown in \cite{PS-tams},  these bounds 
are of a topological nature: they are valid for spaces much more
general than arrangement complements, but they are far from being 
sharp in complete generality.  The modular bounds were first used  
in \cite{MP} to study the algebraic monodromy of the Milnor fibration, 
especially in the context of (signed) graphic arrangements.

We are now ready to state our next main result, which in particular 
shows that, under certain combinatorial conditions, the 
above modular bounds are sharp, at least for the prime 
$p=3$ and for $s=1$.

\begin{theorem}
\label{thm:main1}
Let $\M$ be a simple matroid.
Suppose $L_2(\M)$ has no flats of multiplicity $3r$, for any $r>1$. 
Then, the following conditions are equivalent:
\begin{romenum}
\item \label{a1}
$L_{\le 2}(\M)$ admits a reduced $3$-multinet. 
\item \label{a2}
$L_{\le 2}(\M)$ admits a $3$-net. 
\item \label{a3}
$\beta_3(\M) \ne 0$.
\suspend{romenum}
Moreover, if $\M$ is realized by an arrangement $\A$, the following hold:
\resume{romenum}
\item \label{a5}
$\beta_3(\A)\le 2$.
\item \label{a6}
$e_3(\A)=\beta_3(\A)$.
\item \label{a7}
$\abs{\E_3(\A)} = (3^{\beta_{3}(\A)}-1)/2$.
\end{romenum}
\end{theorem}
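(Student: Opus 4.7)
The plan is to establish the equivalences (i)--(iii) for any simple matroid $\M$ and then deduce (iv)--(vi) in the realized case by combining them with the three preceding theorems and the modular bound \eqref{eq:bound}. The heart of the argument is a local analysis of the cocycle equation $\sigma \cdot \tau = 0$ at each rank $2$ flat $X$ of multiplicity $m$. Expanding in $A^2(\M_X)$ and reducing via the Orlik--Solomon triple relations $e_i e_j - e_i e_k + e_j e_k = 0$ for triples in $\A_X$, one finds that the condition collapses to $m\tau_H = \sum_{H' \in \A_X} \tau_{H'}$ for every $H \in \A_X$. Over $\FF_3$ this forces $\tau|_{\A_X}$ to be constant when $3 \nmid m$ and becomes $\sum_{H' \in \A_X} \tau_{H'} \equiv 0 \pmod{3}$ when $3 \mid m$. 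Under our hypothesis the only multiplicity divisible by $3$ is $3$ itself, and an elementary pigeonhole calculation shows that the only triples in $\FF_3^3$ summing to zero are either constant or a bijection $\A_X \to \FF_3$. Hence $Z_{\FF_3}(\M) = Z'_{\FF_3}(\M)$.

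With this identification in hand, (ii) $\Leftrightarrow$ (iii) is immediate from Theorem~\ref{teo=lambdaintro}(i): the map $\lambda_{\FF_3}$ places $3$-nets on $\M$ in bijection with $Z'_{\FF_3} \setminus B_{\FF_3} = Z_{\FF_3} \setminus B_{\FF_3}$, which is non-empty precisely when $\beta_3(\M) > 0$. The implication (ii) $\Rightarrow$ (i) is trivial, and for (i) $\Rightarrow$ (ii) the identity $|\A_X| = 3 n_X$ at every base flat of a reduced $3$-multinet forces $n_X = 1$ under the multiplicity hypothesis, turning the multinet into a net.

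For realized $\M = \M(\A)$, I would deduce (iv) by contradiction: if $\beta_3(\A) \ge 3$, three $\FF_3$-linearly independent classes in $Z_{\FF_3}/B_{\FF_3}$ yield, via $Z = Z'$ and Theorem~\ref{teo=lambdaintro}(i), three $3$-nets with independent $\lambda$-images, which Theorem~\ref{thm:intro-mat} forbids for realizable matroids. Part (vi) follows from the free action of $\Sigma_3$ on $Z' \setminus B$ by permutation of values in $\FF_3$ (freeness uses that a non-constant $\tau \in Z'$ must be bijective on some multiplicity-$3$ flat, hence surjective onto $\FF_3$): unlabeled $3$-nets correspond bijectively to $\Sigma_3$-orbits, so $\abs{\E_3(\A)} = \abs{Z' \setminus B}/6 = (3^{\beta_3(\A)+1} - 3)/6 = (3^{\beta_3(\A)} - 1)/2$.

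The main obstacle is (v). The upper bound $e_3(\A) \le \beta_3(\A)$ is the modular bound \eqref{eq:bound}; the hard direction is the lower bound. By (iv), only $\beta_3 \in \{1, 2\}$ requires work. When $\beta_3 = 1$, the unique $3$-net produces, via the Falk--Yuzvinsky correspondence, a pencil $M(\A) \to \CP^1 \setminus \{3 \text{ points}\}$; the corresponding $2$-dimensional subtorus of the character variety contains the Milnor-fibration character $\rho_\omega$ (because its defining constraint $c_1 c_2 c_3 = 1$ is satisfied at $c_1 = c_2 = c_3 = \omega$), yielding a nontrivial class in $H^1(M(\A), L_{\rho_\omega})$ and so $e_3 \ge 1$. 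When $\beta_3 = 2$, one has four $3$-nets by (vi), each giving a $2$-dimensional component of $\RR_1(\A,\C)$ through the origin and a translated subtorus through $\rho_\omega$; the delicate step is to verify that the associated twisted cohomology jumps produce a $2$-dimensional $\omega$-eigenspace in $H^1(F(\A), \C)$. I expect this to require a careful comparison between the Aomoto complex over $\FF_3$ and the twisted de Rham complex at a primitive cube root of unity, exploiting the absence of multiplicity-$3r$ flats to rule out the spectral-sequence jumps that normally obstruct equality in the modular bound.
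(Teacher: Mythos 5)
Most of your outline follows the paper's own route: the identification $Z_{\FF_3}(\M)=Z'_{\FF_3}(\M)$ under the multiplicity hypothesis is Lemma \ref{lem:lambda}, the equivalences \eqref{a1}--\eqref{a3} then come from Lemma \ref{lem:rednet} and Theorem \ref{teo=lambdaintro}\eqref{li1}, part \eqref{a5} is deduced from Theorem \ref{thm:intro-mat} exactly as in \S\ref{ss74}, and the orbit count for \eqref{a7} is the argument of \S\ref{ssec=55}. The genuine gap is in part \eqref{a6}, in the case $\beta_3(\A)=2$. You correctly note that the (at least) four $3$-nets give $2$-dimensional components of $\RR_1(\A,\C)$ and corresponding subtori of the character torus containing $\rho_3$, but membership of $\rho_3$ in a component of $\VV_1(\A)$ arising from a $3$-net only yields depth $\ge 1$ (the generic depth on such a $2$-dimensional subtorus is $k-2=1$), so having several such components does not by itself produce a $2$-dimensional $\omega$-eigenspace; and your proposed ``careful comparison between the Aomoto complex over $\FF_3$ and the twisted de Rham complex at a cube root of unity'' is a hope, not an argument --- no such comparison is available that would force equality in the modular bound. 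What the paper actually uses at this point is a propagation result for torsion characters: by Theorem \ref{thm:be3}, $\rho_3$ lies on the two subtori $f_{\NN}^*(H^1(S,\C^*))$ and $f_{\NN'}^*(H^1(S,\C^*))$ attached to two distinct $3$-nets; these are distinct components of $\VV_1(\A)$ because their tangent cones at $1$ are the two distinct resonance components; and then Theorem \ref{thm:acm} (Artal Bartolo--Cogolludo--Matei) gives $\rho_3\in \VV_2(\A)$, hence $e_3(\A)\ge 2$ by \eqref{eq:h1milnor}, and equality follows from \eqref{eq:bound bis}. Without this (or an equivalent) ingredient, your proof of \eqref{a6} is incomplete precisely in the only nontrivial case.

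A secondary point concerns \eqref{a7}: you assert that unlabeled $3$-nets correspond bijectively to essential components, but what needs proof is injectivity of the assignment $\NN\mapsto \Psi(\NN)$ on $\Sigma_3$-orbits, i.e., that two non-conjugate $3$-nets cannot give the same component of $\RR_1(\A)$. The paper establishes this in Lemma \ref{lem=fibres} using Arapura theory: equality of the pulled-back subspaces forces the two admissible maps to differ by an automorphism of the punctured $\CP^1$, which permutes the classes, so the nets are conjugate. You should either invoke such a statement or recover the partition from the subspace via the explicit spanning vectors of Corollary \ref{cor:res comp}; as written, your count only bounds $\abs{\E_3(\A)}$ from above.
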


In the matroidal part of the above result, the equivalence
\eqref{a1}$\Leftrightarrow$\eqref{a2} follows immediately 
from Lemma \ref{lem:rednet}.  The key matroidal equivalence,
\eqref{a2}$\Leftrightarrow$\eqref{a3}, which uses 
Theorem \ref{teo=lambdaintro}, is proved in \S\ref{ssec=41}. 
As we saw in \S\ref{subsec:intr-mat}, the invariant $\beta_3(\M)$ 
can take arbitrary large values.  The striking fact, though, is that its 
range of values is drastically constrained in the realizable case from
Theorem \ref{thm:main1}\eqref{a5}.

Indeed, for arrangements $\A$, we establish the crucial inequality \eqref{a5}  
in Theorem \ref{thm:main4}, 
using in an essential way Theorem \ref{thm:intro-mat}. 
In Theorem \ref{thm:b3e3}, we prove the implication 
\eqref{a5}$\Rightarrow$\eqref{a6}. Finally, 
equality \eqref{a7} is established in \S\ref{ssec=55}.
In the particular case when $\mult(\A)\subseteq \{3\}$, parts \eqref{a5} 
and \eqref{a6} together imply Theorem \ref{thm:main0}.  

Our assumption on multiplicities is definitely needed. This is illustrated in 
Example \ref{ex:B3 bis}, where we produce a family of 
arrangements $\{\A_{3d+1}\}_{d\ge 1}$ having rank-$2$ 
flats of multiplicity $3(d+1)$: these arrangements support no 
reduced $3$-multinets, yet satisfy $e_3(\A_{3d+1})=\beta_3(\A_{3d+1})=1$;
in particular, property \eqref{a7} fails.  
Nevertheless, both \eqref{a5} and \eqref{a6} hold for this 
family of arrangements, as well as for the related 
family of monomial arrangements from Example \ref{ex:cevad}, 
which also violate our hypothesis.  

Our approach also allows us to characterize $4$-nets in terms of 
mod $2$ resonance, and to find combinatorial conditions which
imply that the modular bounds \eqref{eq:bound} are again sharp, 
for $p=2$ and $s\le 2$.  The next result is proved in \S\ref{subsec:e2beta2}.

\begin{theorem}
\label{thm:2main1}
For a simple matroid $\M$, the following are equivalent:
\begin{romenum}
\item \label{2m1}
$Z'_{\FF_4}(\M) \ne B_{\FF_4}(\M)$.
\item \label{2m3}
$L_{\le 2}(\M)$ supports a $4$-net. 
\end{romenum} 
If $\M$ is realized by an arrangement $\A$ with $\beta_2(\A)\le 2$ 
and the above conditions hold, then 
$\e_2(\A)= \e_4(\A)=\beta_2(\A)=2$. 
\end{theorem}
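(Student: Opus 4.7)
The equivalence (i) $\Leftrightarrow$ (ii) is obtained by specializing Theorem \ref{teo=lambdaintro}\eqref{li1} to $\k=\FF_4$ (viewed as a $4$-element set): the bijection $\lambda_{\FF_4}$ identifies $4$-nets on $\M$ with $Z'_{\FF_4}(\M)\setminus B_{\FF_4}(\M)$, so one side is nonempty iff the other is. Since the notion of a $k$-net depends only on the ground set and the rank-$2$ flats of $\M$, this yields the stated equivalence with ``$L_{\le 2}(\M)$ supports a $4$-net''.

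Now assume $\M$ is realized by $\A$ with $\beta_2(\A)\le 2$ and condition (i) holds. Since $4\not\equiv 2\pmod 4$, Theorem \ref{teo=lambdaintro}\eqref{li2} with $\k=\FF_4$ yields $Z'_{\FF_4}(\M)\subseteq Z_{\FF_4}(\M)$, so $\beta_2(\A)\ge 1$. Moreover, the $4$-net on $\A$ produces an element of $\E_4(\A)$, whence $|\E_4(\A)|\ge 1$. Theorem \ref{thm:essintro} then bounds $|\E_4(\A)|\le (4^{\beta_2(\A)}-1)/6$, forcing $4^{\beta_2(\A)}\ge 7$ and hence $\beta_2(\A)\ge 2$. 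Combined with the hypothesis, $\beta_2(\A)=2$.

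It remains to pin down the exponents. The modular bound \eqref{eq:bound} gives $e_{2^s}(\A)\le \beta_2(\A)=2$ for $s=1,2$. For the matching lower bounds, I would argue in parallel with the proof of Theorem \ref{thm:main1}\eqref{a6}: the $4$-net $\NN$ produces, via the Falk--Yuzvinsky theorem, a positive-dimensional essential component $V\subset \RR_1(\A,\C)$, and exponentiation lifts $V$ to a translated subtorus of the characteristic variety of $\A$ passing through rank-one torsion characters of orders $2$ and $4$ determined by the coloring of $\NN$. The standard passage from positive-dimensional components at torsion points to eigenspace dimensions of $h_1$ on $H_1(F(\A),\C)$ (cyclic covers / Sabbah specialization) then yields the desired inequalities $e_2(\A)\ge 2$ and $e_4(\A)\ge 2$.

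The main obstacle will be this last step. Unlike the $3$-net situation of Theorem \ref{thm:main1}, where the single prime $3$ equals the size of the net, a $4$-net contributes simultaneously to the two cyclotomic factors $\Phi_2(t)$ and $\Phi_4(t)$. One must therefore locate both order-$2$ and order-$4$ torsion characters on the \emph{same} Falk--Yuzvinsky subtorus of $V_1(\A)$ and verify that each contributes independently $2$ to its respective eigenspace of the Milnor monodromy, rather than sharing contributions in a way that would drop one of $e_2(\A), e_4(\A)$ below $2$.
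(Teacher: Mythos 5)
Your handling of the equivalence \eqref{2m1}$\Leftrightarrow$\eqref{2m3} matches the paper (Theorem \ref{teo=lambdaintro} with $\k=\FF_4$), and your derivation of $\beta_2(\A)=2$ is correct, though roundabout: Proposition \ref{prop:betapknet} gives $\beta_2(\A)\ge k-2=2$ directly from the $4$-net (and in the paper $\beta_2(\A)=2$ simply falls out of $2\le e_2(\A)\le\beta_2(\A)\le 2$). The genuine gap is the step you yourself flag as ``the main obstacle'': the lower bounds $e_2(\A)\ge 2$ and $e_4(\A)\ge 2$ are only sketched, and without them nothing is proved, since the modular bound \eqref{eq:bound bis} only gives the upper bounds.

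The concern about the factors $\Phi_2(t)$ and $\Phi_4(t)$ ``sharing contributions'' is not the real issue: by formula \eqref{eq:h1milnor} (equivalently \eqref{eq:equiv}), $e_d(\A)=\depth(\rho_d)=\dim_{\C}H_1(M(\A),\C_{\rho_d})$ is attached to the single torsion character $\rho_d$, and the $\Phi_2$- and $\Phi_4$-isotypic pieces of $H_1(F(\A),\C)$ are computed independently, so there is no interaction to control. What you must actually show is that both $\rho_2$ and $\rho_4$ lie in $\VV_2(\A)$, and this is exactly Theorem \ref{thm:be3}\eqref{bb2}, whose proof is short: since a $4$-net is a \emph{reduced} multinet, Lemma \ref{lem:pen h1} shows that the diagonal order-$4$ character $\rho$ of $S=\CP^1\setminus\{\text{$4$ points}\}$ pulls back under the admissible map $f_{\NN}$ to $\rho_4$; hence $\rho_4$ lies on the $3$-dimensional subtorus $T=f_{\NN}^*(H^1(S,\C^*))$, and so does $\rho_2=\rho_4^{\,2}$, because $T$ is a subgroup. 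By Example \ref{ex:cv surf} and the discussion in \S\ref{subsec:cv arr}, the entire subtorus $T$ lies in $\VV_{k-2}(\A)=\VV_2(\A)$, so $\depth(\rho_2)\ge 2$ and $\depth(\rho_4)\ge 2$, i.e., $e_2(\A),e_4(\A)\ge 2$. Note that, unlike the $\beta_3(\A)=2$ case of Theorem \ref{thm:b3e3}, no appeal to Theorem \ref{thm:acm} (two distinct components through a common torsion character) is needed here: a single Falk--Yuzvinsky subtorus already sits in depth $2$ because $S$ has four punctures.
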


\subsection{Flat connections}
\label{ssec=flatintro}

Foundational results due to Goldman and Millson \cite{GM}, together with 
related work from \cite{KM, DPS, DP}, imply that the local geometry of representation 
varieties of fundamental groups of arrangement complements in linear algebraic groups,
near the trivial representation, is determined by the global geometry of varieties of flat connections 
on Orlik--Solomon algebras with values in the corresponding Lie algebras.

In this paper, we establish a link between the information on modular resonance encoded by
non-constant special $\k$-cocycles on an arrangement $\A$, and $\g$-valued flat connections on 
$A(\A)\otimes \C$, for an arbitrary finite-dimensional complex Lie algebra $\g$. More precisely,
we denote by $\HH^{\k}(\g) \subseteq \g^{\k}$ the subspace of vectors with zero sum of coordinates,
and declare a vector in $\HH^{\k}(\g)$ to be regular if the span of its coordinates has dimension
at least $2$. Inside the variety of flat connections, $\F (A(\A)\otimes \C, \g)$, the elements which
do not come from Lie subalgebras of $\g$ of dimension at most $1$ are also called regular.

In Proposition \ref{prop=liftev}, we associate to every special cocycle 
$\tau \in Z'_{\k}(\A)\setminus B_{\k}(\A)$ an embedding 
$\ev_{\tau}\colon \HH^{\k}(\g) \inj \F (A(\A)\otimes \C, \g)$ 
which preserves the regular parts. Building on recent work 
from \cite{DP, MPPS}, we then exploit this construction in two ways,
for $\g=\sl_2 (\C)$. On one hand, as noted in Remark \ref{rem=lambdainv}, 
the construction gives the inverse of the map $\lambda_{\k}$ from 
Theorem \ref{teo=lambdaintro}\eqref{li1}.  On the other hand, 
we use a version of this construction, involving a subarrangement of $\A$ 
as a second input, to arrive at the following result, which is proved in 
Theorem \ref{thm=freg3}.

\begin{theorem}
\label{teo=modtoflatintro}
Suppose that, for every subarrangement $\B \subseteq \A$, all essential components 
of $\RR_1(\B,\C)$ arise from nets on $\B$. Then 
\[
\F_{\reg}(A(\A)\otimes \C, \sl_2 (\C))= \bigcup_{\B, \tau} \; 
\ev^{\B}_{\tau} (\HH^{\k}_{\reg} (\sl_2 (\C)))\, ,
\]
where the union is taken over all $\B \subseteq \A$ 
and all non-constant special $\k$-cocycles $\tau\in Z'_{\k}(\B)\setminus B_{\k}(\B)$.
\end{theorem}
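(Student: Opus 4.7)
The inclusion $\supseteq$ is immediate from Proposition \ref{prop=liftev}, which provides each $\ev^{\B}_{\tau}$ as a regularity-preserving embedding (and flat connections on $A(\B) \otimes \C$ extend by zero to $A(\A) \otimes \C$ without losing flatness or regularity). The real content is the reverse inclusion $\subseteq$. Fix $\omega \in \F_{\reg}(A(\A) \otimes \C, \sl_2(\C))$, write $\omega = \sum_{H \in \A} e_H \otimes X_H$, and let $\B \subseteq \A$ be its support; regularity forces $\dim \spn\{X_H : H \in \B\} \geq 2$.

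The first step is to attach to $\omega$ an essential component of $\RR_1(\B, \C)$. Choose a Cartan decomposition $\sl_2(\C) = \C h \oplus \C e \oplus \C f$, and expand $\omega = \eta_1 \otimes h + \eta_2 \otimes e + \eta_3 \otimes f$ with $\eta_i \in A^1(\B, \C)$. The Maurer--Cartan equation $[\omega, \omega] = 0$, together with the $\sl_2(\C)$-brackets, yields $\eta_i \cdot \eta_j = 0$ in $A^2(\B, \C)$ for all $i \neq j$. Regularity ensures at least two of the $\eta_i$ are nonzero and non-collinear, so by the linearity of the components of $\RR_1(\B, \C)$ over $\C$ and their transversality at the origin \cite{CS99, LY}, all nonzero $\eta_i$ lie in a common positive-dimensional irreducible component $C$ not contained in $B_{\C}(\B)$; that is, $C$ is essential. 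By hypothesis $C$ arises from a $k$-net $\NN$ on $\B$ (with $k \in \{3, 4\}$), and Theorem \ref{teo=lambdaintro}\eqref{li1} produces the corresponding special cocycle $\tau = \lambda_{\k}(\NN) \in Z'_{\k}(\B) \setminus B_{\k}(\B)$, inducing a partition $\B = \coprod_{\alpha \in \k} \B_\alpha$.

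The main obstacle is to upgrade this linear identification to the whole connection: to prove $X_H = Y_{\tau(H)}$ for some $(Y_\alpha) \in \HH^{\k}(\sl_2(\C))$. The plan is to combine Remark \ref{rem=lambdainv} (which recognizes $\tau$ as the inverse of $\ev_\tau$ at the linear level) with the Falk--Yuzvinsky description of $C$ via pencil data, identifying $C$ with the linearized image of $\ev^{\B}_{\tau}$. A rigidity argument then proceeds flat-by-flat, via the Maurer--Cartan equation at each rank-$2$ flat $X \in L_2(\B)$: on a base flat of $\NN$, where $\tau|_{\A_X}$ is bijective, the $\sl_2(\C)$-bracket relations force the $X_H$ to coincide on each class $\B_\alpha$ and impose $\sum_\alpha Y_\alpha = 0$; on non-base flats $\tau|_{\A_X}$ is constant and the MC equation is automatic. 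This yields $\omega = \ev^{\B}_{\tau}((Y_\alpha))$, with $(Y_\alpha) \in \HH^{\k}_{\reg}(\sl_2(\C))$ inheriting regularity from $\omega$. The genuine difficulty is precisely this rigidity step --- showing that a flat connection whose linearization lies in a net-parametrized component must itself lie in the $\ev^{\B}_{\tau}$-image, with no higher-order corrections --- and it is here that the hypothesis on essential components is used in full force.
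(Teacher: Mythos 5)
Your first two steps are sound, and they amount to a hands-on re-derivation of the external input the paper simply cites: expanding $\omega=\eta_1\otimes h+\eta_2\otimes e+\eta_3\otimes f$ and extracting $\eta_i\eta_j=0$ from the Maurer--Cartan equation, then using linearity and pairwise transversality of the components of $\RR_1(\B,\C)$ to place all the $\eta_i$ in one positive-dimensional component $C$, is essentially the content of Theorem \ref{lem:disjoint}\eqref{r3} (Proposition 5.3 of \cite{MPPS}), which the paper invokes instead. Two small repairs are needed there: ``essential'' means not contained in any \emph{coordinate} subspace of $\C^{\B}$, not merely ``not contained in $B_{\C}(\B)$''; the correct justification comes from your own choice $\B=\supp(\omega)$ --- if $C\subseteq\{x_H=0\}$ for some $H\in\B$ then all $(\eta_i)_H=0$, so $X_H=0$, contradicting $H\in\supp(\omega)$.

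The genuine gap is the final step, which you yourself flag as ``the genuine difficulty'' and leave unproven, and for which the mechanism you propose does not work: a flat-by-flat analysis of the Maurer--Cartan equation at a multi-colored flat $X$ only yields the alternative in \eqref{eq:flat sl2} (either $\sum_{v\in X}\omega_v=0$ or the $\omega_v$ span a line), so it does not locally force $X_H$ to be constant on the classes $\B_\alpha$; nor is the hypothesis on essential components used at that point --- it is used only earlier, to know that $C$ comes from a \emph{net}, hence that $\tau=\lambda_{\k}(\NN)$ is a special cocycle with all multiplicities equal to $1$. In fact no rigidity argument is needed, because everything is linear: once all $\eta_i$ lie in $C=\Psi(\NN)$, Corollary \ref{cor:res comp} gives $C=\spn\{u_2-u_1,\dots,u_k-u_1\}$ with $u_\alpha=\sum_{H\in\B_\alpha}e_H$, so every element of $C$ is constant on each class $\B_\alpha$ with class-values summing to zero; applying this to $\eta_1,\eta_2,\eta_3$ yields at once $X_H=Y_{\tau(H)}$ with $\sum_\alpha Y_\alpha=0$, i.e.\ $\omega\in C\otimes\sl_2=\im(\ev^{\B}_{\tau})$, and rank preservation (Proposition \ref{prop=liftev}) transfers regularity to $Y$. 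This is exactly how the paper concludes, via Proposition \ref{prop=multiflat}\eqref{pmf3} and Lemma \ref{lem:submat}\eqref{732}. With that replacement your outline closes, and it would then be a slightly more self-contained variant of the paper's proof (rederiving Theorem \ref{lem:disjoint}\eqref{r3} rather than citing it); as written, however, the concluding step is missing and the route you sketch for it would fail.
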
 

When $\A$ satisfies the above combinatorial condition (for instance, when $\A$ is an 
unsigned graphic arrangement), it follows that the variety of $\sl_2 (\C)$-valued 
flat connections has an interesting property: it can be reconstructed in an explicit way 
from information on modular resonance. 

\subsection{Discussion}
\label{subsec:disc}

We return now to Problem \ref{prob:mfa}, and discuss the literature 
surrounding it, as well as our approach to solving it 
in some notable special cases. 
Nearly half the papers in our bibliography are directly related to this 
problem. This (non-exhaustive) list of papers may give the reader an idea about the 
intense activity devoted to this topic, and the variety of tools used 
to tackle it. 

In \cite{BDS, CL, CDO, Di, D12, DIM, DL, Li02, Li}, mostly geometric 
methods (such as superabundance of linear systems of polynomials, 
logarithmic forms, and Mixed Hodge theory) have been used. 
It seems worth mentioning that our approach also provides  
answers to rather subtle geometric questions. For instance, 
a superabundance problem raised by Dimca in \cite{Di} 
is settled in Remark \ref{rem:fourth}.

The topological approach to Problem \ref{prob:mfa} traces its origins 
to the work of Cohen and Suciu \cite{CS95, CS99} on Milnor fibrations 
and characteristic varieties of arrangements, which builds in turn on 
Arapura's theory \cite{Ar} of characteristic varieties of quasi-projective 
manifolds.  This theory, as refined in \cite{ACM, Di07}, provides a 
geometric interpretation of these topologically defined varieties in 
terms of (orbifold) pencils.

A crucial ingredient in our approach is the idea to connect the Orlik--Solomon 
algebra in positive characteristic to the monodromy of the Milnor fibration.  This idea, 
which appeared in \cite{CO, De02}, was developed and generalized in \cite{PS-tams}. 
The modular bounds from \eqref{eq:bound}, first exploited in a systematic 
way by M\u{a}cinic and Papadima in \cite{MP}, have since been put to 
use in \cite{BY, DIM, TY}. Theorem \ref{thm:main0} is used by Dimca in \cite{D14}
to solve a difficult problem, namely the combinatorial computation of the 
equivariant Poincar\'{e}-Deligne polynomial for the Milnor fiber of a triple 
point line arrangement.

On the combinatorial side, multinets and their relationship with complex 
resonance varieties, established in \cite{FY, MB} and further developed in 
\cite{PY, Yu09} play an important role in \cite{DS13, DIM, DP11, Su14, Su17}, 
and are key to our approach. Here, the novelty in our viewpoint is to relate
(multi)nets to modular resonance and varieties of flat connections.

\subsection{Conclusion}
\label{subsec:conclude}

The many examples we discuss in this paper show a strikingly 
similar pattern, whereby the only interesting primes, as far as 
the algebraic monodromy of the Milnor fibration goes, are $p=2$ 
and $p=3$.  Furthermore, all rank $3$ simplicial arrangements examined 
by Yoshinaga in \cite{Yo} satisfy $e_3(\A)=0$ or $1$, and $e_d(\A)=0$, 
otherwise. Finally, we do not know of any arrangement $\A$ 
of rank at least $3$ for which $\beta_p(\A) \ne 0$ if 
$p>3$. By \cite{MP}, no such example may be found among 
subarrangements of non-exceptional Coxeter arrangements. 

Theorems \ref{thm:main1} and \ref{thm:2main1}, together 
with these and other considerations lead us to formulate the following 
conjecture. 

\begin{conjecture}
\label{conj:mf}
Let $\A$ be an arrangement of rank at least $3$.  Then 
$\e_{p^s}(\A)=0$ for all primes $p$ and integers $s\ge 1$, with two 
possible exceptions:
\begin{equation}
\label{eq:e2e3}
\e_2(\A)= \e_4(\A)=\beta_2(\A) \:\text{ and }\: \e_3(\A)=\beta_3(\A). 
\end{equation}
\end{conjecture}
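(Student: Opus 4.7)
My plan is to attack Conjecture \ref{conj:mf} by decomposing it into three sub-problems and leveraging the partial results already established in the paper. First, using the modular bound $\e_{p^s}(\A) \le \beta_p(\A)$ from \eqref{eq:bound}, the vanishing statement $\e_{p^s}(\A) = 0$ for primes $p \ge 5$ reduces to showing $\beta_p(\A) = 0$ for every arrangement $\A$ of rank at least $3$ and every such $p$. Second, the equalities $\e_3(\A) = \beta_3(\A)$ and $\e_2(\A) = \e_4(\A) = \beta_2(\A)$ amount to promoting the inequality in \eqref{eq:bound} to an equality for small primes; partial versions of this are Theorems \ref{thm:main1} and \ref{thm:2main1}. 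Third, we must show $\e_{p^s}(\A) = 0$ for $p^s \notin \{2,3,4\}$ even when $\beta_p(\A) \ne 0$, which the modular bound alone cannot detect.

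For the first sub-problem, I would try to refine Theorem \ref{teo=lambdaintro}\eqref{li2} so that every non-constant class in $Z_{\FF_p}(\A)/B_{\FF_p}(\A)$ is detected by a special cocycle, i.e., by the indicator of some $p$-net on a subarrangement of $\A$. Combined with Yuzvinsky's result that no $k$-nets with $k \ge 5$ exist on realizable simple matroids, this would force $\beta_p(\A) = 0$ for $p \ge 5$. The natural technical tool is the Falk--Yuzvinsky description of the components of $\RR_1(\A,\C)$, re-read in characteristic $p$; the obstacle is that in positive characteristic the components of $\RR_1(\A,\FF_p)$ can be non-linear or meet non-transversely at the origin, so the complex picture does not transport directly, and one would likely need to combine the non-realizability criterion of Theorem \ref{thm:intro-mat} with a new characteristic-$p$ structure theorem.

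For the second sub-problem, the plan is to remove the multiplicity hypothesis from Theorem \ref{thm:main1} and the bound $\beta_2(\A) \le 2$ from Theorem \ref{thm:2main1}. The examples discussed after Theorem \ref{thm:main1} (the families $\A_{3d+1}$ and the monomial arrangements of Example \ref{ex:cevad}) already show that $\e_3 = \beta_3$ can persist beyond the hypothesis, so a deletion--restriction argument — stratifying $H_1(F(\A),\C)$ by subarrangements of $\A$ that do support reduced multinets, and using the flat-connection machinery of Theorem \ref{teo=modtoflatintro} to transfer multinet data into monodromy eigenspaces — seems the right way in.

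The main obstacle, I expect, is the third sub-problem: bounding $\e_{p^s}(\A)$ strictly better than $\beta_p(\A)$ when $s$ is large. The modular bound \eqref{eq:bound} is insensitive to $s$, yet the conjecture predicts sharp vanishing at high prime powers. One promising direction is to refine the $\FF_p$-Aomoto complex to a $\Z/p^s\Z$-version and show that, for realizable matroids, the higher Aomoto--Betti numbers vanish; equivalently, to produce an obstruction on the $\sl_2(\C)$-representation variety side, along the lines of Theorem \ref{teo=modtoflatintro}, that kills the would-be higher-order eigenvectors. This is the step furthest from current technique, and where a genuinely new ingredient — perhaps a modular analogue of the deformation theory governing flat connections — would most likely be required.
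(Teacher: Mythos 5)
The statement you are addressing is Conjecture \ref{conj:mf}: the paper does not prove it, and offers no proof to compare against --- it is stated as an open problem, supported only by partial results (Theorems \ref{thm:main1} and \ref{thm:2main1}, the modular bounds \eqref{eq:bound}, and the examples in \S\ref{subsec:examples} and Example \ref{ex:hesse}). Your text is likewise a research program rather than a proof: none of your three sub-problems is actually carried out, and you yourself acknowledge that a ``genuinely new ingredient'' is needed. So as a proof it has a gap by construction; what can be assessed is whether the plan is sound, and there it has at least one concrete flaw.

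Your route to the vanishing $\e_{p^s}(\A)=0$ for $p\ge 5$ is to show $\beta_p(\A)=0$ by ``refining Theorem \ref{teo=lambdaintro}\eqref{li2} so that every non-constant class in $Z_{\FF_p}(\A)/B_{\FF_p}(\A)$ is detected by a special cocycle,'' i.e.\ by a net on a subarrangement. This step would fail as stated: the inclusion $Z'_{\k}(\A)\subseteq Z_{\k}(\A)$ is strict in general, and the paper itself exhibits this failure --- in Example \ref{ex:B3 bis} the full monomial arrangements $\A_{3d+1}$ ($d>0$) have $\beta_3=1$ yet support no $3$-net, so the non-constant cocycle classes there are \emph{not} represented by special cocycles, and the bijection $\lambda_{\k}$ cannot be upgraded to see all of $Z_{\k}/B_{\k}$. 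Moreover, $\beta_p(\A)=0$ for $p\ge 5$ and rank $\ge 3$ is itself an open question (the paper only records the absence of known counterexamples in \S\ref{subsec:conclude}), and it is stronger than what the conjecture asserts, which concerns $\e_{p^s}$, not $\beta_p$. Your identification of the third sub-problem (killing $\e_{p^s}$ for large $s$ even when $\beta_p\neq 0$, where the bound \eqref{eq:bound} is insensitive to $s$) as the furthest from current technique is accurate, but no mechanism is proposed that goes beyond restating the difficulty. In short: the conjecture remains unproved, and the one step of your plan that is concrete enough to test is contradicted by the paper's own examples.
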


When $\e_d(\A)=0$ for all divisors $d$ of $\abs{\A}$ which 
are not prime powers, this conjecture would give the following
complete answer to Problem \ref{prob:mfa}:
\begin{equation}
\label{eq:delta arr}
\Delta_{\A}(t)=(t-1)^{\abs{\A}-1} ((t+1)(t^2+1))^{\beta_2(\A)} 
(t^2+t+1)^{\beta_3(\A)}. 
\end{equation} 

Recent results from \cite{MPP, D16, DS} establish the validity of this conjecture, in the
strong form \eqref{eq:delta arr}, for all complex reflection arrangements.

\subsection{Organization of the paper}
\label{subsec:org}

We start in \S\ref{sect:nets} with a review of matroids and multinets, 
and establish some simple lemmas which will be of use later on. 
In \S\ref{sect:res} we discuss the Orlik--Solomon algebra, the resonance 
varieties, and the Aomoto--Betti numbers of a matroid,  and we explore  
some of the constraints imposed on those numbers 
by the existence of nets on the matroid.  We then construct in 
\S\ref{sect:small} suitable parameter sets for nets 
on matroids, and relate these parameter sets to modular resonance, 
leading to a proof of Theorem \ref{teo=lambdaintro}  
and the combinatorial parts of Theorems \ref{thm:main1} and 
\ref{thm:2main1}. 

We switch our point of view in \S\ref{sect:flat}, where 
we study the space of $\g$-valued flat connections 
on the Orlik--Solomon algebra of a simple matroid $\M$, 
and the closely related holonomy Lie algebra $\h(\M)$. 
This analysis is continued in \S\ref{sect:flat res net}, where 
we find a combinatorial condition insuring that the variety of 
$\sl_2 (\C)$-valued flat connections on the Orlik--Solomon algebra 
of an arrangement $\A$ can be reconstructed explicitly from information 
on modular resonance.  The definitions and results from these 
two sections regarding the space of flat connections are not 
used in the remainder of the paper, but are of independent interest. 

In \S\ref{sect:res vars} we narrow our focus to realizable matroids, 
and recall the description of the resonance variety $\RR_1(\A,\C)$ of an 
arrangement $\A$ in terms of multinets on subarrangements of $\A$.
As an application of these techniques, we prove
Theorem \ref{thm:essintro} and Theorem \ref{thm:main1}\eqref{a7}. 
In \S\ref{sect:cjl milnor} we use the jump loci for homology in rank $1$ 
local systems to derive information on the characteristic polynomial of 
the algebraic monodromy of the Milnor fiber $F(\A)$.  In the process, 
we establish implication 
\eqref{a5}$\Rightarrow$\eqref{a6} from Theorem \ref{thm:main1},
and we finish the proof of Theorem \ref{thm:2main1}.

In the last section, 
we construct in \S\ref{ss71}--\ref{subsec:realize} an infinite family of rank 
$3$ matroids, $\M(m)$, which are realizable over $\C$ if and only if $m= 2$, 
and which have the property that $\beta_3(\M(m))=m$.  
Finally, in \S\ref{ss72}--\ref{ss74}   we use this information 
to establish the key part \eqref{a5} of Theorem \ref{thm:main1}, 
thereby completing the proof of this theorem. 

To recap, the logical dependence of the remaining sections is given by the 
following Leitfaden:
\[
\xymatrixcolsep{18pt}
\xymatrixrowsep{18pt}
\xymatrix{
& \S\ref{sect:res vars} \ar[r] & \S\ref{sect:cjl milnor} \\
\S\ref{sect:nets} \ar[r] &  \S\ref{sect:res} \ar[r] \ar[u] \ar[d] 
& \S\ref{sect:small} \ar[r]  \ar[d] &\S\ref{sec:matr}  \\
& \S\ref{sect:flat} \ar[r] & \S\ref{sect:flat res net}
}
\]

\section{Matroids and multinets}
\label{sect:nets}

The combinatorics of a hyperplane arrangement is encoded 
in its intersection lattice, which in turn can be viewed as a 
lattice of flats of a realizable matroid.  In this section, we 
discuss multinet structures on matroids, with special emphasis on nets. 
As a byproduct, we prove the combinatorial equivalence 
\eqref{a1} $\Leftrightarrow$ \eqref{a2}  from Theorem \ref{thm:main1}.  

\subsection{Matroids}
\label{subsec:matroids}
We start by reviewing the notion of matroid.  There are many ways to 
axiomatize this notion, which unifies several concepts in linear algebra, 
graph theory, discrete geometry, and the theory of hyperplane arrangements, 
see for instance Wilson's survey \cite{W}.  We mention here only the 
ones that will be needed in the sequel. 

A {\em matroid}\/ is a finite set $\M$, 
together with a collection of subsets, 
called the {\em independent sets}, which satisfy the following axioms: 
(1) the empty set is independent; (2) any proper subset of 
an independent set is independent; and (3) if $I$ and $J$ 
are independent sets and $\abs{I} > \abs{J}$, then there exists 
$u \in I \setminus J$ such that $J \cup \{u\}$ is independent.  
A maximal independent set is called a {\em basis}, while a 
minimal dependent set is called a {\em circuit}. 

The {\em rank}\/ of a subset $S\subset \M$ is the size of the largest 
independent subset of $S$. A subset is {\em closed}\/ if it is maximal 
for its rank; the closure $\overline{S}$ of a subset $S\subset \M$ is 
the intersection of all closed sets containing $S$. Closed sets are 
also called {\em flats}. 

We will consider only {\em simple}\/ matroids, defined by the 
condition that all subsets of size at most two are independent.

The set of flats of $\M$, ordered by inclusion, forms a 
geometric lattice, $L(\M)$, whose atoms are the elements 
of $\M$. We will denote by $L_s(\M)$ the set of rank-$s$ flats, 
and by $L_{\le s}(\M)$  the sub-poset of flats of rank 
at most $s$.  We say that a flat $X$ has multiplicity $q$ if 
$\abs{X}=q$.  The join of two flats $X$ and $Y$ is given by 
$X\vee Y=\overline{X\cup Y}$, while the meet is given by 
$X\wedge Y=X\cap Y$.

\subsection{Hyperplane arrangements}
\label{subsec:arrs}

An arrangement of hyperplanes is a finite set $\A$ of 
codi\-mension-$1$ linear subspaces in a finite-dimensional, 
complex vector space $\C^{\ell}$.  We will assume throughout 
that the arrangement is central, that is, all the hyperplanes pass 
through the origin.  Projectivizing, we obtain an arrangement 
$\pA=\{\bar{H}\mid H\in \A\}$ of projective, codimension-$1$ 
subspaces in $\CP^{\ell-1}$, from which $\A$ can be reconstructed 
via a coning construction.

The combinatorics of the arrangement is encoded in its 
{\em intersection lattice}, $L(\A)$.  This is the poset of all 
intersections of hyperplanes in $\A$ (also known as {\em flats}), 
ordered by reverse inclusion, and ranked by codimension.  
The join of two flats $X,Y\in L(\A)$ is given by $X\vee Y=X\cap Y$, while 
the meet is given by $X\wedge Y=\bigcap \{Z\in L(\A) \mid X+Y \subseteq Z\}$.
Given a flat $X$, we will denote by $\A_X$ the subarrangement 
$\{H\in \A\mid H\supset X\}$. 

We may view $\A$ as a simple matroid, whose points correspond to the 
hyperplanes in $\A$, with dependent subsets given by linear algebra, 
in terms of the defining equations of the hyperplanes.  In this way, the 
lattice of flats of the underlying matroid is identified with $L(\A)$. 
Under this dictionary, the two notions of rank coincide.  

A matroid $\M$ is said to be {\em realizable}\/ (over $\C$) 
if there is an arrangement $\A$ such that $L(\M)=L(\A)$. 
The simplest situation is when $\M$ has rank $2$, in 
which case $\M$ can always be realized by a pencil 
of lines through the origin of $\C^2$. 

For most of our purposes here, it will be enough to assume that 
the arrangement $\A$ lives in $\C^3$, in which case $\pA$ is an 
arrangement of (projective) lines in $\CP^2$. This is clear when the 
rank of $\A$ is at most $2$, and may be achieved otherwise 
by taking a generic $3$-slice. This operation does not
change the poset $L_{\le 2}(\A)$, or derived invariants 
such as $\beta_p(\A)$, nor does it change the monodromy 
action on $H_1(F(\A),\C)$. 

For a rank-$3$ arrangement, the set $L_1(\A)$ is in $1$-to-$1$ correspondence 
with the lines of $\pA$, while $L_2(\A)$ is in $1$-to-$1$ correspondence 
with the intersection points of $\pA$.  The poset structure of $L_{\le 2}(\A)$ 
corresponds then to the incidence structure of the point-line configuration $\pA$. 
This correspondence is illustrated in Figure \ref{fig:braid}.  
We will say that a flat $X \in L_2(\A)$ has multiplicity $q$ if 
$\abs{\A_X}=q$, or, equivalently, if the point $\bar{X}$ has 
exactly $q$ lines from $\bar{\A}$ passing through it. 

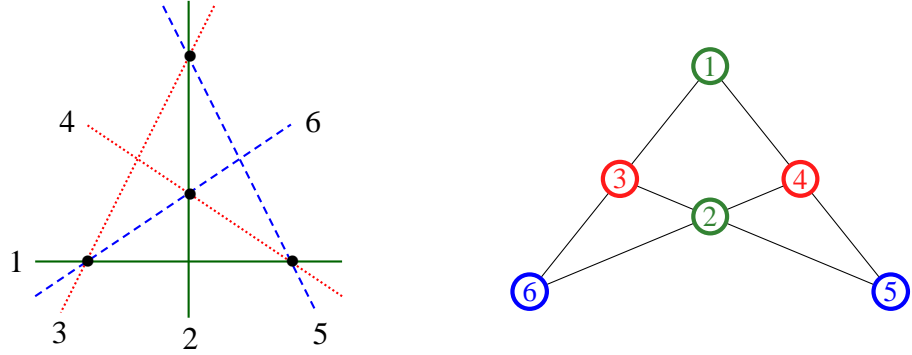
\begin{figure}
\centering
\begin{tikzpicture}[scale=0.68]
\hspace*{-0.5in}
\draw[style=thick,densely dashed,color=blue] (-0.5,3) -- (2.5,-3);
\draw[style=thick,densely dotted,color=red]  (0.5,3) -- (-2.5,-3);
\draw[style=thick,color=dkgreen] (-3,-2) -- (3,-2);
\draw[style=thick,densely dotted,color=red]  (3,-2.68) -- (-2,0.68);
\draw[style=thick,densely dashed,color=blue] (-3,-2.68) -- (2,0.68);
\draw[style=thick,color=dkgreen] (0,-3.1) -- (0,3.1);
\node at (-0.1,-2) {$\bullet$};
\node at (3.9,-2) {$\bullet$};
\node at (1.9,2) {$\bullet$};
\node at (1.9,-0.7) {$\bullet$};
\node at (-0.5,0.75) {$4$};
\node at (4.3,0.75) {$6$};
\node at (1.9,-3.5) {$2$};
\node at (-1.5,-2) {$1$};
\node at (-0.65,-3.4) {$3$};
\node at (4.45,-3.4) {$5$};
\end{tikzpicture}
\hspace*{0.65in}
\begin{tikzpicture}[scale=0.6]
\path[use as bounding box] (1,-1.5) rectangle (6,6.8); 
\path (0,0) node[draw, ultra thick, shape=circle, 
inner sep=1.3pt, outer sep=0.7pt,  color=blue] (v1) {\small{6}};
\path (2,2.5) node[draw, ultra thick, shape=circle, 
inner sep=1.3pt, outer sep=0.7pt, color=red!90!white] (v2) {\small{3}};
\path (4,5) node[draw, ultra thick, shape=circle, 
inner sep=1.3pt, outer sep=0.7pt,  color=dkgreen!80!white] (v3) {\small{1}};
\path (6,2.5) node[draw, ultra thick, shape=circle, 
inner sep=1.3pt, outer sep=0.7pt,  color=red!90!white] (v4) {\small{4}};
\path (8,0) node[draw, ultra thick, shape=circle, 
inner sep=1.3pt, outer sep=0.7pt,  color=blue] (v5) {\small{5}};
\path (4,1.666) node[draw, ultra thick, shape=circle, 
inner sep=1.3pt, outer sep=0.7pt,  color=dkgreen!80!white] (v6) {\small{2}};
\draw (v1) -- (v2) -- (v3) -- (v4) -- (v5); 
\draw (v2) -- (v6) -- (v5); 
\draw (v1) -- (v6) -- (v4);
\end{tikzpicture}
\caption{A $(3,2)$-net on the ${\rm A}_3$ arrangement, 
and on the corresponding matroid}
\label{fig:braid}
\end{figure}
\setcounter{figure}{0}

\subsection{Multinets on matroids}
\label{subsec:multinets}

Guided by the work of Falk and Yuzvinsky \cite{FY}, we 
define the following structure on a matroid $\M$---in 
fact, on the poset $L_{\le 2}(\M)$. 

A {\em multinet}\/ on $\M$ is a partition into $k\ge 3$ subsets 
$\M_1, \dots, \M_k$, together with an assignment of multiplicities, 
$m\colon \M\to \N$, and a subset $\XX\subseteq L_2(\M)$ 
with $\abs{X}>2$ for each $X\in \XX$, called the base locus, such that:
\begin{enumerate}
\item  \label{mu1} 
There is an integer $d$ such that $\sum_{u\in \M_{\alpha}} m_u=d$, 
for all $\alpha\in [k]$.
\item  \label{mu2} 
For any two points $u, v\in \M$ in different classes, the flat spanned by 
$\set{u,v}$ belongs to $\XX$.
\item  \label{mu3} 
For each $X\in\XX$, the integer 
$n_X:=\sum_{u\in \M_\alpha \cap X} m_u$ 
is independent of $\alpha$.
\item  \label{mu4} 
For each $1\leq \alpha \leq k$ and $u,v\in \M_{\alpha}$, there is a 
sequence $u=u_0,\ldots, u_r=v$ such that $u_{i-1}\vee u_i\not\in\XX$ for
$1\leq i\leq r$.
\end{enumerate}

We say that a multinet  $\NN$ as above has $k$ classes 
and weight $d$, and refer to it as a $(k,d)$-multinet, or simply as 
a $k$-multinet.  Without essential loss of generality, we may 
assume that $\gcd \{m_u\}_{u\in \M}=1$.  

If all the multiplicities are equal to $1$, the multinet is said 
to be {\em reduced}.  If $n_X=1$, for all $X\in \XX$, 
the multinet is called a {\em $(k,d)$-net};  in this case, 
the multinet is reduced, and every flat in the base locus contains precisely 
one element from each class.  A $(k,d)$-net $\NN$ is {\em non-trivial}\/ if $d>1$, 
or, equivalently, if the matroid $\M$ has rank at least $3$.

The symmetric group $\Sigma_k$ acts freely on the set of $(k,d)$-multinets 
on $\M$, by permuting the $k$ classes. Note that the $\Sigma_k$-action 
on $k$-multinets preserves reduced $k$-multinets as well as $k$-nets. 

\begin{figure}
\centering
\subfigure{%
\begin{minipage}{0.47\textwidth}
\centering
\begin{tikzpicture}[scale=0.7]
\draw[style=thick,color=dkgreen] (0,0) circle (3.1);
\node at (-2.4,0.3){2}; 
\node at (0,-2.6){2}; 
\node at (3.35,0.5){2}; 
\clip (0,0) circle (2.9);
\draw[style=thick,densely dashed,color=blue] (-1,-2.1) -- (-1,2.5);
\draw[style=thick,densely dotted,color=red] (0,-2.2) -- (0,2.5);
\draw[style=thick,densely dashed,color=blue] (1,-2.1) -- (1,2.5);
\draw[style=thick,densely dotted,color=red] (-2.5,-1) -- (2.5,-1);
\draw[style=thick,densely dashed,color=blue] (-2.5,0) -- (2.5,0);
\draw[style=thick,densely dotted,color=red] (-2.5,1) -- (2.5,1);
\draw[style=thick,color=dkgreen]  (-2,-2) -- (2,2);
\draw[style=thick,color=dkgreen](-2,2) -- (2,-2);
\end{tikzpicture}
\caption{\!A $(3,4)$-multinet} 
\label{fig:b3 arr}
\end{minipage}
}
\subfigure{%
\begin{minipage}{0.5\textwidth}
\centering
\begin{tikzpicture}[scale=0.8]
\draw[style=thick,densely dotted,color=red] (0,0) circle (3.1);
\clip (0,0) circle (2.9);
\draw[style=thick,densely dotted,color=red] (0,-2.8) -- (0,2.8);  
\draw[style=thick,densely dotted,color=red] (-2.6,-1) -- (2.6,-1); 
\draw[style=thick,densely dotted,color=red] (-2.6,1) -- (2.6,1); 
\draw[style=thick,densely dashed,color=blue] (-0.5,-2.7) -- (-0.5,2.7); 
\draw[style=thick,densely dashed,color=blue] (1.5,-2.4) -- (1.5,2.4); 
\draw[style=thick,densely dashed,color=blue]  (-2.5,-2) -- (2.2,2.7); 
\draw[style=thick,densely dashed,color=blue](-2.2,1.7) -- (2.2,-2.7); 
\draw[style=thick,color=dkgreen] (-1.5,-2.4) -- (-1.5,2.4);  
\draw[style=thick,color=dkgreen] (0.5,-2.7) -- (0.5,2.7);  
\draw[style=thick,color=dkgreen]  (-1.7,-2.2) -- (2.2,1.7);  
\draw[style=thick,color=dkgreen](-2,2.5) -- (2.2,-1.7);  
\end{tikzpicture}
\caption{A reduced $(3,4)$-multi\-net, but not a $3$-net}
\label{fig:non-net}%
\end{minipage}
}
\end{figure}

We will say that an arrangement $\A$ admits a multinet if the matroid 
realized by $\A$ does. 
The various possibilities are illustrated in the above figures. 
Figure \ref{fig:braid} shows a $(3,2)$-net on a planar slice 
of the reflection arrangement of type ${\rm A}_3$.  
Figure \ref{fig:b3 arr} shows a non-reduced $(3,4)$-multinet 
on a planar slice of the reflection arrangement of type ${\rm B}_3$.  
Finally, Figure \ref{fig:non-net} shows a simplicial arrangement of $12$ 
lines in $\CP^2$ supporting a reduced $(3,4)$-multinet which is not a $3$-net. 
For more examples, we refer to  \cite{FY, Yo, Yu04}.

\subsection{Reduced multinets and nets}
\label{subsec:red multi}

Let $\NN$ be a multinet on a matroid $\M$, with associated 
classes $\{\M_1,\dots, \M_k\}$. For each flat $X\in L_2(\M)$, let us write 
\begin{equation}
\label{eq:supp}
\supp_{\NN}(X)=\{\alpha \in [k] \mid \M_{\alpha}\cap X \ne \emptyset\}.
\end{equation} 

Evidently, $\abs{\supp_{\NN}(X)} \le \abs{X}$.  
Notice also that $\abs{\supp_{\NN}(X)}$ is either $1$ (in which case 
we say $X$ is mono-colored), or $k$ (in which case we say 
$X$ is multi-colored).  Here is an elementary lemma.

\begin{lemma}
\label{lem:rednet}
Suppose a matroid $\M$ has no $2$-flats of multiplicity $kr$, 
for any $r>1$.  
Then every reduced $k$-multinet on $\M$ is a $k$-net. 
\end{lemma}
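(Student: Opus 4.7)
The plan is to analyze directly, for a given reduced $k$-multinet, the integers $n_X$ attached to flats in the base locus, and to show that the multiplicity hypothesis forces each of them to equal $1$.

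Let $\NN=(\M_1,\dots,\M_k;m;\XX)$ be a reduced $k$-multinet on $\M$, so that $m_u=1$ for every $u\in\M$. Fix $X\in\XX$. By the definition of a multinet (condition \eqref{mu3}), the integer $n_X=\sum_{u\in\M_\alpha\cap X}m_u=\abs{\M_\alpha\cap X}$ is independent of $\alpha\in[k]$. Since $\abs{X}>2$ and the classes $\M_\alpha\cap X$ partition $X$, the common value $n_X$ cannot be zero; hence $\abs{\M_\alpha\cap X}\ge 1$ for every $\alpha$. In other words, $X$ is multi-colored, i.e., $\supp_\NN(X)=[k]$.

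The main counting step is then immediate: since $X=\coprod_{\alpha=1}^{k}(\M_\alpha\cap X)$, we have
\[
\abs{X}=\sum_{\alpha=1}^{k}\abs{\M_\alpha\cap X}=k\,n_X.
\]
Now invoke the hypothesis: $\M$ has no rank-$2$ flats of multiplicity $kr$ with $r>1$. Since $\abs{X}=k\,n_X$ with $n_X\ge 1$, this forces $n_X=1$. As $X\in\XX$ was arbitrary, $n_X=1$ for every flat in the base locus, which is the defining condition for $\NN$ to be a $k$-net.

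There is really no obstacle here; the only thing worth being careful about is observing that the independence-of-$\alpha$ axiom \eqref{mu3} forbids $n_X=0$ and thereby forces multi-coloredness, which is what allows the counting identity $\abs{X}=k\,n_X$ to be applied.
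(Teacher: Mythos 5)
Your proof is correct and follows essentially the same route as the paper's: observe that a flat $X$ in the base locus must be multi-colored (the paper states $\abs{\supp_{\NN}(X)}=k$; you derive it from axiom \eqref{mu3} and the non-emptiness of $X$), deduce $\abs{X}=k\,n_X$, and invoke the multiplicity hypothesis to conclude $n_X=1$. Your write-up is slightly more explicit about why $n_X\ge 1$, but the argument is the same.
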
 

\begin{proof}
Let $X$ be a flat in the base locus of a $k$-multinet $\NN$; 
then $\abs{\supp_{\NN}(X)}=k$.  If the multinet is reduced, we 
have that $\abs{X}=k n_X$.  Since, by assumption, 
$L_2(\M)$ has no flats of multiplicity $kr$, with $r>1$, 
we must have $n_X=1$.  Thus, the multinet is a net. 
\end{proof} 

In the case when $k=3$, Lemma  \ref{lem:rednet} proves implication  
\eqref{a1} $\Rightarrow$ \eqref{a2}  from Theorem \ref{thm:main1}.  
(The implication \eqref{a2} $\Rightarrow$ \eqref{a1}  from that theorem 
is of course trivial.)

Work of Yuzvinsky \cite{Yu04, Yu09} and Pereira--Yuzvinsky \cite{PY} 
shows that, if $\NN$ is a $k$-multinet on a realizable matroid, 
with base locus of size greater than $1$, then $k=3$ 
or $4$; furthermore, if $\NN$ is not reduced, then $k$ 
must equal $3$.

Work of Kawahara from \cite[\S3]{Ka} shows that nets on matroids abound.   
In particular, this work shows that, for any $k\ge 3$, there is a simple matroid 
$\M$ supporting a non-trivial $k$-net.  By the above, if $\M$ is realizable, 
then $k$ can only be equal to $3$ or $4$.

Let us look in more detail at the structure of nets on matroids.  
First recall the following well-known definition.  
Given a matroid $\M$ and  a subset $S \subseteq \M$, 
we say that $S$ is {\em line-closed}\/ 
(in $\M$) if $X$ is closed in $\M$, for all $X\in L_2(S)$.  
Clearly, this property is stable under intersection.

\begin{lemma}
\label{lem:net props}
Assume a matroid $\M$ supports a $k$-net with parts 
$\M_{\alpha}$.  Then:
\begin{enumerate}
\item \label{n1}
Each submatroid $\M_{\alpha}$ has the same cardinality, 
equal to $d:=\abs{\M}/k$. 

\item \label{n3}
Each submatroid $\M_{\alpha}$ is line-closed in $\M$. 
\end{enumerate}
\end{lemma}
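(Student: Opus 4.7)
Proof plan.

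For part (\ref{n1}), the approach is essentially bookkeeping. By condition (\ref{mu1}) in the definition of a multinet, $\sum_{u\in \M_\alpha} m_u = d$ for every $\alpha$. Since a net is reduced, all multiplicities equal $1$, so $\abs{\M_\alpha}=d$. Summing over $\alpha$ yields $\abs{\M}=kd$, i.e., $d=\abs{\M}/k$.

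For part (\ref{n3}), the plan is to argue by contradiction, using the key fact that closure commutes with matroid restriction. Let $X\in L_2(\M_\alpha)$, and let $Y=\overline{X}$ denote the closure of $X$ in $\M$, so that $Y\in L_2(\M)$ (the rank of $X$ in $\M$ is still $2$, as a $2$-element independent subset of $\M_\alpha$ is already independent in $\M$). Because the restriction $\M|_{\M_\alpha}$ satisfies $\mathrm{cl}_{\M_\alpha}(S) = \mathrm{cl}_\M(S)\cap \M_\alpha$ for any $S\subseteq \M_\alpha$, and $X$ is closed in $\M_\alpha$, one gets $Y\cap \M_\alpha = X$. Now assume $Y\neq X$ and pick $v\in Y\setminus X$; by the previous identity, $v$ lies in some class $\M_\beta$ with $\beta\ne\alpha$.

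Pick any $u\in X$, which lies in $\M_\alpha$. Since $u,v$ belong to different classes, axiom (\ref{mu2}) of the multinet definition forces $\overline{\{u,v\}}$ to be in the base locus $\XX$. But $u,v\in Y$ and $Y$ has rank $2$, so $\overline{\{u,v\}}=Y$; thus $Y\in\XX$. As $\NN$ is a net we have $n_Y=1$, which (multiplicities being $1$) gives $\abs{Y\cap \M_\alpha}=1$. This contradicts $X\subseteq Y\cap \M_\alpha$ together with $\abs{X}\ge 2$ (a rank-$2$ flat in a simple matroid contains at least two points). Hence $Y=X$, i.e., $X$ is closed in $\M$.

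The only subtlety I foresee is making sure that ``rank'' and ``closure'' behave correctly when passing between $\M$ and the restriction $\M|_{\M_\alpha}$; once one records the commutation $\mathrm{cl}_{\M_\alpha}=\mathrm{cl}_\M(\cdot)\cap \M_\alpha$, the rest is immediate from axiom (\ref{mu2}) and the net condition $n_X=1$. No other machinery is needed.
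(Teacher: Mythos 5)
Your proof is correct and takes essentially the same route as the paper's: supposing the closure $\overline{X}$ in $\M$ contains a point outside $\M_\alpha$, the multinet axiom on pairs of points in different classes forces $\overline{X}$ into the base locus, and the net condition $n_{\overline{X}}=1$ then contradicts the fact that $X$ already contains two distinct points of $\M_\alpha$. Your explicit use of the identity $\mathrm{cl}_{\M_\alpha}(S)=\mathrm{cl}_{\M}(S)\cap\M_\alpha$ only spells out a step the paper leaves implicit, so there is no substantive difference.
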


\begin{proof}
Part \eqref{n1} follows from the definitions.  To 
prove \eqref{n3}, let $X$ be a flat in $L_2(\M_{\alpha})$, and 
suppose there is a point $u\in (\M\setminus \M_{\alpha})\cap \overline{X}$,
where  $\overline{X}$ denotes the closure in $\M$. 
Since  $X\in L_2(\M_{\alpha})$, there exist distinct points 
$v, w\in \M_{\alpha}\cap  X$.  Since also $u\in \overline{X}$, the flat  
$\overline{X}$ must belong to the base locus.  On the other hand, 
$\overline{X}$ contains a single point from $\M_{\alpha}$. This is  
a contradiction, and we are done.
\end{proof}

\subsection{Nets and Latin squares}
\label{subsec:nets}

The next lemma provides an alternative definition of nets. 
The lemma, which was motivated by \cite[Definition 1.1]{Yu04} 
in the realizable case, will prove to be useful in the sequel.

\begin{lemma}
\label{lem:lsq}
A $k$-net $\NN$ on a matroid $\M$ is a partition with non-empty blocks,
$\M =\coprod_{\alpha \in [k]} \M_{\alpha}$, with the property that, 
for every $u\in \M_{\alpha}$ and $v\in \M_{\beta}$ with $\alpha \ne \beta$ 
and every $\gamma\in [k]$, 
\begin{equation}
\label{eq=altnet}
\abs{(u \vee v) \cap \M_{\gamma}}=1.
\end{equation}
\end{lemma}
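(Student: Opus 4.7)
The plan is to prove the equivalence of \eqref{eq=altnet} with the four defining axioms of a $k$-net (that is, a $k$-multinet with $m\equiv 1$ and $n_X=1$ for every $X$ in the base locus). Since the alternative characterization refers only to the partition, the base locus---part of the original datum---must be recovered intrinsically from the blocks $\M_\alpha$.

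For the forward implication, suppose $\NN$ is a $k$-net in the original sense, with classes $\M_1,\dots,\M_k$ and base locus $\XX$. Given $u\in\M_\alpha$ and $v\in\M_\beta$ with $\alpha\ne\beta$, axiom~(2) places $X:=u\vee v$ in $\XX$, and axiom~(3) together with $n_X=m_u=1$ immediately gives $|X\cap\M_\gamma|=1$ for every $\gamma\in[k]$, which is exactly \eqref{eq=altnet}.

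For the converse, I would start from a partition with non-empty blocks satisfying \eqref{eq=altnet}, set $m\equiv 1$, and \emph{define} the base locus to be $\XX:=\{u\vee v : u,v\text{ lie in distinct classes}\}$. By \eqref{eq=altnet}, every $X\in\XX$ contains one representative of each of the $k$ classes, so $|X|\ge k\ge 3$; axioms~(2) and~(3) then hold by construction. Axiom~(4) is automatic as well: if $u\ne v$ both lie in $\M_\alpha$, then $u\vee v\notin\XX$, for otherwise \eqref{eq=altnet} would force $|(u\vee v)\cap\M_\alpha|=1$, contradicting the presence of the two distinct points $u,v$ in that intersection; hence the trivial two-term sequence already satisfies the connectivity requirement.

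The only remaining---and slightly more substantive---step is axiom~(1), the equi-cardinality of the blocks, and this is the point where $k\ge 3$ is essential. I would fix $u_0\in\M_\alpha$, pick distinct $\beta,\gamma\in[k]\setminus\{\alpha\}$ (which is possible precisely because $k\ge 3$), and define $\phi\colon\M_\beta\to\M_\gamma$ by sending $v$ to the unique element of $\M_\gamma\cap(u_0\vee v)$. Since any two distinct points on a rank-$2$ flat $X$ of a simple matroid span $X$, the symmetric construction $\psi\colon\M_\gamma\to\M_\beta$ is a two-sided inverse for $\phi$, and hence $|\M_\beta|=|\M_\gamma|$. Varying the base class $\alpha$ then yields a common cardinality $d$ for all blocks, completing the proof.
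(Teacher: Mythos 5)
Your proposal is correct and follows essentially the same route as the paper: recover the base locus as the set of multi-colored rank-$2$ flats, observe that axioms (2)--(4) then come for free from \eqref{eq=altnet} (including the same observation that $u\vee v\notin\XX$ for $u,v$ in one class), and reduce the real content to equi-cardinality of the blocks, which both arguments obtain by fixing a point in a third class (hence the need for $k\ge 3$). The only cosmetic difference is that the paper's bijection goes from a class $\M_\alpha$ to the set of base-locus flats through the fixed point $v$, whereas you compose two such correspondences into a direct bijection $\M_\beta\to\M_\gamma$; your verification that it is a two-sided inverse, via equality of the rank-$2$ flats $u_0\vee v=u_0\vee w$ in a simple matroid, is sound.
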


\begin{proof}
Plainly, the net axioms \eqref{mu2} and \eqref{mu3} from \S\ref{subsec:multinets} 
imply the following dichotomy, for an arbitrary flat $X\in L_2(\M)$: either $X$ is mono-colored, 
or $X$ belongs to the base locus $\XX$ and $\abs{X}=\abs{\supp_{\NN}(X)}=k>2$. 
Hence, we may replace $\XX$ by the subset of multi-colored, rank-$2$ flats, 
i.e., the set of flats of the form $u\vee v$, where $u\in \M_{\alpha}$, 
$v\in \M_{\beta}$, and $\alpha \ne \beta$. In this way, axiom \eqref{mu2}
may be eliminated, and  axiom \eqref{mu3} reduces to property \eqref{eq=altnet}.
In turn, this property clearly implies axiom \eqref{mu4}, by taking $r=1$, 
$u_0=u$, and $u_1=v$. 

In order to complete the proof, we are left with showing that 
property \eqref{eq=altnet} implies axiom \eqref{mu1}. To this end, 
let us fix $v\in \M_{\gamma}$, and define for each $\alpha \ne \gamma$ 
a function 
\begin{equation}
\label{eq:fv}
f_v \colon \M_{\alpha} \to \{ X\in \XX \mid v\in X \}
\end{equation}
by setting $f_v (u)=u\vee v$. By property \eqref{eq=altnet}, the function 
$f_v$ is a bijection. Finally, given $\alpha \ne \beta$ in $[k]$, pick a third 
element $\gamma \in [k]$ and a point $v\in \M_{\gamma}$ to infer that 
$\abs{\M_{\alpha}}= \abs{\M_{\beta}}$. This verifies axiom \eqref{mu1},
and we are done.
\end{proof}

A {\em Latin square}\/ of size $d$ is a matrix corresponding 
to the multiplication table of a quasi-group of order $d$; that is to say, 
a $d\times d$ matrix $\Lambda$, with each row and column a 
permutation of the set $[d]=\{1,\dots, d\}$.  

In the sequel, we will make extensive use of $3$-nets. In view of 
Lemma \ref{lem:lsq}, a $3$-net on a matroid $\M$ is a partition into 
three non-empty subsets $\M_1, \M_2, \M_3$  with the 
property that, for each pair of points $u,v\in \M$ in 
different classes, we have $u\vee v=\set{ u,v,w }$, for some 
point $w$ in the third class.

Three-nets  are intimately related to Latin squares. If 
$\M$ admits a $(3,d)$-net with parts $\M_1,\M_2,\M_3$, then 
the multi-colored $2$-flats define a Latin square $\Lambda$: 
if we label the points of $\M_{\alpha}$ as 
$u^{\alpha}_{1},\dots, u^{\alpha}_{d}$, 
then the $(p,q)$-entry of this matrix is the integer $r$ 
given by the condition that 
$\{ u^1_p, u^2_q, u^3_{r} \} \in L_2(\M)$.
A similar procedure shows that a $k$-net is 
encoded by a $(k-2)$-tuple of orthogonal Latin squares. 

The realizability of $3$-nets by line arrangements in $\CP^2$ 
has been studied by several authors, including Yuzvinsky \cite{Yu04}, 
Urz\'{u}a \cite{Ur}, and Dimca, Ibadula, and M\u{a}cinic \cite{DIM}. 

\begin{example}
\label{ex:kawa}
Particularly simple is the following construction, due to 
Kawahara \cite{Ka}: given any Latin square, there is a 
matroid with a $3$-net realizing it, such that each submatroid 
obtained by restricting to the parts of the $3$-net is a uniform matroid. 

In turn, some of these matroids may be realized by line arrangements 
in $\CP^2$. For instance, suppose $\Lambda$ is the multiplication table 
of one of the groups $\Z_2$, $\Z_3$, $\Z_4$, or $\Z_2\times \Z_2$.  
Then the corresponding realization is the braid arrangement, 
the Pappus $(9_3)_1$ configuration, the Kirkman configuration, 
and the Steiner configuration, respectively. 
\end{example}

In general, though, there are many other realizations of Latin 
squares.  For example, the group $\Z_3$ admits two more 
realizations, see \cite[Theorem 2.2]{DIM} and Examples \ref{ex:second}, 
\ref{ex:third}.  

\section{Modular Aomoto--Betti numbers and resonance varieties}
\label{sect:res}

We now study two inter-related matroidal invariants:  
the Ao\-moto--Betti numbers $\beta_p$ and the resonance 
varieties in characteristic $p>0$.  In the process, we explore  
some of the constraints imposed on the $\beta_p$-invariants  
by the existence of nets on the matroid. 

\subsection{The Orlik--Solomon algebra}
\label{subsec:os alg}
As before, let $\A$ be an arrangement of hyperplanes in $\C^{\ell}$.  
The main topological invariant associated to such an arrangement 
is its complement, $M(\A)=\C^{\ell}\setminus \bigcup_{H\in \A} H$.  
This is a smooth, quasi-projective variety, with the homotopy type 
of a connected, finite CW-complex of dimension $\ell$.  

Building on work of Brieskorn, Orlik and Solomon described in 
\cite{OS} the cohomology ring $A(\A)=H^*(M(\A),\Z)$ as the quotient 
of the exterior algebra on degree-one classes dual to the 
meridians around the hyperplanes of $\A$, modulo a certain 
ideal determined by the intersection lattice. 

Based on this combinatorial description, one may associate an 
Orlik--Solomon algebra $A(\M)$ to any (simple) matroid $\M$, as follows.  
Let $E=\bigwedge(\M)$ be the exterior algebra on degree 
$1$ elements $e_u$ corresponding to the points of the matroid, 
and define a graded derivation $\partial\colon E\to E$ of degree $-1$ 
by setting $\partial(1)=0$ and $\partial (e_u)=1$, for all 
$u\in \M$. Then
\begin{equation}
\label{eq:os matroid}
A(\M) = E/\text{ideal $\{\partial (e_S) \mid$ \text{$S$ a circuit in $\M$}\}},
\end{equation}
where $e_S = \prod_{u\in S} e_u$.  As is well-known, this graded ring 
is torsion-free, and the ranks of its graded pieces are determined  
by the M\"obius function of the matroid. In particular, $A^1(\M)=\Z^{\M}$ 
(this is one instance where the simplicity assumption on $\M$ is needed).

This construction enjoys the following naturality property:  
if $\M' \subseteq \M$ is a submatroid, then the canonical 
embedding $\bigwedge(\M')\inj \bigwedge(\M)$ induces 
a monomorphism of graded rings, $A(\M')\inj A(\M)$. 

\subsection{Resonance varieties}
\label{subsec:res}

Let $A$ be a graded, graded-commutative algebra over a 
commutative Noetherian ring $\k$.  We will assume that each 
graded piece $A^q$ is free and finitely generated over $\k$, 
and $A^0=\k$. Furthermore, we will assume that $a^2=0$, for all $a\in A^1$, 
a condition which is automatically satisfied if $\k$ is a field with $\ch(\k)\ne 2$, 
by the graded-commutativity of multiplication in $A$.

For each element $a \in A^1$, we turn the algebra $A$ into a 
cochain complex, 
\begin{equation}
\label{eq:aomoto}
\xymatrix{(A , \delta_a)\colon  \ 
A^0\ar^(.66){ \delta_{a}}[r] & A^1\ar^{ \delta_{a}}[r] & A^2  \ar[r]& \cdots},
\end{equation}
using as differentials the maps  $ \delta_{a}(b)=ab$. For a finitely generated 
$\k$-module $H$, we denote by $\rank_{\k} H$ the minimal number of 
$\k$-generators for $H$.  The (degree $q$, depth $r$) {\em resonance varieties}\/ 
of $A$ are then defined as the jump loci for the cohomology of this complex, 
\begin{equation}
\label{eq:resa}
\RR^q_r(A)= \{a \in A^1 \mid \rank_{\k} H^q(A , \delta_{a}) \ge  r\}.
\end{equation}
When $\k$ is a field, it is readily seen that these sets are Zariski-closed, homogeneous 
subsets of the affine space $A^1$.  

For our purposes here, we will only consider the degree $1$ resonance 
varieties, $\RR_r(A)=\RR^1_r(A)$. Clearly, these varieties depend  
only on the degree $2$ truncation of $A$, denoted $A^{\le 2}$. Over a field $\k$, 
$\RR_r(A)$ consists of $0$, together with all elements 
$a \in A^1$ for which there exist $b_1,\dots ,b_r \in A^1$ 
such that $\dim_{\k}\spn\{a,b_1,\dots,b_r\}=r+1$ and $ab_i=0$ in $A^2$.

The degree $1$ resonance varieties over a field enjoy the following naturality 
property:  if $\varphi\colon A\to A'$ is a morphism of commutative 
graded algebras, and  $\varphi$ is injective in degree $1$, then 
$\varphi^1$ embeds $\RR_r(A)$ into $\RR_r(A')$, for each $r\ge 1$.

\subsection{The Aomoto--Betti numbers}
\label{subsec:aomoto betti}

Consider now  the algebra $A=A(\M)\otimes \k$, i.e., the Orlik--Solomon 
algebra of the matroid $\M$ with coefficients in a commutative Noetherian 
ring $\k$.  Since $A$ is a quotient of an exterior algebra, we have that 
$a^2=0$ for all $a\in A^1$. Thus, we may define the resonance varieties 
of the matroid $\M$ as 
\begin{equation}
\label{eq:res mat}
\RR_r(\M,\k) := \RR_r(A(\M)\otimes \k).
\end{equation}

If the coefficient ring is a field, these varieties essentially depend only on 
the characteristic of the field.  Indeed, if $\k\subset \mathbb{K}$ is a 
field extension, then $\RR_r(\M,\k) = \RR_r(\M,\mathbb{K}) \cap \k^{\M}$.
The resonance varieties of a realizable matroid were first 
defined and studied by Falk \cite{Fa97} for $\k=\C$, by 
Matei and Suciu \cite{MS00} for arbitrary fields, and then 
by Falk \cite{Farx, Fa07} in the general case. A complete 
description of the resonance varieties $\RR_1(\M,\C)$ for 
a (not necessarily realizable) simple matroid $\M$ were 
given by Marco Buzun\'ariz in \cite{MB}. 

Over an arbitrary Noetherian ring $\k$, the $\k$-module $A^1=\k^{\M}$ 
comes endowed with a preferred basis, which we will also write as 
$\{e_u\}_{u\in \M}$. Consider the ``diagonal" element
\begin{equation}
\label{eq:omega1}
\sigma=\sum_{u\in  \M} e_u \in A^1, 
\end{equation}
and define the {\em cocycle space}\/ of the matroid (with respect to $\sigma$) 
to be  
\begin{equation}
\label{eq:z1}
Z_{\k}(\M) = \{  \tau\in A^1 \mid \sigma \cup \tau=0\}.
\end{equation}

Over a field, the dimension of $Z_{\k}(\M)$ 
depends only on the characteristic of $\k$, and not on $\k$ itself.  
The following lemma gives a convenient system of linear 
equations for the cocycle space $Z_{\k}(\M)$, in general.
  
\begin{lemma}
\label{lem:eqs}
Let $\M$ be a matroid, and let $\k$ be a commutative Noetherian ring. 
A vector $\tau=\sum_{u\in  \M}\tau_u e_u \in \k^{\M}$ 
belongs to $Z_\k(\M)$ if and only if, for each flat $X\in L_2(\M)$ and $v\in X$, 
the following equation holds:
\begin{equation}
\label{eq=zfalk}
\sum_{u\in X} \tau_u =\abs{X} \cdot \tau_v .
\end{equation}

Furthermore, if $\k$ is a field of characteristic $p>0$, the above equations 
are equivalent to the system
\begin{equation}
\label{eq:zsigma}
\begin{cases}
\:\sum_{u\in X} \tau_u = 0 & \text{if $p\mid \abs{X}$},\\[3pt]
\:\tau_{u}=\tau_{v},\ \text{for all $u,v\in X$} & \text{if $p\nmid \abs{X}$}.
\end{cases}
\end{equation}
\end{lemma}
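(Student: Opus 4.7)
The plan is to expand $\sigma \cdot \tau$ in $A^2(\M) \otimes \k$ and then rewrite the condition $\sigma \cdot \tau = 0$ as a system of local equations indexed by rank-$2$ flats.

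First, I would note the identity $\sigma \cdot \tau = \sum_{u < v}(\tau_v - \tau_u)\, e_u e_v$ in the exterior algebra (for any fixed total order on $\M$), and then invoke the fact that the Orlik--Solomon ideal in degree $2$ is generated by the boundaries $\partial(e_u e_v e_w) = e_v e_w - e_u e_w + e_u e_v$ of $3$-element circuits---each such circuit being a triple of collinear atoms sitting inside some $X \in L_2(\M)$. Consequently, $A^2(\M)$ splits as a direct sum over $L_2(\M)$, with each $X$-summand being a free $\Z$-module on the $\abs{X}-1$ generators $\{e_{u_1} e_{u_k}\}_{k=2}^q$, once a basepoint $u_1 \in X = \{u_1,\dots,u_q\}$ is fixed (the no-broken-circuit basis). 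In particular, the vanishing of $\sigma \cdot \tau$ decouples into a separate condition for each $X \in L_2(\M)$, depending only on $\tau|_X$.

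For a flat $X = \{u,v\}$ with $\abs{X} = 2$ the condition is immediate: $(\tau_v - \tau_u)\, e_u e_v = 0$ iff $\tau_u = \tau_v$, which is precisely \eqref{eq=zfalk} here. For $\abs{X} = q \geq 3$, I would rewrite each non-basis product via the OS relation $e_{u_i} e_{u_j} = e_{u_1} e_{u_j} - e_{u_1} e_{u_i}$ (read off the circuit $\{u_1, u_i, u_j\}$) and collect coefficients. A short telescoping calculation---in which $\tau_{u_k}$ accumulates one contribution from the $i = 1$ term, $k-2$ contributions as a $j$-term for $i \geq 2$, and $q - k$ contributions as an $i$-term for $j > k$---shows that the coefficient of $e_{u_1} e_{u_k}$ in the $X$-component of $\sigma \cdot \tau$ equals $q\,\tau_{u_k} - \sum_{u \in X} \tau_u$. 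Requiring these to vanish for $k = 2,\dots,q$ recovers \eqref{eq=zfalk} for every $v \in X \setminus \{u_1\}$; summing the resulting $q-1$ equations yields $\sum_{u \in X} \tau_u = q\,\tau_{u_1}$, supplying the missing case.

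For the second part, with $\k$ a field of characteristic $p$, the dichotomy is an immediate one-line observation: if $p \mid \abs{X}$ then $\abs{X}\,\tau_v = 0$ in $\k$ and \eqref{eq=zfalk} collapses to $\sum_{u \in X} \tau_u = 0$; if $p \nmid \abs{X}$, then $\abs{X}$ is invertible, so the assertion that $\abs{X}\,\tau_v = \sum_{u \in X} \tau_u$ be independent of $v \in X$ forces $\tau_v$ to be constant on $X$, and the converse is trivial. The only real obstacle is the bookkeeping in the preceding paragraph; to guard against sign or index errors I would cross-check the coefficient formula by evaluating it by hand in the case $q = 3$ before trusting the general telescoping.
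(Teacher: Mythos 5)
Your proof is correct, but it follows a genuinely different route from the paper: there, the first assertion is not proved at all but simply quoted from Falk's work (\cite[Theorem 3.5]{Farx}), and the characteristic-$p$ dichotomy is noted, as in your last paragraph, to be an immediate consequence. What you do instead is a self-contained verification inside the Orlik--Solomon algebra: expand $\sigma\cdot\tau=\sum_{u<v}(\tau_v-\tau_u)e_ue_v$, use the degree-$2$ decomposition $A^2(\M)=\bigoplus_{X\in L_2(\M)}A^2_X$ with the nbc basis $\{e_{u_1}e_{u_k}\}_{k=2}^{q}$ for $X=\{u_1,\dots,u_q\}$, rewrite $e_{u_i}e_{u_j}=e_{u_1}e_{u_j}-e_{u_1}e_{u_i}$ via the circuit $\{u_1,u_i,u_j\}$, and collect coefficients; your telescoping count $(1)+(k-2)+(q-k)=q-1$ does give the coefficient $q\,\tau_{u_k}-\sum_{u\in X}\tau_u$ of $e_{u_1}e_{u_k}$, and summing over $k$ correctly recovers the equation for the basepoint $v=u_1$, so the equivalence with \eqref{eq=zfalk} holds. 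Two small points are worth making explicit: the degree-$2$ part of the OS ideal of a simple matroid is spanned exactly by the boundaries $\partial(e_ue_ve_w)$ of $3$-element circuits (no degree-$1$ relations exist), and the integral freeness of $A(\M)$ with the nbc basis is what guarantees that your flat-by-flat decoupling survives tensoring with an arbitrary commutative Noetherian ring $\k$ --- which is precisely the generality the lemma claims. The trade-off is the expected one: the paper's citation is shorter and leans on Falk's general treatment of resonance over arbitrary rings, while your argument is elementary, self-contained, and makes the origin of the local equations \eqref{eq=zfalk} (and hence of \eqref{eq:zsigma}) completely transparent.
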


\begin{proof}
The first assertion follows from \cite[Theorem 3.5]{Farx}, while   
the second assertion is a direct consequence of the first one.
\end{proof}

Note that $\sigma \in Z_\k(\M)$. Define the {\em coboundary space}\/ 
of the matroid to be the submodule $B_\k(\M)\subseteq Z_\k(\M)$ 
spanned by $\sigma$.  Clearly, a vector $\tau$ as above belongs to 
$B_\k(\M)$ if and only if all its components $\tau_u$ are equal. 
Let us define the {\em Aomoto--Betti number}\/ 
over $\k$ of the matroid $\M$ as 
\begin{equation}
\label{eq=betafalk}
\beta_{\k}(\M):=\rank_{\k} Z_\k(\M)/B_\k(\M) .
\end{equation}

Clearly, $\beta_{\k}(\M)=0$ if and only if $Z_\k(\M)= B_\k(\M)$.
If $\k$ is a field of positive characteristic, the Aomoto--Betti number  
of $\M$ depends only on $p=\ch \k$, and so we will write it simply as 
$\beta_p(\M)$.  We then have
\begin{equation}
\label{eq:betap bis}
\beta_p(\M)= \dim_{\k} Z_\k(\M) -1 .  
\end{equation}

Note that $Z_\k(\M)/B_\k(\M) = H^1(A(\M)\otimes \k,\delta_{\sigma})$. 
Thus, the above $\bmod$-$p$ matroid invariant may be reinterpreted as 
\begin{equation}
\label{eq:betap again}
\beta_p(\M) = \max\{r \mid \sigma\in \RR_r(\M,\k)\}.
\end{equation} 

\subsection{Constraints on the Aomoto--Betti numbers}
\label{subsec:constraints}

We now use Lemma \ref{lem:eqs} to derive useful 
information on the matroidal invariants defined above.  
We start with a vanishing criterion, which is an immediate 
consequence of that lemma.

\begin{corollary}
\label{cor:beta zero}
If $p\nmid \abs{X}$, for any $X\in L_2(\M)$ with $\abs{X}>2$, 
then $\beta_p(\M)=0$. 
\end{corollary}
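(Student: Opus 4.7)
The plan is to reduce the corollary directly to the explicit linear system for $Z_{\k}(\M)$ given in Lemma \ref{lem:eqs}, and in particular to the reformulation \eqref{eq:zsigma}. Under the stated hypothesis, I expect every defining equation of $Z_{\k}(\M)$ to collapse to the condition that $\tau$ be coordinate-wise constant, i.e.\ that $Z_{\k}(\M)=B_{\k}(\M)$.

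The key observation is that, under the hypothesis, every rank-$2$ flat $X\in L_2(\M)$ forces $\tau_u=\tau_v$ for all $u,v\in X$. For $\abs{X}>2$, the hypothesis gives $p\nmid\abs{X}$, so the second alternative of \eqref{eq:zsigma} applies and yields this equality immediately. For $\abs{X}=2$, I would split into subcases: if $p\ne 2$, then $p\nmid 2$ and the second alternative of \eqref{eq:zsigma} again gives $\tau_u=\tau_v$; if $p=2$, the first alternative reads $\tau_u+\tau_v=0$, which in characteristic $2$ is still $\tau_u=\tau_v$. Thus, every rank-$2$ flat of the matroid imposes exactly the same constraint on $\tau$.

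To conclude, fix any two points $u,v\in\M$; since $\M$ is a simple matroid, the closure $\overline{\{u,v\}}$ is a rank-$2$ flat containing both, so by the previous step $\tau_u=\tau_v$. Hence every $\tau\in Z_{\k}(\M)$ is constant, i.e.\ lies in $B_{\k}(\M)$, and \eqref{eq=betafalk} gives $\beta_p(\M)=0$. The argument is short and Lemma \ref{lem:eqs} does all the work; the only small point of care is the case-split at $\abs{X}=2$ in characteristic $2$, which is not a genuine obstacle.
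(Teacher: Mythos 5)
Your proposal is correct and follows exactly the route the paper takes: Corollary \ref{cor:beta zero} is stated there as an immediate consequence of Lemma \ref{lem:eqs}, and your argument simply spells out that reduction via \eqref{eq:zsigma}. The details you add — the constancy of $\tau$ on each rank-$2$ flat (including the $\abs{X}=2$, $p=2$ subcase, where $\tau_u+\tau_v=0$ indeed means $\tau_u=\tau_v$) and the use of simplicity to see that any two points lie in a common rank-$2$ flat — are precisely what the paper leaves implicit.
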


For instance, if $\M$ is a rank~$3$ uniform matroid, then $\beta_p(\M)=0$, 
for all $p$.  At the other extreme, if  all the points of $\M$ are collinear, 
and $p$ divides $\abs{\M}$, then $\beta_p(\M)=\abs{\M}-2$.

The next application provides constraints on the Aomoto--Betti 
numbers in the presence of nets.

\begin{prop}
\label{prop:betapknet}
Assume that $\M$ supports a $k$-net. Then $\beta_p (\M)=0$ if $p\not\mid k$,
and $\beta_p (\M) \ge k-2$, otherwise.
\end{prop}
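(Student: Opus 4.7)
The plan is to analyze the defining equations of $Z_\k(\M)$ from Lemma \ref{lem:eqs} in the presence of a $k$-net with classes $\M_1,\dots,\M_k$, splitting into the two cases according to whether $p\mid k$ or not. The key observation is that Lemma \ref{lem:lsq} gives us a very clean picture of $L_2(\M)$: every rank-$2$ flat is either mono-colored (entirely contained in some $\M_\alpha$) or multi-colored, in which case it lies in the base locus and has size exactly $k$, with one element from each class.

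For the first assertion ($p\nmid k$), I would show $Z_\k(\M)=B_\k(\M)$ directly. Given $\tau\in Z_\k(\M)$ and any two points $u\in\M_\alpha$, $v\in\M_\beta$ with $\alpha\neq\beta$, the flat $u\vee v\in L_2(\M)$ is multi-colored, hence has size $k$. Since $p\nmid k$, the second line of \eqref{eq:zsigma} forces $\tau_u=\tau_v$. Because $k\geq 3$, for any two points $u,v$ in the same class I can bridge through a third class to conclude $\tau_u=\tau_v$ there as well. Hence $\tau$ is constant, so $\tau\in B_\k(\M)$.

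For the second assertion ($p\mid k$), I would exhibit an explicit $(k-1)$-dimensional subspace of $Z_\k(\M)$ containing $B_\k(\M)$. Define the indicator cocycles $\tau^\alpha\in\k^\M$ by $\tau^\alpha_u=1$ if $u\in\M_\alpha$ and $0$ otherwise, and consider $\tau=\sum_\alpha c_\alpha\tau^\alpha$ for scalars $c_\alpha\in\k$. On a mono-colored flat $X\subseteq\M_\alpha$, the function $\tau$ is constant equal to $c_\alpha$, and either branch of \eqref{eq:zsigma} is satisfied (if $p\mid |X|$, then $\sum_{u\in X}\tau_u=|X|c_\alpha=0$). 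On a multi-colored flat $X$ (necessarily of size $k$, with $p\mid k$), the first branch of \eqref{eq:zsigma} applies and reads $\sum_{u\in X}\tau_u=\sum_\alpha c_\alpha$. Therefore $\tau\in Z_\k(\M)$ precisely when $\sum_\alpha c_\alpha=0$ in $\k$.

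Since the $\tau^\alpha$ have pairwise disjoint supports and each $\M_\alpha$ is non-empty, they are linearly independent; the hyperplane $\{\sum c_\alpha=0\}$ thus contributes a $(k-1)$-dimensional subspace of $Z_\k(\M)$. The element $\sigma=\sum_\alpha\tau^\alpha$ lies in this subspace because $\sum_\alpha 1=k\equiv 0\pmod p$, so quotienting by $B_\k(\M)=\langle\sigma\rangle$ leaves a subspace of dimension at least $k-2$ inside $Z_\k(\M)/B_\k(\M)$, whence $\beta_p(\M)\geq k-2$ by \eqref{eq=betafalk}. The only subtlety worth double-checking is the bookkeeping on mono-colored flats in the $p\mid k$ case—one has to notice that the two branches of \eqref{eq:zsigma} are both satisfied automatically by any class-constant function—but there is no serious obstacle here.
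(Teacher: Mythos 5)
Your proof is correct and follows essentially the same route as the paper's: in the $p\nmid k$ case both arguments use multi-colored flats (which have size $k$, with one point per class) to force $\tau$ to be constant, bridging through a third class for points in the same block; in the $p\mid k$ case both exhibit the $(k-1)$-dimensional space of class-constant functions with coordinate sum zero inside $Z_\k(\M)$, noting $\sigma$ lies in it since $k\equiv 0\pmod p$. The only difference is that you spell out the verification on mono-colored flats and the final dimension count more explicitly; the paper compresses this into a couple of sentences.
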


\begin{proof}
Set $\k=\FF_p$.  
First suppose that $p\not\mid k$. Let $\tau\in Z_{\k}(\M)$. 
Pick $u,v\in \M_{\alpha}$, and then $w\in \M_{\beta}$,  with $\beta\ne \alpha$. 
Since $\abs{u\vee w}= \abs{v\vee w}=k$, by the net property, we infer from 
Lemma \ref{lem:eqs} that $\tau_u=\tau_w=\tau_v$. Hence, $\tau$ is constant 
on each part $\M_{\alpha}$. Applying once again Lemma \ref{lem:eqs} to a 
multi-colored flat, we deduce that $\tau\in B_{\k}(\M)$. 

Now suppose $p\mid k$, and consider the $k$-dimensional subspace of $\k^{\M}$ 
consisting of those elements $\tau$ that are constant (say, equal to $c_{\alpha}$) 
on each $\M_{\alpha}$. By Lemma \ref{lem:eqs} and the net property,
the subspace  of $\k^{\M}$  given by the equation 
$\sum_{\alpha\in [k]} c_{\alpha}=0$ is contained in
$Z_{\k}(\M)$. Thus, $\dim_{\k} Z_{\k}(\M) \ge k-1$, 
and the desired inequality follows at once.
\end{proof}

Finally, let us record a construction which relates the cocycle space 
of a matroid to the cocycle spaces of the parts of a net supported 
by the matroid. 

\begin{lemma}
\label{lem:proj}
Let $\k$ be a field. 
Suppose a matroid $\M$ supports a net.  For each part $\M_{\alpha}$, 
the natural projection $\k^{\M}\to \k^{\M_{\alpha}}$ restricts to a 
homomorphism $h_{\alpha}\colon Z_\k(\M) \to Z_\k(\M_{\alpha})$, 
which in turn induces a homomorphism 
$\bar{h}_{\alpha}\colon Z_\k(\M)/B_\k(\M) \to 
Z_\k(\M_{\alpha})/B_\k(\M_{\alpha})$. 
\end{lemma}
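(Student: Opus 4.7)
The plan is to verify the two claims in turn: first, that the projection $\pi_\alpha \colon \k^{\M} \to \k^{\M_\alpha}$ sends $Z_\k(\M)$ into $Z_\k(\M_\alpha)$; and second, that it sends $B_\k(\M)$ into $B_\k(\M_\alpha)$, so that it descends to quotients. The second claim is immediate: a constant function on $\M$ restricts to a constant function on $\M_\alpha$, and in fact $\pi_\alpha(\sigma_\M)=\sigma_{\M_\alpha}$, where $\sigma$ denotes the function taking the value $1$ everywhere. So the real content is the first claim.

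For the first claim, the key input is Lemma \ref{lem:net props}\eqref{n3}: since $\M$ supports a net with part $\M_\alpha$, every flat $X\in L_2(\M_\alpha)$ is also closed in $\M$. Because $X$ contains two distinct points of $\M_\alpha \subseteq \M$ and is closed in $\M$, it is in particular an element of $L_2(\M)$; moreover, the underlying set of $X$ and its cardinality $\abs{X}$ are the same whether we view $X$ in $\M_\alpha$ or in $\M$. I would then apply the characterization of cocycles from Lemma \ref{lem:eqs}: given $\tau\in Z_\k(\M)$, the identity
\[
\sum_{u\in X} \tau_u = \abs{X}\cdot \tau_v, \qquad v\in X,
\]
holds for every $X\in L_2(\M)$, and therefore in particular for every $X\in L_2(\M_\alpha)$. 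Since such an $X$ lies entirely in $\M_\alpha$, this same identity, read with $\tau$ replaced by $\pi_\alpha(\tau)=\tau|_{\M_\alpha}$, is exactly the defining condition for $\pi_\alpha(\tau)\in Z_\k(\M_\alpha)$.

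Combining the two observations, $h_\alpha := \pi_\alpha|_{Z_\k(\M)}$ is a well-defined $\k$-linear map $Z_\k(\M)\to Z_\k(\M_\alpha)$, and it carries $B_\k(\M)$ into $B_\k(\M_\alpha)$, so it descends to the induced map $\bar{h}_\alpha$ on the quotient spaces $Z_\k/B_\k$. The only step of any substance is the appeal to the line-closedness property of the parts of a net; once one has this, the rest is just transcribing the Falk equations of Lemma \ref{lem:eqs} for flats in $L_2(\M_\alpha)$, which are literally special cases of those for flats in $L_2(\M)$. I therefore do not anticipate a genuine obstacle; the proof is essentially a two-line verification, with Lemma \ref{lem:net props}\eqref{n3} doing all the work.
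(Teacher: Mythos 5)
Your proof is correct and follows exactly the paper's argument: Lemma \ref{lem:net props}\eqref{n3} guarantees that each $X\in L_2(\M_\alpha)$ is also a flat of $\M$ with the same underlying set, so by Lemma \ref{lem:eqs} the defining equations of $Z_\k(\M_\alpha)$ are a subset of those of $Z_\k(\M)$, and the projection sends $\sigma$ to $\sigma_\alpha$, giving the induced map on quotients. No issues.
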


\begin{proof}
In view of Lemmas \ref{lem:net props}\eqref{n3} and \ref{lem:eqs}, 
the equations defining $Z_\k(\M_{\alpha})$ form a subset of the set of 
equations defining $Z_\k(\M)$.  Thus, the projection $\k^{\M}\to \k^{\M_{\alpha}}$ 
restricts to a homomorphism $Z_\k(\M) \to Z_\k(\M_{\alpha})$.  Clearly, this 
homomorphism takes $\sigma$ to $\sigma_{\alpha}$, and the second 
assertion follows. 
\end{proof}

\subsection{More on $3$-nets and the $\beta_3$ numbers}
\label{eq:beta3}

In the case when the net has $3$ parts and the ground field 
has $3$ elements, the previous lemma can be made be more precise.

\begin{prop}
\label{prop:ker1}
Let $\NN$ be a $3$-net on a matroid $\M$, and let $\k=\FF_3$.  
For each part $\M_{\alpha}$, we  have an exact sequence of 
$\k$-vector spaces, 
\begin{equation}
\label{eq:ex seq}
\xymatrix{0\ar[r] & \k \ar[r] & Z_\k(\M)/B_\k(\M) 
\ar^(.48){\bar{h}_{\alpha}}[r] & Z_\k(\M_{\alpha})/B_\k(\M_{\alpha})}.
\end{equation}
\end{prop}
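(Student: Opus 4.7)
The plan is to analyze $\ker(\bar{h}_{\alpha})$ directly and show that it consists precisely of classes of cocycles that are constant on each part of the net, modulo the constants on $\M$. This kernel will turn out to be one-dimensional over $\FF_3$, yielding the claimed exact sequence.

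\medskip

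First I would unravel what it means for $[\tau]\in Z_\k(\M)/B_\k(\M)$ to lie in $\ker(\bar{h}_{\alpha})$: the image $h_{\alpha}(\tau)=\tau|_{\M_{\alpha}}$ must lie in $B_\k(\M_{\alpha})$, i.e.\ $\tau$ is constant on $\M_{\alpha}$, say $\tau_u=c_{\alpha}$ for all $u\in \M_{\alpha}$. The central claim is then that such a $\tau$ is automatically constant on each of the remaining parts $\M_{\beta}$ as well.

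\medskip

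To prove this key claim, let $\{\alpha,\beta,\gamma\}=\{1,2,3\}$, and fix $v\in \M_{\beta}$. By Lemma \ref{lem:lsq}, the map $\phi_v\colon \M_{\alpha}\to \M_{\gamma}$ sending $u$ to the unique point of $\M_{\gamma}$ lying on the flat $u\vee v$ is a bijection (it is the composition of $f_v$ with the map reading off the $\M_{\gamma}$-coordinate of each multi-colored flat through $v$). Every such flat has cardinality $3$, so by Lemma \ref{lem:eqs}, equation \eqref{eq:zsigma} with $p=3$ gives
\begin{equation*}
\tau_u+\tau_v+\tau_{\phi_v(u)}=0\quad\text{in } \FF_3,
\end{equation*}
hence $\tau_{\phi_v(u)}=-c_{\alpha}-\tau_v$ is independent of $u\in \M_{\alpha}$. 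Since $\phi_v$ is a bijection, $\tau$ is constant on $\M_{\gamma}$, with value $-c_{\alpha}-\tau_v$. As the left-hand side does not depend on the choice of $v\in \M_{\beta}$, neither does the right-hand side, forcing $\tau_v$ to be constant on $\M_{\beta}$. Applying the same argument with the roles of $\beta$ and $\gamma$ swapped shows $\tau$ is also constant on $\M_{\gamma}$.

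\medskip

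Finally I would assemble the count. Writing $c_1,c_2,c_3$ for the values of $\tau$ on $\M_1,\M_2,\M_3$, the multi-colored flat relation $\tau_u+\tau_v+\tau_w=0$ imposes $c_1+c_2+c_3=0$ in $\FF_3$, while axiom \eqref{mu4} together with Lemma \ref{lem:eqs} imposes no further constraint. The preimage $h_{\alpha}^{-1}(B_\k(\M_{\alpha}))$ is therefore the two-dimensional subspace of constant-on-parts cocycles cut out by $c_1+c_2+c_3=0$. Modding out by the one-dimensional space $B_\k(\M)$ (the diagonal $c_1=c_2=c_3$, which in $\FF_3$ satisfies the sum condition) yields a one-dimensional kernel, producing the desired embedding $\k\inj Z_\k(\M)/B_\k(\M)$. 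The main subtlety to watch for is the bijectivity of $\phi_v$ and the fact that the characteristic equals $3$, which is precisely what makes the coefficients in the relation $\tau_u+\tau_v+\tau_w=0$ collapse to the right form; everywhere else the argument is just bookkeeping inside the system \eqref{eq:zsigma}.
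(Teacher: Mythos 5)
Your proof is correct and follows essentially the same route as the paper's: both use the three-term relations $\tau_u+\tau_v+\tau_w=0$ on the (necessarily size-$3$) multi-colored flats to show that any cocycle constant on $\M_{\alpha}$ is constant on every part, so that $\ker\bar{h}_{\alpha}$ is the one-dimensional space of part-constant vectors satisfying $c_1+c_2+c_3=0$ modulo $B_\k(\M)$, which the paper phrases equivalently by exhibiting the generator $\tau_{\alpha}$ (equal to $0,1,-1$ on the three parts) of $\ker h_{\alpha}$. The only blemish is your appeal to axiom \eqref{mu4}, which is irrelevant here; the point you actually need, immediate from \eqref{eq:zsigma}, is that mono-colored flats impose no further conditions on part-constant vectors.
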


\begin{proof}
Let $\tau_{\alpha}\in \k^{\M}$ be the vector whose components 
are equal to $0$ on $\M_{\alpha}$, and are equal to $1$, 
respectively $-1$ on the other two parts of the net. 
It follows from Lemmas \ref{lem:net props}\eqref{n3} 
and \ref{lem:eqs} that $\tau_{\alpha}\in Z_\k(\M)$. 
Clearly, $\tau_{\alpha}\in \ker(h_{\alpha})$.  In fact, 
as we shall see next, $\tau_{\alpha}$ generates the 
kernel of $h_{\alpha}$. 

Suppose $h_{\alpha}(\eta)=0$, for some $\eta \in Z_\k(\M)$. 
We first claim that $\eta$ must be constant on the other two 
parts, $\M_{\beta}$ and $\M_{\gamma}$. To verify this claim, 
fix a point $u\in \M_{\gamma}$ and pick $v,w \in \M_{\beta}$. 
By the net property and \eqref{eq:zsigma}, $\eta_{u} + \eta_{v} + 
\eta_{v'}=0$ and $\eta_{u} + \eta_{w} + \eta_{w'}=0$,
for some $v',w'\in \M_{\alpha} $. But $\eta_{v'}=\eta_{w'}=0$, 
by assumption.  Hence, $\eta_{v}=\eta_{w}$, and our claim follows. 

Writing now condition 
\eqref{eq:zsigma} for $\eta$ on a multi-colored flat of $\NN$, 
we conclude that $\eta \in \k \cdot \tau_{\alpha}$. 
It follows that sequence \eqref{eq:ex seq} is exact in the middle.  
Exactness at $\k$ is obvious, and so we are done.
\end{proof}

\begin{corollary}
\label{cor:betabounds}
If a matroid $\M$ supports a $3$-net with parts $\M_{\alpha}$, then
$1\le \beta_3(\M)\le \beta_3(\M_{\alpha})+1$, for all $\alpha$.  
\end{corollary}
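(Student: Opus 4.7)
The plan is to obtain the corollary as an immediate consequence of two results already established in this section: the exact sequence from Proposition \ref{prop:ker1} gives the upper bound, while the general net inequality from Proposition \ref{prop:betapknet} gives the lower bound. Both ingredients are in place, so the task is essentially bookkeeping.

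First I would dispose of the upper bound. Fix a part $\M_\alpha$ of the $3$-net and set $\k=\FF_3$. Proposition \ref{prop:ker1} supplies the left-exact sequence
\[
\xymatrix{0\ar[r] & \k \ar[r] & Z_\k(\M)/B_\k(\M)
\ar^(.48){\bar{h}_{\alpha}}[r] & Z_\k(\M_{\alpha})/B_\k(\M_{\alpha})},
\]
from which the rank inequality
\[
\dim_\k Z_\k(\M)/B_\k(\M) \;\le\; 1 + \dim_\k Z_\k(\M_\alpha)/B_\k(\M_\alpha)
\]
is read off at once. By the definition of the Aomoto--Betti number in \eqref{eq=betafalk}, this is exactly the bound $\beta_3(\M) \le \beta_3(\M_\alpha)+1$.

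For the lower bound I would invoke Proposition \ref{prop:betapknet} with $k=3$ and the prime $p=3$. Since $p\mid k$, that proposition yields $\beta_3(\M) \ge k-2 = 1$, which is precisely what is required. Combining the two inequalities finishes the proof.

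The only mildly delicate point is making sure the hypotheses of Proposition \ref{prop:ker1} and Proposition \ref{prop:betapknet} are actually satisfied for each part $\M_\alpha$; but both statements are formulated for a matroid supporting a $3$-net and hold uniformly in $\alpha$, so there is no real obstacle. I expect the whole argument to be a two-line assembly of earlier results, with no genuine hard step.
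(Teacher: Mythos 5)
Your proof is correct and is essentially the argument the paper intends: the corollary is stated as an immediate consequence of the exact sequence in Proposition \ref{prop:ker1} (upper bound) together with Proposition \ref{prop:betapknet} (lower bound). One small simplification: the injection $\k \inj Z_\k(\M)/B_\k(\M)$ in that same exact sequence already gives $\beta_3(\M)\ge 1$, so Proposition \ref{prop:betapknet} is not strictly needed.
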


If $\beta_3(\M_{\alpha})= 0$, for some $\alpha$, then $\beta_3(\M)=1$, while 
if $\beta_3(\M_{\alpha})=1$, for some $\alpha$, then $\beta_3(\M)=1$ or $2$.
Furthermore, as the next batch of examples shows, all three possibilities 
do occur, even among realizable matroids. 

\begin{figure}
\centering
\subfigure{%
\begin{minipage}{0.46\textwidth}
\centering
\begin{tikzpicture}[scale=0.58]
\path[use as bounding box] (-2,-0.5) rectangle (6,7); 
\path (0,4) node[fill, draw, inner sep=2pt, shape=circle, color=blue] (v1) {};
\path (2,4) node[fill, draw, inner sep=2pt, shape=circle, color=red!90!white] (v2) {};
\path (4,4) node[fill, draw, inner sep=2pt, shape=circle, color=dkgreen!80!white] (v3) {};
\path (0,2) node[fill, draw, inner sep=2pt, shape=circle, color=blue] (v4) {};
\path (2,2) node[fill, draw, inner sep=2pt, shape=circle, color=red!90!white] (v5) {};
\path (4,2) node[fill, draw, inner sep=2pt, shape=circle, color=dkgreen!80!white] (v6) {};
\path (0,0) node[fill, draw, inner sep=2pt, shape=circle, color=blue] (v7) {};
\path (2,0) node[fill, draw, inner sep=2pt, shape=circle, color=red!90!white] (v8) {};
\path (4,0) node[fill, draw, inner sep=2pt, shape=circle, color=dkgreen!80!white] (v9) {};
\draw (v1) -- (v2) -- (v3); \draw (v4) -- (v5) -- (v6); \draw (v7) -- (v8) -- (v9);
\draw (v1) -- (v4) -- (v7);  \draw (v2) -- (v5) -- (v8); \draw (v3) -- (v6) -- (v9);
\draw (v2) -- (v4) -- (v8) -- (v6) -- (v2);
\draw (v1) -- (v5) -- (v9); \draw (v3) -- (v5) -- (v7); 
\draw (v7) .. controls (-5,6) and (-1,6.8) .. (v2);
\draw (v4) .. controls (-4,7) and (0,7.8) .. (v3);
\draw (v1) .. controls (5,7.8) and (8,7) .. (v6);
\draw (v2) .. controls (6,6.8) and (9,6) .. (v9);
\end{tikzpicture}
\caption{A $(3,3)$-net on the Ceva matroid}
\label{fig:ceva}%
\end{minipage}
}
\subfigure{%
\begin{minipage}{0.5\textwidth}
\centering
\begin{tikzpicture}[scale=0.5]
\path[use as bounding box] (-1.5,-3.2) rectangle (6,6.8); 
\path (0,4) node[fill, draw, inner sep=2pt, shape=circle, color=blue] (v1) {};
\path (2,4) node[fill, draw, inner sep=2pt, shape=circle, color=red!90!white] (v2) {};
\path (4,4) node[fill, draw, inner sep=2pt, shape=circle, color=dkgreen!80!white] (v3) {};
\path (0,2) node[fill, draw, inner sep=2pt, shape=circle, color=red!90!white] (v4) {};
\path (2,2) node[fill, draw, inner sep=2pt, shape=circle, color=dkgreen!80!white] (v5) {};
\path (4,2) node[fill, draw, inner sep=2pt, shape=circle, color=blue] (v6) {};
\path (0,0) node[fill, draw, inner sep=2pt, shape=circle, color=dkgreen!80!white] (v7) {};
\path (2,0) node[fill, draw, inner sep=2pt, shape=circle, color=blue] (v8) {};
\path (4,0) node[fill, draw, inner sep=2pt, shape=circle, color=red!90!white] (v9) {};
\draw (v1) -- (v2) -- (v3); \draw (v4) -- (v5) -- (v6); \draw (v7) -- (v8) -- (v9);
\draw (v1) -- (v4) -- (v7);  \draw (v2) -- (v5) -- (v8); \draw (v3) -- (v6) -- (v9);
\draw (v4) -- (v8); \draw (v2) -- (v6);
\draw (v1) -- (v5) -- (v9); 
\draw (v7) .. controls (-5,6) and (-1,6.8) .. (v2);
\draw (v4) .. controls (-4,7) and (0,7.8) .. (v3);
\path (2,-3) node[fill, draw, inner sep=2pt, shape=circle,  color=dkbrown!90!white] (v10) {};
\path (6.12,-2.12) node[fill, draw, inner sep=2pt, shape=circle,  color=dkbrown!90!white] (v11) {};
\path (7,2) node[fill, draw, inner sep=2pt, shape=circle,  color=dkbrown!90!white] (v12) {};
\draw (v8) .. controls (2.65,-0.6) and (3,-1.5) .. (v11);
\draw (v6) .. controls (4.6,1.35) and (5.5,1) .. (v11);
\draw (v7) .. controls (0.4,-2.2) and (1.2,-2.6) .. (v10);
\draw (v9) .. controls (3.6,-2.2) and (2.8,-2.6) .. (v10);
\draw (v8) -- (v10);  \draw (v9) -- (v11);  \draw (v6) -- (v12); 
\draw (v3)  .. controls (5,3.8) and (6,3.6) .. (v12); 
\draw (v9) .. controls (5,0.2) and (6,0.4) ..  (v12); 
\end{tikzpicture}
\caption{A $(4,3)$-net on the Hessian matroid}
\label{fig:hessian}%
\end{minipage}
}
\end{figure}

\begin{example}
\label{ex:first}
First, let $\A$ be the braid arrangement from Figure \ref{fig:braid}. 
Then $\A$ admits a $(3,2)$-net with all parts $\A_{\alpha}$ in general 
position.  Hence, $\beta_3(\A_{\alpha})=0$ 
for each $\alpha$, and thus $\beta_3(\A)=1$.  
\end{example}

\begin{example}
\label{ex:second}
Next, let $\A$ be the realization of the configuration described by the 
Pappus hexagon theorem.  As noted in \cite[Example 2.3]{DIM}, 
$\A$ admits a $(3,3)$-net with two parts in general position and 
one not.  Hence, $\beta_3(\A_1)=\beta_3(\A_2)=0$ while 
$\beta_3(\A_3)=1$.  Therefore, $\beta_3(\A)=1$. 
\end{example}

\begin{example}
\label{ex:third}
Finally, let $\A$ be the Ceva arrangement, defined by the polynomial 
$Q=(z_1^3-z_2^3)(z_1^3-z_3^3)(z_2^3-z_3^3)$. As can be seen  
in Figure \ref{fig:ceva}, this arrangement admits a $(3,3)$-net with 
no parts in general position. Hence, $\beta_3(\A_{\alpha})=1$ 
for each $\alpha$. Moreover,  $\beta_3(\A)=2$, by direct computation, 
or by Proposition \ref{prop=mb3} below.
The classification results from \cite{DIM} and the above considerations 
imply that the only rank $3$ arrangement $\A$ of at most $9$ planes 
that supports a $3$-net and has $\beta_3(\A) \ge 2$ is the Ceva arrangement. 
\end{example}

\section{From modular resonance to nets}
\label{sect:small}

In this section, we construct suitable parameter sets for nets 
on matroids, and relate these parameter sets to modular resonance. 
The approach we take leads to a proof of Theorem \ref{teo=lambdaintro}  
and the combinatorial parts of Theorems \ref{thm:main1} and 
\ref{thm:2main1} from the Introduction. 

\subsection{A parameter set for nets on a matroid}
\label{ssec=34}
As before, let $\M$ be a simple matroid. Generalizing the 
previous setup, let $\k$ be a finite set of size $k\ge 3$. 
Inside the set $\k^{\M}$ of all functions $\tau\colon \M\to \k$, 
we isolate two subsets.  

The first subset, $Z'_{\k}(\M)$, consists of all functions $\tau$ 
with the property that, for every $X\in L_2(\M)$, the restriction 
$\tau\colon X\to \k$ is either constant or bijective.  The second 
subset, $B_{\k}(\M)$, consists of all constant functions.
Plainly, $B_{\k}(\M)\subseteq Z'_{\k}(\M)$. In view of 
Lemma \ref{lema=sigmadich} below, we will call the 
elements of $Z'_{\k}(\M)$ {\em special}\/ $\k$-cocycles.

Now define a function
\begin{equation}
\label{eq=lambda}
\xymatrix{\lambda_{\k} \colon \{\text{$k$-nets on $\M$}\} \ar[r]& \k^{\M}},  
\end{equation}
by associating to a $k$-net $\NN$, with partition $\M= \coprod_{\alpha \in \k} \M_{\alpha}$,
the element $\tau:= \lambda_{\k}(\NN)$ which takes the value $\alpha$ on $\M_{\alpha}$.

\begin{lemma}
\label{lema=lambdabij}
The above construction induces a bijection,
\[
\xymatrix{\lambda_{\k} \colon \{\text{$k$-nets on $\M$}\}  
\ar^(.52){\simeq}[r]& Z'_{\k}(\M)\setminus B_{\k}(\M)}.
\]
\end{lemma}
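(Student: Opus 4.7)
The plan is to verify the bijection in three short steps, using the reformulation of the net axioms from Lemma \ref{lem:lsq} so that the net condition becomes a statement about rank-$2$ flats, which matches the definition of $Z'_{\k}(\M)$.

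First I would check that $\lambda_{\k}$ lands in $Z'_{\k}(\M)\setminus B_{\k}(\M)$. Let $\NN$ be a $k$-net with parts $\{\M_{\alpha}\}_{\alpha\in\k}$ and set $\tau=\lambda_{\k}(\NN)$. Since $k\ge 3$ all parts are non-empty, so $\tau$ takes at least two distinct values and hence lies outside $B_{\k}(\M)$. Given $X\in L_2(\M)$, either all points of $X$ lie in a single class (in which case $\tau|_X$ is constant), or $X$ contains points from two distinct classes $\M_{\alpha}$ and $\M_{\beta}$; by Lemma \ref{lem:lsq} applied to these two points, $\abs{X\cap \M_{\gamma}}=1$ for every $\gamma\in\k$, so $\abs{X}=k$ and $\tau|_X\colon X\to\k$ is a bijection. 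Thus $\tau\in Z'_{\k}(\M)$.

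Injectivity is immediate, since $\tau$ encodes the labelled partition tautologically: from $\tau$ one recovers $\M_{\alpha}=\tau^{-1}(\alpha)$. For surjectivity, start from an arbitrary $\tau\in Z'_{\k}(\M)\setminus B_{\k}(\M)$ and define $\M_{\alpha}:=\tau^{-1}(\alpha)$. I need to show this partition satisfies the criterion in Lemma \ref{lem:lsq}: given $u\in\M_{\alpha}$, $v\in\M_{\beta}$ with $\alpha\ne\beta$ and any $\gamma\in\k$, one has $\abs{(u\vee v)\cap \M_{\gamma}}=1$. Setting $X:=u\vee v$, the restriction $\tau|_X$ takes at least the two distinct values $\alpha$ and $\beta$, so by the special cocycle property $\tau|_X\colon X\to\k$ must be a bijection; this yields exactly one preimage of each $\gamma$ in $X$, as required. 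A quick fibre count also shows each $\M_{\gamma}$ is non-empty: since $\tau$ is non-constant, some pair $u,v$ in distinct classes exists, and the bijection $\tau|_{u\vee v}\to\k$ produces a representative in every class.

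The verification is essentially bookkeeping once Lemma \ref{lem:lsq} is in hand; the only point requiring a moment of care is the dichotomy on $\tau|_X$, which is packaged precisely into the definition of $Z'_{\k}(\M)$. I do not expect any serious obstacle, since the special cocycle condition was designed to mirror the alternative net axiom exactly.
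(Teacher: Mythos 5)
Your proposal is correct and matches the paper's own argument almost step for step: both reduce everything to the reformulation of the net axioms in Lemma \ref{lem:lsq}, check that $\lambda_{\k}(\NN)$ is special and non-constant via the mono-/multi-colored dichotomy, note injectivity is tautological, and prove surjectivity by partitioning $\M$ into the fibres of $\tau$ and using bijectivity of $\tau$ on multi-colored flats (including the same observation that non-constancy of $\tau$ forces all blocks to be non-empty). No substantive differences to report.
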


\begin{proof}
Plainly, $\lambda_{\k}$ is injective, with image disjoint from $B_{\k}(\M)$.
To show that $\lambda_{\k}(\NN)$ belongs to $Z'_{\k}(\M)$, for any $k$-net $\NN$, 
pick $X\in  L_2(\M)$. If the flat $X$ is mono-colored with respect to $\NN$, 
the restriction $\tau\colon X\to \k$ is constant, by construction. 
If $X$ is multi-colored, this restriction is a bijection, according 
to Lemma \ref{lem:lsq}, and we are done.

Finally, let $\tau$ be an element in $Z'_{\k}(\M)\setminus B_{\k}(\M)$. 
Define a partition $\M= \coprod_{\alpha \in \k} \M_{\alpha}$ by setting 
$\M_{\alpha}= \{ u\in \M \mid \tau_u= \alpha \}$.  Since $\tau$ is 
non-constant on $\M$, there must be a flat $X\in  L_2(\M)$ with 
$\tau\colon X\to \k$ bijective, which shows that all blocks of the 
partition are non-empty.  For $u\in \M_{\alpha}$ and $v\in \M_{\beta}$ 
with $\alpha \ne \beta$, we infer that $\tau\colon u \vee v\to \k$ must 
be bijective, since $\tau\in Z'_{\k}(\M)$. It follows from Lemma \ref{lem:lsq} 
that the partition defines a $k$-net $\NN$ on $\M$. By construction, 
$\lambda_{\k}(\NN)=\tau$, and this completes the proof.
\end{proof}

\begin{corollary}
\label{coro=realobs}
If $\abs{\k}>4$, then $Z'_{\k}(\M)= B_{\k}(\M)$, for any realizable matroid of 
rank at least $3$. 
\end{corollary}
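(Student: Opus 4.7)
The plan is to translate the statement about special cocycles into a statement about nets, and then invoke Yuzvinsky's non-existence result for nets of large index on realizable matroids.

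First, I would observe that by Lemma \ref{lema=lambdabij}, the map $\lambda_{\k}$ establishes a bijection
\[
\lambda_{\k}\colon \{\text{$k$-nets on }\M\} \isom Z'_{\k}(\M)\setminus B_{\k}(\M),
\]
where $k=\abs{\k}$. Therefore, the equality $Z'_{\k}(\M)=B_{\k}(\M)$ is equivalent to the non-existence of $k$-nets on $\M$. So the task reduces to showing that a realizable matroid $\M$ of rank at least $3$ cannot support a $k$-net when $k>4$.

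Next, I would note that any $k$-net on a matroid of rank at least $3$ is necessarily non-trivial in the sense of \S\ref{subsec:multinets}: the common cardinality $d=\abs{\M_\alpha}$ must satisfy $d>1$, for otherwise each block is a single point, and the whole matroid would collapse to a single rank-$2$ flat, contradicting the rank assumption. Consequently, fixing $v\in\M_\gamma$ and any $\alpha\ne\gamma$, the $d\ge 2$ distinct flats $u\vee v$ with $u\in\M_\alpha$ are all multi-colored and lie in the base locus, so the base locus has size at least $2$.

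At this point I would invoke the theorem of Yuzvinsky \cite{Yu09} (as quoted in \S\ref{subsec:red multi}): a non-trivial $k$-net on a realizable simple matroid can only exist for $k=3$ or $k=4$. Since by hypothesis $k=\abs{\k}>4$, no such $k$-net on $\M$ exists, and hence $Z'_{\k}(\M)\setminus B_{\k}(\M)=\emptyset$, which is the desired conclusion. There is no real obstacle here — the work has already been done in Lemma \ref{lema=lambdabij} and in the cited projective-geometric result of Yuzvinsky; the corollary is really a clean repackaging of these two facts.
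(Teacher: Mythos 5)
Your proof is correct and is essentially the argument the paper intends: the corollary is stated right after Lemma \ref{lema=lambdabij} precisely because it follows by combining that bijection with the Yuzvinsky/Pereira--Yuzvinsky restriction (quoted in \S\ref{subsec:red multi}) that realizable matroids admit non-trivial $k$-nets only for $k=3,4$. Your added verification that rank $\ge 3$ forces $d>1$ and a base locus of size at least $2$ matches the paper's own remark that a $(k,d)$-net is non-trivial exactly when the matroid has rank at least $3$, so there is no divergence in approach.
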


\subsection{Sums in finite abelian groups}
\label{subsec:finite abelian}
Before proceeding with our main theme, let us 
consider a purely algebraic general situation. Given a finite abelian 
group $\k$, define 
\begin{equation}
\label{eq:sigmak}
\Sigma (\k)= \sum_{\alpha\in \k} \alpha.
\end{equation} 
to be the sum of the elements of the group. 
The following formula is then easily checked:
\begin{equation}
\label{eq=sigmaprod}
\Sigma (\k \times \k')= (\abs{\k'} \cdot \Sigma (\k), \abs{\k} \cdot \Sigma (\k')) .
\end{equation}

Clearly, $2 \Sigma (\k)=0$.  Moreover, if  $\k$ is cyclic, 
then $\Sigma (\k)=0$ if and only if the order of $\k$ is odd.   
These observations readily imply the following elementary lemma. 

\begin{lemma}
\label{lema=sigmazero}
Let $\k=\prod_{\text{$p$ prime}} \prod_{s\ge 1} (\Z/p^s \Z)^{e(p,s)}$
be the primary decomposition of $\k$.
Then $\Sigma (\k)=0$ if and only if\/ $\sum_s e(2,s) \ne 1$. 
\end{lemma}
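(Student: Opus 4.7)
The plan is to reduce the problem to the $2$-primary part of $\k$ using formula \eqref{eq=sigmaprod} together with the observation that $2\Sigma(\k)=0$ for any finite abelian group, and then settle the $2$-group case by a short case analysis on the $2$-rank $r:=\sum_s e(2,s)$. Write $\k=\k_{\mathrm{odd}}\times\k_2$, where $\k_{\mathrm{odd}}=\prod_{p\text{ odd},\,s}(\Z/p^s\Z)^{e(p,s)}$ and $\k_2=\prod_s(\Z/2^s\Z)^{e(2,s)}$.

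First I would kill the odd part. For each cyclic factor of odd order, the identity $0+1+\cdots+(p^s-1)=p^s(p^s-1)/2\equiv 0\pmod{p^s}$ gives $\Sigma(\Z/p^s\Z)=0$; iterating \eqref{eq=sigmaprod}, which sends the pair $(0,0)$ to $0$, then yields $\Sigma(\k_{\mathrm{odd}})=0$. Feeding this back into \eqref{eq=sigmaprod} for the decomposition $\k=\k_{\mathrm{odd}}\times\k_2$ gives $\Sigma(\k)=(0,\,\abs{\k_{\mathrm{odd}}}\cdot\Sigma(\k_2))$. Since $\abs{\k_{\mathrm{odd}}}$ is odd and $2\Sigma(\k_2)=0$ inside $\k_2$, multiplication by $\abs{\k_{\mathrm{odd}}}$ acts as the identity on $\Sigma(\k_2)$, so $\Sigma(\k)=0$ if and only if $\Sigma(\k_2)=0$.

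The remaining step is the $2$-group analysis. For $r=0$ the group $\k_2$ is trivial and $\Sigma(\k_2)=0$. For $r=1$, so $\k_2=\Z/2^s\Z$ for a unique $s$, the cyclic calculation gives $\Sigma(\k_2)=2^{s-1}\ne 0$. For $r\ge 2$, split off a cyclic factor as $\k_2=\Z/2^s\Z\times \k_2'$ with $\k_2'$ a nontrivial $2$-group (so $\abs{\k_2'}$ is even); applying \eqref{eq=sigmaprod} and $2\Sigma=0$, both coordinates of $\Sigma(\k_2)$ are even multiples of order-$\le 2$ elements and hence vanish. Comparing the three cases produces $\Sigma(\k_2)=0\iff r\ne 1$, which is the claim.

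The lemma is elementary and I do not expect any genuine obstacle. The only point requiring care is the reduction step, where one has to notice that multiplication by the odd integer $\abs{\k_{\mathrm{odd}}}$ fixes the $2$-torsion element $\Sigma(\k_2)$; the rest of the argument is routine unwinding of the primary decomposition via \eqref{eq=sigmaprod}.
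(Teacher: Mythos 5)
Your proof is correct, and it uses exactly the ingredients the paper relies on — the product formula \eqref{eq=sigmaprod}, the observation that $2\Sigma(\k)=0$, and the computation of $\Sigma$ for cyclic groups — the paper simply leaves this "readily implied" verification to the reader. Your write-up is just a careful unwinding of that same argument, with the $2$-rank case analysis made explicit.
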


Next, we examine the conditions under which $\Sigma(\k)$ vanishes 
when the group $\k$ is, in fact, a (finite) commutative ring.

\begin{lemma}
\label{lema=2mod4}
Let $k$ be a positive integer.  There is a finite commutative ring $\k$ 
with $k$ elements and satisfying $\Sigma (\k)=0$
if and only if $k \not \equiv 2$ mod $4$. Moreover, if $2\ne k=p^s$, we 
may take $\k=\FF_{p^s}$.
\end{lemma}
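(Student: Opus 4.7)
My plan is to reduce the statement entirely to Lemma \ref{lema=sigmazero}, since $\Sigma(\k)$ depends only on the additive group $(\k,+)$ and not on the multiplication. That lemma asserts, for a finite abelian group with primary decomposition $\prod_{p,s}(\Z/p^s\Z)^{e(p,s)}$, that $\Sigma(\k) = 0$ if and only if $\sum_s e(2,s) \ne 1$. The whole question thus becomes a purely arithmetic one: for which cardinalities $k$ is there an abelian group of order $k$ satisfying this parity condition which can be realized as the additive group of a finite commutative ring?

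For the forward (``only if'') direction, suppose $k \equiv 2 \bmod 4$. Then the Sylow $2$-subgroup of any abelian group of order $k$ has order $2$, hence must be $\Z/2\Z$. Consequently, $e(2,1) = 1$ and $e(2,s) = 0$ for $s \ge 2$, so $\sum_s e(2,s) = 1$; Lemma \ref{lema=sigmazero} then forces $\Sigma(\k) \ne 0$ no matter which commutative ring structure of cardinality $k$ is placed on $\k$.

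For the reverse direction, I would exhibit an explicit ring. Writing $k = 2^a m$ with $m$ odd and $a \in \{0\} \cup \{2, 3, \ldots\}$, set
\[
\k := \begin{cases} \Z/m\Z & \text{if } a = 0, \\ \FF_{2^a} \times \Z/m\Z & \text{if } a \ge 2. \end{cases}
\]
This is a finite commutative ring of cardinality $k$, whose additive $2$-primary part is $(\Z/2\Z)^a$ (coming from $\FF_{2^a}$, and empty when $a = 0$), while $\Z/m\Z$ contributes no $2$-primary component. Hence $\sum_s e(2,s) = a \ne 1$, and Lemma \ref{lema=sigmazero} yields $\Sigma(\k) = 0$. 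For the ``moreover'' clause, the field $\FF_{p^s}$ itself works whenever $p^s \ne 2$: its additive group is $(\Z/p\Z)^s$, and either $p$ is odd (so $\sum_s e(2,s) = 0$), or $p = 2$ with $s \ge 2$ forced by $k \ne 2$ (so $\sum_s e(2,s) = s \ge 2$). No serious obstacle is anticipated, as Lemma \ref{lema=sigmazero} (together with \eqref{eq=sigmaprod}) has absorbed all the group-theoretic content, and what remains is an elementary case analysis on the $2$-adic valuation of $k$.
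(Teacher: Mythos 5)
Your proof is correct and follows essentially the same route as the paper: reduce everything to Lemma \ref{lema=sigmazero} (which only sees the additive group), note that $k\equiv 2 \bmod 4$ forces $\sum_s e(2,s)=1$ for any ring of that cardinality, and otherwise exhibit an explicit product ring with elementary abelian $2$-part. Your witness $\FF_{2^a}\times\Z/m\Z$ differs only cosmetically from the paper's $\prod_p \FF_p^{v_p(k)}$, and your treatment of the $\FF_{p^s}$ clause matches the paper's observation that $\FF_{p^s}\cong\FF_p^s$ additively.
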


\begin{proof}
Let $k=\prod_p p^{v_p(k)}$ be the prime decomposition. If $\abs{\k}=k$, then
$\sum_s se(2,s)= v_2(k)$, and $k \equiv 2$ mod $4$ if and only if $v_2(k)=1$.
If $v_2(k)\ne 1$, we may take $\k= \prod_p \FF_p^{v_p(k)}$, and infer from
Lemma \ref{lema=sigmazero} that $\Sigma (\k)=0$. If $v_2(k)=1$ and $\abs{\k}=k$,
then clearly  $\sum_s e(2,s)= 1$. Again by Lemma \ref{lema=sigmazero}, this implies that 
$\Sigma (\k)\ne 0$. For the last claim, note that $\FF_{p^s}= \FF_{p}^s$, as an additive group. 
\end{proof}

\subsection{Modular resonance and multinets}
\label{subsec:special}
For the rest of this section, we will assume $\k$ is a finite commutative ring. 
In this case, $B_{\k}(\M)$ coincides with the coboundary space defined in 
\S\ref{subsec:aomoto betti}. 
The next lemma establishes a relationship between the subset $Z'_{\k}(\M) \subseteq \k^{\M}$ 
and the modular cocycle space $Z_{\k}(\M)$.  

\begin{lemma}
\label{lema=sigmadich}
Let $\M$ be a matroid and let $\k$ be a finite commutative ring. If $\Sigma (\k)=0$,
then $Z'_{\k}(\M)\subseteq Z_{\k}(\M)$. Otherwise, $Z'_{\k}(\M)\cap Z_{\k}(\M)= B_{\k}(\M)$.
\end{lemma}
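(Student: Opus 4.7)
The plan is to take an arbitrary $\tau \in Z'_{\k}(\M)$ and test the linear condition $\sigma \cdot \tau = 0$ flat by flat, using the reformulation from Lemma \ref{lem:eqs}: namely, for every $X \in L_2(\M)$ and every $v \in X$ one needs
\[
\sum_{u \in X} \tau_u = \abs{X} \cdot \tau_v.
\]
Since $\tau \in Z'_{\k}(\M)$, for each such flat $X$ there is a dichotomy: either $\tau|_X$ is constant, or $\tau|_X \colon X \to \k$ is a bijection (in which case necessarily $\abs{X} = \abs{\k}$).

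In the constant case, with $\tau_u = c$ for all $u \in X$, both sides of the equation equal $\abs{X} c$, so the equation holds automatically. In the bijective case, the left-hand side equals $\sum_{\alpha \in \k} \alpha = \Sigma(\k)$, while the right-hand side equals $\abs{\k} \cdot \tau_v$, which vanishes in $\k$ by Lagrange's theorem applied to the finite additive group underlying $\k$. Hence the required equation reduces to $\Sigma(\k) = 0$. This already proves the first assertion: if $\Sigma(\k) = 0$, then every $\tau \in Z'_{\k}(\M)$ lies in $Z_{\k}(\M)$.

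For the second assertion, suppose $\Sigma(\k) \ne 0$ and pick any $\tau \in Z'_{\k}(\M) \cap Z_{\k}(\M)$. The analysis above shows that no flat $X \in L_2(\M)$ can support a bijective restriction of $\tau$, since that would force $\Sigma(\k) = 0$. Thus $\tau$ is constant on every rank-$2$ flat of $\M$. Given distinct points $u, v \in \M$, the flat $u \vee v$ has rank $2$ in the simple matroid $\M$ and contains both, so $\tau_u = \tau_v$; therefore $\tau$ is globally constant, i.e., $\tau \in B_{\k}(\M)$. The opposite inclusion $B_{\k}(\M) \subseteq Z'_{\k}(\M) \cap Z_{\k}(\M)$ is immediate from the definitions.

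The argument is essentially a two-line case split, so there is no real obstacle; the only subtlety worth flagging is the use of $\abs{\k} \cdot \alpha = 0$ for $\alpha \in \k$, which is why the statement is framed in terms of the additive invariant $\Sigma(\k)$ rather than any multiplicative structure of the ring.
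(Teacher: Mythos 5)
Your proof is correct and follows essentially the same route as the paper: test the equations of Lemma \ref{lem:eqs} flat by flat, noting that constant restrictions satisfy them trivially while bijective restrictions reduce them to $\Sigma(\k)=\abs{\k}\cdot\alpha=0$, with Lagrange giving $\abs{\k}\cdot\alpha=0$. Your explicit derivation of $Z'_{\k}(\M)\cap Z_{\k}(\M)=B_{\k}(\M)$ when $\Sigma(\k)\ne 0$ (constancy on each $2$-flat forces global constancy in a simple matroid) just spells out what the paper leaves to the reader.
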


\begin{proof}
For $\tau\in Z'_{\k}(\M)$ and $X\in  L_2(\M)$ with $\tau \equiv \alpha$ on $X$, 
equations \eqref{eq=zfalk} reduce to $\abs{X}\cdot \alpha=\abs{X}\cdot \alpha$.
When $\tau\colon X\to \k$ is a bijection, these equations take the form 
$\Sigma (\k)= \abs{\k}\cdot \alpha$, for every $\alpha\in \k$, or, equivalently, 
$\Sigma (\k)=0$. The desired conclusions follow.
\end{proof}

Lemmas \ref{lema=lambdabij}, \ref{lema=2mod4}, and \ref{lema=sigmadich} 
together prove Theorem \ref{teo=lambdaintro} from the Introduction.
In turn, Theorem \ref{teo=lambdaintro} applied to the case when $\k=\FF_4$ 
proves the combinatorial part of Theorem \ref{thm:2main1}, i.e.,
the equivalence \eqref{2m1}$\Leftrightarrow$\eqref{2m3} from there.

\begin{remark}
\label{rem:zz}
When $\M$ supports a $4$-net, the inclusion 
$Z'_{\FF_4}(\M)\subseteq Z_{\FF_4}(\M)$ from Theorem \ref{teo=lambdaintro}\eqref{li2}
is always strict. Indeed, let $\M =\coprod_{\alpha \in [4]} \M_{\alpha}$ be a $4$-net
partition. Define $\tau\in \FF_4^{\M}= (\FF_2 \times \FF_2)^{\M}$ to be
equal to $(0,0)$ on $\M_1$ and $\M_2$,  and to $(1,0)$ 
on $\M_3$ and $\M_4$. Using \eqref{eq:zsigma}, we easily see that 
$\tau \in Z_{\FF_4}(\M)\setminus Z'_{\FF_4}(\M)$.

In the case of $3$-nets, this phenomenon no longer occurs.  For instance, 
if $\A$ is the Ceva arrangement from Example \ref{ex:third}, then 
$\A$ admits a $3$-net, while $Z'_{\FF_3}(\A)= Z_{\FF_3}(\A)$, 
by Lemma \ref{lem:lambda}.
\end{remark}

Next, we provide an extension of Lemma \ref{lema=lambdabij}, 
from nets to multinets. 

\begin{lemma}
\label{lem:special}
Let $\k$ be a finite commutative ring with $\Sigma (\k)=0$. Then the 
function $\lambda_{\k} \colon \{\text{$k$-nets on 
$\M$}\} \inj  Z_{\k}(\M)\setminus B_{\k}(\M)$ has an injective extension,
\[
\lambda_{\k}\colon \{\text{reduced $k$-multinets on $\M$}\} 
\inj  Z_{\k}(\M)\setminus B_{\k}(\M).
\]
\end{lemma}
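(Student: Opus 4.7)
The plan is to extend $\lambda_{\k}$ by the same formula used for nets: given a reduced $k$-multinet $\NN$ on $\M$ with partition $\M = \coprod_{\alpha\in\k}\M_{\alpha}$, set $\tau := \lambda_{\k}(\NN)$ to be the function assigning the value $\alpha$ to every element of $\M_{\alpha}$. Axiom \eqref{mu1} of \S\ref{subsec:multinets} forces every block $\M_{\alpha}$ to be non-empty, and since $\abs{\k}\ge 3$, the resulting $\tau$ takes at least three distinct values, so $\tau\notin B_{\k}(\M)$. Injectivity of the extension is then immediate: the partition is recovered as $\M_{\alpha} = \tau^{-1}(\alpha)$, which, together with the reduced condition $m\equiv 1$ and axiom \eqref{mu2}, pins down the whole multinet.

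The substantive step is to verify that $\tau \in Z_{\k}(\M)$. By Lemma \ref{lem:eqs}, this amounts to checking
\[
\sum_{u\in X}\tau_u \;=\; \abs{X}\cdot \tau_v
\]
for every $X\in L_2(\M)$ and every $v\in X$. If $X$ is mono-colored, so that $\tau\equiv\alpha$ on $X$, both sides equal $\abs{X}\cdot\alpha$ and the equation holds trivially. If $X$ is multi-colored, then $X\in\XX$, and the reduced assumption together with axiom \eqref{mu3} yields $\abs{\M_{\alpha}\cap X}=n_X$ for every $\alpha$, whence $\abs{X}=k\,n_X$ and
\[
\sum_{u\in X}\tau_u \;=\; n_X\sum_{\alpha\in\k}\alpha \;=\; n_X\cdot\Sigma(\k)\;=\; 0,
\]
using the standing hypothesis $\Sigma(\k)=0$. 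For the right-hand side,
\[
\abs{X}\cdot\tau_v \;=\; n_X\cdot(k\,\tau_v)\;=\; 0,
\]
since the additive order of every element of $\k$ divides $\abs{\k}=k$. Thus \eqref{eq=zfalk} is satisfied on all flats, so $\tau\in Z_{\k}(\M)$.

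The only delicate point is the multi-colored case: one needs to annihilate the left-hand sum via $\Sigma(\k)=0$ and the right-hand product via Lagrange's theorem applied to the additive group of $\k$. The standing hypothesis supplies the former, and the latter is automatic from $\abs{\k}=k$. Once both vanishings are in place, the argument reduces \eqref{eq=zfalk} to the identity $0=0$ on every flat of the base locus, and the rest is routine bookkeeping. Finally, since a $k$-net is the special case of a reduced $k$-multinet in which every $n_X$ equals $1$, the extension agrees on nets with the map already constructed in Lemma \ref{lema=lambdabij}.
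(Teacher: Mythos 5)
Your proposal is correct and follows essentially the same route as the paper: extend $\lambda_{\k}$ by the partition formula, note non-constancy and injectivity, and verify \eqref{eq=zfalk} via Lemma \ref{lem:eqs}, with the mono-colored case trivial and the multi-colored case reducing to $n_X\cdot\Sigma(\k)=0$ on the left and $\abs{X}\cdot\tau_v = n_X(k\tau_v)=0$ on the right by Lagrange in the additive group of $\k$. The only cosmetic difference is that the paper deduces non-constancy from surjectivity of $\tau$ via axiom \eqref{mu3}, whereas you argue it from non-emptiness of the blocks; both are fine.
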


\begin{proof}
Let $\NN$ be a reduced $k$-multinet on $\M$. Define $\lambda_{\k} (\NN)\in \k^{\M}$
by using the underlying partition, $\M= \coprod_{\alpha \in \k} \M_{\alpha}$, exactly as in
\eqref{eq=lambda}. Clearly, $\lambda_{\k} (\NN)$ determines $\NN$. By the multinet axiom
\S\ref{subsec:multinets}\eqref{mu3}, the map $\lambda_{\k} (\NN)\colon \M \to \k$ 
is surjective; hence
$\lambda_{\k} (\NN) \not\in B_\k (\M)$. 

Now, if $X\in  L_2(\M)$ is mono-colored, i.e., 
$X\subseteq \M_{\alpha}$ for some $\alpha\in \k$, then the system of 
equations \eqref{eq=zfalk} reduces to
$\abs{X}\cdot \alpha=\abs{X}\cdot \alpha$, which is trivially satisfied. 
Otherwise, those equations take the form 
\[
\sum_{\alpha\in \k} \abs{X\cap \M_{\alpha}} \cdot \alpha= \abs{X} \cdot \beta,
\]
for all $\beta\in \k$, or, equivalently, $n_X \cdot \Sigma (\k)=0$ and $\abs{X}=0$, 
and we are done. 
\end{proof} 

\subsection{A multiplicity assumption}
\label{ssec=41}

Finally, let us consider the case when $k=3$ (and $\k=\FF_3$) in 
Theorem \ref{teo=lambdaintro}.  Under a natural multiplicity 
assumption, we are then able to say more about the cocycle space 
of our matroid $\M$.

\begin{lemma}
\label{lem:lambda}
Suppose $L_2(\M)$ has no flats of multiplicity properly divisible by $3$. 
Then $Z'_{\FF_3}(\M)=Z_{\FF_3}(\M)$. 
\end{lemma}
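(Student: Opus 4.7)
\textbf{Plan for the proof of Lemma \ref{lem:lambda}.}
The inclusion $Z'_{\FF_3}(\M) \subseteq Z_{\FF_3}(\M)$ is already in hand from Lemma \ref{lema=sigmadich}, since $\Sigma(\FF_3) = 0+1+2 = 0$ in $\FF_3$. So the only thing to establish is the reverse inclusion $Z_{\FF_3}(\M) \subseteq Z'_{\FF_3}(\M)$ under the multiplicity hypothesis, and I would do this flat by flat, using the explicit equations \eqref{eq:zsigma} provided by Lemma \ref{lem:eqs}.

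Fix $\tau \in Z_{\FF_3}(\M)$ and an arbitrary $X \in L_2(\M)$. The hypothesis that no flat of $L_2(\M)$ has multiplicity properly divisible by $3$ means that $\abs{X}$ is not of the form $3r$ with $r > 1$; hence either $3 \nmid \abs{X}$ or $\abs{X} = 3$. In the first case, the second line of \eqref{eq:zsigma} forces $\tau_u = \tau_v$ for all $u, v \in X$, so $\tau|_X$ is constant; since $\abs{X} \ne \abs{\FF_3}$ whenever $\abs{X} \ne 3$, a constant restriction is all that the definition of $Z'_{\FF_3}(\M)$ can ask for here.

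In the remaining case $\abs{X} = 3$, write $X = \{u_1, u_2, u_3\}$, and apply the first line of \eqref{eq:zsigma} to get $\tau_{u_1} + \tau_{u_2} + \tau_{u_3} = 0$ in $\FF_3$. The key (and essentially only) observation is an arithmetic one about $\FF_3$: the nine solutions of $a+b+c = 0$ in $\FF_3^3$ are precisely the three constant triples together with the six permutations of $(0,1,2)$. Consequently, the restriction $\tau|_X \colon X \to \FF_3$ is either constant or a bijection, as required by the definition of $Z'_{\FF_3}(\M)$.

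Taken together, these two cases show $\tau \in Z'_{\FF_3}(\M)$, completing the proof. There is no real obstacle: once the multiplicity assumption reduces the discussion to $\abs{X} \in \{2, 3\}$ (and coprime-to-$3$ multiplicities more generally), everything follows from the elementary enumeration of zero-sum triples in $\FF_3$.
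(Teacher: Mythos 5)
Your proof is correct and takes essentially the same route as the paper: reduce to the case $\abs{X}=3$ via the multiplicity hypothesis and the equations \eqref{eq:zsigma}, then invoke the enumeration of zero-sum triples in $\FF_3$. The only difference is that you spell out the inclusion $Z'_{\FF_3}(\M)\subseteq Z_{\FF_3}(\M)$ via Lemma \ref{lema=sigmadich}, whereas the paper's proof leaves that direction implicit and only argues $Z_{\FF_3}(\M)\subseteq Z'_{\FF_3}(\M)$.
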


\begin{proof}
By \eqref{eq:zsigma}, an element  $\tau=\sum_{u\in \M} \tau_u e_u \in \FF_3^{\M}$ 
belongs to $Z_{\FF_3}(\M)$ if and only if, for each 
$X\in  L_2(\M)$, either $3$ divides $\abs{X}$ 
and  $\sum_{u\in X} \tau_u =0$, or else $\tau$  
is constant on $X$. 

In view of our multiplicity hypothesis, the first possibility 
only occurs when $X$ has size $3$, in which case the equation 
$\sum_{u\in X} \tau_u =0$ implies that the restriction 
$\tau\colon X \to \FF_3$ is either constant or bijective. 
Hence, the element  $\tau$ belongs to $Z'_{\FF_3}(\M)$, and we are done.
\end{proof}

Putting now together Theorem \ref{teo=lambdaintro}\eqref{li1} 
with Lemma \ref{lem:lambda} establishes equivalence 
\eqref{a2}$\Leftrightarrow$\eqref{a3} from Theorem \ref{thm:main1} 
in the Introduction. The remaining combinatorial part of Theorem \ref{thm:main1},
i.e., equivalence \eqref{a1}$\Leftrightarrow$\eqref{a2}, follows from
Lemma \ref{lem:rednet}.

\section{Flat connections and holonomy Lie algebras}
\label{sect:flat}

In this section, we study the space of $\g$-valued flat connections 
on the Orlik--Solomon algebra of a simple matroid $\M$, and the closely related 
holonomy Lie algebra $\h(\M)$. We construct flat connections from non-constant 
special cocycles on $\M$, and we characterize the key axiom for multinets 
on $\M$ by using $\h(\M)$.

\subsection{Flat, $\g$-valued connections} 
\label{subsec:flat}

We start by reviewing some standard material on flat connections, 
following the approach from \cite{DP, DPS, MPPS}.

Let $(A,d)$ be a commutative, differential graded 
algebra over $\C$, for short, a \cdga.  
We will assume that $A$ is connected (i.e., $A^0=\C$) 
and of finite $q$-type, for some $q\ge 1$ (i.e., $A^i$ is 
finite-dimensional, for all $i\le q$). 
The cohomology groups $H^i(A)$ are $\C$-vector spaces, 
of finite dimension if $i\le q$.  
Since $A$ is connected, the differential $d\colon A^0\to A^1$ 
vanishes, and so we may view $H^1(A)$ as a 
linear subspace of $A^1$. 

Now let $\g$ be a finite-dimensional Lie algebra over $\C$. 
On the graded vector space $A\otimes \g$, we may define 
a bracket by $[a\otimes x, b\otimes y]= ab\otimes [x,y],\ 
\text{for $a,b\in A$ and $x,y\in \g$}$. 
This functorial construction produces a differential 
graded Lie algebra $A\otimes \g$, with grading inherited 
from $A$, and differential $d (a\otimes x) = da \otimes x$. 

An element $\omega\in A^1\otimes \g$ is called an 
{\em infinitesimal, $\g$-valued flat connection}\/ on 
$(A,d)$ if $\omega$ satisfies the Maurer--Cartan equation,
\begin{equation}
\label{eq:flat}
d\omega + \tfrac{1}{2} [\omega,\omega] = 0 . 
\end{equation}

We will denote by $\F(A,\g)$ the subset of $A^1\otimes \g$ 
consisting of all flat connections.  This set  has a 
natural affine structure, and depends functorially 
on both $A$ and $\g$. Notice that $\F(A,\g)$ depends 
only on the degree $2$ truncation $A^{\le 2}=A/\bigoplus_{i>2}A^i$ 
of our \cdga.

Consider the algebraic map 
$\pi\colon A^1\times \g\to A^1\otimes \g$ given by 
$(a,x)\mapsto a\otimes x$.  Notice that $\pi$ restricts 
to a map $\pi\colon H^1(A)\times \g \to \F(A,\g)$. 
The set $\F^{(1)}(A,\g):=\pi(H^1(A)\times \g)$ is an irreducible, 
Zariski-closed subset of $\F(A,\g)$, which is equal to either $\{0\}$, 
or to the cone on $\PP(H^1(A)) \times \PP(\g)$.  We call its 
complement the {\em regular}\/ part of $\F(A,\g)$.

\subsection{Holonomy Lie algebra} 
\label{subsec:holo}

An alternate view of the parameter space of flat connections 
involves only Lie algebras.  Let us briefly review this approach, 
following the detailed study done in \cite{MPPS}.  

Let $A_i=\Hom_{\C} (A^i, \C)$ be the dual vector space.  
Let $\nabla \colon A_2 \to A_1\wedge A_1$ be the dual 
of the multiplication map 
$A^1\wedge A^1\to A^2$, and let $d_1\colon A_2\to A_1$ be 
the dual of the differential $d^1\colon A^1\to A^2$. 
By definition, the {\em holonomy Lie algebra}\/ of $(A,d)$ is 
the quotient of the free Lie algebra on the $\C$-vector space 
$A_1$ by the ideal generated by the image of $d_1 + \nabla$:
\begin{equation}
\label{eq:holo}
\h(A) = \Lie(A_1) / (\im(d_1 + \nabla)). 
\end{equation}

This construction is functorial.  Indeed, if $\varphi\colon A\to A'$ 
is a \cdga~map, then the linear map $\varphi_1=(\varphi^1)^*\colon A'_1\to A_1$ 
extends to Lie algebra map $\Lie(\varphi_1)\colon \Lie(A'_1)\to \Lie(A_1)$, 
which in turn induces a Lie algebra map $\h(\varphi)\colon \h(A')\to \h(A)$.

When $d=0$, the holonomy Lie algebra $\h(A)$ inherits a natural 
grading from the free Lie algebra, compatible with the Lie bracket. 
Thus, $\h(A)$ is a finitely presented, graded Lie algebra, 
with generators in degree $1$, and relations in degree $2$.  
In the particular case when $A$ is the cohomology algebra 
of a path-connected space $X$ with finite first Betti number, 
$\h(A)$ coincides with the classical holonomy Lie algebra $\h(X)$ 
of K.T.~Chen.

Given a finite set $\k=\{c_1,\dots ,c_k\}$, let us define the 
{\em reduced}\/ free Lie algebra on $\k$ as  
\begin{equation}
\label{eq:holo set}
\bLie(\k)=\Lie(c_1,\dots ,c_k)/\Big(\sum\nolimits_{\alpha=1}^{k} c_\alpha =0 \Big). 
\end{equation}
Clearly, $\bLie(\k)$ is a graded Lie algebra, isomorphic to the free Lie 
algebra of rank $k-1$.

\begin{example}
\label{ex:holo surf}
Consider the $k$-times punctured sphere, $S=\CP^1\setminus \{\text{$k$ points}\}$.  
Letting $\k=\{c_1,\dots,c_k\}$  
be the set of homology classes in $H_1(S,\C)$ represented by standardly 
oriented loops around the punctures, we readily see that $\h(S)=\bLie(\k)$. 
\end{example}

As before, let $\g$ be a finite-dimensional Lie algebra.
As noted in \cite{MPPS}, the canonical isomorphism
$\iota\colon A^1\otimes \g \isom  \Hom_{\C} (A_1,\g)$  
restricts to a functorial isomorphism 
\begin{equation}
\label{eq:hom holo}
\iota\colon \F(A,\g) \isom  \Hom_{\Lie} (\h(A), \g) . 
\end{equation}
Under this isomorphism, the subset $\F^{(1)}(A,\g)$ corresponds 
to the set $\Hom^1_{\Lie} (\h(A), \g)$ of Lie algebra morphisms 
whose image is at most $1$-dimensional. 

If $\varphi\colon A\to A'$ is a \cdga~map, we will let 
$\varphi^{!}\colon \Hom_{\Lie} (\h(A), \g) \to \Hom_{\Lie} (\h(A'), \g)$ 
denote the morphism of algebraic varieties induced by $\h(\varphi)$.

\subsection{The holonomy Lie algebra of a matroid}
\label{subsec:holo matroid}

Let $\M$ be a simple matroid, and let $A=A(\M)\otimes \C$ be the 
Orlik--Solomon algebra of $\M$ with coefficients in $\C$. 
As noted before, the $\C$-vector space $A^1$ has basis 
$\{e_u\}_{u\in \M}$.  Let $A_1$ be the dual vector space, 
with dual basis $\{a_u\}_{u\in \M}$.

By definition, the holonomy Lie algebra of the matroid, 
$\h(\M):=\h(A)$, is the quotient of the free Lie algebra on $A_1$ 
by the ideal generated by the image of the dual of the multiplication map, 
$A^1\wedge A^1\to A^2$.  Using the presentation \eqref{eq:os matroid}
for the algebra $A(\M)$, it is proved in \cite[\S 11]{PS04} that $\h(\M)$ 
has the following quadratic presentation: 
\begin{equation}
\label{eq:holo rel}
\h(\M)= \Lie(a_u,\:  u\in \M)\Big/ 
\Big( \sum_{v\in X}\,  [a_u, a_v ],\:\:  X\in L_2(\M), u\in X \Big)\Big. .
\end{equation}

Now let $\g$ be a finite-dimensional Lie algebra over $\C$. 
Once we identify $A^1\otimes \g \cong \g^{\A}$, a $\g$-valued $1$-form 
$\omega$ may be viewed as a vector with components  
$\omega_u \in \g$ indexed by the points $u\in \M$. 
By \eqref{eq:hom holo}, $\omega\in \F(A,\g)$ if and only if
\begin{equation}
\label{eq:flat omega}
\sum_{v\in X}\,  [\omega_u, \omega_v ]=0,\: \text{for all $X\in L_2(\M)$ 
and $u\in X$}.
\end{equation}

Let $\spn(\omega)$ be the linear subspace of $\g$ spanned 
by the set $\{\omega_u\}_ {u\in \M}$. Clearly, 
if $\dim \spn(\omega)\le 1$, then $\omega$ 
is a solution to the system of equations \eqref{eq:flat omega}; 
the set of such solutions is precisely $\F^{(1)}(A,\g)$. 
We call a solution $\omega$ 
{\em regular}\/ if $\dim \spn(\omega) \ge 2$. 

Noteworthy is the case when $\g=\sl_2$, a case studied in a more 
general context in \cite{MPPS}.  In this setting, an $\sl_2$-valued $1$-form 
$\omega=(\omega_u)_{u\in \M}$ is a solution to the system of 
equations \eqref{eq:flat omega} if and only if, for each $X\in L_2(\M)$, 
\begin{equation}
\label{eq:flat sl2}
\text{either}\  \sum_{v\in X}  \omega_v=0, \text{ or }
\dim\spn \{\omega_v\}_{v\in X} \le 1. 
\end{equation}

\subsection{Holonomy Lie algebra and multinets}
\label{subsec:holo multinet}

We may now characterize the key multinet axiom \eqref{mu3} 
from \S\ref{subsec:multinets} in terms of certain Lie algebra 
morphisms defined on the holonomy Lie algebra, as follows.

Let $\M$ be a matroid, endowed with a partition into non-empty blocks, 
$\M= \coprod_{\alpha \in \k} \M_{\alpha}$, and a multiplicity function,
$m\colon \M \to \N$. For each flat $X\in L_2(\M)$, define $\supp (X)$ 
as in \eqref{eq:supp}, and call the flat mono-colored if $\abs{\supp (X)}=1$, 
and multi-colored, otherwise.
Write $\k=\{c_1,\dots, c_k\}$, and let  $\bLie(\k)$ be the reduced 
free Lie algebra from \eqref{eq:holo set}.  
To these data, we associate a graded epimorphism of free Lie algebras,
\begin{equation}
\label{eq:phimap}
\xymatrix{\varphi \colon \Lie (a_u, u\in \M) \ar@{->>}[r]& \bLie(\k)}
\end{equation}
by sending $a_u$ to $m_u \cdot c_{\alpha}$, for each $u\in \M_{\alpha}$. 

\begin{prop}
\label{prop=multihol}
Given a matroid partition $\M= \coprod_{\alpha \in \k} \M_{\alpha}$ 
and a multiplicity function $m\colon \M \to \N$, the following conditions 
are equivalent:
\begin{enumerate}
\item \label{lie1}
The map $\varphi$ defined above factors through a graded 
Lie algebra epimorphism, $\varphi \colon \h(\M) \surj \bLie(\k)$.
\item  \label{lie2}
The integer $n_X:=\sum_{u\in \M_\alpha \cap X} m_u$ 
is independent of $\alpha$, for each multi-colored flat $X\in L_2(\M)$.
\end{enumerate}
\end{prop}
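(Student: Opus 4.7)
The plan is to use the explicit presentation \eqref{eq:holo rel} of $\h(\M)$ to reduce the factorization of $\varphi$ to a system of linear constraints inside $\bLie(\k)$, and then exploit the freeness of the reduced free Lie algebra to solve them. Concretely, $\varphi$ factors through $\h(\M)$ if and only if, for every flat $X \in L_2(\M)$ and every $u \in X$, the image $\varphi\bigl(\sum_{v \in X}[a_u,a_v]\bigr)$ vanishes in $\bLie(\k)$. Writing $\alpha_0$ for the color of $u$ and grouping $v \in X$ by color, a direct computation gives
\[
\varphi\Bigl(\sum_{v \in X}[a_u,a_v]\Bigr) \;=\; m_u \sum_{\beta \in \k} n^X_\beta\,[c_{\alpha_0},c_\beta],
\]
where $n^X_\beta := \sum_{v \in X \cap \M_\beta} m_v$. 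Since $m_u > 0$ and we work over $\C$, the relator vanishes precisely when $\sum_\beta n^X_\beta [c_{\alpha_0},c_\beta] = 0$ in $\bLie(\k)$.

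Next I would split into cases. If $X$ is mono-colored with $X \subseteq \M_{\alpha_0}$, then $n^X_\beta = 0$ for $\beta \ne \alpha_0$, so the constraint reduces to $n^X_{\alpha_0}[c_{\alpha_0},c_{\alpha_0}] = 0$, which holds automatically; hence mono-colored flats impose no condition, matching the fact that \eqref{lie2} only quantifies over multi-colored flats. For multi-colored $X$ and $\alpha_0 \in \supp(X)$, set $S_X := \sum_\beta n^X_\beta c_\beta$ in the degree-one part of $\bLie(\k)$; the constraint becomes $[c_{\alpha_0}, S_X] = 0$. Now $\bLie(\k)$ is the free Lie algebra on any $k-1$ of the $c_\beta$'s, and in a free Lie algebra two elements of the degree-one part bracket to zero exactly when they are proportional; so $S_X$ must be a scalar multiple of $c_{\alpha_0}$.

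The crux, and the step where I would do the most careful bookkeeping, is translating this proportionality into condition \eqref{lie2}. Using the relation $c_{\alpha_0} = -\sum_{\beta \ne \alpha_0} c_\beta$ and expanding $S_X = \mu_0 c_{\alpha_0}$ in the free basis $\{c_\beta\}_{\beta \ne \alpha_0}$, one reads off that $n^X_\beta$ must be constant (equal to $-\mu_0$) over all $\beta \ne \alpha_0$. Applying this at two distinct colors $\alpha_1, \alpha_2 \in \supp(X)$, which exist because $X$ is multi-colored, forces $n^X_\beta$ to be constant over all $\beta \in \k$; that common value is exactly the $n_X$ of \eqref{lie2}. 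The converse is easy: if $n^X_\beta$ is independent of $\beta$, then $S_X = n_X \sum_\beta c_\beta = 0$ in $\bLie(\k)$, so every $[c_{\alpha_0},S_X]$ vanishes trivially. Surjectivity of the induced map $\h(\M) \surj \bLie(\k)$ is automatic since the generators $c_\alpha$ all lie in the image. The only delicate point is invoking freeness of $\bLie(\k)$, which is why I would make the basis change $c_{\alpha_0} \mapsto -\sum_{\beta \ne \alpha_0} c_\beta$ explicit before quoting the linear-dependence criterion for vanishing brackets.
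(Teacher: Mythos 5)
Your proof is correct and takes essentially the same route as the paper: both reduce factorization to the vanishing of the images of the quadratic relators (equivalently, the flatness equations), dismiss mono-colored flats, invoke the fact that commuting degree-one elements of a free Lie algebra are linearly dependent, and compare the resulting constraints at two distinct colors of a multi-colored flat to force $n^X_\beta$ constant, with the converse following from $\sum_{\alpha\in\k}c_\alpha=0$ in $\bLie(\k)$. Your explicit basis change $c_{\alpha_0}\mapsto-\sum_{\beta\ne\alpha_0}c_\beta$ is just a more hands-on version of the paper's observation that the element must lie in $\C\cdot c_\beta+\C\cdot\big(\sum_\alpha c_\alpha\big)$ for each $\beta$ in the support.
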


\begin{proof}
The morphism $\varphi$ factors through $\h(\M)$ if and only if 
equations \eqref{eq:flat omega} are satisfied by $\omega_u= \varphi (a_u)$. 
In turn, these equations are equivalent to 
\begin{equation}
\label{eq:lie alg eqs}
\Big[ \sum_{\alpha\in \k} \Big(\sum_{u\in X\cap \M_{\alpha}}m_u \Big) 
c_{\alpha}, c_{\beta} \Big]=0,
\end{equation}
for all $X\in L_2(\M)$ and $\beta \in \supp (X)$. Clearly, equations \eqref{eq:lie alg eqs} 
are always satisfied if $X$ is a mono-colored flat.

Now assume condition \eqref{lie1} holds.  As is well-known, commuting elements in a 
free Lie algebra must be dependent; see for instance \cite{MKS}. It follows that
$\sum_{\alpha\in \k} (\sum_{u\in X\cap \M_{\alpha}}m_u) c_{\alpha}$ belongs to
$\C \cdot c_{\beta} + \C \cdot (\sum_{\alpha \in \k} c_{\alpha})$, 
for all $\beta \in \supp (X)$. When $X$ is multi-colored, this constraint implies that
$\sum_{u\in X\cap \M_{\alpha}}m_u$ is independent of $\alpha$.

Conversely, assume \eqref{lie2} holds.  Equations \eqref{eq:lie alg eqs} for 
a multi-colored flat $X$ reduce then to $n_X \cdot [\sum_{\alpha \in \k} 
c_{\alpha}, c_{\beta}]=0$, and these are satisfied since 
$\sum_{\alpha \in \k} c_{\alpha}=0$ in $\bLie(\k)$. 
\end{proof}

\subsection{An evaluation map}
\label{subsec:ev}

Let $V$ be a finite-dimensional $\C$-vector space, and 
let $\k$ be a finite set with $k\ge 3$ elements. Inside the vector space 
$V^{\k}$, consider the linear subspace
\begin{equation}
\label{eq:hyper}
\HH^{\k} (V)=\big\{ x=(x_{\alpha}) \in V^{\k} \mid  
\sum_{\alpha \in \k} x_{\alpha}=0\big\}.
\end{equation}

Given a family of elements of a vector space, we may speak about 
its {\em rank}, that is, the dimension of the vector subspace generated 
by that family.  Inside $\HH^{\k} (V)$, we define the {\em regular part}\/ 
to be the set
\begin{equation}
\label{eq:hyper-reg}
\HH^{\k}_{\reg}(V)=\{ x \in \HH^{\k} (V) \mid \rank (x)>1 \}.
\end{equation}

Let us view an element $x \in V^{\k}$ as a map, $x\colon \k \to V$. 
Given a matroid $\M$, let us denote the induced map, $\k^{\M} \to V^{\M}$, 
by $\ev_{\hdot}(x)$.  For a fixed element $\tau \in \k^{\M}$, we obtain in 
this way a linear ``evaluation" map
\begin{equation}
\label{eq:ev}
\ev_{\tau} \colon V^{\k} \to V^{\M}, \quad  \ev_{\tau}(x)_u = x_{\tau (u)},  
\  \text{for $u\in \M$}.
\end{equation}

We will use this simple construction in the case when $V$ is a Lie algebra $\g$ and
$\tau$ is a non-constant, special $\k$-cocycle on $\M$.

\begin{prop}
\label{prop=liftev}
Let $\M$ be a matroid and let $\g$ be a finite-dimensional, complex Lie algebra.
For every $\tau \in Z'_{\k}(\M)\setminus B_{\k}(\M)$, the map $\ev_{\tau}$ induces a
linear embedding, 
\[
\ev_{\tau}\colon \HH^{\k}(\g) \inj \F (A(\M)\otimes \C, \g). 
\]
Moreover, $\ev_{\tau}$ is rank-preserving, and so the regular parts are preserved. 
\end{prop}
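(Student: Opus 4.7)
The plan is to unpack everything into a direct verification, since the three claims (flatness, injectivity, rank preservation) all follow from the special-cocycle dichotomy combined with the zero-sum condition cutting out $\HH^{\k}(\g)$. Given $x=(x_{\alpha})_{\alpha\in\k}\in \HH^{\k}(\g)$, set $\omega=\ev_{\tau}(x)$, so that $\omega_u=x_{\tau(u)}$ for each $u\in\M$. I would first check that $\omega$ solves the Maurer--Cartan system \eqref{eq:flat omega}. Fix $X\in L_2(\M)$ and $u\in X$. Because $\tau\in Z'_{\k}(\M)$, the restriction $\tau|_X$ is either constant or bijective. In the constant case, say $\tau\equiv\alpha$ on $X$, all the vectors $\omega_v$ with $v\in X$ equal $x_{\alpha}$, so every bracket $[\omega_u,\omega_v]$ vanishes. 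In the bijective case $\abs{X}=k$ and
\[
\sum_{v\in X}\omega_v \;=\; \sum_{\alpha\in\k} x_{\alpha}\;=\;0,
\]
the last equality because $x\in \HH^{\k}(\g)$; consequently $\sum_{v\in X}[\omega_u,\omega_v]=[\omega_u,\sum_v \omega_v]=0$. This places $\omega$ inside $\F(A(\M)\otimes\C,\g)$.

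Next I would record the key auxiliary observation that $\tau\colon\M\to\k$ is surjective. Since $\tau\notin B_{\k}(\M)$, there exist $u,v\in\M$ with $\tau(u)\neq\tau(v)$. Then $\tau|_{u\vee v}$ is non-constant, hence bijective by the special-cocycle property, and thus hits every element of $\k$. From here, both injectivity and rank preservation are immediate. For injectivity, the map $\ev_{\tau}$ is linear, and if $\ev_{\tau}(x)=0$ then $x_{\tau(u)}=0$ for every $u\in\M$, which, by surjectivity of $\tau$, forces $x_{\alpha}=0$ for every $\alpha\in\k$. For rank preservation, surjectivity of $\tau$ gives
\[
\spn\{\omega_u\}_{u\in\M}\;=\;\spn\{x_{\tau(u)}\}_{u\in\M}\;=\;\spn\{x_{\alpha}\}_{\alpha\in\k},
\]
so $\rank(\ev_{\tau}(x))=\rank(x)$.

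Finally, the regular-part claim is a formal consequence of rank preservation combined with the characterization of $\F^{(1)}(A(\M)\otimes\C,\g)$ recalled in \S\ref{subsec:flat} as the locus of $\omega$ with $\dim\spn(\omega)\le 1$: an element $x\in\HH^{\k}(\g)$ lies in $\HH^{\k}_{\reg}(\g)$ precisely when $\ev_{\tau}(x)$ lies in $\F_{\reg}(A(\M)\otimes\C,\g)$. There is really no obstacle in this proof; the only thing to be slightly careful about is justifying surjectivity of $\tau$ before invoking it for injectivity and rank, but this is exactly the observation already used in the proof of Lemma \ref{lema=lambdabij}.
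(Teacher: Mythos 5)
Your proof is correct and follows essentially the same route as the paper's: verify the Maurer--Cartan system \eqref{eq:flat omega} via the constant/bijective dichotomy for $\tau|_X$ together with the zero-sum condition, then deduce surjectivity of $\tau$ from non-constancy, which gives rank preservation and hence injectivity and preservation of regular parts. The only difference is that you spell out the surjectivity argument (via a bijective restriction to some $2$-flat), which the paper leaves implicit by citing the same observation from Lemma \ref{lema=lambdabij}.
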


\begin{proof}
We first check that $\ev_{\tau}(x)\in \g^{\M}$ satisfies the flatness conditions
\eqref{eq:flat omega}, for $x\in \HH^{\k}(\g)$, where $\omega_u= x_{\tau (u)}$,
for $u\in \M$. If $\tau$ is constant on $X\in L_2(\M)$, this is clear. Otherwise,
$\tau \colon X \to \k$ is a bijection, hence the system \eqref{eq:flat omega} becomes
$[\sum_{\alpha \in \k} x_{\alpha}, x_{\tau (u)}]=0$, for all $u\in X$, and
we are done, since $x\in \HH^{\k}(\g)$.

We also know that $\tau\colon \M \to \k$ is surjective, since $\tau \not\in B_{\k}(\M)$.
This implies that $\rank (\ev_{\tau}(x))= \rank (x)$ for all $x\in \g^{\k}$. 
In particular, $\ev_{\tau}$ is injective.
\end{proof}

In the setup from Theorem \ref{teo=lambdaintro}\eqref{li2}, the above result may be
interpreted as an explicit way of lifting information on modular resonance to $\C$,
via flat connections.

\section{Complex resonance varieties and pencils}
\label{sect:res vars}

We now narrow our focus to realizable matroids, and recall the description 
of the (degree $1$, depth $1$) complex resonance variety of an 
arrangement $\A$ in terms of multinets on subarrangements of $\A$.
As an application of our techniques, we prove
Theorem \ref{thm:essintro} and Theorem \ref{thm:main1}\eqref{a7} 
from the Introduction.

\subsection{Resonance varieties of arrangements}
\label{subsec:res arr}

Let $\A$ be a hyperplane arrangement in $\C^{\ell}$, and let 
$A=H^*(M(\A),\C)$ be its Orlik--Solomon algebra over $\C$.  
The (first) resonance variety of the arrangement, 
$\RR_1(\A):=\RR_1(A)$, is a closed algebraic subset 
of the affine space $H^1(M(\A),\C)=\C^{\A}$. 
Since the slicing operation described in \S\ref{subsec:arrs} 
does not change $\RR_1(\A)$, we may assume without loss  
of generality that $\ell=3$.

The basic structure of the (complex) resonance varieties of 
arrangements is explained in the following theorem, which 
summarizes work of Cohen--Suciu \cite{CS99} and 
Libgober--Yuzvinsky \cite{LY}.  (We refer to \cite{DPS} 
for a more general context where such a statement holds.) 

\begin{theorem}
\label{thm:tcone}
All irreducible components 
of the resonance variety $\RR_1(\A)$ are linear subspaces, 
intersecting pairwise only at $0$. Moreover, the positive-dimensional 
components have dimension at least two, and the cup-product map 
$A^1\wedge A^1 \to A^2$ vanishes identically on each such component. 
\end{theorem}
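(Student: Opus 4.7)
The plan is to establish the three assertions separately, by combining a pencil construction with Arapura-type rigidity.

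As a first step, I would reduce to the rank-$3$ case (already mentioned in \S\ref{subsec:arrs}) so that $M(\A)$ is a smooth, quasi-projective surface. Then, given any irreducible component $C \subseteq \RR_1(\A)$ that is not $\{0\}$, I would invoke the tangent cone theorem (which says $\RR_1(\A)$ coincides with the tangent cone at $1$ of the first characteristic variety $\VV_1(\A)$), together with Arapura's theorem on cohomology jump loci of smooth quasi-projective varieties. This produces an admissible map $f_C \colon M(\A) \to S_C$, where $S_C = \CP^1 \setminus \{k \text{ points}\}$ for some $k \ge 3$, such that
\[
C = f_C^* \big( H^1(S_C,\C) \big) \subseteq H^1(M(\A),\C) = \C^{\A}.
\]
From this identification the first three claims are essentially formal. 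Linearity of $C$ is immediate because $f_C^*$ is linear. The dimension bound follows from $\dim H^1(S_C,\C) = k-1 \ge 2$. The vanishing of cup-products on $C$ follows because $f_C^*$ is a ring map and $H^2(S_C,\C)=0$, so every product of two pulled-back $1$-classes already vanishes in $H^2(M(\A),\C)$.

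For the transversality (pairwise intersection only at $0$), I would argue by contradiction: suppose $0 \ne a \in C \cap C'$ for two distinct positive-dimensional components. Writing $a = f_C^*(\alpha) = f_{C'}^*(\alpha')$ with $\alpha \in H^1(S_C,\C)$, $\alpha' \in H^1(S_{C'},\C)$, one uses the uniqueness clause in Arapura's theorem (or, equivalently, the Falk--Yuzvinsky classification: each positive-dimensional essential component of $\RR_1(\A,\C)$ arises from a unique multinet on a subarrangement $\B \subseteq \A$, and each non-essential local component arises from a single rank-$2$ flat). In either case, two distinct components would force two distinct pencils through which $a$ factors, which one can rule out by a Stein factorization / connected-fiber argument applied to the meromorphic map $(f_C, f_{C'})\colon M(\A) \dashrightarrow S_C \times S_{C'}$: if $a \ne 0$, the image would be one-dimensional, collapsing one of the pencils onto the other and forcing $C = C'$.

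I would expect the technical heart of the argument to be this last transversality step. Linearity, dimension, and the cup-product vanishing are easy corollaries of the pencil structure, but the ``pairwise only at $0$'' assertion is genuinely delicate: it requires the full strength of the quasi-projective jump-loci structure theorem (to get the pencils in the first place) together with a separate rigidity argument to show that distinct pencils give rise to transverse pullbacks. An alternate and more combinatorial route for this step, available because we are in the arrangement setting, is to use the Falk--Yuzvinsky description of essential components via multinets on subarrangements together with the elementary observation (see \S\ref{subsec:multinets}) that the partition of $\B$ underlying a multinet is recovered from the component, so that two multinets producing overlapping components must coincide. Either approach closes the argument and completes the proof of the theorem.
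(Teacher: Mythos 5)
Your overall strategy is viable, but note first that the paper offers no proof of this statement: Theorem \ref{thm:tcone} is quoted as a summary of \cite{CS99} and \cite{LY}, with \cite{DPS} cited for the general $1$-formal setting. What you propose is essentially the \cite{DPS}-style topological proof (tangent cone theorem plus Arapura's classification of the components of $\VV_1(\A)$ through the origin), rather than the original algebraic argument of \cite{LY} on the Orlik--Solomon algebra, and for the first three assertions it works exactly as you say: linearity of $f_C^*(H^1(S_C,\C))$, dimension $k-1\ge 2$, and vanishing of cup products because $H^2(S_C,\C)=0$. Two sourcing caveats, though. You must invoke the formality-based tangent cone theorem of \cite{DPS}; the arrangement version in \cite{CS99, LY} is historically intertwined with the very structure theorem you are proving. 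For the same reason your ``alternate, more combinatorial route'' through the Falk--Yuzvinsky classification \cite{FY} is not available: that classification is deduced from the present theorem, and in any case injectivity of the multinet-to-component correspondence only shows that distinct components come from distinct (non-equivalent) pencils; it does not rule out a nonzero vector lying on two distinct components.

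The genuine gap is in the transversality step itself. The ``uniqueness clause'' in Arapura's theorem again only yields non-equivalence of the two pencils, and your key assertion --- that a nonzero class $a=f_C^*(\alpha)=f_{C'}^*(\alpha')$ forces the image of $(f_C,f_{C'})$ to be one-dimensional --- is precisely what requires proof; it is not formal, since pullback along a dominant non-proper map need not be injective on $H^2$, so dominance of $(f_C,f_{C'})$ does not directly contradict a shared $H^1$-class. The standard repair: represent $\alpha$ and $\alpha'$ by logarithmic $1$-forms on the punctured lines; their pullbacks lie in the Brieskorn span of the forms $d\log f_H$, so equality of cohomology classes upgrades to equality of forms; then if $(f_C,f_{C'})$ were dominant, $f_C^*\omega_{\alpha}\wedge f_{C'}^*\omega_{\alpha'}$ would be a nonzero $2$-form, whereas it equals $f_C^*\omega_{\alpha}\wedge f_C^*\omega_{\alpha}=0$; Stein factorization then finishes as you indicate. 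Alternatively, and much more cheaply, transversality is a formal consequence of the parts you have already established: if $0\ne a\in P\cap P'$ for distinct components $P,P'$, then isotropicity of each component gives $a\cdot c=0$ for every $c\in P+P'$, so the linear (hence irreducible) subspace $P+P'$ is contained in $\RR_1(\A)$ and strictly contains $P$, contradicting the maximality of $P$ as an irreducible component. With either repair your argument is complete.
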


We will also need a basic result from Arapura 
theory \cite{Ar} (see also \cite{DPS}), a result which adds 
geometric meaning to the aforementioned properties of  
$\RR_1(\A)$. Let $S$ denote $\CP^1$ with at least $3$ points removed. 
A map $f\colon M(\A) \to S$ is said to be {\em admissible}\/ 
if $f$ is a regular, surjective map with connected generic fiber. 

\begin{theorem}
\label{thm:arapura}
The correspondence $f\leadsto f^*(H^1(S,\C))$ gives a bijection between
the set of admissible maps (up to reparametrization at the target) and
the set of positive-dimensional components of $\RR_1(\A)$.
\end{theorem}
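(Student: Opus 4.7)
The plan is to prove the theorem by passing through the first characteristic variety $V^1(M(\A)) \subseteq \Hom(\pi_1(M(\A)),\C^*)$, i.e., the locus of characters $\rho$ for which $H^1(M(\A),\C_\rho)\neq 0$, and then invoking Arapura's geometric description of the latter together with a tangent cone comparison.

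For the easy direction, I would start with an admissible map $f\colon M(\A)\to S$, where $S=\CP^1\setminus\{\text{$k\ge 3$ points}\}$. Since $f$ is surjective with connected generic fiber, $f^*\colon H^1(S,\C)\to A^1$ is injective, with image $V$ of dimension $k-1\ge 2$. Because $S$ is an affine curve, $H^2(S,\C)=0$, so every cup product in $V$ vanishes in $A^2$. Thus $V\subseteq \RR_1(\A)$, and Theorem \ref{thm:tcone} forces $V$ to lie in a single positive-dimensional component $C(f)$. A dimension count using that $f^*$ is rank-preserving shows $V=C(f)$.

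For the converse direction, I would invoke two deep inputs. First, Arapura's theorem \cite{Ar}: each positive-dimensional irreducible component of $V^1(M(\A))$ containing the trivial character $1$ is of the form $f^*(H^1(S,\C^*))$ for a unique admissible map $f\colon M(\A)\to S$ (up to reparametrization of the target). Second, the Tangent Cone Theorem of \cite{DPS}: since the complement of a hyperplane arrangement is $1$-formal (via the Orlik--Solomon quadratic model), the exponential map identifies $\RR_1(\A)$ with the tangent cone $TC_1(V^1(M(\A)))$, and this identification induces a bijection between positive-dimensional irreducible components on both sides. Combining these, any positive-dimensional component $C$ of $\RR_1(\A)$ is the tangent cone at $1$ of a unique component $W$ of $V^1(M(\A))$, and $W=f^*(H^1(S,\C^*))$ for an essentially unique admissible $f$; differentiating at $1$ gives $f^*(H^1(S,\C))=C$.

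Bijectivity is then a bookkeeping exercise: given the easy direction, the assignment $f\mapsto C(f)$ lifts under $\exp$ to $f\mapsto f^*(H^1(S,\C^*))$, a component of $V^1$ through $1$, so injectivity of $f\mapsto C(f)$ follows from the uniqueness clause in Arapura's theorem, while surjectivity was established above. The main obstacle in this strategy is of course Arapura's theorem itself, whose proof rests on Hodge-theoretic and $L^2$-cohomological techniques (resolutions of jumping loci by pencils over orbifold curves) well beyond the scope of this note. A second subtlety worth flagging is the need to confirm that positive-dimensional components of $\RR_1(\A)$ correspond to components of $V^1$ passing through $1$ rather than through torsion characters; this is automatic because $\RR_1$ is homogeneous and linear (Theorem \ref{thm:tcone}), so its components are precisely the tangent cones at $1$ of those $V^1$-components meeting the identity.
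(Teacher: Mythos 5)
Your proposal is correct and is essentially the paper's approach: the paper offers no proof of Theorem \ref{thm:arapura} at all, quoting it as a known consequence of Arapura theory \cite{Ar} together with \cite{DPS}, which is exactly the pair of black boxes (classification of positive-dimensional components of $\VV_1(\A)$ through the origin by admissible maps, plus the tangent cone identification of $\RR_1(\A)$ with $\VV_1(\A)$ at $1$ via formality) that your outline combines. The same exponential/tangent-cone comparison you describe is the one the paper itself invokes later, in \S\ref{subsec:cv arr}, so there is nothing genuinely different to compare.
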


Most important for our purposes are the {\em essential}\/ 
components of $\RR_1(\A)$, i.e., those irreducible components 
which do not lie in any coordinate subspace of $\C^{\A}$. 
We will give a complete description of these 
components in the next subsection. 

\subsection{Pencils and multinets}
\label{subsec:pen multi}

As shown by Falk and Yuzvinsky in \cite{FY}, the essential components of 
$\RR_1(\A)$ can be described in terms of pencils arising from multinets on 
$L_{\le 2}(\A)$.  An alternate description of $\RR_1(\M,\C)$, valid for arbitrary 
simple matroids $\M$, was given in \cite{MB}.  We will follow here the approach 
from \cite{FY}.

Suppose we have a $k$-multinet $\NN$ on $\A$, 
with parts $\A_1,\dots, \A_k$ and multiplicity vector $m$.  
Let $Q(\A)=\prod_{H\in \A} f_H$ be a defining polynomial 
for $\A$, and set 
\begin{equation}
\label{eq:qalpha}
Q_{\alpha}=\prod_{H\in\A_{\alpha}}f_H^{m_H}.
\end{equation}

The polynomials  $Q_1,\dots,Q_k$ define a pencil of 
degree $d$ in $\CP^1$, having $k$ completely reducible fibers that correspond 
to $\A_1,\dots, \A_k$. For each $\alpha>2$, we may write 
$Q_{\alpha}$ as a linear combination $a_{\alpha} Q_1+b_{\alpha}Q_2$. 
In this way, we obtain a $k$-element subset 
\begin{equation}
\label{eq:d set}
D=\set{(0:-1), (1:0), (b_3:-a_3), \dots , (b_k:-a_k)} \subset \CP^1.
\end{equation}

Consider now the arrangement $\A'$ in $\C^2$ defined by the  
polynomial $Q(\A')=g_1\cdots g_k$, where 
$g_{\alpha}(z_1,z_2)= a_{\alpha} z_1+b_{\alpha}z_2$.  Projectivizing 
gives a canonical projection $\pi\colon M(\A') \to  S:=\CP^1\setminus D$. 
Setting $\psi(x)=(Q_1(x), Q_2(x))$ gives a regular map $\psi\colon M(\A) \to M(\A')$. 
It is now readily verified that the regular map 
\begin{equation}
\label{eq:pen}
f_{\NN}= \pi \circ \psi\colon M(\A) \to S
\end{equation}
is admissible. Hence, the linear subspace 
$f_{\NN}^*(H^1(S,\C))\subset H^1(M(\A),\C)$ 
is a component of $\RR_1(\A)$.  Moreover, this subspace has 
dimension $k-1$, and does not lie in any coordinate subspace. 
Conversely, as shown in \cite[Theorem 2.5]{FY}, every essential 
component of $\RR_1(\A)$ can be realized as $f_{\NN}^*(H^1(S,\C))$, 
for some multinet $\NN$ on $\A$.

\subsection{An induced homomorphism}
\label{subsec:induced}
To describe the above subspace explicitly, and for further purposes, 
we need to compute the homomorphism induced in homology 
by the map $f_{\NN}$.  To that end, let 
$\gamma_1,\dots ,\gamma_k$ be compatibly oriented, 
simple closed curves on $S=\CP^1\setminus D$, going around 
the points of $D$, so that $H_1(S,\Z)$ is generated by the homology 
classes $c_{\alpha}=[\gamma_{\alpha}]$, subject to the single relation 
$\sum_{\alpha=1}^k c_{\alpha}=0$. 

Recall that the cohomology ring $H^{\bullet}(M(\A),\Z)$ 
is generated by the degree $1$ classes $\{e_H\}_{H\in \A}$ 
dual to the meridians about the hyperplanes of $\A$.  
We shall abuse notation, and denote by the same symbol 
the image of $e_H$ in $H^1(M(\A),\C)$.  As is well-known, 
$e_H$ is the de~Rham cohomology class of the logarithmic $1$-form 
$\frac{1}{2\pi \ii}\, d\log f_H$ on $M(\A)$.

For each index $\alpha\in [k]$, set
\begin{equation}
\label{eq=defu}
u_{\alpha} :=\sum_{H\in \A_{\alpha}} m_H e_H \in H^1(M(\A),\C) .
\end{equation}

\begin{lemma}
\label{lem:pen h1}
The induced homomorphism $(f_{\NN})_* \colon H_1(M(\A),\Z) \to H_1(S,\Z)$ 
is given by 
\begin{equation*}
\label{eq:multi hom}
(f_{\NN})_*(a_H) = m_H c_{\alpha}, \quad\text{for  $H\in \A_{\alpha}$}.
\end{equation*}
In other words, $(f_{\NN})_*$ is the $\Z$-form of the homomorphism $\varphi$
associated to $\NN$ that appears in Proposition \ref{prop=multihol}. 
\end{lemma}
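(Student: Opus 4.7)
My plan is to establish the formula by a direct winding-number calculation, factoring $f_{\NN} = \pi \circ \psi$ and computing each factor on meridians. Fix $H \in \A_\alpha$, and let $\mu_H \subset M(\A)$ be a small, positively oriented meridian loop around a smooth point of $H$, representing $a_H \in H_1(M(\A), \Z)$.

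For $\psi_*$: the class $\psi_*[\mu_H] \in H_1(M(\A'), \Z) = \bigoplus_{\delta} \Z\, a_{L_\delta}$, where $L_\delta = \{g_\delta = 0\}$, is determined by the winding numbers of $\psi \circ \mu_H$ around each of the $k$ coordinate lines $L_\delta$. Each such winding number equals the winding number at $0 \in \C$ of $g_\delta \circ \psi \circ \mu_H = Q_\delta \circ \mu_H$. By construction of the pencil, $Q_\delta = \prod_{H' \in \A_\delta} f_{H'}^{m_{H'}}$; since the $\A_\delta$ partition $\A$ and $H \in \A_\alpha$, the polynomial $Q_\alpha$ vanishes along $H$ to order exactly $m_H$, while $Q_\delta$ is nonvanishing on $H$ for $\delta \ne \alpha$. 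Hence $\psi_*(a_H) = m_H\, a_{L_\alpha}$.

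For $\pi_*$: the projectivization $\pi \colon M(\A') \to S$ is a $\C^*$-bundle, and sends the meridian $a_{L_\delta}$ to the meridian $c_\delta$ around the corresponding puncture $D_\delta \in D \subset \CP^1$. Composing gives $(f_{\NN})_*(a_H) = \pi_* \psi_*(a_H) = m_H\, c_\alpha$, as desired. For the ``in other words'' clause, I would simply observe that the abelianization of $\bLie(\k)$ is canonically $\bigoplus_\gamma \Z\, c_\gamma / (\sum_\gamma c_\gamma) = H_1(S, \Z)$, and that the abelianization of $\varphi \colon \Lie(a_u, u \in \A) \surj \bLie(\k)$ (which factors through $\h(\A)$ by Proposition \ref{prop=multihol}) sends $a_H$ to $m_H\, c_\alpha$, matching $(f_{\NN})_*$ on integral generators.

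The only delicate point, and hence the main obstacle, is sign and orientation bookkeeping: one has to verify that the positively oriented meridian $a_{L_\delta}$ is indeed sent to $+c_\delta$ (not $-c_\delta$) under $\pi_*$, with the labeling of $D$ fixed exactly as in \eqref{eq:d set}. Once the compatibly oriented loops $\gamma_\alpha$ are chosen as in the discussion preceding \eqref{eq=defu} and the generators $a_H$ are taken with the standard orientation, this follows from the residue formula for the $\C^*$-bundle $\pi$. Apart from that, all content is the routine order-of-vanishing computation for $Q_\delta$ along $H$.
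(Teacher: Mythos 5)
Your proof is correct and takes essentially the same route as the paper: both factor $f_{\NN}=\pi\circ\psi$ and reduce everything to the identity $g_{\alpha}\circ\psi=Q_{\alpha}=\prod_{H\in\A_{\alpha}}f_H^{m_H}$, i.e.\ to the fact that $Q_{\alpha}$ vanishes to order exactly $m_H$ along $H\in\A_{\alpha}$ and not at all along the other classes. The only difference is one of formulation: the paper records this computation dually, as $\psi^*(d\log g_{\alpha})=d\log Q_{\alpha}=\sum_{H\in\A_{\alpha}}m_H\, d\log f_H$ in de~Rham cohomology, which is exactly your winding-number count for meridians read through the pairing of logarithmic forms with the classes $a_H$.
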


\begin{proof}
Given the construction of $f_{\NN}$ from \eqref{eq:pen}, 
it is plainly enough to check that the dual homomorphism, 
$\psi^* \colon H^1(M(\A'),\C) \to H^1(M(\A),\C)$, sends the de~Rham 
cohomology class of $d\log g_{\alpha}$ to $u_{\alpha}$. An easy 
calculation shows that 
\begin{equation}
\label{eq:dlog}
\psi^*(d\log g_{\alpha})=d\log Q_{\alpha}= 
\sum_{H\in \A_{\alpha}} m_H d\log f_H,
\end{equation} 
and the claim follows.
\end{proof}

Taking the transpose of $(f_{\NN})_*$ and using linear algebra, we obtain 
the following immediate corollary.

\begin{corollary}[\cite{FY}]
\label{cor:res comp}
Let $\NN$ be a $k$-multinet on an arrangement $\A$, and let 
$f_{\NN}\colon M(\A)\to S=\CP^1 \setminus \{\text{$k$ points}\}$ 
be the associated admissible map. Then the pull-back
$f_{\NN}^*(H^1(S,\C))$ is the linear subspace of 
$H^1(M(\A),\C)$ spanned by the vectors $u_2-u_1,\dots , u_k-u_1$, 
where $u_{\alpha} =\sum_{H\in \A_{\alpha}} m_H e_H$.
\end{corollary}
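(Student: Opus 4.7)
The plan is to derive the corollary from Lemma \ref{lem:pen h1} by a direct dualization, working with a convenient basis of $H_1(S,\Z)$ that incorporates the single relation $\sum_{\alpha=1}^k c_\alpha = 0$. First I would observe that, because $S = \CP^1 \setminus D$ is a $k$-punctured sphere, the group $H_1(S,\Z)$ is free abelian of rank $k-1$, and the relation $c_1 = -(c_2 + \cdots + c_k)$ lets us take $\{c_2,\dots,c_k\}$ as a basis. Let $\{\gamma_2^*,\dots,\gamma_k^*\} \subset H^1(S,\C)$ denote the dual basis, so that $\gamma_\alpha^*(c_\beta) = \delta_{\alpha\beta}$ for $\alpha,\beta \in \{2,\dots,k\}$; the relation among the $c_\alpha$'s then forces $\gamma_\alpha^*(c_1) = -1$ for each $\alpha \ge 2$.

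Next I would compute $f_{\NN}^*(\gamma_\alpha^*)$ by evaluating on the meridian classes $a_H \in H_1(M(\A),\Z)$ and applying Lemma \ref{lem:pen h1}. For $H \in \A_\beta$ one has
\[
f_{\NN}^*(\gamma_\alpha^*)(a_H) \;=\; \gamma_\alpha^*\bigl((f_{\NN})_*(a_H)\bigr) \;=\; m_H \, \gamma_\alpha^*(c_\beta),
\]
which equals $m_H$ if $\beta = \alpha$, $-m_H$ if $\beta = 1$, and $0$ otherwise. Dualizing back to $H^1(M(\A),\C) = \C^{\A}$ via the basis $\{e_H\}_{H \in \A}$, this yields
\[
f_{\NN}^*(\gamma_\alpha^*) \;=\; \sum_{H \in \A_\alpha} m_H e_H \;-\; \sum_{H \in \A_1} m_H e_H \;=\; u_\alpha - u_1
\]
for each $\alpha = 2,\dots,k$, using the definition \eqref{eq=defu} of $u_\alpha$.

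Finally, since $\{\gamma_2^*,\dots,\gamma_k^*\}$ spans $H^1(S,\C)$, the image $f_{\NN}^*(H^1(S,\C))$ is precisely the $\C$-span of $u_2 - u_1,\dots,u_k - u_1$, completing the argument. There is no real obstacle here: once the basis of $H_1(S,\Z)$ is chosen so as to absorb the relation $\sum c_\alpha = 0$, the computation is purely formal linear algebra, and the only point that requires a moment of care is tracking the sign $\gamma_\alpha^*(c_1) = -1$ that produces the contribution $-u_1$ in each generator.
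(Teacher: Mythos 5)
Your proposal is correct and is essentially the paper's own argument: the paper derives the corollary by "taking the transpose of $(f_{\NN})_*$ and using linear algebra," which is exactly the dualization you carry out explicitly with the basis $\{c_2,\dots,c_k\}$ absorbing the relation $\sum_\alpha c_\alpha=0$. Your computation $f_{\NN}^*(\gamma_\alpha^*)=u_\alpha-u_1$ and the sign bookkeeping $\gamma_\alpha^*(c_1)=-1$ are exactly the details the paper leaves implicit.
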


\subsection{Mapping multinets to resonance components}
\label{subsec:multinet map}
Let $\E(\A)$ be the set of essential components of the resonance 
variety $\RR_1(\A)$. The preceding discussion allows us to define a map 
\begin{equation}
\label{eq:multi map}
\Psi \colon \{ \text{multinets on $\A$} \} \to \E(\A), \quad 
\NN\mapsto f_{\NN}^*(H^1(S,\C)).  
\end{equation}
This map sends $k$-multinets to essential, $(k-1)$-dimensional 
components of $\RR_1(\A)$. 
By the above-mentioned result of Falk and Yuzvinsky, the map 
$\Psi$ is surjective.  The next lemma describes the fibers of this map. 

\begin{lemma}
\label{lem=fibres}
The surjective map $\Psi$ defined in \eqref{eq:multi map} is constant 
on the orbits of the natural $\Sigma_k$-action on $k$-multinets.  
Moreover, the fibers of $\Psi$ coincide with those orbits.
\end{lemma}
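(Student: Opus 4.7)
For the $\Sigma_k$-equivariance of $\Psi$, the plan is to unwind Corollary~\ref{cor:res comp}: the component $\Psi(\NN)\subseteq H^1(M(\A),\C)$ is the linear span of $\{u_{\alpha}-u_1 : 2\le \alpha\le k\}$, which manifestly coincides with the symmetric span $\spn\{u_{\alpha}-u_{\beta}:\alpha,\beta\in [k]\}$. Since the vectors $u_{\alpha}=\sum_{H\in\A_{\alpha}}m_H e_H$ are attached to the unordered parts of $\NN$ and this symmetric span is $\Sigma_k$-invariant, applying any $\sigma\in \Sigma_k$ to $\NN$ produces the same subspace. Hence $\Psi$ is constant on $\Sigma_k$-orbits.

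For the converse inclusion, fibers of $\Psi$ are contained in $\Sigma_k$-orbits, the plan is as follows. Suppose $\Psi(\NN)=\Psi(\NN')$. By the construction in \S\ref{subsec:pen multi}, both $f_{\NN}$ and $f_{\NN'}$ are admissible maps from $M(\A)$ onto a $k$-punctured sphere, realizing the same essential component of $\RR_1(\A)$. The Arapura bijection (Theorem~\ref{thm:arapura}) then forces $f_{\NN}$ and $f_{\NN'}$ to agree up to reparametrization of the target: there is a linear automorphism $\phi$ of $\CP^1$ carrying the associated $k$-element subset $D\subset\CP^1$ of $\NN$ bijectively onto that of $\NN'$, with $f_{\NN'}=\phi\circ f_{\NN}$. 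The remaining task is to recover $\NN$ from $f_{\NN}$ up to relabeling. For each puncture $p_{\alpha}\in D$, the closure of $f_{\NN}^{-1}(p_{\alpha})$ inside $\C^{\ell}$ is the completely reducible fiber $\{Q_{\alpha}=0\}=\bigcup_{H\in\A_{\alpha}}H$, so the partition $\{\A_{\alpha}\}$ is intrinsic to $f_{\NN}$; moreover, the multiplicities $m_H$ are intrinsic too, since by formula~\eqref{eq:dlog} the integer $m_H$ is the order of vanishing of $\psi^*(d\log g_{\alpha})$ along $H$, equivalently the exponent of $f_H$ in the $\alpha$-th completely reducible member of the pencil. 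The permutation of $\{1,\dots,k\}$ induced by $\phi$ then yields $\sigma\in\Sigma_k$ with $\NN'=\sigma\cdot \NN$.

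The step I anticipate as the main obstacle is making rigorous the intrinsic reconstruction of both the partition and the multiplicities from $f_{\NN}$ alone, independent of the chosen parametrization of the target. For the partition, the key point is that the $k$ completely reducible fibers of the pencil lie over precisely the $k$ punctures, so that any M\"obius target-automorphism merely relabels them. For the multiplicities, one must check that such an automorphism preserves the pole and zero orders of the logarithmic forms pulled back via $\psi$; this amounts to a local computation around each hyperplane $H\in\A$, reflecting the fact that ramification indices are preserved under target biholomorphism.
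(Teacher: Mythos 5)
Your proof is correct, and its skeleton is the same as the paper's: equivariance is read off from the symmetric description of the span of the $u_\alpha$'s, and the equality $\Psi(\NN)=\Psi(\NN')$ is converted via Arapura theory (Theorem~\ref{thm:arapura}) into $f_{\NN'}=\phi\circ f_{\NN}$ for an automorphism $\phi$ of $S$ that extends to $\CP^1$ and permutes the punctures. Where you genuinely diverge is the final identification of $\NN'$ with a permuted copy of $\NN$. You do it geometrically, reconstructing the partition and the multiplicities from the fibers of the pencil over the punctures; the paper does it homologically: by Lemma~\ref{lem:pen h1}, $(f_{\NN})_*(a_H)=m_H c_{\alpha}$ for $H\in\A_{\alpha}$, and since the automorphism sends $c_{\alpha}$ to $c_{g\alpha}$, the identity $(f_{\NN'})_*=\phi_*\circ(f_{\NN})_*$ immediately yields $\A'_{g\alpha}=\A_{\alpha}$ and $m'_H=m_H$. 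The homological route buys you precisely what you flag as the main obstacle: no extension of $f_{\NN}$ across the hyperplanes is needed (note that, taken literally, $f_{\NN}^{-1}(p_{\alpha})=\emptyset$, since the punctures do not lie in $S$; you must pass to the rational map $(Q_1:Q_2)$ on $\C^{\ell}$ and work with its fiber divisors), and no argument that fiber divisors and their multiplicities are preserved by a target M\"obius transformation. Your geometric route is workable once this is done carefully, but note that $m_H$ is the residue of $\psi^*(d\log g_{\alpha})$ along $H$, not an order of vanishing; your ``equivalently'' clause (the exponent of $f_H$ in the completely reducible member of the pencil) is the correct formulation. Finally, add the one-line step the paper includes before invoking Arapura: since $\Psi$ is defined on multinets with an arbitrary number of classes, you need $\dim\Psi(\NN)=k-1$ to conclude that $\NN'$ is also a $k$-multinet.
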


\begin{proof}
The first claim is 
an immediate consequence of the description of the action of $\Sigma_k$ 
on $k$-multinets, given in \S\ref{subsec:multinets}, coupled with the 
construction of  $\Psi (\NN)$. 

Suppose now that $\NN$ is a $k$-multinet, and $\Psi (\NN)=\Psi (\NN')$, 
for some multinet $\NN'$. As noted before, $\dim \Psi (\NN)=k-1$;  
hence, $\NN'$ is also a $k$-multinet. Let $f_{\NN}$ and $f_{\NN'}$ 
be the corresponding admissible maps from $M(\A)$ to 
$S=\CP^1 \setminus \{\text{$k$ points}\}$. Since 
$f_{\NN}^*(H^1(S,\C))=f_{\NN'}^*(H^1(S,\C))$, Arapura theory 
implies that $f_{\NN}$ and $f_{\NN'}$ differ by an automorphism 
of the curve $S$. 

In turn, this automorphism extends to an automorphism of $\CP^1$, 
inducing a permutation 
$g\in \Sigma_k$ of the $k$ points. Hence, the automorphism induced on
$H_1(S, \Z)$ sends $c_{\alpha}$ to $c_{g\alpha}$, for each $\alpha \in [k]$.  
Using Lemma \ref{lem:pen h1}, we conclude that $\NN$ and
$\NN'$ are conjugate under the action of $g$. 
\end{proof}

More generally, every positive-dimensional component 
$P$ of $\RR_1(\A)$ may be described in terms of multinets. 
Indeed, denote by $\proj_H \colon \C^{\A} \to \C$ 
the coordinate projections, and consider the subarrangement $\B\subseteq \A$ 
consisting of those hyperplanes $H$ for which $\proj_H \colon P \to \C$
is non-zero. It is easy to check that the subspace $P\subseteq \C^{\B}$ 
belongs to $\E(\B)$.
Hence, there is a multinet $\NN$ on $\B$ such that  $P=\Psi (\NN)$. Denoting by
$f_P \colon M(\A)\to S$ the regular map given by the restriction of the admissible map 
$f_{\NN} \colon M(\B)\to S$ to the complement of $\A$, 
this means that $P=f_{P}^*(H^1(S,\C))$.

\begin{corollary}
\label{cor=complrepr}
When $P$ runs through the set of positive-dimensional irreducible 
components of $\RR_1(\A)$, the regular maps $f_P$ constructed 
above form a complete set of representatives for the admissible 
maps on  $M(\A)$, modulo reparametrization at the target. 
\end{corollary}

\begin{proof}
By Theorem \ref{thm:arapura}, we only need to check that each regular 
map $f_P \colon M(\A)\to S$ is admissible. Since $f_{\NN} \colon M(\B)\to S$ 
is admissible, it is easy to infer that $f_P$ is non-constant, 
with connected generic fiber. Therefore, it is enough to prove 
that $f_P$ is surjective. In turn, this is a consequence of the fact that 
$P=f_{P}^*(H^1(S,\C))$ is a component of $\RR_1(\A)$. 

To prove this last claim, let us denote by $S'$ 
the image of $f_P$. Then $S'$ is obtained from $S$ by 
removing a finite set of points, and $f_P=j\circ f'$, where 
$j \colon S' \to S$ is the inclusion and $f' \colon M(\A) \to S'$ is 
the corestriction of $f$.  Clearly, the map $f'$ is admissible.  
Hence, both $j^* \colon H^1(S,\C) \to H^1(S',\C)$ 
and $f'^* \colon H^1(S',\C) \to H^1(M(\A), \C)$
are injections. On the other hand, $f'^*(H^1(S',\C))$ is a 
component of $\RR_1(\A)$, by Theorem \ref{thm:arapura}. 
Therefore, $f^*$ and $f'^*$ have the same image.  Hence,  
$H^1(S,\C)$ and $H^1(S', \C)$ have the same dimension,  
and so $S'=S$.  This proves the claim, and we are done.
\end{proof}

\subsection{Counting essential components}
\label{ssec=55}

Recall that $\E_k(\A)$ denotes the set of essential 
components of $\RR_1(\A)$ arising from $k$-nets on $\A$.  
As mentioned previously, this set is empty for $k\ge 5$.  

\begin{proof}[Proof of Theorem \ref{thm:essintro}]
Let $k=3$ or $4$, and let $\k$ be the corresponding Galois field, $\FF_k$. 
By Lemma \ref{lem=fibres} and Theorem \ref{teo=lambdaintro}, we have that 
\begin{equation}
\label{eq:eka}
\abs{\E_k(\A)}= \frac{1}{k!} \abs{Z'_{\k} (\A) \setminus B_{\k} (\A)} \le
\frac{1}{k!} \abs{Z_{\k} (\A) \setminus B_{\k} (\A)} .
\end{equation}
Clearly, 
\begin{equation}
\label{eq:zka}
\abs{Z_{\k} (\A) \setminus B_{\k} (\A)}= \abs{\k} \cdot 
\abs{Z_{\k} (\A)/ B_{\k} (\A) \setminus \{ 0\} }
= k\cdot (k^{\beta_{\k}(\A)}-1) .
\end{equation}
Inequality \eqref{eq=essboundintro} now follows at once.

Next, assume that both $\E_3(\A)$ 
and $\E_4(\A)$ are non-empty.   
From Proposition \ref{prop:betapknet} we then infer that 
$\beta_2 (\A)=0$ and $\beta_2 (\A) \ge 2$, a contradiction.
This completes the proof.
\end{proof}

\begin{proof}[Proof of Theorem \ref{thm:main1}\eqref{a7}]
Suppose $L_2(\A)$ has no flats of multiplicity properly divisible by $3$. 
By Lemma \ref{lem:lambda}, $Z'_{\FF_3} (\A) = Z_{\FF_3} (\A)$.
The above proof then shows that $\abs{\E_3(\A)}= (3^{\beta_{3}(\A)}-1)/2$, 
and we are done. 
\end{proof}

\section{Evaluation maps and multinets}
\label{sect:flat res net}

We extend in this section our construction of evaluation maps, from nets to
multinets. For realizable matroids, we exploit evaluation maps in two directions.
First, we construct the inverse of the bijection $\lambda_{\k}$ from Lemma
\ref{lema=lambdabij} by using the variety of $\sl_2 (\C)$-valued flat
connections on the Orlik--Solomon algebra. Second, we provide in 
Theorem \ref{thm=freg3} (Theorem \ref{teo=modtoflatintro} from 
the Introduction) a combinatorial 
condition insuring that this variety can be reconstructed explicitly from 
information on modular resonance.

\subsection{Flat connections coming from multinets}
\label{ssec=60}

We first extend the construction from Proposition \ref{prop=liftev} to
a broader context. Let $\k$ be a finite set with $k\ge 3$ elements
and let $\M$ be a simple matroid. For a $k$-multinet $\NN$ on $\M$, 
denote by $\varphi_{\NN} \colon \h (\M) \surj \bLie(\k)$
the epimorphism of graded Lie algebras constructed in 
Proposition \ref{prop=multihol}.  

Let $\g$ be a finite-dimensional complex Lie algebra, and let  
\begin{equation}
\label{eq:shriek}
\xymatrix{\varphi_{\NN}^! \colon \Hom_{\Lie}(\bLie (\k), \g) \ar[r]&
\Hom_{\Lie}(\h(\M), \g)}
\end{equation}
be the induced map on $\Hom$-sets.   Using \eqref{eq:holo set} 
and \eqref{eq:hom holo}, we may identify $\Hom_{\Lie}(\bLie (\k), \g)$ 
with $\HH^{\k}(\g)$ and $ \Hom_{\Lie}(\h(\M), \g)$ with $\F (A(\M)\otimes \C, \g)$.  
Let 
\begin{equation}
\label{eq:bigev}
\xymatrix{\Ev_{\NN} \colon \HH^{\k}(\g)  \ar[r]&\F (A(\M)\otimes \C, \g)}
\end{equation}
be the map corresponding to $\varphi_{\NN}^!$ under these identifications.
Finally, for each $\alpha\in \k$, let 
$\proj_{\alpha} \colon \HH^{\k}(\g) \to \g$ be the restriction to $\HH^{\k}(\g)$ of the 
$\alpha$-coordinate projection $\g^{\k}\to \g$. 

\begin{prop}
\label{prop=multiflat}
With notation as above, the following hold.
\begin{enumerate}
\item \label{pmf1}
The evaluation map $\Ev_{\NN}$ is a rank-preserving, linear embedding. 
\item \label{pmf2}
For any $u\in \M$, there is $\alpha\in \k$ such that the restriction of
$\proj_u \otimes \id_{\g} \colon A^1 (\M) \otimes \g \to \g$ to $\HH^{\k}(\g)$ 
via $\Ev_{\NN}$ belongs to $\C^* \cdot \proj_{\alpha}$. 
\item \label{pmf3}
If $\NN$ is a $k$-net, then $\Ev_{\NN}= \ev_{\tau}$, where $\tau= \lambda_{\k} (\NN)$.
\end{enumerate}
\end{prop}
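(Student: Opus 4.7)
The plan is to first make $\Ev_{\NN}$ completely explicit in coordinates, after which all three claims follow by inspection. Under the identification $\Hom_{\Lie}(\bLie(\k), \g) \cong \HH^{\k}(\g)$ that sends a Lie morphism $\psi$ to the tuple $(\psi(c_{\alpha}))_{\alpha \in \k}$ (the relation $\sum_{\alpha} c_{\alpha} = 0$ in $\bLie(\k)$ corresponds exactly to membership in $\HH^{\k}(\g)$), and under the identification \eqref{eq:hom holo} that sends a morphism $\h(\M) \to \g$ to the $1$-form whose $u$-th component is the image of $a_u$, the formula $\varphi_{\NN}(a_u) = m_u \cdot c_{\tau(u)}$ gives
\begin{equation*}
\Ev_{\NN}(x)_u \;=\; m_u \cdot x_{\tau(u)}, \qquad u \in \M,
\end{equation*}
where $\tau \colon \M \to \k$ is defined by $\tau(u) = \alpha$ for $u \in \M_{\alpha}$. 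Writing this formula down rigorously (tracing it through the functorial identifications above) is the main setup step; once it is in place, everything else is immediate.

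For part \eqref{pmf1}, linearity is clear from the formula, since $x \mapsto m_u x_{\tau(u)}$ is linear for each fixed $u$. For the rank claim, I would argue as follows. The span of the components of $\Ev_{\NN}(x)$ is $\spn\{m_u x_{\tau(u)} : u \in \M\}$. Because every block $\M_{\alpha}$ is non-empty (a multinet is a partition into non-empty parts) and every multiplicity $m_u$ is a positive integer, for each $\alpha \in \k$ there exists $u \in \M_{\alpha}$ with $m_u \neq 0$, so $x_{\alpha} = m_u^{-1}(m_u x_{\alpha})$ lies in that span. Conversely, every generator $m_u x_{\tau(u)}$ lies in $\spn\{x_{\alpha} : \alpha \in \k\}$. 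Hence the two spans coincide, so $\rank(\Ev_{\NN}(x)) = \rank(x)$, and in particular $\Ev_{\NN}(x) = 0$ forces $x = 0$, establishing injectivity.

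For part \eqref{pmf2}, fix $u \in \M$ and set $\alpha = \tau(u)$. The explicit formula gives $(\proj_u \otimes \id_{\g})(\Ev_{\NN}(x)) = m_u \cdot x_{\alpha} = m_u \cdot \proj_{\alpha}(x)$, so the restriction equals $m_u \proj_{\alpha}$ with $m_u \in \N \subset \C^*$. For part \eqref{pmf3}, a $k$-net is the special case where all $m_u = 1$, so the formula collapses to $\Ev_{\NN}(x)_u = x_{\tau(u)}$, which is precisely $\ev_{\tau}(x)_u$ for $\tau = \lambda_{\k}(\NN)$ by the definitions in \eqref{eq:ev} and \eqref{eq=lambda}. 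The one genuinely substantive ingredient is that $\Ev_{\NN}$ lands in $\F(A(\M) \otimes \C, \g)$ to begin with, but this is guaranteed by Proposition \ref{prop=multihol}, which is exactly the reason $\varphi_{\NN}$ descends to $\h(\M)$. Thus the only real obstacle is purely bookkeeping: checking that the coordinate formula above is what the functorial $\varphi_{\NN}^!$ computes, and this is immediate once the dualities are unwound.
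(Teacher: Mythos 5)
Your proof is correct and follows essentially the same route as the paper: you derive the explicit coordinate formula $\Ev_{\NN}(x)_u = m_u\, x_{\tau(u)}$ and read off all three claims from it, just as the paper does, with the only cosmetic difference being that you verify rank-preservation by directly comparing the two spans rather than invoking surjectivity of $\varphi_{\NN}$ (these amount to the same thing, since all parts $\M_\alpha$ are non-empty and all $m_u > 0$).
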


\begin{proof}
 \eqref{pmf1}  By construction, the map $\Ev_{\NN}$ is linear. 
Since $\varphi_{\NN}$ is surjective, the map $\varphi_{\NN}^!$ is  rank-preserving; 
hence, $\Ev_{\NN}$ is also rank-preserving, and thus, injective.

\eqref{pmf2} Using the underlying partition of $\NN$, we find that $u\in \M_{\alpha}$,
for a unique $\alpha \in \k$. By construction of $\varphi_{\NN}$, we have that 
$\Ev_{\NN}^* (\proj_u \otimes \id_{\g})= m_u \cdot \proj_{\alpha}$.

 \eqref{pmf3} Let $\NN$ be a $k$-net. For $x\in \HH^{\k}(\g)$ and $u\in \M_{\alpha}$,
we have that $\Ev_{\NN}(x)_u= m_u x_{\alpha}$, by construction.  Moreover, $m_u=1$,
since $\NN$ is a reduced multinet. On the other hand, 
$\ev_{\tau}(x)_u=  x_{\tau (u)}$, by \eqref{eq:ev}, and $\tau (u)=\alpha$,
by \eqref{eq=lambda}. This completes the proof.
\end{proof}

\subsection{Flat connections and complex resonance varieties}
\label{subsec:flat res}

In the case of realizable matroids, a crucial ingredient in our approach 
is a general result relating resonance and flat connections, based 
on the detailed study done in \cite{MPPS}. 

To start with, let $A$ be a graded, graded-commutative algebra over $\C$. 
Recall we assume $A$ is connected and $A^1$ is finite-dimensional. 
Given a linear subspace $P\subset A^1$, define a connected 
sub-algebra $A_{P} \subset A^{\le 2}$ by setting $A^1_{P}=P$ and 
$A^2_{P}=A^2$, and then restricting the multiplication map accordingly. 

Now let $\g$ be a complex Lie algebra.  The following equality is then easily verified:
\begin{equation}
\label{eq:fap}
\F(A_{P},\g) = \F(A, \g)\cap (P\otimes \g).
\end{equation}
Thus, if $\g$ is finite-dimensional, then $\F(A_P, \g)$ is a Zariski-closed 
subset of $\F(A, \g)$.

\begin{theorem}
\label{lem:disjoint}
Suppose $\RR_1(A)= \bigcup_{P\in \cP} P$, where $\cP$ is a finite
collection of linear subspaces of $A^1$, intersecting pairwise only 
at $0$.  Then, for any finite-dimensional Lie algebra $\g$, the 
following hold:
\begin{enumerate}
\item  \label{r2}
 $\F(A_{P},\g)\cap \F(A_{P'},\g)=\{0\}$, for all distinct subspaces $P,P'\in \cP$.
 \\[-9pt]
\item \label{r1}
$\F(A,\g) \supseteq \F^{(1)}(A,\g) \cup \bigcup_{P\in \cP} \F(A_{P},\g)$.
\\[-9pt]
\item \label{r3}
If $\g=\sl_2$, then the above inclusion holds as an equality. 
\item \label{r4}
If $\g= \sl_2$ and all subspaces from $\cP$ are isotropic, then
$\F(A_{P},\g) = P\otimes \g$, for every $P \in \cP$.
\end{enumerate}
\end{theorem}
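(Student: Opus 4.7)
The plan is to handle the four parts in order of increasing difficulty, with Part \eqref{r3} being the heart of the matter. Part \eqref{r2} follows from the elementary tensor identity $(P \otimes \g) \cap (P' \otimes \g) = (P \cap P') \otimes \g$ together with \eqref{eq:fap}: since $P \cap P' = 0$ by hypothesis, the intersection $\F(A_P,\g) \cap \F(A_{P'},\g)$ collapses to $\{0\}$. Part \eqref{r1} is immediate from \eqref{eq:fap} and the definition of $\F^{(1)}$, both of which manifestly produce subsets of $\F(A,\g)$.

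For Part \eqref{r3}, given $\omega \in \F(A, \sl_2) \setminus \F^{(1)}(A, \sl_2)$, I would form the image subspace
\[
P_\omega := \{(\id \otimes \xi)(\omega) : \xi \in \sl_2^*\} \subseteq A^1;
\]
writing $\omega = \sum_i a_i \otimes x_i$ in some basis of $\sl_2$ gives $P_\omega = \spn\{a_i\}$, so $\dim P_\omega = \rank(\omega) \ge 2$. The crucial $\sl_2$-specific input is that the Lie bracket $[\cdot,\cdot] \colon \Lambda^2 \sl_2 \to \sl_2$ is a linear isomorphism of three-dimensional vector spaces; dualizing, every $\xi \wedge \eta \in \Lambda^2 \sl_2^*$ equals $\zeta \circ [\cdot,\cdot]$ for a unique $\zeta \in \sl_2^*$. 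Pairing the Maurer--Cartan relation $\sum_{i<j} a_i a_j \otimes [x_i, x_j] = 0$ against this $\zeta$ produces
\[
(\id \otimes \xi)(\omega) \cdot (\id \otimes \eta)(\omega) = \sum_{i<j} \zeta([x_i, x_j])\, a_i a_j = 0
\]
in $A^2$, so $P_\omega$ is isotropic with respect to the cup product. Since $\dim P_\omega \ge 2$, every nonzero $a \in P_\omega$ admits a linearly independent $b \in P_\omega$ with $ab = 0$, placing $a$ in $\RR_1(A) = \bigcup_{P \in \cP} P$. The standard fact that a linear subspace (over an infinite field) contained in a finite union of subspaces must lie in one of them then yields $P_\omega \subseteq P$ for some $P \in \cP$; since $\omega \in P_\omega \otimes \sl_2 \subseteq P \otimes \sl_2$, flatness and \eqref{eq:fap} give $\omega \in \F(A_P, \sl_2)$.

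Part \eqref{r4} is the computation of Part \eqref{r3} run in reverse. If $P \in \cP$ is isotropic, meaning the restricted cup product $P \wedge P \to A^2$ vanishes, then for any $\omega = \sum_k p_k \otimes y_k \in P \otimes \sl_2$ one has
\[
\tfrac{1}{2}[\omega, \omega] = \sum_{k<l} p_k p_l \otimes [y_k, y_l] = 0,
\]
since each $p_k p_l = 0$. Hence $P \otimes \sl_2 \subseteq \F(A_P, \sl_2)$, and \eqref{eq:fap} gives the reverse inclusion.

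The main obstacle is Part \eqref{r3}, whose pivot is isolating the correct structural feature of $\sl_2$ that forces $P_\omega$ to be isotropic: namely, that $\Lambda^2 \sl_2 \to \sl_2$ is an isomorphism. For a generic finite-dimensional Lie algebra $\g$, the bracket $\Lambda^2 \g \to \g$ need not be surjective, so not every pure bivector can be realized as $\zeta \circ [\cdot,\cdot]$, and flat connections lying outside $\F^{(1)}(A, \g) \cup \bigcup_{P} \F(A_P, \g)$ can appear. This explains why parts \eqref{r3} and \eqref{r4} are stated only for $\g = \sl_2$, and the underlying approach mirrors the philosophy developed in \cite{MPPS}.
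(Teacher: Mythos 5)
Your argument is correct, and it differs from the paper mainly in that it is self-contained where the paper outsources the hard part: for parts \eqref{r2}, \eqref{r1} and \eqref{r4} the two proofs coincide (transversality plus \eqref{eq:fap}, naturality, and the observation that isotropy of $P$ makes every element of $P\otimes\g$ satisfy the Maurer--Cartan equation), but for the key equality \eqref{r3} the paper simply invokes \cite[Proposition 5.3]{MPPS}, whereas you reprove it directly. Your mechanism --- contracting a regular flat connection $\omega$ against $\sl_2^*$ to get the subspace $P_\omega$ of rank $\ge 2$, using that $[\,\cdot\,,\cdot\,]\colon \Lambda^2\sl_2\to\sl_2$ is an isomorphism to convert the Maurer--Cartan relation into the vanishing of all cup products on $P_\omega$, and then locating the isotropic subspace $P_\omega$ inside a single component $P$ of $\RR_1(A)$ by the finite-union-of-subspaces lemma, so that $\omega\in\F(A_P,\sl_2)$ by \eqref{eq:fap} --- is exactly the $\sl_2$-specific input that makes the cited result work, and it also makes transparent why \eqref{r3} fails for general $\g$ and why no isotropy hypothesis is needed there. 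What the paper's route buys is brevity and a pointer to the more general rank-stratification framework of \cite{MPPS}; what yours buys is a complete proof readable without that reference. Two minor points worth making explicit if you write this up: with $d=0$ one has $H^1(A)=A^1$, so $\F^{(1)}(A,\g)$ is precisely the set of decomposable (rank $\le 1$) flat tensors, which is what justifies $\dim P_\omega\ge 2$ for $\omega\notin\F^{(1)}$; and the membership $a\in\RR_1(A)$ for $0\ne a\in P_\omega$ uses that some $b\in P_\omega$ is independent of $a$, which is exactly where $\dim P_\omega\ge 2$ is consumed.
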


\begin{proof}
Claim \eqref{r2}  follows from our transversality hypothesis, while
claim \eqref{r1} is obvious, by the naturality property of flat connections.
Claim \eqref{r3} is proved in \cite[Proposition 5.3]{MPPS}. 
Here, the assumption that $\g=\sl_2$ is crucial. In the proof of
Proposition 5.3 from \cite{MPPS} it is also shown that $P\otimes \g \subseteq \F(A,\g)$,
when $P$ is isotropic. Claim \eqref{r4} follows then from \eqref{eq:fap}.
\end{proof}

\subsection{From evaluation maps to multinets}
\label{subsec:pencil}

We now return to the situation  when $A=H^*(M(\A),\C)$  is 
the Orlik--Solomon algebra of an arrangement $\A$.  
In view of Theorem \ref{thm:tcone}, 
all the hypotheses of Theorem \ref{lem:disjoint} are satisfied 
in this case.

Guided by Proposition \ref{prop=multiflat}, we take $\g=\sl_2 (\C)$ and 
define the evaluation space $\cE_k (\A)$ to be the set of all maps,
$e\colon \HH^{\k}(\g) \to \F (A(\A)\otimes \C, \g)$, satisfying properties 
\eqref{pmf1} and \eqref{pmf2} from that proposition, and having the 
property that $\im (e)$ is an irreducible component of $\F (A(\A)\otimes \C, \g)$. 

\begin{lemma}
\label{lem:submat}
Let $A$ be the complex Orlik--Solomon algebra of an arrangement $\A$, 
and let $\RR_1(\A) = \bigcup_{P\in \cP} P$ be the irreducible decomposition of its 
resonance variety.  The following then hold.
\begin{enumerate}
\item \label{731}
The irreducible decomposition of the variety $\F(A,\sl_2)$
is given by 
\[
\F(A,\sl_2) = \F^{(1)}(A,\sl_2) \cup \bigcup_{P\in \cP} \F(A_{P},\sl_2).
\] 
\item \label{732}
For every $k$-multinet $\NN$ on $\A$, 
\[
\im (\Ev_{\NN})= P\otimes \sl_2= \F (A_P, \sl_2),
\]
where $P=\Psi (\NN)$.
\end{enumerate}
\end{lemma}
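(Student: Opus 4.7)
The plan is to deduce both parts from Theorem \ref{lem:disjoint}, combined with Theorem \ref{thm:tcone}. The key observation is that Theorem \ref{thm:tcone} guarantees every positive-dimensional component $P \in \cP$ of $\RR_1(\A)$ is isotropic with respect to the cup-product, so Theorem \ref{lem:disjoint}\eqref{r4} yields $\F(A_P, \sl_2) = P \otimes \sl_2$ as a linear (hence irreducible) subvariety. This simultaneously establishes the second equality in part \eqref{732}.

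For part \eqref{731}, Theorem \ref{lem:disjoint}\eqref{r3} already supplies the set-theoretic union $\F(A, \sl_2) = \F^{(1)}(A, \sl_2) \cup \bigcup_{P \in \cP} \F(A_P, \sl_2)$, so I only need to check that each piece is irreducible and that no piece lies in another. Irreducibility of $\F(A_P, \sl_2) = P \otimes \sl_2$ was just observed, while $\F^{(1)}(A, \sl_2)$ is irreducible as the image under $\pi$ of the irreducible variety $H^1(A) \times \sl_2$. Pairwise non-containment among the $\F(A_P, \sl_2)$ follows from Theorem \ref{lem:disjoint}\eqref{r2}, since distinct $P, P'$ give intersection $\{0\}$ between positive-dimensional linear spaces. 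For the comparison with $\F^{(1)}$: since $\dim P \geq 2$ and $\dim \sl_2 = 3$, the space $P \otimes \sl_2$ contains rank-$2$ tensors $a \otimes x + b \otimes y$ with $\{a,b\}$ and $\{x,y\}$ both linearly independent, and such non-pure tensors cannot lie in $\F^{(1)}$; hence $\F(A_P, \sl_2) \not\subseteq \F^{(1)}$. Conversely, whenever $P \neq H^1(A)$, any $a \in H^1(A) \setminus P$ produces $a \otimes x \in \F^{(1)} \setminus \F(A_P, \sl_2)$ for $x \neq 0$. In the degenerate situation $P = H^1(A)$, one has $\F(A_P, \sl_2) = \F(A, \sl_2)$ as the whole ambient space and $\F^{(1)}$ as one of its subvarieties, so the irreducible decomposition collapses to a single component and the claim remains valid.

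For the remaining equality $\im(\Ev_\NN) = P \otimes \sl_2$ of part \eqref{732}, I will compute $\Ev_\NN$ explicitly. By Proposition \ref{prop=multiflat}\eqref{pmf2}, for every $x = (x_\alpha)_{\alpha \in \k} \in \HH^\k(\sl_2)$ and every $H \in \A_\alpha$, the $H$-component of $\Ev_\NN(x) \in A^1 \otimes \sl_2$ equals $m_H x_\alpha$. Summing over $H$,
\[
\Ev_\NN(x) = \sum_{\alpha \in \k} u_\alpha \otimes x_\alpha,
\]
with $u_\alpha = \sum_{H \in \A_\alpha} m_H e_H$ as in \eqref{eq=defu}. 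Using the relation $\sum_\alpha x_\alpha = 0$ defining $\HH^\k(\sl_2)$ to eliminate $x_1$, this rewrites as
\[
\Ev_\NN(x) = \sum_{\alpha = 2}^{k} (u_\alpha - u_1) \otimes x_\alpha.
\]
By Corollary \ref{cor:res comp}, the vectors $u_2 - u_1, \ldots, u_k - u_1$ span $P = \Psi(\NN)$, and since $\dim P = k-1$ they form a basis. Combined with the isomorphism $\HH^\k(\sl_2) \cong \sl_2^{k-1}$, $x \mapsto (x_2, \ldots, x_k)$, this exhibits $\Ev_\NN$ as a linear isomorphism onto $P \otimes \sl_2$. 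The essential content of the lemma is already packaged in Theorem \ref{lem:disjoint}\eqref{r3}--\eqref{r4}; the only real subtlety is the clean handling of the degenerate case $P = H^1(A)$ in part \eqref{731}.
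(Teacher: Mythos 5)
Your proof is correct, but it follows a genuinely different route from the paper's, especially in part \eqref{731}. The paper does not verify irredundancy by hand: it invokes Theorem \ref{thm:arapura} to write each nonzero $P\in\cP$ as $f^*(H^1(S,\C))$ for an admissible map $f\colon M(\A)\to S$, quotes the irreducible decomposition of $\F(A,\sl_2)$ from Theorem 7.4 of \cite{MPPS} (with pieces $f^{!}(\F(H^{\hdot}(S,\C),\sl_2))$), and then converts each such piece into $P\otimes\sl_2=\F(A_P,\sl_2)$ using $H^2(S,\C)=0$ and Theorem \ref{lem:disjoint}\eqref{r4}. You instead take only the set-theoretic equality from Theorem \ref{lem:disjoint}\eqref{r3} and check directly that the pieces are irreducible, closed, and pairwise non-nested, using \eqref{r2} for distinct $P,P'$ and a tensor-rank argument (valid because $\dim P\ge 2$ by Theorem \ref{thm:tcone}) to separate $P\otimes\sl_2$ from $\F^{(1)}(A,\sl_2)$; this is more self-contained relative to what is stated in the paper, at the price of a little case analysis (your degenerate cases $P=H^1(A)$, or $\cP=\{0\}$, in fact never occur for arrangements, and the paper silently ignores them). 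For part \eqref{732}, the paper argues via Lemma \ref{lem:pen h1}, identifying $\varphi^1_{\NN}$ with $(f_{\NN})_*\otimes\C$ and appealing to the construction of $\Psi$, whereas you make the same computation explicit: $\Ev_{\NN}(x)=\sum_{\alpha\ge 2}(u_\alpha-u_1)\otimes x_\alpha$ together with Corollary \ref{cor:res comp}; note that spanning of the $u_\alpha-u_1$ already suffices here, so the dimension count is not even needed. Two cosmetic remarks: the component formula $\Ev_{\NN}(x)_H=m_H x_\alpha$ is really what is established in the proof of Proposition \ref{prop=multiflat} (parts \eqref{pmf2}--\eqref{pmf3}) rather than the literal statement of \eqref{pmf2}, and both your argument and the paper's ultimately rest on the isotropy of the components (Theorem \ref{thm:tcone}) feeding into Theorem \ref{lem:disjoint}\eqref{r4} for the equality $P\otimes\sl_2=\F(A_P,\sl_2)$.
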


\begin{proof}
\eqref{731} By Theorem \ref{thm:arapura}, every non-zero 
subspace $P\in \cP$ is of the form $P=f^*(H^1(S, \C))$, for some admissible map
$f \colon M(\A) \to S:= \CP^1 \setminus \{\text{$k$ points}\}$. Theorem 7.4 from \cite{MPPS}
gives the irreducible decomposition of $\F (A, \sl_2)$, with $\F (A_P, \sl_2)$
replaced by $ f^! (\F (H^{\hdot}(S, \C), \sl_2))$. 
But $\F (H^{\hdot}(S, \C), \sl_2)= H^{1}(S, \C)\otimes \sl_2$, since $H^{2}(S, \C)=0$.
Hence, by Theorem \ref{lem:disjoint}\eqref{r4}, 
\begin{equation}
\label{eq:fhdot}
f^! (\F (H^{\hdot}(S, \C), \sl_2))= P\otimes \sl_2= \F (A_P, \sl_2), 
\end{equation}
and this proves our claim.

\eqref{732} Let $f_{\NN}\colon M(\A) \to S$  
be the admissible map associated to the multinet $\NN$. By Lemma \ref{lem:pen h1}, 
the map $\varphi^1_{\NN} \colon \h^1 (\A) \to \bLie^1(\k)$ may be identified with 
$(f_{\NN})_* \otimes \C$.  It follows that $\im (\Ev_{\NN})= \Psi (\NN)\otimes \sl_2$, 
by the construction \eqref{eq:multi map} of $\Psi$.
In view of \eqref{eq:fhdot}, we are done. 
\end{proof}

In view of the above lemma, 
the construction from Proposition \ref{prop=multiflat} 
gives a correspondence, 
\begin{equation}
\label{eq:bigev corr}
\xymatrix{\Ev \colon \{ \text{$k$-multinets on $\A$}\} \ar[r]& \cE_k(\A)} .
\end{equation}

We now define another function,
\begin{equation}
\label{eq:nmap}
\xymatrix{\Net \colon \cE_k(\A) \ar[r]& \{ \text{$k$-multinets on $\A$}\}/ \Sigma_k} ,
\end{equation}
as follows. Given a map $e\colon \HH^{\k}(\sl_2) \to \F (A(\A)\otimes \C, \sl_2)$ 
belonging to $\cE_k(\A)$, the variety $\im (e)$ cannot be the irreducible component 
$\F^{(1)} (A, \sl_2)$. Indeed, $\HH^{\k}(\sl_2)$ contains a regular element $x$, 
since $\dim \sl_2 \ge 2$. Since $e$ is a rank-preserving linear map  
we must have $\rank (e(x))\ge 2$, and therefore $e(x)$ is a regular 
flat connection. Hence, by Lemma \ref{lem:submat}\eqref{731} 
and Theorem  \ref{lem:disjoint}\eqref{r4}, $\im (e)= P\otimes \sl_2$,
for a unique $0\ne P\in \cP$. 

We claim that $P$ must be an essential component of $\RR_1(\A)$. 
For otherwise we could find a hyperplane $H\in \A$ such that
$\proj_H \colon A^1 \to \C$ vanishes on $P$.  But now recall $e$ 
satisfies property \eqref{pmf2} from Proposition \ref{prop=multiflat}; hence, 
$\proj_{\alpha}$ must vanish on $ \HH^{\k}(\sl_2)$, for some $\alpha \in \k$,
a contradiction. By Lemma \ref{lem=fibres}, then, $P=\Psi (\NN)$, for some $k'$-multinet 
$\NN$ on $\A$, uniquely determined up to the natural $\Sigma_{k'}$-action.
Moreover, 
\[
3(k'-1)= \dim P\otimes \sl_2= \dim  \HH^{\k}(\sl_2)= 3(k-1),
\]
and thus $k'=k$. We then define $\Net(e)$ to be the class modulo $\Sigma_k$ of the 
$k$-multinet $\NN$.

\begin{corollary}
\label{coro=lambdasurj}
The composition $\Net\circ \Ev$ is the canonical projection that associates to a
$k$-multinet $\NN$ on $\A$ its $\Sigma_k$-orbit.
\end{corollary}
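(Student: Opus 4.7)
The plan is to chase the definitions of $\Ev$ and $\Net$ and then invoke two results already proved: Lemma \ref{lem:submat}\eqref{732}, which tells us exactly which irreducible component of $\F(A(\A)\otimes \C,\sl_2)$ is hit by $\Ev_{\NN}$, and Lemma \ref{lem=fibres}, which identifies the fibers of $\Psi$ with $\Sigma_k$-orbits.

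First, given a $k$-multinet $\NN$ on $\A$, I would check that $\Ev_{\NN}$ actually lies in the target of $\Net$, i.e., belongs to $\cE_k(\A)$. Properties \eqref{pmf1} and \eqref{pmf2} are exactly what Proposition \ref{prop=multiflat} provides. The remaining condition that $\im(\Ev_{\NN})$ is an irreducible component of $\F(A(\A)\otimes \C,\sl_2)$ is supplied by Lemma \ref{lem:submat}: indeed, part \eqref{732} says $\im (\Ev_{\NN}) = \Psi(\NN)\otimes \sl_2 = \F(A_{\Psi(\NN)}, \sl_2)$, and part \eqref{731} shows that this is one of the irreducible components listed in the decomposition of $\F(A,\sl_2)$ (it cannot be $\F^{(1)}(A,\sl_2)$, since $\Psi(\NN) \ne 0$ and $\Ev_{\NN}$ is rank-preserving, so applying it to a regular element of $\HH^{\k}(\sl_2)$ lands outside $\F^{(1)}$).

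Next, I would unwind the definition of $\Net(\Ev_{\NN})$. By that definition, $\im(\Ev_{\NN}) = P\otimes \sl_2$ for a unique essential component $P$ of $\RR_1(\A)$, and $\Net(\Ev_{\NN})$ is the $\Sigma_k$-orbit of any $k$-multinet $\NN'$ satisfying $\Psi(\NN') = P$. Combining this with Lemma \ref{lem:submat}\eqref{732}, the essential component $P$ in question is precisely $\Psi(\NN)$. Hence $\Psi(\NN') = \Psi(\NN)$, and Lemma \ref{lem=fibres} forces $\NN'$ and $\NN$ to lie in the same $\Sigma_k$-orbit. That is exactly the statement $\Net(\Ev_{\NN}) = [\NN]$.

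There is no real obstacle here; the result is essentially a bookkeeping check that the two constructions $\Ev$ and $\Net$ have been set up to be compatible. The only slightly subtle point is verifying that $\im(\Ev_{\NN})$ is a genuine irreducible component (as opposed to being contained in one), but this is handled cleanly by the identification $\im(\Ev_{\NN}) = \F(A_{\Psi(\NN)},\sl_2)$ from Lemma \ref{lem:submat}\eqref{732} together with the decomposition in Lemma \ref{lem:submat}\eqref{731}.
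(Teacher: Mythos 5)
Your proposal is correct and takes essentially the same approach as the paper: both argue via Lemma \ref{lem:submat}\eqref{732} that $\im(\Ev_{\NN}) = \Psi(\NN)\otimes\sl_2$, then read off from the construction of $\Net$ (together with Lemma \ref{lem=fibres}) that $\Net(\Ev_{\NN})$ is the $\Sigma_k$-orbit of $\NN$. The paper's proof is terser, deferring the well-definedness of $\Ev$ (i.e., that $\Ev_{\NN}\in\cE_k(\A)$) to the preceding paragraph, whereas you spell that step out explicitly, but the underlying argument is the same.
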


\begin{proof}
By Lemma \ref{lem:submat}, we have that $\im (\Ev_{\NN})= \Psi (\NN)\otimes \sl_2$.
By construction, $\Net(\Ev_{\NN})$ is the  $\Sigma_k$-orbit of $\NN$.
\end{proof}

\begin{remark}
\label{rem=lambdainv}
Returning to the bijection from Lemma \ref{lema=lambdabij}, 
let us take a special $\k$-cocycle $\tau \in Z'_{\k}(\A)\setminus B_{\k}(\A)$, 
and set $\NN= \lambda_{\k}^{-1} (\tau)$. By Proposition \ref{prop=multiflat}\eqref{pmf3}
and Corollary \ref{coro=lambdasurj}, $\ev_{\tau}\in \cE_k (\A)$ and $\Net(\ev_{\tau})$ 
is the $\Sigma_k$-orbit of the $k$-net $\NN$. 
This shows that, for realizable matroids, the inverse of the modular construction 
$\lambda_{\k}$ may be described in terms of $\sl_2 (\C)$-valued flat
connections on the Orlik--Solomon algebra.
\end{remark}

\subsection{Flat connections from special cocycles}
\label{subsect:flat 3net}

Let $\B \subseteq \A$ be a subarrangement. Denote the associated monomorphism 
between complex Orlik--Solomon algebras by $\psi \colon B\inj A$. This map 
in turn induces a rank-preserving inclusion, 
$\psi\otimes \id_{\g} \colon \F (B, \g) \inj \F (A, \g)$, for
any finite-dimensional complex Lie algebra $\g$. For 
$\tau\in Z'_{\k}(\B)\setminus B_{\k}(\B)$, denote by 
$\ev^{\B}_{\tau} \colon \HH^{\k} (\g) \to \F (A, \g)$ the linear, rank-preserving embedding
$(\psi\otimes \id_{\g}) \circ \ev_{\tau}$. 

\begin{theorem}
\label{thm=freg3}
Assume that all essential components of $\RR_1(\B)$ arise from nets on $\B$, 
for every subarrangement $\B \subseteq \A$. Then 
\begin{equation}
\label{eq:freg}
\F_{\reg}(A(\A)\otimes \C, \sl_2 (\C))= \bigcup_{\B, \tau} \; 
\ev^{\B}_{\tau} (\HH^{\k}_{\reg} (\sl_2 (\C)))\, ,
\end{equation}
where the union is taken over all $\B \subseteq \A$ 
and all $\tau\in Z'_{\k}(\B)\setminus B_{\k}(\B)$.
\end{theorem}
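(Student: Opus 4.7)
The plan is to prove the two inclusions in \eqref{eq:freg} separately. The inclusion $\supseteq$ is immediate: each composition $\ev^{\B}_{\tau} = (\psi \otimes \id) \circ \ev_{\tau}$ is rank-preserving, with $\ev_{\tau}$ rank-preserving by Proposition \ref{prop=liftev} and $\psi \otimes \id$ rank-preserving by the naturality recalled in \S\ref{subsect:flat 3net}. Hence regular elements of $\HH^{\k}(\sl_2(\C))$ are sent to regular flat connections in $\F(A(\A) \otimes \C, \sl_2(\C))$.

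For the inclusion $\subseteq$, I would pick $\omega \in \F_{\reg}(A, \sl_2(\C))$ with $A = A(\A) \otimes \C$, and invoke Lemma \ref{lem:submat}\eqref{731}, which expresses $\F(A, \sl_2(\C))$ as $\F^{(1)}(A, \sl_2(\C)) \cup \bigcup_{P \in \cP} P \otimes \sl_2(\C)$, where $\cP$ indexes the irreducible components of $\RR_1(\A)$. Regularity of $\omega$ rules out the $\F^{(1)}$ piece, so $\omega \in P \otimes \sl_2(\C)$ for some nonzero $P \in \cP$. Following the procedure in \S\ref{subsec:multinet map}, I would choose $\B \subseteq \A$ to be the subarrangement of those hyperplanes $H$ for which $\proj_H$ is nonzero on $P$; then $P \subseteq \C^{\B}$ is an essential component of $\RR_1(\B)$. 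The combinatorial hypothesis furnishes a $k$-net $\NN$ on $\B$ with $P = \Psi(\NN)$, and I would set $\tau = \lambda_{\k}(\NN) \in Z'_{\k}(\B) \setminus B_{\k}(\B)$ for $\k = \FF_k$. Proposition \ref{prop=multiflat}\eqref{pmf3} identifies $\Ev_{\NN} = \ev_{\tau}$, while Lemma \ref{lem:submat}\eqref{732} applied to $\B$ gives $\im(\Ev_{\NN}) = P \otimes \sl_2(\C)$; composing with $\psi \otimes \id$ yields $\im(\ev^{\B}_{\tau}) = P \otimes \sl_2(\C)$ inside $\F(A, \sl_2(\C))$. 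So $\omega = \ev^{\B}_{\tau}(x)$ for a unique $x \in \HH^{\k}(\sl_2(\C))$, and rank-preservation forces $\rank(x) = \rank(\omega) \ge 2$, whence $x \in \HH^{\k}_{\reg}(\sl_2(\C))$.

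The main obstacle I anticipate is keeping the several identifications straight, especially verifying that $P \otimes \sl_2(\C)$ can be viewed simultaneously as the irreducible component $\F(A(\A)_P, \sl_2(\C))$ of the ambient flat-connection variety and as the image of $\F(A(\B)_P, \sl_2(\C))$ under the monomorphism $\psi \otimes \id$. This is handled cleanly by Theorem \ref{lem:disjoint}\eqref{r4} combined with Theorem \ref{thm:tcone}, since each positive-dimensional component $P$ is isotropic for the cup product; this guarantees that in both settings the flat-connection variety over $P$ fills up the full tensor product $P \otimes \sl_2(\C)$, so the map $\ev^{\B}_{\tau}$ lands surjectively on the component containing $\omega$.
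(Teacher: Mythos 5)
Your proposal is correct and follows essentially the same route as the paper's proof: the decomposition of $\F(A,\sl_2)$ from Lemma \ref{lem:submat}\eqref{731}, passage to the subarrangement $\B$ supporting the component $P$ as in \S\ref{subsec:multinet map}, the hypothesis producing a net with $\tau=\lambda_{\k}(\NN)$, and the identification $\im(\ev_{\tau})=P\otimes\sl_2$ via Proposition \ref{prop=multiflat}\eqref{pmf3} and Lemma \ref{lem:submat}\eqref{732}, finishing with rank-preservation. Your explicit treatment of the easy inclusion and of the $\C^{\B}\hookrightarrow\C^{\A}$ identification only makes explicit what the paper leaves implicit.
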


\begin{proof}
Let $\RR_1(A)= \bigcup_{P \in \cP} P$ be the decomposition 
of the complex resonance variety of $\A$ into (linear) irreducible 
components. For simplicity, write $\HH=\HH^{\k}(\sl_2)$.  
In view of Theorem \ref{lem:disjoint}, we only have to show that, for 
every non-zero component $P$ of $\RR_1(A)$, the set 
$(P\otimes \sl_2)_{\reg}$ is contained in the right-hand side 
of \eqref{eq:freg}.  As explained in \S\ref{subsec:multinet map}, 
the subspace $P$ belongs to $\E(\B)$, for some subarrangement 
$\B \subseteq \A$. Thus, we may replace $\A$ by $\B$, and 
reduce our proof to showing that, for any $P\in \E (\B)$, the set 
$(P\otimes \sl_2)_{\reg}$ is contained in $\ev_{\tau}(\HH_{\reg})$, 
for some $\tau\in Z'_{\k}(\B)\setminus B_{\k}(\B)$. 

Using our hypothesis, we infer that $P=\Psi (\NN)$, 
for some $k$-net $\NN$ on $\B$.  Set 
$\tau= \lambda_{\k} (\NN) \in Z'_{\k}(\B)\setminus B_{\k}(\B)$. 
It follows from Proposition \ref{prop=multiflat}\eqref{pmf3} and
Lemma \ref{lem:submat} that $\ev_{\tau}(\HH) =P\otimes \sl_2$. 
By construction, $\rank (\ev_{\tau}(x))= \rank (x)$, for all 
$x\in \HH$. Hence, $\ev_{\tau}(\HH_{\reg})= 
(P\otimes \sl_2)_{\reg}$.
\end{proof}

\subsection{Examples}
\label{subsec:ess disc}

We conclude this section with a couple of extended examples. 

\begin{example}
\label{ex=flatsharp}
Let $\A$ be the reflection arrangement of type 
${\rm B}_3$, defined by the polynomial 
$Q=z_1z_2z_3(z_1^2-z_2^2)(z_1^2-z_3^2)(z_2^2-z_3^2)$.
As shown in \cite{MP}, we have that $\beta_p(\A)=0$, for all $p$.
In particular, $\A$ supports no net, by \eqref{eq=essboundintro}. 
On the other hand, this arrangement admits the multinet from Figure \ref{fig:b3 arr}.
Thus, the hypothesis of Theorem \ref{thm=freg3} is violated in this example.

We claim that Theorem \ref{thm=freg3} does not hold 
for this arrangement.  To verify this claim, pick 
$x=(x_0,x_1,x_2)\in \HH^{\FF_3}_{\reg}(\sl_2)$ and define 
$\omega= \sum_{H\in \A} e_H \otimes \omega_ H \in A^1\otimes \sl_2$ by
\[
\omega_{z_1}=2 x_1,\,  \omega_{z_2}=2 x_2, \, \omega_{z_3}=2 x_0,\, 
\omega_{z_1 \pm z_2}=x_0, \,
\omega_{z_2 \pm z_3}=x_1, \, \omega_{z_1 \pm z_3}=x_2.
\] 
It is easy to check that $\omega \in \F_{\reg}(A, \sl_2)$.
Since clearly $x_i\ne 0$ for all $i$, we infer that
$\omega$ is supported on the whole arrangement $\A$.  
On the other hand, all elements from the 
right-hand side of \eqref{eq:freg} are supported on proper 
subarrangements of $\A$, by Lemma \ref{lema=lambdabij}.  
Thus, equality does not hold in \eqref{eq:freg}  in this case.  
\end{example}

\begin{example}
\label{ex:graphic}

Let $\Gamma$ be a finite simplicial graph, with vertex set 
$[\ell]$ and edge set $E$.  The corresponding (unsigned) 
graphic arrangement, $\A_{\Gamma}$, is the arrangement in 
$\C^{\ell}$ defined by the polynomial $Q=\prod_{(i,j)\in E} (z_i-z_j)$. 

For instance, if $\Gamma=K_{\ell}$ is the complete graph on 
$\ell$ vertices, then $A_{\Gamma}$ is the reflection arrangement 
of type $\operatorname{A}_{\ell-1}$. 
The Milnor fibrations of graphic arrangements were studied in \cite{MP}. 
Clearly, $\mult(\A_{\Gamma}) \subseteq \{3\}$, and so $\beta_p(\A_{\Gamma})=0$, 
unless $p=3$.  It turns out that $\beta_3(\A_{\Gamma})=0$ for all graphs 
$\Gamma$ except $\Gamma=K_3$ and $K_4$, in which case 
$\beta_3(\A_{\Gamma})=1$. 

Theorem \ref{thm:main0} was proved in \cite{MP} for the class 
of graphic arrangements.  
For such arrangements, the inequalities \eqref{eq=essboundintro} are sharp.  
Indeed, $\RR_1(\A_{\Gamma})$ 
has an essential component if and only if $\Gamma=K_3$ 
or $K_4$, in which case $\abs{\E(\A_{\Gamma})}= \abs{\E_3(\A_{\Gamma})}=1$; 
see \cite{SS, CS99}. Moreover, it follows  that Theorem \ref{thm=freg3}
holds for all graphic arrangements.

Finally, by \cite[Theorem A]{MP}, Conjecture \ref{conj:mf} holds in the strong form
\eqref{eq:delta arr}, for all (not necessarily unsigned) graphic arrangements.
\end{example}

\section{Characteristic varieties and the Milnor fibration}
\label{sect:cjl milnor}

In this section, topology comes to the fore.  Using the jump loci 
for homology in rank $1$ local systems, we prove implication 
\eqref{a5}$\Rightarrow$\eqref{a6} from Theorem \ref{thm:main1},
and we finish the proof of Theorem \ref{thm:2main1}.

\subsection{Characteristic varieties and finite abelian covers}
\label{subsec:cv}

Let $X$ be a connected, finite-type CW-complex.  Without loss 
of generality, we may assume $X$ has a single $0$-cell.  
Let $\pi=\pi_1(X,x_0)$ be the fundamental group of $X$, 
based at this $0$-cell.  
  
Let $\Hom(\pi,\C^*)$ be the affine 
algebraic group of $\C$-valued, multiplicative characters on $\pi$, 
which we will identify with $H^1(\pi,\C^*)=H^1(X,\C^*)$. 
The (degree $q$, depth $r$) {\em characteristic varieties}\/ of 
$X$ are the jump loci for homology with coefficients in rank-$1$ 
local systems on $X$:
\begin{equation}
\label{eq:cvs}
\VV^q_r(X)=\{\xi\in\Hom(\pi,\C^*)  \mid  
\dim_{\C} H_q(X,\C_\xi)\ge r\}.
\end{equation}

By construction, these loci are Zariski-closed subsets of 
the character group. Here is a simple example, that we will need 
later on. 

\begin{example}
\label{ex:cv surf}
Let $S=\CP^1\setminus \{\text{$k$ points}\}$.  Then 
$\VV^1_r(S)$ equals $H^1(S,\C^*)=(\C^*)^{k-1}$ if 
$1\le  r  \le k-2$, it equals $\{1\}$ if $r  = k-1$, and it is empty 
if $r \ge k$.
\end{example}

As is well-known, the geometry of the characteristic varieties 
controls the Betti numbers of regular, finite abelian covers of $X$.  
For instance, suppose that the deck-trans\-formation group is 
cyclic of order $n$, and fix an inclusion $\iota \colon \Z_n \inj \C^*$, 
by sending $1 \mapsto e^{2\pi \ii/n}$. With this choice, the 
epimorphism $\nu \colon \pi\surj \Z_n$ defining the cyclic cover $X^{\nu}$ 
yields a (torsion) character, $\rho=\iota\circ \nu\colon \pi \to \C^*$. 
We then have an isomorphism of $\C[\Z_n]$-modules,
\begin{equation}
\label{eq:equiv}
H_q(X^{\nu},\C) \cong H_q(X,\C) \oplus 
\bigoplus_{1<d\mid n} (\C[t]/\Phi_d(t))^{\depth(\rho^{n/d})}, 
\end{equation}
where $\depth(\xi):=\dim_{\C} H_q(X, \C_{\xi}) = 
\max\{r \mid \xi\in \VV^q_r(X)\}$.  For a quick proof of this classical 
formula (originally due to A.~Libgober, M.~Sakuma, and E.~Hironaka), 
we refer to \cite[Theorem 2.5]{DS13} or \cite[Theorem B.1]{Su14}.

As shown in \cite{PS-tams}, the exponents in formula \eqref{eq:equiv} 
coming from prime-power divisors can be estimated in terms of the 
corresponding Aomoto--Betti numbers. More precisely, suppose 
$n$ is divisible by $d=p^s$, for some prime $p$.  Composing the 
canonical projection $\Z_n \surj \Z_p$ with $\nu$ defines a 
cohomology class $\bar\nu\in H^1(X,\FF_p)$.  

\begin{theorem}[\cite{PS-tams}]
\label{thm:mod bound}
With notation as above, assume $H_*(X,\Z)$ is 
torsion-free.  Then  
\[
\dim_{\C} H_q(X, \C_{\rho^{n/d}}) \le 
\dim_{\FF_p} H^q( H^{\hdot}(X, \FF_p), \delta_{\bar\nu}).
\]
\end{theorem}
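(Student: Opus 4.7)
The plan is to bound the left-hand side by reducing modulo a prime above $p$ and then comparing the resulting mod-$p$ twisted cochain complex with the Aomoto complex by means of a filtration argument. Write $\rho' := \rho^{n/d}$; this character has order dividing $d = p^s$, hence factors through $\mathcal{O} := \Z[\zeta_{p^s}] \subset \C$. Fix a prime $\mathfrak{p}$ of $\mathcal{O}$ above $p$, with residue field $k = \mathcal{O}/\mathfrak{p}$, a finite extension of $\FF_p$. A finite CW structure on $X$ yields a chain complex $C_*$ of finitely generated free $\Z[\pi]$-modules computing $H_*(\tilde{X}, \Z)$, and $H_q(X, \C_{\rho'}) = H_q(C_* \otimes_{\Z[\pi]} \C_{\rho'})$.

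The first step is to establish the semicontinuity inequality
$$\dim_\C H_q(X, \C_{\rho'}) \;\le\; \dim_k H_q\bigl(C_* \otimes_{\Z[\pi]} R_{\rho'_R}\bigr),$$
where $R$ is a suitable Artinian local truncation of the completion $\mathcal{O}_{\mathfrak{p}}$ and $\rho'_R$ is a lift of $\rho'$ to $R^*$. This is where the torsion-freeness of $H_*(X, \Z)$ enters: together with a universal-coefficient and flat base-change argument, it ensures that the $\C$-dimension of the twisted homology is bounded by the $k$-dimension of an appropriate $R$-integral model after reduction.

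The second step is to extract the Aomoto bound from this mod-$\mathfrak{p}$ quantity. Since $\zeta_{p^s} \equiv 1 \pmod{\mathfrak{p}}$, the reduction $\bar\rho$ of $\rho'$ to $k$ is the trivial character, but the lift $\rho'_R$ records the first-order deformation via $\bar\nu \in H^1(X, \FF_p)$: under the identification $(1 + \mathfrak{m}_R)/(1 + \mathfrak{m}_R^2) \simeq \FF_p$, the homomorphism $g \mapsto \rho'_R(g) \bmod \mathfrak{m}_R^2$ is precisely $\bar\nu$. Filtering $R$ by powers of its maximal ideal $\mathfrak{m}_R$ induces a filtration on $C_* \otimes R_{\rho'_R}$ whose associated graded pieces are copies of the untwisted complex $C_* \otimes k$. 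In the resulting spectral sequence, the $E_1$-page is identified with $H^*(X, \FF_p) \otimes_{\FF_p} k$ (up to Poincar\'e duality / degree conventions to pass between $H_q$ and $H^q$), and a direct chain-level calculation identifies the $d_1$-differential with multiplication by $\bar\nu$, i.e.\ exactly the Aomoto differential $\delta_{\bar\nu}$. Convergence of the spectral sequence together with the general inequality $\dim E_\infty \le \dim E_1$ then yields the claimed bound.

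The main obstacle is the first step: the semicontinuity inequality between $\C$-dimensions and $k$-dimensions of twisted homology is not automatic and rests essentially on the torsion-freeness hypothesis, which guarantees that the chain-level complex remains well-behaved under arbitrary coefficient changes (in particular, that no rank can drop when passing from the generic fiber to a special fiber of the relevant family of coefficient systems). The second step, although technically involved, is the $\bmod p$ analogue of the classical tangent-cone identification of the germ of a characteristic variety at $1$ with the corresponding resonance variety, and the identification of $d_1$ with $\delta_{\bar\nu}$ amounts to tracking how $\rho'_R$ differs from the trivial character at first order.
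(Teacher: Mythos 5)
The paper itself contains no proof of this statement: it is quoted from \cite{PS-tams} (Theorem~11.3 there, sharpening Cohen--Orlik), so the only meaningful comparison is with the argument in that reference. Your outline does identify that strategy correctly in broad terms: realize $\rho^{n/d}$ over $\Z[\zeta_{p^s}]$, pass to the prime $(1-\zeta_{p^s})$ above $p$ (where, note, the residue field is exactly $\FF_p$ by total ramification, not merely a finite extension), observe that the character becomes trivial modulo the maximal ideal with first-order term $\bar\nu$, and filter the integral twisted chain complex by powers of the maximal ideal so that the resulting spectral sequence begins with the Aomoto complex $(H^{\hdot}(X,\FF_p),\delta_{\bar\nu})$.

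However, the way you combine your two steps does not yield the stated inequality, and this is a genuine gap rather than a gloss. If $R=\mathcal{O}_{\mathfrak{p}}/\mathfrak{m}^N$ is an Artinian truncation of length $N$, then the filtration of $R$ by powers of $\mathfrak{m}_R$ has $N$ graded pieces, so the $E_1$-page you describe is (roughly) $N$ copies of $H_*(X,\FF_p)$, and the inequality $\dim E_\infty\le\dim E_1$ (or $\le\dim E_2$) only bounds the total length of $H_q(C_*\otimes R)$ by about $N$ times the Aomoto number $a_q:=\dim_{\FF_p}H^q(H^{\hdot}(X,\FF_p),\delta_{\bar\nu})$. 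Since the quantity on the right of your step-1 inequality is itself of order $N$ (the rank-$r$ free part of the homology over the valuation ring already contributes $rN$ to that length, where $r=\dim_\C H_q(X,\C_{\rho^{n/d}})$), the chain of inequalities as written proves nothing about $r$ versus $a_q$; your closing sentence ``$\dim E_\infty\le\dim E_1$ then yields the claimed bound'' is simply not true at this level of generality. What is missing is exactly the stable-range comparison that carries the proof: either work over the complete discrete valuation ring and compare, in each sufficiently deep filtration degree, the $E_\infty$-piece (which has dimension at least $r$) with the $E_2$-piece (which equals $a_q$ away from finitely many boundary degrees), or run the truncated argument for every $N$, obtain $rN\le Na_q+O(1)$, and let $N\to\infty$. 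Two smaller points: semicontinuity over a discrete valuation ring (and flat base change to $\C$) holds for any bounded complex of finitely generated free modules and does not use torsion-freeness of $H_*(X,\Z)$; that hypothesis is needed instead to identify the first page of the spectral sequence with the Aomoto complex of the mod~$p$ cohomology ring without Tor/Bockstein corrections, which is how the cited reference uses it. Also, the mod-$\mathfrak{p}$ reduction of $\rho^{n/d}$ is a quotient, not a ``lift''.
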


\subsection{Characteristic varieties of arrangements}
\label{subsec:cv arr}

Let $\A$ be a hyperplane arrangement  in $\C^{\ell}$.  Since the 
slicing operation described in \S\ref{subsec:arrs} does not affect the 
character torus, $H^1(M(\A),\C^*)=(\C^*)^{\A}$, or the degree $1$ 
characteristic varieties of the arrangement, $\VV_r(\A):=\VV^1_r(M(\A))$, 
we will assume from now on that $\ell=3$.

The varieties $\VV_r(\A)$ are closed algebraic 
subsets of the character torus.  
Since $M(\A)$ is a smooth, quasi-projective variety, 
a general result of Arapura \cite{Ar} insures that 
$\VV_r(\A)$ is, in fact, a finite union of translated 
subtori.  Moreover, as shown in \cite{CS99, LY}, and, 
in a broader context in \cite{DPS}, the tangent cone 
at the origin to $\VV_1(\A)$ coincides with the resonance 
variety $\RR_1(\A)$. 

More explicitly, consider the exponential map 
$\C\to \C^*$, and the coefficient homomorphism 
$\exp\colon H^1(M(\A),\C)\to H^1(M(\A),\C^*)$. 
Then, if $P\subset H^1(M(\A),\C)$ is one of the linear 
subspaces comprising $\RR_1(\A)$, its image under 
the exponential map, $\exp(P)\subset H^1(M(\A),\C^*)$, 
is one of the subtori comprising $\VV_1(\A)$. Moreover, 
this correspondence gives a bijection between the components 
of $\RR_1(\A)$ and the components of $\VV_1(\A)$ passing through 
the origin.  

Now recall from Arapura theory (\cite{Ar, DPS})
that each positive-dimensional component of $\RR_1(\A)$ 
is obtained by pullback along 
an admissible map $f\colon M(\A)\to S$, where $S=\CP^{1}\setminus 
\{\text{$k$ points}\}$ and $k\ge 3$. Thus, each positive-dimensional 
component of $\VV_1(\A)$ containing the origin is of the form 
$\exp(P)=f^*(H^1(S,\C^*))$, with $f$ admissible. 
In view of Example \ref{ex:cv surf},  
the subtorus $f^*(H^1(S,\C^*))$ is a  positive-dimensional 
component of $\VV_1(\A)$ through the origin that
lies inside $\VV_{k-2}(\A)$, for any admissible map $f$ as above. 

Next, let $\bar{\A}$ be the projectivized line arrangement in 
$\CP^{2}$, and let $U(\A)$ be its complement.  The Hopf 
fibration, $\pi\colon \C^{3} \setminus \set{0} \to \CP^{2}$ 
restricts to a trivializable bundle map, 
$\pi\colon M(\A)\to U(\A)$, with fiber $\C^*$. 
Therefore, $M(\A)\cong U(\A)\times \C^*$, 
and the character torus $H^1(M(\A),\C^*)$ splits as 
$H^1(U(\A),\C^*)\times \C^*$.  Under this splitting, 
the characteristic varieties $\VV^1_r(M(\A))$ get 
identified with the varieties $\VV^1_r(U(\A))$ lying 
in the first factor. 

\subsection{The homology of the Milnor fiber}
\label{subsec:milnor}

Let $Q=Q(\A)$ be a defining polynomial for our arrangement. 
The restriction of $Q$ to the complement defines 
the Milnor fibration, $Q\colon M(\A) \to \C^*$, 
whose typical fiber, $F(\A)=Q^{-1}(1)$, is the Milnor fiber of 
the arrangement. 

The map $\pi\colon M(\A)\to U(\A)$ 
restricts to a regular, $\Z_n$-cover 
$\pi \colon F(\A) \to U(\A)$, where $n=\abs{\A}$. 
As shown in \cite{CS95} (see \cite[Theorem 4.10]{Su14} for 
full details), this cover is classified by the ``diagonal" epimorphism 
\begin{equation}
\label{eq:nu milnor}
\nu\colon H_1(U(\A),\Z)\surj \Z_n, \quad 
\nu(\pi_*(a_H) )= 1 \bmod n. 
\end{equation}

For each divisor $d$ of $n$, let $\rho_d\colon H_1(M(\A),\Z)\to \C^*$ 
be the character defined by $\rho_d(a_H)= e^{2\pi \ii /d}$. 
Using formula \eqref{eq:equiv}, we conclude that 
\begin{equation}
\label{eq:h1milnor}
H_1(F(\A),\C) = \C^{n-1} \oplus \bigoplus_{1<d\mid n} 
(\C[t]/\Phi_d(t))^{\e_d(\A)},
\end{equation}
as modules over $\C [\Z_n]$, where $e_d(\A)=\depth (\rho_d)$. Furthermore, 
applying Theorem \ref{thm:mod bound}, we obtain the ``modular upper bound"
\begin{equation}
\label{eq:bound bis}
\e_{p^s} (\A) \le \beta_p(\A), 
\end{equation}
valid for all primes $p$ and integers $s\ge 1$. 

\subsection{Milnor fibration and multinets}
\label{subsec:milnor multi}

A key task now is to find suitable lower bounds for the exponents 
appearing in formula \eqref{eq:h1milnor}.  The next result provides 
such bounds, in the presence of reduced multinets on the arrangement.

\begin{theorem}
\label{thm:be3}
Suppose that an arrangement $\A$ admits a reduced $k$-multinet. Letting 
$f\colon M(\A)\to S$ denote the associated admissible map, the following hold. 
\begin{enumerate}
\item \label{bb1}
The character $\rho_k$ belongs to $f^*(H^1(S,\C^*))$, and  
$\e_k(\A)\ge k-2$. 
\item \label{bb2}
If $k=p^s$, then $\rho_{p^r}\in f^*(H^1(S,\C^*))$
and $\e_{p^r}(\A)\ge k-2$, for all $1\le r\le s$.
\end{enumerate}
\end{theorem}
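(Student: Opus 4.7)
The plan is to build the required character of $H_1(S,\Z)$ by hand and then read off the depth bound from the known position of the pencil-subtorus inside the characteristic variety. Since the multinet is reduced, Lemma \ref{lem:pen h1} gives the particularly clean formula $(f_{\NN})_*(a_H)=c_\alpha$ for $H\in\A_\alpha$; the map $f^*\colon H^1(S,\C^*)\inj H^1(M(\A),\C^*)$ is dual to this, so a character $\xi\in H^1(S,\C^*)$ pulls back to the character of $H_1(M(\A),\Z)$ sending $a_H$ to $\xi(c_\alpha)$ for $H\in\A_\alpha$.

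First I would prove part \eqref{bb1}. Recall $H_1(S,\Z)$ is presented by generators $c_1,\dots,c_k$ subject to the single relation $c_1+\cdots+c_k=0$, so a character is the same as a tuple $(\zeta_1,\dots,\zeta_k)\in(\C^*)^k$ with $\prod_\alpha\zeta_\alpha=1$. Define $\xi_k\in H^1(S,\C^*)$ by $\xi_k(c_\alpha)=e^{2\pi\ii/k}$ for every $\alpha$; the product condition reads $e^{2\pi\ii k/k}=1$, which holds. Then $f^*(\xi_k)$ sends $a_H\mapsto e^{2\pi\ii/k}$ for every $H\in\A$, i.e. $f^*(\xi_k)=\rho_k$, which proves the first assertion of \eqref{bb1}. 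For the depth bound, invoke the fact recalled in \S\ref{subsec:cv arr}: the pulled-back subtorus $f^*(H^1(S,\C^*))$ is entirely contained in $\VV_{k-2}(\A)$ (this comes from Example \ref{ex:cv surf} together with the pullback along $f$). Since $k\ge 3$, the character $\rho_k$ is non-trivial, so $\e_k(\A)=\depth(\rho_k)\ge k-2$.

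For part \eqref{bb2}, I would run the same argument with $d=p^r$ in place of $k$. Define $\xi_{p^r}\in H^1(S,\C^*)$ by $\xi_{p^r}(c_\alpha)=e^{2\pi\ii/p^r}$; the product condition now reads $e^{2\pi\ii k/p^r}=e^{2\pi\ii p^{s-r}}=1$, which holds precisely because $k=p^s$ and $1\le r\le s$. Again $f^*(\xi_{p^r})=\rho_{p^r}$, so $\rho_{p^r}\in f^*(H^1(S,\C^*))\subseteq\VV_{k-2}(\A)$, yielding $\e_{p^r}(\A)\ge k-2$.

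There is no real obstacle here; the only thing that could fail is the compatibility $\prod\zeta_\alpha=1$, and this is exactly the arithmetic condition $d\mid k$ that is guaranteed by the hypotheses ($d=k$ in \eqref{bb1}, and $d=p^r\mid p^s=k$ in \eqref{bb2}). The essential use of reducedness is that $(f_{\NN})_*(a_H)=c_\alpha$ independently of $H\in\A_\alpha$, so a single root of unity $\zeta_\alpha$ suffices per class; for a non-reduced multinet one would need $\zeta_\alpha^{m_H}$ to equal the same root of unity for all $H\in\A_\alpha$, which generally fails. Everything else is imported from results already in the paper: Lemma \ref{lem:pen h1} for the description of $(f_{\NN})_*$, and the inclusion $f^*(H^1(S,\C^*))\subseteq\VV_{k-2}(\A)$ recorded in \S\ref{subsec:cv arr}.
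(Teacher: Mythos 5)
Your proposal is correct and follows essentially the same route as the paper: compute $f^*$ on characters via Lemma \ref{lem:pen h1}, use reducedness to see that the diagonal character of $S$ pulls back to $\rho_k$, and conclude from the inclusion $f^*(H^1(S,\C^*))\subseteq\VV_{k-2}(\A)$ recorded in \S\ref{subsec:cv arr}. The only cosmetic difference is in part \eqref{bb2}, where the paper writes $\rho_{p^r}=f^*(\rho)^{p^{s-r}}$ while you define the character $\xi_{p^r}$ on $S$ directly; these coincide, since $\xi_{p^r}=\rho^{p^{s-r}}$.
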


\begin{proof}
First let $\NN$ be an arbitrary $k$-multinet on $\A$, with 
parts $\A_{\alpha}$ and multiplicity function $m$, and let 
$f=f_{\NN}\colon M(\A)\to S$. It follows from 
Lemma \ref{lem:pen h1} that the induced morphism between 
character groups, $f^*\colon H^1(S, \C^*) \to  H^1(M(\A), \C^*)$, 
takes the character $\rho$ given by 
$\rho(c_{\alpha})=\zeta_{\alpha}$, where $\zeta_1\cdots \zeta_k=1$, 
to the character given by 
\begin{equation}
\label{eq:char hom}
f^*(\rho)(a_H) = \zeta_{\alpha}^{m_H}, \quad\text{for  $H\in \A_{\alpha}$}.
\end{equation}

Now assume $\NN$ is reduced.  Taking $\zeta_{\alpha}=e^{2\pi \ii/k}$ 
in the above, we see that $\rho_k=f^*(\rho)$ belongs to the 
subtorus $T=f^*(H^1(S, \C^*))$.  Since $T$  lies inside $\VV_{k-2}(\A)$, 
formula \eqref{eq:h1milnor} shows that $e_k(\A)=\depth (\rho_k)\ge k-2$.  

Finally, suppose $k=p^s$.  Then $\rho_{p^r}=f^*(\rho)^{p^{s-r}}$, which 
again belongs to the subtorus $T$, for $1\le r\le s$.  
The inequality $\e_{p^r}(\A)\ge k-2$ now follows as above.
\end{proof}

\begin{remark}
\label{rem:multik}
An alternate way of proving Theorem \ref{thm:be3}, part \eqref{bb1} is by 
putting  together \cite[Theorem 3.11]{FY} and \cite[Theorem 3.1(i)]{DP11}.  
The proof we give here, though, is more direct, and, besides, it 
will be needed in the proof of Theorem \ref{thm:b3e3} below.  
Furthermore, the additional part \eqref{bb2} will be used 
in proving Theorem \ref{thm:2main1}.
\end{remark}

\subsection{From $\beta_3(\A)$ to $e_3(\A)$}
\label{subsec:e3beta3}

Before proceeding, we need to recall a result of Artal Bartolo, 
Cogolludo, and Matei, \cite[Proposition 6.9]{ACM}.

\begin{theorem}[\cite{ACM}]
\label{thm:acm}
Let $X$ be a smooth, quasi-projective variety.
Suppose $V$ and $W$ are two distinct, positive-dimensional 
irreducible components of $\VV_r(X)$ and $\VV_s(X)$, 
respectively.  If $\xi\in V\cap W$ is a torsion character, 
then $\xi \in \VV_{r+s} (X)$. 
\end{theorem}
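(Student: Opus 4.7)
My plan is to exploit Arapura's structure theorem for positive-dimensional components of the characteristic variety, combined with a finite-cover reduction to the identity character. First, I would invoke Arapura's theorem on smooth quasi-projective varieties: each positive-dimensional irreducible component of $\VV_1(X)$ is a translate by a torsion character of a pullback subtorus $f^*(H^1(S, \C^*))$, where $f\colon X \to S$ is an admissible map onto an orbicurve $S$. Accordingly, I would write $V = \zeta_V \cdot f^*(H^1(S_V, \C^*))$ and $W = \zeta_W \cdot g^*(H^1(S_W, \C^*))$; the generic depth on each such component (compare Example \ref{ex:cv surf}) matches $b_1(S_V) - 1 \ge r$ and $b_1(S_W) - 1 \ge s$, respectively.

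Second, since $\xi$ is torsion and lies in $V \cap W$, I would pass to the finite regular abelian cover $p\colon Y \to X$ determined by $\xi$. On $Y$ the character $p^*\xi$ is trivial, and by Galois decomposition $H^*(Y,\C)$ splits into isotypical components indexed by characters of the deck group, with the $\xi$-isotypical piece canonically identified with $H^*(X, \C_\xi)$. Under $p^*$, the components $V$ and $W$ lift to distinct components of $\VV_*(Y)$ through the identity character, corresponding to admissible maps $\tilde f\colon Y \to \tilde S_V$ and $\tilde g\colon Y \to \tilde S_W$. This reduction is the most economical way to access the structure of characteristic varieties at torsion points without developing separate local Hodge-theoretic machinery at $\xi$.

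Third, I would argue that the pullback subspaces $\tilde f^* H^1(\tilde S_V, \C)$ and $\tilde g^* H^1(\tilde S_W, \C)$ are linearly independent inside $H^1(Y, \C)$, and that their relevant summands both lie inside the $\xi$-isotypical piece. The distinctness of $V$ and $W$ (preserved after translation back to the identity on $Y$), combined with the structure of admissible pencils — distinct pencils are distinct up to reparametrization of the target — supplies the required independence. Summing the resulting dimensions inside the $\xi$-isotypical component yields $\dim_{\C} H^1(X, \C_\xi) \ge r + s$, and hence $\xi \in \VV_{r+s}(X)$.

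The hard part will be the third step: ensuring that the independent pullback contributions genuinely land in the correct isotypical piece, rather than merely in $H^1(Y,\C)$ at large. This requires careful tracking of the deck-group equivariance along both $\tilde f$ and $\tilde g$, and invokes the vanishing of the cup product $\tilde f^* H^1(\tilde S_V) \otimes \tilde g^* H^1(\tilde S_W) \to H^2(Y)$, which follows from the isotropy of the resonance components on $Y$ recorded by Theorem \ref{thm:tcone}. Once this vanishing and equivariance are in place, a dimension count in the appropriate isotypical summand closes the argument.
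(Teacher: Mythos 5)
You should first be aware that the paper does not prove this statement at all: it is imported verbatim from Artal Bartolo--Cogolludo--Matei \cite[Proposition 6.9]{ACM}, so there is no internal argument to compare yours with, and your proposal has to stand on its own. On its own, it has genuine gaps, all concentrated in your third step, which is where the entire content of the theorem lives.

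The tool you name for the crucial independence claim does not do the job. Theorem \ref{thm:tcone} is a statement about the first resonance variety of a complex hyperplane arrangement complement (an Orlik--Solomon algebra); it says nothing about a finite cover $Y$ of an arbitrary smooth quasi-projective variety. Even where it applies, it asserts only that each component of $\RR_1$ is isotropic, i.e.\ that cup products vanish \emph{within} a single component; it gives no vanishing of the mixed products $\tilde f^* H^1(\tilde S_V)\cup \tilde g^* H^1(\tilde S_W)$, which are typically nonzero (this is exactly why distinct components meet in only finitely many points), and such a vanishing would in any case not imply the linear independence of the two pullback subspaces that your dimension count needs. So the mechanism you propose for the "hard part" both rests on an inapplicable result and, even granted, would not close the argument.

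The bookkeeping of dimensions is also unsupported, because it ignores the orbifold structure that the structure theorem forces on translated components. For a component $V$ of $\VV_r(X)$ that is a torsion translate of $f^*(H^1(S,\C^*))$, the generic depth is not $b_1(S)-1$ (Example \ref{ex:cv surf} covers only untranslated components with target $\CP^1$ minus points); e.g.\ for orbifold pencils over $\C^*$ the ordinary twisted cohomology of the base contributes nothing, and the depth comes entirely from the multiple fibers. Consequently, to produce a subspace of dimension at least $r$ inside the $\xi$-isotypical piece "coming from $V$", you must know that the torsion character $\xi$ is pulled back from a finite-order character of the \emph{orbifold} fundamental group of the target orbicurve and then compute orbifold twisted cohomology there; your sketch treats the targets as ordinary curves and never establishes this. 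Finally, the independence of the two contributions inside $H^1(X,\C_\xi)$ does not follow from "distinct pencils are distinct up to reparametrization of the target": that is a statement about maps, not a linear-algebra statement about twisted pullbacks in a fixed isotypical summand, and proving it (together with the correct depth contributions) is precisely the marked-orbifold-pencil machinery of \cite{ACM} that your proposal presupposes rather than supplies.
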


The next theorem establishes implication \eqref{a5} $\Rightarrow$ \eqref{a6} 
from Theorem \ref{thm:main1} in the Introduction. 

\begin{theorem}
\label{thm:b3e3}
Let $\A$ be an arrangement that has no rank-$2$ flats 
of multiplicity properly divisible by $3$.  If\, $\beta_3(\A) \le  2$,  
then $\e_3(\A) =\beta_3(\A)$. 
\end{theorem}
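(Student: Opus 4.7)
The plan is to establish the inequality $e_3(\A) \ge \beta_3(\A)$, since the reverse inequality is supplied by the modular bound \eqref{eq:bound bis}. Under the assumption $\beta_3(\A) \le 2$, I would proceed by cases on $\beta_3(\A) \in \{0,1,2\}$; the case $\beta_3(\A) = 0$ is trivial.

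For $\beta_3(\A) = 1$, I would first invoke the already-established matroidal equivalences from Theorem \ref{thm:main1} (whose proof, under the multiplicity hypothesis, goes through Lemmas \ref{lem:rednet} and \ref{lem:lambda}) to conclude that $\A$ supports a reduced $3$-multinet. A direct application of Theorem \ref{thm:be3}\eqref{bb1} with $k=3$ then yields $e_3(\A) \ge k-2 = 1$, as required.

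For $\beta_3(\A) = 2$, the strategy is to exhibit $\rho_3$ as a torsion character lying in the intersection of two distinct positive-dimensional components of $\VV_1(\A)$, and then invoke Theorem \ref{thm:acm}. Part \eqref{a7} of Theorem \ref{thm:main1}, already proved in \S\ref{ssec=55}, gives $|\E_3(\A)| = (3^2-1)/2 = 4 \ge 2$, so I can select two distinct essential components $P, P' \in \E_3(\A)$ realized by $3$-nets $\NN, \NN'$ via admissible maps $f_{\NN}, f_{\NN'}\colon M(\A) \to S$. Under the exponential correspondence recalled in \S\ref{subsec:cv arr}, the components $P$ and $P'$ map to distinct $2$-dimensional subtori $T = f_{\NN}^*(H^1(S,\C^*))$ and $T' = f_{\NN'}^*(H^1(S,\C^*))$ of $\VV_1(\A)$. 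The key observation, extracted from the proof of Theorem \ref{thm:be3}, is that substituting $\zeta_1=\zeta_2=\zeta_3 = e^{2\pi \ii/3}$ (which satisfies $\zeta_1\zeta_2\zeta_3 = 1$) into formula \eqref{eq:char hom}, together with the fact that all multiplicities of a reduced net equal $1$, gives $f_{\NN}^*(\rho) = \rho_3$, and likewise for $f_{\NN'}$. Hence $\rho_3 \in T \cap T'$ is a torsion character in the intersection of two distinct positive-dimensional components of $\VV_1(\A)$.

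Theorem \ref{thm:acm} applied with $r=s=1$ then yields $\rho_3 \in \VV_2(\A)$, and therefore $e_3(\A) = \depth(\rho_3) \ge 2$ by formula \eqref{eq:h1milnor}. I expect the main technical hurdle to be the case $\beta_3(\A) = 2$, where three ingredients must be combined: the sharp essential-component count from Theorem \ref{thm:main1}\eqref{a7}, the explicit placement of $\rho_3$ inside every $3$-net subtorus, and the depth-jumping phenomenon at torsion intersection points supplied by Theorem \ref{thm:acm}. The multiplicity hypothesis enters crucially in securing the first ingredient through Lemma \ref{lem:lambda}.
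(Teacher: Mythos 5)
Your proposal is correct and follows essentially the same route as the paper's proof: the modular bound gives $e_3(\A)\le\beta_3(\A)$, the case $\beta_3(\A)=1$ is handled via Theorem \ref{thm:main1} and Theorem \ref{thm:be3}\eqref{bb1}, and the case $\beta_3(\A)=2$ uses Theorem \ref{thm:main1}\eqref{a7} to produce two distinct $3$-net components of $\VV_1(\A)$ through $\rho_3$, with Theorem \ref{thm:acm} forcing $\rho_3\in\VV_2(\A)$.
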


\begin{proof}
From the modular bound \eqref{eq:bound bis}, we know that 
$\e_3(\A) \le \beta_3(\A)$.  Since we are assuming 
that $\beta_3(\A) \le  2$, there are only three cases to consider.
First, if $\beta_3(\A)=0$, then clearly $e_3(\A)=0$.
Second, if $\beta_3(\A)=1$, then $e_3(\A)=1$, by 
implication \eqref{a3} $\Rightarrow$ \eqref{a1} from 
Theorem \ref{thm:main1} and Theorem \ref{thm:be3}\eqref{bb1}.

Finally, suppose $\beta_3(\A)=2$.  
We then know from Theorem \ref{thm:main1}\eqref{a7}  that the resonance variety 
$\RR_1(\A)$ has at least $4$ essential components, all corresponding 
to $3$-nets on $\A$.  Pick two of them, given by $3$-nets $\NN$ and $\NN'$, 
constructed as in Corollary \ref{cor:res comp}.

By Theorem \ref{thm:be3}, the characteristic variety $\VV_1(\A)$ 
has two positive-dimensional components, $f_{\NN}^*(H^1(S, \C^*))$ 
and $f_{\NN'}^*(H^1(S, \C^*))$, both passing through  the torsion 
character $\rho_3$.  These components must be distinct, since the 
corresponding components of the resonance variety, $f_{\NN}^*(H^1(S, \C))$ 
and  $f_{\NN'}^*(H^1(S, \C))$, are distinct.  By Theorem \ref{thm:acm}, 
then, $\rho_3$ belongs to $\VV_2(\A)$.  Formula \eqref{eq:h1milnor} 
now gives $e_3(\A)\ge 2$.  By the modular bound, $e_3(\A)= 2$, 
and the proof is complete.
\end{proof}

\subsection{From $\beta_2(\A)$ to $e_2(\A)$ and $e_4(\A)$}
\label{subsec:e2beta2}

We are now ready to complete the proof of Theorem \ref{thm:2main1} 
from the Introduction. 

\begin{proof}[Proof of Theorem \ref{thm:2main1}]
The equivalence \eqref{2m1}$\Leftrightarrow$\eqref{2m3} is a direct consequence 
of Theorem \ref{teo=lambdaintro}, case $\k=\FF_4$.

Suppose $\A$ admits a  $4$-net. 
By Theorem \ref{thm:be3}\eqref{bb2}, then, both $e_2(\A)$ 
and $e_4(\A)$ are at least $2$. On the other hand, 
the modular bound \eqref{eq:bound bis} gives that both $e_2(\A)$ and 
$e_4(\A)$ are at most $\beta_2(\A)$. The further assumption that 
$\beta_2(\A)\le 2$ implies that this bound is sharp, and we are done.
\end{proof}

\begin{example}[cf.~\cite{DP11, FY, Yu04}]
\label{ex:hesse}
In Theorem \ref{thm:2main1}, we were guided 
by the properties of the Hessian arrangement. 
This is the arrangement $\A$ of $12$ lines in $\CP^2$ which 
consists of the $4$ completely reducible fibers of the cubic pencil 
generated by $z_1^3+z_2^3+z_3^3$ and $z_1z_2z_3$. 
Each of these fibers is a union of $3$ lines in general position.  
The resulting partition defines a $(4,3)$-net 
on $L_{\le 2}(\A)$, depicted in Figure \ref{fig:hessian}. 
Clearly, $\mult (\A)= \{ 4\}$. 

From the above information, we find that $\beta_2(\A)=2$.  
Using Theorem \ref{thm:2main1}, 
we recover the known result that $\Delta_{\A}(t)= (t-1)^{11}[(t+1)(t^2+1)]^2$.
The Hessian arrangement shows  that the hypothesis 
on multiplicities is needed in Theorem \ref{thm:main0}.  Indeed, 
$\beta_p(\A)=0$ for all primes $p\ne 2$, by Corollary \ref{cor:beta zero}; 
consequently, $\Delta_{\A}(t)\ne  (t-1)^{11}(t^2+t+1)^{\beta_3(\A)}$. 

Finally, we infer from the above discussion that Conjecture \ref{conj:mf} 
holds  for the Hessian arrangement, in the strong form \eqref{eq:delta arr}.
\end{example}

\subsection{More examples}
\label{subsec:examples}

We conclude this section with several applications to  
some concrete classes of examples.  To start with, 
Theorem \ref{thm:main1} provides a partial answer 
to the following question, raised by Dimca, Ibadula, and 
M\u{a}cinic in \cite{DIM}: If $\rho_d \in \VV_{1}(\A)$, 
must $\rho_d$ actually belong to a component of $\VV_{1}(\A)$ 
passing through the origin?  

\begin{corollary}
\label{cor:dimq}
Suppose $L_2(\A)$ has no flats of multiplicity $3r$, for any $r>1$. 
If $\rho_3 \in \VV_{1}(\A)$, then $\rho_3$ belongs to a 
$2$-dimensional component of $\VV_{1}(\A)$, passing 
through $1$.
\end{corollary}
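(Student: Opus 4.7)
The plan is to combine the hypothesis with formula \eqref{eq:h1milnor} in order to produce a reduced $3$-net on $\A$, and then to invoke Theorem \ref{thm:be3} to locate $\rho_3$ inside an explicit $2$-dimensional subtorus of $\VV_1(\A)$ that passes through the origin.

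First I would translate the assumption $\rho_3 \in \VV_1(\A)$ into a lower bound on the Milnor-fiber exponent. By definition, this means $\dim_{\C} H_1(M(\A),\C_{\rho_3}) \ge 1$, and formula \eqref{eq:h1milnor} identifies this dimension with $\e_3(\A)$; hence $\e_3(\A) \ge 1$. Applying the modular bound \eqref{eq:bound bis} with $p=3$, $s=1$ then forces $\beta_3(\A) \ge 1$.

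Next I would feed this into Theorem \ref{thm:main1}. The hypothesis that $L_2(\A)$ contains no flat of multiplicity $3r$ with $r>1$ is exactly the multiplicity assumption of that theorem, so conditions \eqref{a1}, \eqref{a2}, \eqref{a3} are equivalent on $\A$. From $\beta_3(\A) \ge 1$ I get condition \eqref{a2}, namely the existence of a $3$-net $\NN$ on $L_{\le 2}(\A)$, i.e., a reduced $3$-multinet on $\A$.

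Finally I would produce the desired component. Let $f_{\NN}\colon M(\A)\to S$, with $S=\CP^1\setminus\{\text{$3$ points}\}$, be the admissible map associated to $\NN$ as in \S\ref{subsec:pen multi}. By the discussion in \S\ref{subsec:cv arr}, the subtorus $T:=f_{\NN}^*(H^1(S,\C^*))$ is an irreducible component of $\VV_1(\A)$ passing through the trivial character, of dimension $\dim H^1(S,\C^*)= k-1 = 2$. Theorem \ref{thm:be3}\eqref{bb1}, applied with $k=3$, yields $\rho_3 \in T$. Thus $\rho_3$ lies in a $2$-dimensional component of $\VV_1(\A)$ passing through $1$, as required. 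No step is a serious obstacle here; the content of the corollary is really a packaging of Theorem \ref{thm:main1} (to manufacture the net) together with Theorem \ref{thm:be3} (to place $\rho_3$ on the resulting subtorus).
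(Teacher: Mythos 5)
Your proposal is correct and follows the paper's own argument essentially verbatim: use \eqref{eq:h1milnor} and the modular bound \eqref{eq:bound bis} to get $\beta_3(\A)\neq 0$, invoke Theorem \ref{thm:main1} under the multiplicity hypothesis to produce a ($3$-net, hence reduced) $3$-multinet $\NN$, and then apply Theorem \ref{thm:be3} to place $\rho_3$ in the $2$-dimensional subtorus $f_{\NN}^*(H^1(S,\C^*))$ through the origin. No substantive differences from the paper's proof.
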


\begin{proof}
Since $\rho_3 \in \VV_{1}(\A)$, formulas \eqref{eq:h1milnor}--\eqref{eq:bound bis} 
imply that $\beta_3(\A)\neq 0$.  By Theorem \ref{thm:main1}, then, 
$\A$ supports a reduced $3$-multinet $\NN$. By Theorem \ref{thm:be3},
the character $\rho_3$ belongs to the $2$-dimensional subtorus 
$f_{\NN}^*(H^1(S, \C^*))\subset \VV_{1}(\A)$.
\end{proof}

\begin{remark}
\label{rem:fourth}
In \cite{Di}, A. Dimca used superabundance methods to analyze the 
algebraic monodromy action on $H_1(F(\A),\C)$, for an arrangement $\A$ 
which has at most triple points, and which admits a reduced $3$-multinet. 
In the case when $\abs{\A}=18$, he discovered an interesting type of 
combinatorics, for which he proved the following dichotomy result: 
there are two possibilities for $\A$, defined in superabundance terms, 
and leading to different values for $e_3(\A)$. 

An example due to M. Yoshinaga and recorded in \cite{Di} shows 
that one of these two cases is actually realizable. Our Theorem \ref{thm:main0} 
then shows that the other case is not realizable, thereby answering the 
subtle question raised in \cite[Remark 1.2]{Di}. This indicates that 
our topological approach may also be used to solve difficult 
superabundance problems. 
\end{remark}

\begin{example}
\label{ex:cevad}
Let $\{\A_m\}_{m\ge 1}$ be the family of 
monomial arrangements, corresponding to the complex reflection 
groups of type $G(m,m,3)$, and defined by the polynomials 
\[
Q_m=(z_1^m-z_2^m)(z_1^m-z_3^m)(z_2^m-z_3^m).
\]
As noted in \cite{FY}, each arrangement $\A_m$ supports a $(3,m)$-net 
with partition given by the factors of $Q_m$, and has Latin square 
corresponding to $\Z_m$. There are $3$ mono-colored flats of multiplicity $m$,
and all the others flats in $L_2(\A_m)$ have multiplicity $3$.

With this information at hand, Lemma \ref{lem:eqs} easily implies that 
$\beta_3(\A_m)=1$ if $3 \nmid m$, and $\beta_3(\A_m)=2$, otherwise.
In the first case, we infer from Theorem \ref{thm:main1} 
that $e_3(\A_m)=1$.  If $m=3$, then $\A_3$ is the Ceva arrangement from 
Example \ref{ex:third}; in this case, Theorem \ref{thm:main1} shows that 
$e_3(\A_3)=2$.  Finally, if $m=3d$, with $d>1$, the multiplicity assumption 
from Theorem \ref{thm:main1} no longer holds;  nevertheless, the 
methods used here can be adapted to show that $e_3(\A_{3d})=2$, 
for all $d$.  In fact, it can be shown that $e_p(\A_m)=\beta_p(\A_m)$, for 
all $m\ge 1$ and all primes $p$, see \cite{MPP} for full details.
\end{example}

We conclude with an in-depth analysis of a family of arrangements 
which highlights the necessity of our multiplicity assumptions from 
Theorem \ref{thm:main1}, and reveals several other interesting 
phenomena.

\begin{example}
\label{ex:B3 bis}
Let $\{\A_m\}_{m\ge 1}$ be the family of full monomial arrangements, 
corresponding to the complex reflection groups of type $G(m,1,3)$, 
and defined by the polynomials 
\[
Q_m=z_1z_2z_3(z_1^m-z_2^m)(z_1^m-z_3^m)(z_2^m-z_3^m).
\]
It is easy to see that $\mult (\A_m)= \{ 3,m+2 \}$.  
Using Lemma \ref{lem:eqs}, we infer that $\beta_3(\A_m)=1$ 
if $m=3d+1$, and $\beta_3(\A_m)=0$, otherwise.

As noted in \cite{FY}, each arrangement $\A_m$ supports a
$3$-multinet $\NN_m$ (non-reduced for $m>1$), with multiplicity 
function equal to $m$ on the hyperplanes $z_1=0$, $z_2=0$, $z_3=0$, 
and equal to $1$, otherwise. Let $f\colon M(\A_m)\to 
S= \CP^1\setminus \{\text{$3$ points}\}$ be the associated 
admissible map, and let $\rho\in H^1(S, \C^*)$ be the
diagonal character used in the proof of Theorem \ref{thm:be3} for $k=3$.

If $m=1$, then $\A_1$ is the  reflection arrangement of type 
$\operatorname{A}_{3}$ from Example \ref{ex:graphic}, and 
$\NN_1$ is the $3$-net from Figure \ref{fig:braid}.  In this 
case, Theorem \ref{thm:main1} applies, giving $e_3(\A)=\beta_3(\A)=1$.  

Now suppose $m=3d+1$, with $d>0$.  In this case, even though $\NN_m$ 
is not reduced, the equality $f^*(\rho)=\rho_3$ still holds, 
since $m\equiv 1 \bmod 3$. Consequently, $\rho_3$ 
belongs to the component $T=f^*(H^1(S,\C^*))$ of $\VV_1(\A_m)$. 
Hence, by formula \eqref{eq:h1milnor}, $e_3(\A_m)\ge 1$.  

Although $\beta_3(\A_m)=1$, we claim that $\A_m$ 
supports no reduced $3$-multinet. Indeed,
suppose there was such a multinet $\NN'_m$, with 
corresponding admissible map $f' \colon M(\A_m)\to S$. 
The same argument as above shows that $\rho_3$ 
lies in the component $T'=f'^*(H^1(S,\C^*))$ of $\VV_1(\A)$. 
If $T= T'$, then $f^*(H^1(S,\C))=f'^*(H^1(S,\C))$, forcing $\NN_m$ 
and $\NN'_m$ to be conjugate under the natural $\Sigma_3$-action, 
by Lemma \ref{lem=fibres}. This is clearly impossible, since 
$\NN'_m$ is reduced and $\NN_m$ is not reduced. Hence, 
the components $T$ and $T'$ are distinct, and so  
Theorem \ref{thm:acm} implies that $\rho_3\in \VV_2(\A_m)$. 
By formula \eqref{eq:h1milnor}, we must then have $\e_3(\A_m)\ge 2$, 
thereby contradicting inequality \eqref{eq:bound bis}. 
\end{example}

\begin{remark}
\label{rem:mono1}
The above family of examples shows that implication 
\eqref{a3} $\Rightarrow$ \eqref{a1} from Theorem \ref{thm:main1} 
fails without our multiplicity restrictions.  Indeed, $\beta_3(\A_m)=1$, 
yet $\A_m$ supports no reduced $3$-multinet if $m\ge 2$.
\end{remark}

\begin{remark}
\label{rem:mono2}
These examples also show that the inclusion $Z'_{\FF_3}(\M) \subseteq Z_{\FF_3}(\M)$
from Theorem \ref{teo=lambdaintro}\eqref{li2} can well be strict, even for realizable 
matroids.  Indeed, consider the arrangements $\A_{3d+1}$ with $d>0$.
The equality $Z'_{\FF_3}(\A_{3d+1})= Z_{\FF_3}(\A_{3d+1})$
would imply that $Z'_{\FF_3}(\A_{3d+1})\setminus  B_{\FF_3}(\A_{3d+1}) \ne \emptyset$,
since $\beta_3 (\A_{3d+1})\ne 0$. By Lemma \ref{lema=lambdabij}, 
$\A_{3d+1}$ would then support a $3$-net, contradicting the conclusion 
of the previous remark.
\end{remark}

\begin{remark}
\label{rem:mono3}
Note that the inequality \eqref{eq=essboundintro} 
is strict for $\A_{3d+1}$ and $k=3$ when $d>0$, since 
$\E_3 (\A_{3d+1})= \emptyset$ and $\beta_3 (\A_{3d+1})=1$. 
The same argument also shows that equality \eqref{a7} from Theorem \ref{thm:main1} 
fails without our multiplicity assumptions.
\end{remark}

\begin{remark}
\label{rem:monomial}
The above analysis shows that $e_3(\A_m)=1$, and thus \eqref{eq:bound bis} 
holds as an equality if $3\mid m+2$.  We also have that  
$e_3(\A_m)=\beta_3(\A_m)=0$ if $3\nmid m+2$. 
In fact, it can be checked that Conjecture \ref{conj:mf} holds in 
the strong form \eqref{eq:delta arr}, for all full monomial arrangements. 
For a complete proof of Conjecture \ref{conj:mf}\eqref{eq:delta arr} for all complex 
reflection arrangements, we refer to \cite{MPP, D16, DS}.
\end{remark}

\section{A family of matroids}
\label{sec:matr}

We conclude by constructing an infinite family of matroids $\M(m)$ 
which are realizable over $\C$ if and only if $m\le 2$, and with the 
property that $\beta_3(\M(m))=m$.  As an application, we 
establish part \eqref{a5} of Theorem \ref{thm:main1} from 
the Introduction, thereby completing the proof of that theorem. 

\subsection{Matroids coming from abelian groups}
\label{ss71}

Given a finite abelian group $G$ and an integer $m\ge 1$, there is 
a simple matroid $\M(G, m)$ of rank at most $3$ on the product 
group $G^m$. The dependent subsets of size $3$ of this matroid 
are all $3$-tuples $\{ v,v',v'' \}$ for which $v+ v' +v''=0$.  

In this section, we take $G=\FF_3$ and omit $G$ from the notation.  
A useful preliminary remark is that $v+v'+v''=0$ in $\FF_3$ if and 
only if either $v=v'=v''$ or $v$, $v'$, and $v''$ are all distinct. 
Note also that $\GL_m(\FF_3)$ acts naturally on the matroid $\M(m)$.

Clearly, $\M(1)$ has rank $2$, and is realized in $\CP^2$ by an arrangement of $3$ 
concurrent lines. It is equally clear that $\M(m)$ has rank $3$, for all $m>1$. 
It is not hard to check that $\M(2)$ is realized over $\C$ by the Ceva arrangement 
from Example \ref{ex:third}. Our first goal is to show that $\M(m)$
cannot be realized over $\C$, for any $m>2$. 

It is useful to remark that the above construction is related to a classical topic, 
namely, finite affine geometries. Over a Galois field $\k=\FF_{p^s}$, 
the dependent subsets of the associated rank $m+1$ simple matroid $\AG (m, \k)$ 
are given by affine dependence in $\k^m$, for $m\ge 1$. 
Let $\M_{\k}(m)$ be the simple matroid of rank at most $3$ on $\k^m$ 
whose size $3$ dependent subsets are given by all collinearity relations. 
Plainly, all $2$-flats of $\M_{\k}(m)$ have multiplicity $p^s$. Furthermore,  $\M_{\k}(1)$ 
has rank $2$, hence is realizable in $\CP^2$, while $\M_{\k}(m)$ has rank $3$ 
for $m>1$, being obtained from $\AG (m, \k)$ by rank $3$ truncation.

It is straightforward to check that $\M(m)= \M_{\FF_3} (m)$, for all $m$; 
see for instance \cite{DM}.  With this remark, the next two lemmas (to be 
used later on in this section) become obvious from a geometric viewpoint.

For each $m\ge 1$, define a map 
\begin{equation}
\label{eq=defq}
q\colon \M(m) \times \M(m) \rightarrow \M(m)
\end{equation}
by setting $q(v,v)=v$ and $q(v,v')=-v-v'$ if $v\neq v'$. 
By construction, 
$q(v,v')$ is the unique point in $v\vee v'$ (the flat generated by $v,v'$) 
different from $v$ and $v'$.  

\begin{lemma}
\label{lem=mflats}
For any $v\neq v'$, the set  $\{ v,v',q(v,v') \}$ belongs to $ L_2(\M(m))$. 
Conversely, every rank $2$ flat of $\M(m)$ is of this form. In particular, 
all flats in $L_2(\M(m))$ have multiplicity $3$.
\end{lemma}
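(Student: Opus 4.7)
The plan is to unpack the matroid's circuit structure and exploit the fact that the equation $v+v'+v''=0$ in $\FF_3$ determines the third element uniquely once two distinct ones are fixed.

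First I would handle the forward direction. Given $v\ne v'$, set $v''=q(v,v')=-v-v'$; then $v+v'+v''=0$ by construction. To apply the preliminary remark and conclude $\{v,v',v''\}$ is a dependent $3$-subset (hence a circuit, since every pair is independent in the simple matroid $\M(m)$), I need to know that $v,v',v''$ are pairwise distinct. But $v''=v$ would give $2v+v'=0$, i.e.\ $v'=v$ (using $\ch \FF_3=3$), contradicting $v\ne v'$; and symmetrically $v''\ne v'$. So $\{v,v',v''\}$ is a circuit. Next I would show this set is closed. Suppose $w\notin\{v,v',v''\}$ lay in $\overline{\{v,v'\}}$; then some triple among $\{v,v',w\}$, $\{v,v'',w\}$, $\{v',v'',w\}$ would be a circuit, and each case forces $w\in\{-v-v',-v-v'',-v'-v''\}=\{v'',v',v\}$ (using again $v+v'+v''=0$), a contradiction. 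Hence $\{v,v',v''\}$ is closed; since it contains the independent pair $\{v,v'\}$ and is dependent, its rank is exactly $2$.

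For the converse, let $X\in L_2(\M(m))$. Pick any two distinct elements $v,v'\in X$ (possible since $\rank X=2$). The closure $\overline{\{v,v'\}}=\{v,v',q(v,v')\}$ is a rank-$2$ flat contained in $X$; by minimality of rank, equality $X=\{v,v',q(v,v')\}$ follows. The multiplicity statement is then immediate.

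I don't anticipate a real obstacle; the only subtle point is the closure argument, and that reduces to the observation that any two elements of $\M(m)$ determine the unique third element with which they form a circuit. Everything else is bookkeeping in $\FF_3$.
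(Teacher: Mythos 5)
Your proof is correct and follows essentially the same route as the paper's: check that $v$, $v'$, $q(v,v')$ are pairwise distinct via the same computation in $\FF_3^m$, observe that $q(v,v')$ is the unique third point forming a dependent triple with $v$ and $v'$, and deduce both directions from this uniqueness. You simply spell out in more detail the closure verification and the converse (picking two distinct points of a rank-$2$ flat), which the paper compresses into ``the desired conclusions follow at once.''
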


The matroids $\M(m)$ have a lot of $3$-nets. More precisely, 
fix $a\in [m]$ and write
\begin{equation}
\label{eq=mpart}
\M(m)= \coprod_{i\in \FF_3} \M_i(m),
\end{equation}
where $\M_i(m)= \{ v=(v_1,\dots,v_m) \in \FF_3^m \mid v_a=i \}$. 
We then have the following lemma.

\begin{lemma}
\label{lem=mnets}
For each $m\ge 2$ and $a\in [m]$, the partition \eqref{eq=mpart} 
defines a $3$-net on $\M(m)$, with all submatroids $\M_i(m)$ 
being line-closed and isomorphic to $\M(m-1)$. 
\end{lemma}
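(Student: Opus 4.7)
The plan is to verify the three assertions separately, all via direct inspection of the arithmetic in $\FF_3^m$, using Lemma \ref{lem=mflats} as the main structural input.

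First, to show that \eqref{eq=mpart} defines a $3$-net, I will invoke the criterion from Lemma \ref{lem:lsq}: it suffices to check that, for any $u\in \M_i(m)$ and $v\in \M_j(m)$ with $i\neq j$, and any $k\in \FF_3$, the rank-$2$ flat $u\vee v$ meets $\M_k(m)$ in exactly one element. By Lemma \ref{lem=mflats}, $u\vee v=\{u,v,q(u,v)\}=\{u,v,-u-v\}$, so the $a$-th coordinates of its three elements are $i$, $j$, and $-i-j$. Since $i\neq j$ in $\FF_3$ and $i+j+(-i-j)=0$, these three coordinates are precisely the three elements of $\FF_3$ (three elements of $\FF_3$ summing to zero are either all equal or all distinct, and the first possibility is excluded by $i\neq j$). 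Hence the $a$-coordinate map $u\vee v\to \FF_3$ is a bijection, which is exactly the required property.

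Next, I will verify that each $\M_i(m)$ is line-closed in $\M(m)$. Take a rank-$2$ flat $X$ of the submatroid $\M_i(m)$, so $X$ is the closure in $\M_i(m)$ of two distinct elements $u,v\in \M_i(m)$. The flat $u\vee v$ in $\M(m)$ equals $\{u,v,-u-v\}$ by Lemma \ref{lem=mflats}, and the third point $-u-v$ has $a$-th coordinate $-i-i=-2i=i$ in $\FF_3$, hence lies in $\M_i(m)$. Therefore the closure of $\{u,v\}$ in $\M_i(m)$ already equals $\{u,v,-u-v\}$, which is closed in $\M(m)$; this shows $X\in L_2(\M(m))$, as required.

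Finally, for the isomorphism $\M_i(m)\cong \M(m-1)$, I will use the bijection
\[
\phi\colon \M_i(m)\longrightarrow \FF_3^{m-1}=\M(m-1)
\]
that deletes the $a$-th coordinate. If $\{u,v,w\}$ is a dependent triple in $\M_i(m)$, then $u+v+w=0$ in $\FF_3^m$, so $\phi(u)+\phi(v)+\phi(w)=0$ in $\FF_3^{m-1}$, making the image dependent in $\M(m-1)$. Conversely, if $\phi(u)+\phi(v)+\phi(w)=0$ for $u,v,w\in \M_i(m)$, then in the $a$-th coordinate we have $i+i+i=3i=0$, so $u+v+w=0$ in $\FF_3^m$ and the triple is dependent in $\M_i(m)$. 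Since both matroids are simple of the same rank, $\phi$ is an isomorphism of matroids.

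There is no serious obstacle here; the only point requiring care is the characterization of rank-$2$ flats in $\M(m)$ via the map $q$, but this has already been established in Lemma \ref{lem=mflats}, so the remaining verifications reduce to elementary $\FF_3$-arithmetic.
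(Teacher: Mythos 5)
Your proof is correct and follows essentially the same route as the paper's: the net property is checked via Lemma \ref{lem:lsq} and Lemma \ref{lem=mflats} by observing that the $a$-th coordinates of $u$, $v$, and $-u-v$ exhaust $\FF_3$, and the isomorphism $\M_i(m)\cong\M(m-1)$ is obtained by matching dependent triples under deletion (resp.\ insertion) of the $a$-th coordinate. The only harmless divergence is the line-closedness claim, which you verify by the direct computation $-i-i=i$ in $\FF_3$, whereas the paper simply invokes the general fact that the parts of a net are line-closed, Lemma \ref{lem:net props}\eqref{n3}.
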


\begin{proof}
First note that an affine line whose direction is transversal to the
direction of an affine hyperplane intersects that hyperplane in exactly 
one point.  It follows that our partition satisfies the criterion from 
Lemma \ref{lem:lsq} for $k=3$, and thus defines a $3$-net on $\M(m)$. 
The proofs of the other claims are immediate.
\end{proof}

\begin{remark}
\label{rk=kge5}
More generally, for $\k=\FF_{p^s}$ and $k=p^s$, the same argument 
shows that the affine truncation $\M_{\k} (m)$ supports a non-trivial $k$-net, 
provided $m\ge 2$ and $k\ge 3$. As explained in \S\ref{subsec:red multi}, 
this fact implies that $\M_{\k} (m)$ is non-realizable over $\C$, for $k\ge 5$.  
We will give more precise results of this type in Proposition \ref{prop:mpm}\eqref{mat2} 
and  Theorem \ref{thm=mnotr} below.
\end{remark}

\subsection{The $\beta$-invariants and matroid realizability}
\label{subsec:beta realize}

Next, we show that our family of matroids is universal for 
$\beta_3$-computations, in the following sense.

\begin{prop}
\label{prop=mb3}
For all $m\ge 1$, we have that $\beta_3(\M(m))=m$.
\end{prop}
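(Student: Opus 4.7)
The plan is to identify $Z_{\FF_3}(\M(m))$ with the $(m+1)$-dimensional space of $\FF_3$-affine functionals on $\FF_3^m$, from which $\beta_3(\M(m)) = (m+1) - 1 = m$ follows after dividing by the line $B_{\FF_3}(\M(m))$ of constants.

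By Lemma \ref{lem=mflats}, every flat $X \in L_2(\M(m))$ has cardinality $3$ and is of the form $\{v, v', -(v+v')\}$ with $v \ne v'$. Since $3 \mid \abs{X}$, the defining system \eqref{eq:zsigma} for $Z_{\FF_3}(\M(m))$ reduces to the single family of relations
\[
\tau(v) + \tau(v') + \tau(v'') = 0 \quad \text{whenever $v + v' + v'' = 0$ in $\FF_3^m$ with $v, v', v''$ distinct.}
\]
For the lower bound, I would observe that any $\FF_3$-affine functional $\tau(v) = L(v) + c$, with $L\colon \FF_3^m \to \FF_3$ linear, automatically satisfies these relations, since $\tau(v) + \tau(v') + \tau(v'') = L(v + v' + v'') + 3c = 0$. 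This embeds the $(m+1)$-dimensional space of affine functionals into $Z_{\FF_3}(\M(m))$, yielding $\beta_3(\M(m)) \ge m$.

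For the upper bound, given $\tau \in Z_{\FF_3}(\M(m))$, I would set $L(v) := \tau(v) - \tau(0)$ and show that $L$ is $\FF_3$-linear. Applying the relation to $\{v, -v, 0\}$, which is a genuine triple in $L_2(\M(m))$ for $v \ne 0$ (since $v = -v$ forces $v = 0$ in $\FF_3^m$), gives $L(-v) = -L(v)$. For $v \ne w$ with $v + w \ne 0$, the elements $v, w, -(v+w)$ are pairwise distinct (in $\FF_3$, $v = -(v+w)$ would force $w = -2v = v$), so the relation applied to this triple yields $L(v) + L(w) + L(-(v+w)) = 0$, i.e., $L(v+w) = L(v) + L(w)$. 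The degenerate cases $v = w$ (use $L(2v) = L(-v) = -L(v) = 2L(v)$) and $v + w = 0$ (use $L(0) = 0$ together with $L(-v) = -L(v)$) follow directly. Thus $L$ is additive, hence $\FF_3$-linear, so $\tau$ is affine.

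There is no serious obstacle here: the conceptual content is entirely packaged inside Lemma \ref{lem:eqs}, and the only mild care needed lies in the bookkeeping of degenerate triples in the additivity verification above.
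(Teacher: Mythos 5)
Your proof is correct, and for the upper bound it takes a genuinely different route from the paper. The lower bound is essentially the same: your affine functionals are spanned by the coordinate cocycles $\eta_a(v)=v_a$ together with $\sigma$, which is exactly the independent family the paper exhibits (and your verification that affine functions satisfy the cocycle equations is the same computation, relying on Lemma \ref{lem=mflats} to know that every $2$-flat is a triple $\{v,v',-(v+v')\}$, so the system \eqref{eq:zsigma} is as you state). The difference is in the inequality $\beta_3(\M(m))\le m$: the paper gets it by induction on $m$, combining the $3$-nets of Lemma \ref{lem=mnets} (whose parts are line-closed copies of $\M(m-1)$) with Corollary \ref{cor:betabounds}, which in turn rests on the exact sequence of Proposition \ref{prop:ker1}. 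You instead compute $Z_{\FF_3}(\M(m))$ outright, showing every cocycle is affine: the additivity argument via the flats $\{v,-v,0\}$ and $\{v,w,-(v+w)\}$, with the degenerate cases handled as you do, is sound (in particular $v=-(v+w)$ forcing $w=v$ uses $-2=1$ in $\FF_3$ correctly), and additivity plus $L(0)=0$ gives $\FF_3$-linearity. What your approach buys is a sharper, self-contained statement — the identification $Z_{\FF_3}(\M(m))=\{\text{affine functionals on }\FF_3^m\}$, which strengthens to an equality the one-way inclusion proved for general $\FF_{p^s}$ in Proposition \ref{prop:mpm}\eqref{mat1} — with no induction and no net machinery; what the paper's route buys is economy, since Lemma \ref{lem=mnets}, Proposition \ref{prop:ker1} and Corollary \ref{cor:betabounds} are already in place and are needed elsewhere in Section \ref{sec:matr} anyway.
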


\begin{proof}
For $m=1$, this is clear. The inequality $\beta_3(\M(m))\le m$ follows 
by induction on $m$, using Corollary \ref{cor:betabounds} and 
Lemma \ref{lem=mnets}.  

For each $a\in [m]$, define a vector $\eta_a\in \FF_3^{\M(m)}$ 
by setting $\eta_a(v)=v_a$. We claim that $\eta_a\in Z_{\FF_3}(\M(m))$. 
Indeed, let $ \{ v,v',v'' \}$ be a flat in $L_2(\M(m))$, so that $v+v'+v''=0$. 
Then $\eta_a(v)+\eta_a(v')+\eta_a(v'')=0$, and the claim follows  
from Lemma \ref{lem:eqs}. 

To prove that $\beta_3(\M(m))\ge m$, it is enough to show 
that $\eta_1,\dots,\eta_m$ and $\sigma$ 
are independent, where $\sigma \in \FF_3^{\M(m)}$ is the standard 
diagonal vector (i.e., $\sigma_v = 1$, for all $v\in \M(m)$). 
To that end, suppose $\sum_a c_a \eta_a + c\sigma =0$.  Evaluating on $v=0$, 
we find that $c=0$. Finally, evaluation on the standard basis vectors 
of $\FF_3^m$ gives $c_a=0$ for all $a$, as needed.
\end{proof}

\begin{prop}
\label{prop:mpm}
Let  $\k=\FF_{p^s}$ be a finite field different from $\FF_2$. Then,  
\begin{enumerate}
\item \label{mat1}
$\beta_p(\M_{\k} (m))\ge m$, for all $m\ge 1$.
\item \label{mat2}
If $\k \ne \FF_3$, the matroids $\M_{\k} (m)$ are non-realizable over $\C$, 
for all $m\ge 2$.
\end{enumerate}
\end{prop}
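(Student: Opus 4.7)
For part \eqref{mat1}, I would mirror the proof of Proposition \ref{prop=mb3}, replacing scalar-valued coordinates by $\FF_p$-linear images thereof. Fix an $\FF_p$-linear functional $\phi\colon \k\to \FF_p$ with $\phi(1)\ne 0$ (obtained by extending $\{1\}$ to an $\FF_p$-basis of $\k$ and dualizing), and for each $a\in [m]$ set $\eta_a(v)=\phi(v_a)\in \FF_p^{\M_{\k}(m)}$. Every flat in $L_2(\M_{\k}(m))$ is an affine line $X=\{u+tw : t\in \k\}$ in $\k^m$ of multiplicity $p^s$, so the relevant cocycle condition in \eqref{eq:zsigma} becomes $\sum_{t\in \k}\phi(u_a+tw_a)=0$; $\FF_p$-linearity of $\phi$ expands this to $p^s\phi(u_a)+\phi(w_a\cdot \Sigma(\k))$, which vanishes because $p\mid p^s$ and $\Sigma(\k)=0$ for $\k\ne \FF_2$ (Lemma \ref{lema=sigmazero}). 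Linear independence of $\{\sigma,\eta_1,\dots,\eta_m\}$ modulo $B_{\FF_p}$ then follows by evaluating a hypothetical dependence at $v=0$ (killing the $\sigma$-coefficient) and at each standard basis vector $e_b$ of $\k^m$ (killing the $\eta_b$-coefficient, using $\phi(1)\ne 0$).

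For part \eqref{mat2}, I would first extend Lemma \ref{lem=mnets}: for $m\ge 2$ and any $a\in[m]$, the partition of $\M_{\k}(m)$ by the $a$-th coordinate yields a non-trivial $|\k|$-net whose parts are line-closed submatroids isomorphic to $\M_{\k}(m-1)$. When $|\k|=p^s\ge 5$, Yuzvinsky's theorem \cite{Yu09}---which forbids non-trivial $k$-nets with $k\ge 5$ on realizable matroids---rules out realizability of $\M_{\k}(m)$ at once.

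The remaining and genuinely delicate case is $\k=\FF_4$. Iterating the line-closed embedding $\M_{\FF_4}(m-1)\inj \M_{\FF_4}(m)$ down to $m=2$, and using that the subarrangement cut out by a line-closed submatroid of a realizable arrangement realizes that submatroid (the corresponding rank-$2$ flats coincide), it suffices to exclude realizability of $\M_{\FF_4}(2)$. Any hypothetical realization would be an arrangement $\A$ whose projectivization consists of $n=16$ complex lines, every pairwise intersection lying on a $4$-fold point, yielding $t_4=\binom{16}{2}/\binom{4}{2}=20$ and $t_r=0$ for all $r\ne 4$. This contradicts Hirzebruch's inequality $t_2+\tfrac{3}{4}t_3\ge n+\sum_{r\ge 5}(2r-9)t_r$, valid for complex line arrangements with $t_n=t_{n-1}=0$, which degenerates here to $0\ge 16$. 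The $\FF_4$ case is the main obstacle, since net-based obstructions alone do not settle it, and a projective-geometric input (Hirzebruch's inequality, or an equivalent Bogomolov--Miyaoka--Yau-type estimate) must be brought in.
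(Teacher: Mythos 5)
Your proof is correct, but it takes a genuinely different route from the paper, especially for part \eqref{mat2}.

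For part \eqref{mat1}, the paper shows directly that the $\k$-valued coordinate functions $v\mapsto v_a$ (and more generally all affine $\k$-valued functions) lie in $Z_{\k}(\M_{\k}(m))$, and then invokes the fact that $\beta_{\k}$ depends only on $\ch(\k)$. You instead postcompose with an $\FF_p$-linear functional $\phi$ with $\phi(1)\ne 0$ to produce $\FF_p$-valued cocycles directly. The computation $\sum_{t\in\k}\phi(u_a+tw_a)=p^s\phi(u_a)+\phi(w_a\Sigma(\k))=0$ is right, and the independence check is the same as the paper's. This is a cosmetic variant; either way one is really using $\Sigma(\k)=0$.

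For part \eqref{mat2}, the paper applies the Hirzebruch inequality \eqref{eq=hmy} uniformly: since all rank-$2$ flats of $\M_{\k}(m)$ have multiplicity $p^s\ge 4$ and $m\ge 2$, the left side of \eqref{eq=hmy} vanishes while the right side is $\ge p^{sm}>0$, a contradiction for \emph{every} $p^s\ge 4$, including $\FF_4$. You instead split into cases: for $p^s\ge 5$ you produce a non-trivial $p^s$-net (the coordinate partition, with base locus of size $>1$) and invoke Yuzvinsky's bound \cite{Yu09} that realizable matroids carry no non-trivial $k$-nets with $k\ge 5$; for $\FF_4$ you fall back on Hirzebruch. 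This is a legitimate alternative, and the Yuzvinsky route is perhaps conceptually cleaner for large $p^s$, but the paper's argument is more uniform and needs no case split. Two small remarks: (a) your reduction to $m=2$ via the line-closed submatroid embedding is valid (any submatroid of a realizable matroid is realizable) but unnecessary, since the Hirzebruch count already degenerates to a contradiction for all $m\ge 2$; (b) your coefficient $(2r-9)$ in the Hirzebruch-type inequality differs from the classical $(r-4)$ used in the paper---you appear to be citing a strengthened version---but since all $t_r$ with $r\ne 4$ vanish in your application, the discrepancy is immaterial.
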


\begin{proof}
From our hypothesis on $\k$, we have that $\Sigma(\k)=0$, 
by Lemma \ref{lema=sigmazero}.  
In view of Lemma \ref{lem:eqs}, any affine function $\tau \in \k^{\M_{\k} (m)}$
belongs to $Z_{\k}(\M_{\k} (m))$. Indeed, let 
$X= \{ \alpha u + (1-\alpha)v \mid \alpha \in \k \}$ be a rank-$2$ flat of $\M_{\k} (m)$.
Then 
\[
\sum_{w\in X} \tau_w= \sum_{\alpha \in \k} \alpha \tau_u + (1-\alpha) \tau_v=
p^s \cdot \tau_v + \Sigma(\k) (\tau_u -\tau_v)=0, 
\] 
as needed.  An argument as in the proof of Proposition \ref{prop=mb3} 
now shows that assertion \eqref{mat1} holds.

To prove assertion \eqref{mat2}, we use the Hirzebruch--Miyaoka--Yau 
inequality from \cite{Hi}.  This inequality involves the numbers 
\begin{equation}
\label{eq:ti}
t_i = \abs{\{ X\in L_2(\M) \mid \abs{X}=i \}}
\end{equation}
associated to a matroid $\M$ for each $1<i \le \abs{\M}$.  When there 
are no rank-$2$ flats of multiplicity $\abs{\M}$ or $\abs{\M}-1$, and 
$\M$ is realized by a line arrangement in  $\CP^2$, then 
\begin{equation}
\label{eq=hmy}
t_2+\frac{3}{4} t_3 \ge \abs{\M}+ \sum_{i\ge 4} (i-4)t_i .
\end{equation}

In our case, $\abs{\M}=p^{sm}$, and the only non-zero number 
$t_i$ occurs when $i=p^s>3$.  This clearly violates \eqref{eq=hmy}, 
thus showing that $\M$ is not realizable. (Note that this argument breaks down
for $\k=\FF_3$.)
\end{proof}

\subsection{Realizability of the $\M(m)$ matroids}
\label{subsec:realize}

The case $\k=\FF_3$ is much more subtle. In order to proceed 
with this case, we need to recall a result of 
Yuzvinsky (Corollary 3.5 from \cite{Yu04}).  Let $\A$ be an 
arrangement in $\C^3$. 
 
\begin{lemma}[\cite{Yu04}]
\label{lem:yuz}
Let $\NN$ be a $(3,mn)$-net ($m\ge 3$) on $\A$, such 
that each class $\A_i$ can be partitioned into $n$ blocks of size $m$, 
denoted $\A_{ij}$, and for every pair $i,j$, there is a $k$ such that 
$\A_{1i}$, $\A_{2j}$, and $\A_{3k}$ are the three classes of a $(3,m)$-subnet 
$\NN_{ij}$ of $\NN$.  If, moreover, each class of $\NN_{ij}$ is a pencil, 
then every class of $\NN$ is also a pencil.
\end{lemma}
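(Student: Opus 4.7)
My plan is a direct geometric argument leveraging the $3$-net axioms of $\NN$ together with the pencil structure of each subnet class to force global coincidence of the pencil base points.

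For each $(\alpha,j) \in [3] \times [n]$, let $P_{\alpha j}$ denote the common point of the pencil $\A_{\alpha j}$. I would first show that $P_{\alpha j}$ is a mono-colored rank-$2$ flat of $\A$ of multiplicity exactly $m$: any line from a different class of $\NN$ passing through $P_{\alpha j}$ would place $P_{\alpha j}$ in the base locus of $\NN$, whose net axioms force exactly one line per class, which is incompatible with the $m \ge 3$ lines of $\A_{\alpha j}$ already through $P_{\alpha j}$. Hence the $3n$ points $P_{\alpha j}$ are mono-colored flats of $\A$, each of multiplicity $m$.

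Next, I would exploit the rigidity of three-pencil $(3,m)$-subnets. For each $(i,j)\in [n]^2$, the subnet $\NN_{ij}$ realizes a Latin-square correspondence on the three pencils through $P_{1i}$, $P_{2j}$, and $P_{3,k(i,j)}$: given $\ell_1 \in \A_{1i}$ and $\ell_2 \in \A_{2j}$, the unique line of $\A_{3,k(i,j)}$ through $\ell_1 \cap \ell_2$ defines a quasi-group on $m$ elements. Parametrizing each pencil projectively as a copy of $\CP^1$ via projection from its base point to a transversal line, this correspondence becomes a projective map between three copies of $\CP^1$ each with $m$ marked points. For $m \ge 3$ it imposes a definite projective constraint relating the triple $(P_{1i}, P_{2j}, P_{3,k(i,j)})$ and the three pencil parametrizations; for $m=2$ the constraint is too weak, as the braid $(3,2)$-net of Figure \ref{fig:braid}, with three non-collinear base points, illustrates.

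Finally, I would propagate these constraints across all $n^2$ subnets. Fixing $j$ and varying $i$, the shared pencil $\A_{2j}$ through $P_{2j}$ couples with every $\A_{1i}$ through $P_{1i}$, yielding rigid projective links among the $P_{1i}$'s mediated by $P_{2j}$; varying $j$ as well, and using that $k\colon [n]\times [n]\to [n]$ is itself a Latin square inherited from $\NN$, the accumulated constraints force all points $P_{1i}$ ($i \in [n]$) to coincide, proving that $\A_1$ is itself a pencil. Symmetric arguments handle $\A_2$ and $\A_3$. The main obstacle is the rigidity claim in the preceding paragraph: formulating and proving the precise projective constraint on a $(3,m)$-net of three pencils for $m \ge 3$. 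This is the technical heart of Yuzvinsky's argument in \cite{Yu04} and does not follow from the net axioms in an elementary way.
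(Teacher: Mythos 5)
The paper does not prove this lemma at all: it is imported verbatim as Corollary 3.5 of Yuzvinsky \cite{Yu04}, so there is no internal argument to compare yours against. Judged on its own terms, your proposal is a strategy sketch rather than a proof, and you concede the decisive point yourself: the ``definite projective constraint'' that a three-pencil $(3,m)$-subnet is supposed to impose for $m\ge 3$ is never formulated, let alone proved, and it is exactly the technical heart of the statement. Without a precise version of that rigidity claim (what relation it actually forces among $P_{1i}$, $P_{2j}$, $P_{3,k(i,j)}$ and the pencil parametrizations, and why $m\ge 3$ marked lines suffice to pin it down), the concluding step --- ``the accumulated constraints force all points $P_{1i}$ to coincide'' --- is pure assertion. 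There is a second, unacknowledged gap in the propagation itself: each subnet $\NN_{ij}$ involves only one block of $\A_1$ at a time, so you never exhibit any mechanism that compares $P_{1i}$ with $P_{1i'}$ for $i\ne i'$; invoking the Latin square $k(i,j)$ does not by itself produce such a comparison.

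The one part that is correct and complete is the preliminary observation that each base point $P_{\alpha j}$ is a mono-colored flat of multiplicity $m$: since $\NN$ is a net, a multi-colored flat contains exactly one line from each class, which is incompatible with the $m\ge 3$ lines of $\A_{\alpha j}$ through $P_{\alpha j}$. But that observation relates nothing across distinct blocks, so as written your proposal establishes only the easy periphery of the lemma. To make this acceptable you would either have to reproduce (or reconstruct) the rigidity input from \cite{Yu04} --- e.g.\ by passing to the dual plane, where ``class is a pencil'' becomes ``class lies on a line'', and proving the constraint a $(3,m)$-net with pencil classes places on those lines --- or simply cite the result as the paper does, rather than presenting an outline whose central claims are deferred to the very reference being proved.
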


Realizability in the family of matroids $\{\M(m)\}_{m\ge 1}$ is 
settled by the next result. 

\begin{theorem}
\label{thm=mnotr}
For any $m\ge 3$, the sub-lattice $L_{\le 2}(\M(m))$ is not realizable over $\C$, 
i.e., there is no arrangement $\A$ in $\C^{\ell}$ such that $L_{\le 2}(\M(m)) \cong 
L_{\le 2}(\A)$, as lattices.
\end{theorem}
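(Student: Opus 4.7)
The plan is to reduce to the case $m=3$ and then derive a contradiction by applying Yuzvinsky's Lemma \ref{lem:yuz} to a nested pair of $3$-nets coming from Lemma \ref{lem=mnets}.

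\textbf{Reduction to $m=3$.} First, I would observe that the inclusion $\iota \colon \FF_3^3 \inj \FF_3^m$, $(a,b,c) \mapsto (a,b,c,0,\dots,0)$, identifies $\M(3)$ with a line-closed submatroid of $\M(m)$: the negative of a sum of elements of $\iota(\FF_3^3)$ still lies in $\iota(\FF_3^3)$. Consequently, every $2$-flat of $\M(m)$ meeting $\iota(\FF_3^3)$ in at least two points is fully contained in it. Thus, if $L_{\le 2}(\M(m)) \cong L_{\le 2}(\A)$ for some arrangement $\A$, the subarrangement $\A' \subseteq \A$ matching $\iota(\FF_3^3)$ realizes $L_{\le 2}(\M(3))$, and it suffices to treat $m=3$.

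\textbf{Nested $3$-nets on $\M(3)$.} Suppose toward a contradiction that $L_{\le 2}(\M(3)) \cong L_{\le 2}(\A)$ for some arrangement $\A$. Since $\M(3)$ has rank $3$, I would slice as in \S\ref{subsec:arrs} to assume $\A \subset \C^3$, so that $\bar{\A}$ is a line arrangement in $\CP^2$. By Lemma \ref{lem=mnets} with $a=1$, the partition of $\FF_3^3$ by the first coordinate yields a $(3,9)$-net on $\A$ with classes $\A_i$ of $9$ lines. Next, I would refine each class by the second coordinate, setting $\A_{i,j} = \{v \in \FF_3^3 : v_1 = i,\ v_2 = j\}$; each $\A_{i,j}$ has size $3$ and is a $2$-flat of $\M(3)$, hence under the realization it becomes a pencil of three concurrent lines in $\CP^2$. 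The relation $v+v'+v''=0$ in $\FF_3^3$ immediately yields, for any $j,k\in\FF_3$, that the triple $\{\A_{0,j},\, \A_{1,k},\, \A_{2,-j-k}\}$ forms a $(3,3)$-subnet of the ambient $(3,9)$-net.

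\textbf{Contradiction via Yuzvinsky.} Since every class of every such subnet is a pencil, Lemma \ref{lem:yuz} (with $m=n=3$) forces each class $\A_i$ of the $(3,9)$-net to be a pencil, i.e., all nine of its lines pass through a common point of $\CP^2$. The resulting rank-$2$ flat of $\A$ has multiplicity at least $9$, and under the lattice isomorphism it pulls back to a $2$-flat of $\M(3)$ of size at least $9$. This contradicts Lemma \ref{lem=mflats}, according to which every flat of $L_2(\M(m))$ has multiplicity exactly $3$.

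\textbf{Main obstacle.} The delicate step is setting up the nested net structure so that the restrictive hypotheses of Lemma \ref{lem:yuz} are verified --- one must choose the second-coordinate partition so that each size-$3$ block is simultaneously a matroid $2$-flat (hence a realized pencil) and so that compatible triples of blocks form genuine $(3,3)$-subnets. The affine structure of $\FF_3^3$, embodied by the identity $j+k+l=0$, is exactly what makes the subnet axiom fall out automatically, and it is also what ultimately ensures that the argument cannot work below $m=3$.
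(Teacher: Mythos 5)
Your proof is correct and takes essentially the same route as the paper: reduce to $m=3$, take the first-coordinate $(3,9)$-net from Lemma \ref{lem=mnets}, refine each class by the second coordinate into size-$3$ blocks that are $2$-flats (hence pencils), verify the subnet hypothesis of Lemma \ref{lem:yuz} via the relation $v+v'+v''=0$, and conclude that every class of the big net is a pencil. Your closing contradiction (a rank-$2$ flat of multiplicity at least $9$ against Lemma \ref{lem=mflats}) is just a rephrasing of the paper's (each class would have rank $2$, yet is isomorphic to $\M(2)$, which has rank $3$), and your direct line-closed embedding of $\M(3)$ into $\M(m)$ is an equally valid way to do the reduction.
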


\begin{proof}
By Lemma \ref{lem=mnets}, the matroid $\M(m-1)$ embeds in $\M(m)$; 
thus, we may assume $m=3$. Clearly, it is enough to show that 
$L_{\le 2}(\M(3))$ cannot be realized by any arrangement in $\C^3$. 

Assuming the contrary, we will use Lemma \ref{lem:yuz} 
to derive a contradiction. Take $a=1$ in Lemma \ref{lem=mnets}, 
and denote by $\NN$ the associated $(3,9)$-net on $\M(3)$. 
Write each class in the form
\begin{equation}
\label{eq:mi3}
\M_i(3)= \{ i \} \times \FF_3 \times \FF_3 = \coprod_{j\in \FF_3} \M_{ij}(3),
\end{equation}
where $\M_{ij}(3)=  \{ i \} \times\{ j \} \times  \FF_3$.  For $j,j'\in \FF_3$,
define $j''\in \FF_3$ by $j+j'+j''=0$. To check the first assumption from
Lemma \ref{lem:yuz}, we have to show that the partition 
$\M_{0j}(3) \coprod \M_{1j'}(3) \coprod \M_{2j''}(3)$ is a $3$-subnet of $\NN$.

Pick $v=(i,j,k)$ and $v'=(i',j',k')$ in two different classes of this partition. Note 
that necessarily $i\neq i'$, by construction.  By Lemma \ref{lem=mflats}, 
$v\vee v'= \{ v,v',v''\}$, where $v''=-v-v' $.  Thus, we must have $v''=(i'',j'',k'')$, 
where $i+i'+i''=j+j'+j''=0$. This implies that $v''$ belongs to the third class of 
the partition, as required for the $3$-net property. Clearly, the $3$-net defined 
by this partition is a $3$-subnet of $\NN$.

As noted before, each class of the partition has rank $2$, being 
isomorphic to the matroid $\M(1)$. Hence, Lemma \ref{lem:yuz} 
applies, and implies that all classes of $\NN$ have rank $2$. 
On the other hand, Lemma \ref{lem=mnets} insures that 
these classes are isomorphic to $\M(2)$.  This is a 
contradiction, and so the proof is complete.
\end{proof}

Let $\M$ be a simple matroid of rank at least $3$. By taking a generic slice, 
it is easy to check that the following statements are equivalent: 
\begin{enumerate}
\item \label{ox1}
The rank $3$ truncation $\tau_3(\M)$ is realizable over $\C$;
\item \label{ox2}
The sub-lattice $L_{\le 2}(\tau_3(\M))$ is realizable over $\C$;
\item \label{ox3}
The sub-lattice $L_{\le 2}(\M)$ is realizable over $\C$.
\end{enumerate}

Applying now Proposition \ref{prop:mpm}\eqref{mat2} 
and Theorem \ref{thm=mnotr} to the matroids $\M=\AG (m, \k)$, 
we obtain the following corollary. 

\begin{corollary}
\label{cor:strong ox}
Let $\k=\FF_{p^s}$ and set $k=p^s$.  
Suppose $m\ge 2$ and $k\ge 3$.  Then 
the lattice $L_{\le 2}(\AG (m, \k))$ is realizable over 
$\C$ if and only if $m=2$ and $k=3$.  
\end{corollary}

\begin{remark}
\label{rem:ox} 
It is well-known that, for $m\ge 2$ and $k\ge 3$, 
the matroid $\AG (m, \k)$ is realizable over $\C$ if and only if $m=2$ and $k=3$, 
see for instance Oxley \cite[p.~522]{Ox}.  
Clearly, our non-realizability 
result is stronger: not only is this matroid non-realizable, but even 
its collinearity relations are not realizable. 
\end{remark}

\subsection{Collections of $3$-nets}
\label{ss72}

For the rest of this section, $\A$ will denote an arrangement in $\C^3$.
Our goal is to prove the following theorem, which verifies assertion  
\eqref{a5} from Theorem \ref{thm:main1} in the Introduction. 

\begin{theorem}
\label{thm:main4}
Suppose $L_2(\A)$ has no flats of multiplicity properly divisible by $3$.  
Then $\beta_3(\A) \le 2$.
\end{theorem}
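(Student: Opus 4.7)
\textbf{Proof proposal for Theorem \ref{thm:main4}.}
The plan is to argue by contradiction, assuming $\beta_3(\A)\ge 3$ and producing a non-realizability obstruction via Theorem \ref{thm:intro-mat} (established in this section as Corollary \ref{cor=t16gral}). All the hard matroidal work is packaged in the non-realizability of the family $\M(m)$ from \S\ref{sec:matr}; what remains is a short translation between modular resonance and $3$-nets.

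First, I would invoke Lemma \ref{lem:lambda}: the hypothesis on $L_2(\A)$ excludes rank-$2$ flats of multiplicity $3r$ with $r>1$, which is precisely the condition needed to deduce $Z'_{\FF_3}(\A)=Z_{\FF_3}(\A)$. Consequently, the Aomoto--Betti number $\beta_3(\A)=\dim_{\FF_3} Z_{\FF_3}(\A)/B_{\FF_3}(\A)$ coincides with $\dim_{\FF_3} Z'_{\FF_3}(\A)/B_{\FF_3}(\A)$.

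Next, suppose for contradiction that $\beta_3(\A)\ge 3$. Then the quotient $Z'_{\FF_3}(\A)/B_{\FF_3}(\A)$ has $\FF_3$-dimension at least three, so I can select three classes $[\tau_1],[\tau_2],[\tau_3]$ which are linearly independent over $\FF_3$. Since no $\tau_i$ lies in $B_{\FF_3}(\A)$, each $\tau_i$ belongs to $Z'_{\FF_3}(\A)\setminus B_{\FF_3}(\A)$, and Theorem \ref{teo=lambdaintro}\eqref{li1} supplies a unique $3$-net $\NN_i$ on $\A$ with $\tau_i=\lambda_{\FF_3}(\NN_i)$. The three classes $[\lambda_{\FF_3}(\NN_i)]$ are independent in $Z_{\FF_3}(\A)/B_{\FF_3}(\A)$ by construction, so Theorem \ref{thm:intro-mat} (equivalently, Corollary \ref{cor=t16gral}) forces the underlying matroid of $\A$ to be non-realizable over $\C$. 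This contradicts the fact that $\A$ is a complex hyperplane arrangement realizing its own matroid, completing the proof.

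The main obstacle is not in this deduction, but in having Theorem \ref{thm:intro-mat} available: its proof requires the construction of the matroids $\M(m)$ with $\beta_3(\M(m))=m$ (Proposition \ref{prop=mb3}), together with the non-realizability Theorem \ref{thm=mnotr} for $\M(3)$, which in turn rests on Yuzvinsky's rigidity lemma (Lemma \ref{lem:yuz}). Once that machinery is in place, the argument above is essentially a three-line conversion from linear independence in the modular cocycle space to a triple of $3$-nets to which the obstruction applies.
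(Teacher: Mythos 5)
Your proposal is correct and follows precisely the paper's own three-step argument: Lemma \ref{lem:lambda} gives $Z'_{\FF_3}(\A)=Z_{\FF_3}(\A)$ under the multiplicity hypothesis, whence (via Lemma \ref{lema=lambdabij}) $\lambda_{\FF_3}$ surjects onto $Z_{\FF_3}(\A)\setminus B_{\FF_3}(\A)$, and then Corollary \ref{cor=t16gral} rules out three independent classes, forcing $\beta_3(\A)\le 2$. The paper states the same deduction more tersely, but the content and the chain of lemmas are identical.
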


Our strategy is based on the map $\lambda_{\FF_3} \colon 
\{\text{$3$-nets on $\A$}\} \to 
Z_{\FF_3}(\A)\setminus B_{\FF_3}(\A)$ from Theorem \ref{teo=lambdaintro}. 
Recall that the map $\lambda_{\FF_3}$ is always injective.  Moreover, 
as shown in Lemma \ref{lem:lambda}, this map is also surjective 
when the above assumption on multiplicities is satisfied.
In this case, $\beta_3(\A)\ge m$ if and only if there is a collection 
$\NN^1,\dots,\NN^m$ of $3$-nets on $\A$ such that the classes 
$[\lambda_{\FF_3}(\NN^1)],\dots,[\lambda_{\FF_3}(\NN^m)]$ are independent in 
$Z_{\FF_3}(\A)/B_{\FF_3}(\A)$. 

Let $\A$ be an arbitrary arrangement. When the above property holds, we call the nets 
$\{ \NN^a \}_{a\in [m]}$ {\em independent}. For $v=(v^1,\dots,v^m)\in \M(m)$, 
we set $\A_v= \bigcap_{a\in [m]} \NN^a_{v^a}$, where $\A= \coprod_{i\in \FF_3} \NN^a_i$ 
is the partition associated to $\NN^a$. We say that the family $\{ \NN^a \}_{a\in [m]}$ 
has the {\em intersection property}\/ if $\A_v \neq \emptyset$ for all $v$, 
and  the {\em strong intersection property}\/ if there is an integer $d>0$ 
such that $\abs{\A_v} =d$ for all $v$.

Clearly, we have a partition, $\A_v \coprod \A_{v'} \coprod  \A_{v''}$, for 
any flat $\{v,v',v'' \} \in L_2(\M(m))$.  If all these partitions define $3$-nets, 
we say that $\{ \NN^a \}_{a\in [m]}$ has the {\em net property}.

Our starting point towards the proof of Theorem \ref{thm:main4} is 
the following theorem.

\begin{theorem}
\label{thm=3mnets}
Suppose there is an arrangement $\A$ in $\C^3$, and a collection 
of $3$-nets on $\A$,  $\{ \NN, \NN', \NN'' \}$, that has both the strong 
intersection property and the net property. Then the sub-lattice 
$L_{\le 2}(\M(3))$ is realizable over $\C$. 
\end{theorem}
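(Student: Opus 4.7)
My plan is to build the desired line arrangement via duality, after first showing that each part $\A_v$ of size $d$ is a \emph{pencil} of $d$ concurrent lines in $\CP^2$ with common apex $P_v$. Granting this, the 27 apexes $\{P_v\}_{v \in \FF_3^3}$ should satisfy exactly the collinearity relations encoded by $L_2(\M(3))$, and the dual arrangement $\B$ consisting of the 27 dual lines $P_v^*$ in the dual projective plane would realize $L_{\le 2}(\M(3))$.

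To prove the pencil structure, I would apply Lemma \ref{lem:yuz} to the $(3, 9d)$-net $\NN^1$ on $\A$, partitioning each class $\NN^1_i$ into nine blocks of size $d$ via the coloring by $(\NN^2, \NN^3)$. For any pair of blocks in different $\NN^1$-classes, the third block completing them to a $(3,d)$-subnet is prescribed by the net property, since the equation $v + v' + v'' = 0$ in $\FF_3^3$ can be solved coordinate-wise (in particular the first coordinates $0+1+2$ automatically sum to zero). The cases $d \le 2$ are automatic (singletons and pairs of distinct lines always form pencils); for $d \ge 3$, I would proceed by an induction on $d$, bootstrapping pencil structure upward from the abundance of embedded $3$-nets supplied by the three independent nets.

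Next, for each triple $\{v, v', v''\} \in L_2(\M(3))$ the subarrangement $\A_v \sqcup \A_{v'} \sqcup \A_{v''}$ is a $3$-net consisting of three pencils, with apexes $P_v, P_{v'}, P_{v''}$, and a classical incidence argument applied to the $d^2$ base locus points forces these three apexes to be collinear. Conversely, the independence of the three $3$-nets rules out spurious collinearities: if the apexes of some triple $\{v, v', v''\}$ with $v + v' + v'' \ne 0$ were collinear, then for some index $a$ one would produce a multi-colored rank-$2$ flat with respect to $\NN^a$ whose three colors neither coincide nor exhaust $\FF_3$, contradicting the $3$-net axiom. The same analysis shows the 27 apexes are pairwise distinct, so $\B$ has exactly the collinearity structure of $L_{\le 2}(\M(3))$.

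The hard part will be the pencil step for $d \ge 3$: Lemma \ref{lem:yuz} requires each sub-block to already be a pencil---essentially the conclusion being sought. Overcoming this circularity will likely require a careful induction on $d$, perhaps combined with a pruning argument extracting a sub-configuration of smaller block size that still satisfies the hypotheses of the theorem, so that pencil structure can be transported from the base case $d \le 2$ upward.
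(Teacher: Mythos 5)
Your proposal has two genuine gaps, one of which is an outright false step. First, the pencil step: nothing in the hypotheses forces the blocks $\A_v$ to be pencils, and the circularity you acknowledge in invoking Lemma \ref{lem:yuz} is not a technicality you can expect to remove by induction on $d$ or by pruning. In the paper that lemma is only usable in the proof of Theorem \ref{thm=mnotr}, where the classes of the subnets are the fibers $\{i\}\times\{j\}\times\FF_3$, which are dependent triples of the (assumed realized) matroid $\M(3)$ and hence automatically rank $2$; here no such information about the $\A_v$ is available, and no argument is offered that would create it. Second, even if every $\A_v$ were a pencil with apex $P_v$, your ``classical incidence argument'' is false: the Ceva arrangement of Example \ref{ex:third} carries a $(3,3)$-net whose three classes are pencils with apexes the coordinate vertices $(1:0:0)$, $(0:1:0)$, $(0:0:1)$, which are not collinear. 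So a dependent triple $\{v,v',v''\}$ need not yield collinear apexes, and the dual arrangement of the $27$ apexes would not realize $L_{\le 2}(\M(3))$ even granting the pencil structure.

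The paper avoids both issues by not attempting a planar realization at all (realizability over $\C$ only requires some $\C^{\ell}$). It sends each $v\in\M(3)$ to the degree-$d$ form $Q_v=\prod_{H\in\A_v}f_H$, well defined and nonzero by the strong intersection property, and with the classes $\overline{Q}_v$ distinct in $\PP(S^d(3))$ because the $f_H$ are distinct primes and the $\A_v$ are disjoint. For a dependent triple, the net property makes $\A_v\sqcup\A_{v'}\sqcup\A_{v''}$ a $3$-net on a subarrangement, and \cite[Theorem 3.11]{FY} gives that the three products span a pencil, i.e., have rank $2$. For an independent triple $\{v_1,v_2,v_3'\}$, assuming $Q_{v_3'}=c_1Q_{v_1}+c_2Q_{v_2}$ and evaluating at the point $\overline{H_1\cap H_2}$ with $H_i\in\A_{v_i}$ produces a rank-$2$ flat containing at least four hyperplanes (one from $\A_{-v_1-v_2}$ by the net property, one from $\A_{v_3'}$ by the vanishing); since a multi-colored flat of a $3$-net has exactly three hyperplanes, this flat is mono-colored for all three nets, forcing $v_1=v_2$, a contradiction. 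If you wish to salvage your route, you would need a genuinely new proof that each $\A_v$ is a pencil and a correct replacement for the apex-collinearity claim; as written, both steps fail.
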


\begin{proof}
Let $S^d(3)\subseteq \Sym^{\hdot}(3)$ be the vector space of degree $d$ 
polynomials in $3$ variables. We will realize $L_{\le 2}(\M(3))$ in the 
dual space, $V=S^d(3)^*$. 

For each hyperplane $H\in \A$, choose a linear form $f_H$ in $3$ 
variables such that $H=\ker(f_H)$. Next, we associate to a point $v\in \M(3)$ 
the vector $Q_v=\prod_{H\in \A_v} f_H \in V^* \setminus \{ 0\}$,
using the strong intersection property.  Note that $\{ f_H \}_{H\in \A}$ 
are distinct primes in the ring $\Sym (3)$. In particular,
$\{ \overline{Q}_v \}_{v\in \M(3)}$ are distinct elements of $\PP (V^*)$, 
since $\A_v \cap \A_{v'}= \emptyset$ for $v\neq v'$. 

We have to show that $\{ v_1,v_2,v_3 \}$ is a dependent set in 
$\M(3)$ if and only if the set $\{ Q_{v_1}, Q_{v_2}, Q_{v_3} \}$ 
has rank $2$. If $\{ v_1,v_2,v_3 \}$ is a flat in $L_2(\M(3))$,
the rank property for $\{ Q_{v_1}, Q_{v_2}, Q_{v_3} \}$ follows 
from the fact that the partition $\A_{v_1} \coprod \A_{v_2} \coprod  \A_{v_3}$ 
defines a $3$-net, according to \cite[Theorem 3.11]{FY}.

Conversely, let $\{ v_1,v_2,v'_3 \}$ be a size $3$ independent 
subset of $\M(3)$, so that $v_1+v_2+v'_3 \neq 0$. Consider the flat 
$\{ v_1,v_2,v_3 \} \in L_2(\M(3))$, where $v_1+v_2+v_3 = 0$; 
in particular, $v_3 \neq v'_3$.  Assume that 
$Q_{v'_3}= c_1 Q_{v_1} + c_2 Q_{v_2}$.  Pick hyperplanes $H_1 \in \A_{v_1}$
and $H_2 \in \A_{v_2}$, and let $X= H_1 \cap H_2$.  
By the net property, there is a hyperplane $H_3 \in \A_{v_3}$ such that 
$\{ H_1, H_2, H_3 \} \subseteq \A_X$.  

Let $x=\overline{X}$ be the intersection point of the projective 
lines $\overline{H_1}$ and $\overline{H_2}$. 
We then have $Q_{v_1}(x)=Q_{v_2}(x)=0$, and thus $Q_{v'_3}(x)=0$. 
Therefore, there is a hyperplane $H'_3 \in \A_{v'_3} \cap \A_X$. 
Consequently, the arrangement $\A_X$ contains the four distinct hyperplanes 
$\{ H_1, H_2, H_3 , H'_3\}$. Hence, the flat $X$ must be monocolor with 
respect to the collection $\{ \NN, \NN', \NN'' \}$, that is,  
$\A_X \subseteq \NN_i \cap \NN'_j \cap \NN''_k =\A_v$, where
$v=(i,j,k)\in \M(3)$. Since $H_i \in \A_{v_i}$ for $i=1,2$, we infer 
that $v_1=v_2=v$, a contradiction. Our realizability claim is thus verified.
\end{proof}

In view of Theorem \ref{thm=mnotr}, Theorem \ref{thm:main4} will be proved
once we are able to show that the independence property for  $\{ \NN, \NN', \NN'' \}$ 
forces both the strong intersection property and the net property. 

\begin{lemma}
\label{lem=upgr1}
For a collection $\{ \NN^1, \dots, \NN^m\}$ of $3$-nets on $\A$, the intersection 
property implies both the strong intersection property and the net property. 
\end{lemma}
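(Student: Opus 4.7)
My plan is to reduce both conclusions to a single coordinatewise claim: \emph{for every rank-$2$ flat $\{v,v',v''\}\in L_2(\M(m))$ and every pair $H\in \A_v$, $H'\in \A_{v'}$, the unique third hyperplane $H''\in \A$ satisfying $\{H,H',H''\}\in L_2(\A)$ belongs to $\A_{v''}$}. Granting this claim, the net property falls out at once: the partition $\A_v \coprod \A_{v'} \coprod \A_{v''}$ of the spanned subarrangement has all three parts nonempty by the intersection hypothesis, and the claim verifies the characterization of $3$-nets in Lemma \ref{lem:lsq}, namely $\abs{(H\vee H')\cap \A_w}=1$ for every $w\in \{v,v',v''\}$ whenever $H,H'$ sit in two distinct parts. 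The strong intersection property then follows by bookkeeping: Lemma \ref{lem:net props}\eqref{n1} applied to each induced $3$-net gives $\abs{\A_v}=\abs{\A_{v'}}=\abs{\A_{v''}}$, and since any two distinct points of $\M(m)$ lie together in a rank-$2$ flat (complete them by $-v-v'$), the function $v\mapsto \abs{\A_v}$ must be constant, equal to some $d>0$.

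To prove the key claim, I would first pick a coordinate $a\in [m]$ with $v^a\neq v'^a$, which exists because $v\neq v'$. Since $H\in \NN^a_{v^a}$ and $H'\in \NN^a_{v'^a}$ lie in distinct classes of the $3$-net $\NN^a$, Lemma \ref{lem:lsq} supplies a unique $H''\in \A$ completing them to a rank-$2$ flat of $\A$, and this $H''$ belongs to $\NN^a_{v''^a}$, where $v''^a = -v^a - v'^a$ by arithmetic in $\FF_3$. It then remains to verify $H''\in \NN^b_{v''^b}$ for each other coordinate $b\in [m]$. Here I would invoke the standard dichotomy for the rank-$2$ flat $\{H,H',H''\}$ relative to $\NN^b$: it is either mono-colored (contained in one class) or multi-colored (meeting every class in exactly one point). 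If $v^b = v'^b$ then $H,H'$ both lie in $\NN^b_{v^b}$, ruling out the multi-colored case, so the flat is mono-colored and $H''\in \NN^b_{v^b}=\NN^b_{v''^b}$, using $v''^b = -2v^b = v^b$. If instead $v^b\neq v'^b$, then $\{v^b,v'^b,v''^b\}$ is a permutation of $\FF_3$; the flat is forced to be multi-colored and $H''$ sits in the unique remaining class, which is again $\NN^b_{v''^b}$.

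The main obstacle, as far as I can see, is only the coordinatewise verification above; everything else is the mechanical harvest of Lemmas \ref{lem:lsq} and \ref{lem:net props}. There is no deeper content required beyond carefully exploiting the mono/multi-colored dichotomy in each factor net $\NN^b$, combined with the additive structure of $\FF_3$ encoded in the relation $v+v'+v''=0$.
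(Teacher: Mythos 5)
Your proof is correct and follows essentially the same route as the paper's: identify the same key coordinatewise claim, handle the coordinate $a$ with $v^a \ne v'^a$ via the net property of $\NN^a$, handle coordinates $b$ with $v^b = v'^b$ via the mono/multi-colored dichotomy (which is the paper's appeal to line-closedness of the classes from Lemma \ref{lem:net props}\eqref{n3}), and then deduce the strong intersection property from constancy of $\abs{\A_v}$ along rank-$2$ flats. The only cosmetic difference is that you phrase the second coordinate case in terms of the color dichotomy rather than explicitly invoking line-closedness, but these are equivalent.
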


\begin{proof}
We start with the net property. Let $X=\{ v,v',v'' \}$ be a flat in $L_2(\M(m))$. 
We know that the first two classes of the partition 
$(\A_v , \A_{v'} , \A_{v''})$ are non-empty.  Write $v=(v_a)\in \FF_3^m$, 
and similarly for $v',v''$. By construction of the matroid $\M(m)$,
the elements $\{ v_a,v'_a,v''_a \}$ are either all equal or all distinct, 
for any $a\in [m]$. 

Pick $H\in \A_v$ and $H'\in \A_{v'}$. Then 
$H\in \NN^a_{v_a}$ and $H' \in \NN^a_{v'_a}$, for some $a$ with 
$v_a\neq v'_a$, since $v\neq v'$. Hence $\{ H,H',H'' \} \in L_2(\A)$, 
for a unique hyperplane $H''$, distinct from $H$ and $H'$, and 
which belongs to $\NN^a_{v''_a}$, by the net property for $\NN^a$.

We are left with checking that $H''\in \A_{v''}$, that is, $H''\in \NN^b_{v''_b}$ 
for all $b\in [m]$.  If $v_b\neq v'_b$, we may use the previous argument. 
If $v_b =v'_b$, then $v''_b =v_b =v'_b$.  Note that $X\in L_2(\NN^b_{v_b})$ 
and therefore $H'' \in \NN^b_{v_b}$, which is line-closed in $\A$,
by Lemma \ref{lem:net props}\eqref{n3}. By the intersection property, 
$\A_{v''}\neq \emptyset$.  According to Lemma \ref{lem:lsq}, then, 
$\{ \NN^a\}$ has the net property.

It will be useful later on to extract from the preceding argument 
the following implication:
\begin{equation}
\label{eq=impli}
\A_v\, ,\A_{v'}\neq \emptyset \Rightarrow \A_{q(v,v')}\neq \emptyset \, ,
\end{equation}
where $q$ is defined in \eqref{eq=defq}. 

We deduce from the net property  that 
$\abs{\A_v}=\abs{\A_{v'}}>0$, for any $v\neq v' \in \M(m)$, by 
using the flat $X=\{ v,v',q(v,v') \}$. The strong intersection property
follows.
\end{proof}

\subsection{The closure operation}
\label{ss73}

We have to analyze the relationship between the independence 
and the intersection properties.  In one direction, things are easy.

\begin{lemma}
\label{lem=intind}
If a collection $\{ \NN^1, \dots, \NN^m\}$ of $3$-nets on $\A$ has the 
intersection property, the nets are independent. 
\end{lemma}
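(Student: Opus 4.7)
The plan is to unwind the definitions and reduce the independence claim to evaluating a linear relation at enough points of $\M(m) = \FF_3^m$, using the intersection property to guarantee these points are realized by hyperplanes.

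First I would suppose, for contradiction, that there is a non-trivial relation in $Z_{\FF_3}(\A)/B_{\FF_3}(\A)$, that is, constants $c_1, \dots, c_m, c \in \FF_3$, not all the $c_a$ zero, such that
\[
\sum_{a=1}^m c_a \lambda_{\FF_3}(\NN^a) = c \cdot \sigma
\]
in $\FF_3^{\A}$, where $\sigma$ is the all-ones vector. Unwinding the construction \eqref{eq=lambda} of $\lambda_{\FF_3}$, for each hyperplane $H \in \A$ we have $(\lambda_{\FF_3}(\NN^a))_H = v^a$ whenever $H \in \NN^a_{v^a}$. Thus, if $H \in \A_v$ for $v = (v^1, \dots, v^m) \in \M(m)$, the relation evaluated at $H$ reads
\[
\sum_{a=1}^m c_a v^a = c.
\]

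Next I would invoke the intersection property to ensure that this equation holds at every $v \in \FF_3^m$: since $\A_v \neq \emptyset$ for every $v \in \M(m)$, we may pick some $H \in \A_v$ and conclude that the affine-linear functional $L(v) := \sum_a c_a v^a - c$ vanishes identically on $\FF_3^m$. Evaluating $L$ at $v = 0$ gives $c = 0$, and evaluating at the standard basis vectors $v = e_b$ of $\FF_3^m$ gives $c_b = 0$ for each $b \in [m]$. This contradicts the non-triviality of the relation, establishing independence.

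The argument is essentially routine once one has the correct setup; the only subtlety is remembering that independence is measured in the quotient $Z_{\FF_3}(\A)/B_{\FF_3}(\A)$, so one must allow the extra constant $c$ on the right-hand side and then separately extract it by evaluating at $v = 0$. No major obstacle is expected: the intersection property supplies precisely the $3^m$ evaluation points needed to pin down an arbitrary affine function on $\FF_3^m$.
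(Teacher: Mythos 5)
Your proof is correct and follows essentially the same route as the paper: the paper notes that $\lambda_{\FF_3}(\NN^a)_H=v^a$ for $H\in\A_v$ and then, using the intersection property to supply a hyperplane in each $\A_v$, repeats verbatim the evaluation argument from Proposition \ref{prop=mb3} (evaluate at $v=0$ to kill the constant, then at the standard basis vectors to kill the $c_a$). Nothing is missing.
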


\begin{proof}
We have to show that $\lambda_{\FF_3}(\NN^1),\dots,\lambda_{\FF_3}(\NN^m)$ and 
$\sigma$ are independent in $\FF_3^{\A}$. Note that 
$\lambda_{\FF_3}(\NN^a)_H= v^a$, by construction, for any 
$v=(v^1,\dots,v^m)\in \FF_3^m$ and $H\in \A_v$. Independence 
then follows exactly as in the proof of Proposition \ref{prop=mb3}.
\end{proof}

For the converse, we will need the well-known line-closure operation 
from matroid theory (see for instance Halsey \cite{Ha}). In the case of our family
of matroids $\M(m)$, this operation may be conveniently described as follows. 
For a subset $\M \subseteq \M(m)$, put $C\M= \{ q(u,v) \mid u,v\in \M\}$.
Then iterate and define $\overline{C}\M= \bigcup_{s\ge 1} C^s \M$, the 
line-closure of $\M$. The next lemma follows easily from the definitions.

\begin{lemma}
\label{lem=cprop}
For each $m\ge 1$, the following hold.
\begin{enumerate}
\item \label{cp1}
If $\M \subseteq \M(m)$, then $\M \subseteq C\M \subseteq \overline{C}\M$.
\item \label{cp2}
If $\M \subseteq \M'$, then $C^s\M \subseteq C^s\M'$ for all $s$, 
and $\overline{C}\M \subseteq \overline{C}\M'$. 
\item \label{cp3}
If $\M \subseteq \M'$ and the submatroid $\M'$ is line-closed in $\M(m)$, then
$C^s\M$ and  $\overline{C}\M$ coincide, when computed in $\M(m)$ and $\M'$.
\end{enumerate}
\end{lemma}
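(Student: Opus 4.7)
Parts \eqref{cp1} and \eqref{cp2} are straightforward book-keeping.  For \eqref{cp1}, note that the definition of $q$ gives $q(v,v)=v$ for every $v\in\M$, so $\M\subseteq C\M$; the inclusion $C\M\subseteq \overline{C}\M$ is immediate, since $\overline{C}\M=\bigcup_{s\ge 1}C^s\M$ contains $C^1\M$. For \eqref{cp2}, if $u,v\in\M\subseteq \M'$, then $q(u,v)\in C\M'$, whence $C\M\subseteq C\M'$; iterating this yields $C^s\M\subseteq C^s\M'$ for every $s\ge 1$, and then $\overline{C}\M\subseteq \overline{C}\M'$ follows by taking unions.

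The content of the lemma is therefore concentrated in part \eqref{cp3}. The plan is to reduce the claim to the single assertion that, whenever $\M'$ is line-closed in $\M(m)$, the set $\M'$ itself is stable under $C$ (computed in $\M(m)$); that is, $q(u,v)\in \M'$ for all $u\neq v$ in $\M'$. Once this is established, one notes that the operators $C$ are determined matroidally (this is the parenthetical remark in the text, justified by Lemma \ref{lem=mflats}), so the map $q$ inside $\M'$ agrees with the restriction of $q$ inside $\M(m)$, and then an easy induction on $s$ gives $C^s\M$ computed in $\M(m)$ equals $C^s\M$ computed in $\M'$, for every $\M\subseteq \M'$ and every $s\ge 1$. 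Taking unions handles $\overline{C}\M$.

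To prove the key stability statement, let $u\neq v$ in $\M'$ and let $X\in L_2(\M')$ be the rank-$2$ flat generated by $u,v$ in the submatroid $\M'$. Since $\M'$ is line-closed in $\M(m)$, the set $X$ is closed in $\M(m)$, so $X$ is a rank-$2$ flat of $\M(m)$. By Lemma \ref{lem=mflats}, every element of $L_2(\M(m))$ has multiplicity exactly $3$, and its third point is precisely $q(u,v)$. Hence $q(u,v)\in X\subseteq \M'$, as required.

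No real obstacle is anticipated: the proof is essentially a transcription of the definitions together with the observation, built into Lemma \ref{lem=mflats}, that all $2$-flats of $\M(m)$ have size $3$. The only point calling for a moment of care is making explicit that $C$, as an operator on subsets, depends only on the matroid structure, so that its value on a subset of a line-closed submatroid is unambiguously defined whether one works in $\M(m)$ or in $\M'$.
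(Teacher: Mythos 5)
Your proof is correct and follows the same route as the paper, which disposes of the lemma with a one-line "easy induction on $s$, using the definitions"; your write-up merely makes explicit the key observation (line-closedness of $\M'$ plus Lemma \ref{lem=mflats} forces $q(u,v)\in\M'$ for $u\ne v$ in $\M'$, so the third-point operator is the same in $\M'$ as in $\M(m)$) that the paper leaves implicit.
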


Given a collection $\{ \NN^a \}_{a\in [m]}$ of $3$-nets on $\A$, 
set $\M= \{ v\in \M(m) \mid \A_v \neq \emptyset \}$. We deduce from 
\eqref{eq=impli} the following characterization of the intersection property.

\begin{corollary}
\label{cor=cprop}
The family $\{ \NN^1,\dots, \NN^m \}$ has the intersection property
if and only if $\overline{C}\M= \M(m)$.
\end{corollary}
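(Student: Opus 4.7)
The plan is to prove the two directions of the equivalence separately, relying crucially on the implication \eqref{eq=impli}, namely $\A_v,\A_{v'}\neq\emptyset \Rightarrow \A_{q(v,v')}\neq\emptyset$, that was established during the proof of Lemma \ref{lem=upgr1}.

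The forward direction is immediate. If the intersection property holds, then by definition $\M=\M(m)$; combining this with the inclusion $\M \subseteq \overline{C}\M$ from Lemma \ref{lem=cprop}\eqref{cp1} yields $\M(m) = \M \subseteq \overline{C}\M \subseteq \M(m)$, so $\overline{C}\M = \M(m)$.

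For the reverse direction, I would first observe that \eqref{eq=impli} says exactly that the set $\M \subseteq \M(m)$ is closed under the binary operation $q$ for distinct inputs; since $q(v,v)=v$ handles the diagonal case trivially, it follows that $C\M \subseteq \M$. Combined with the reverse inclusion $\M \subseteq C\M$ furnished by Lemma \ref{lem=cprop}\eqref{cp1}, this gives $C\M=\M$, which iterates to $C^s\M=\M$ for all $s\ge 1$, and hence $\overline{C}\M=\M$. The hypothesis $\overline{C}\M=\M(m)$ then forces $\M=\M(m)$, which is precisely the intersection property.

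There is no real obstacle here: this corollary is essentially a repackaging of the already-proven implication \eqref{eq=impli} in the language of the closure operator $\overline{C}$. The only thing to notice is that one-step invariance of $\M$ under $C$ propagates automatically to invariance under the iterated closure $\overline{C}$, which is immediate from the definition $\overline{C}\M = \bigcup_{s\ge 1} C^s\M$.
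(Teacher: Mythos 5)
Your proposal is correct and coincides with the paper's own (implicit) argument: the corollary is stated as a direct deduction from \eqref{eq=impli}, and your two directions—intersection property gives $\M=\M(m)$ hence $\overline{C}\M=\M(m)$, while \eqref{eq=impli} makes $\M$ stable under $C$, hence $\overline{C}\M=\M$, so $\overline{C}\M=\M(m)$ forces $\M=\M(m)$—are exactly that deduction. Your implicit reliance on the fact that \eqref{eq=impli} holds for any collection of $3$-nets (its derivation in Lemma \ref{lem=upgr1} does not use the intersection property) matches the paper's intended use.
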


For $m=1$, it is clear that independence implies the intersection property.
We need to establish this implication for $m=3$. We have to start with the case $m=2$.
In order to minimize the amount of subcase analysis, it is useful to make a couple of
elementary remarks on matroid symmetry in the family $\{ \M(m)\}_{m\ge 1}$. 

Clearly, $\Aut (\M(1))= \Sigma_3$. It is equally clear that a partition $[m]=[n] \coprod [n']$
induces a natural morphism, $\Aut (\M(n)) \times \Aut (\M(n')) \to \Aut (\M(m))$.
We will need more details for $m=2$.

Let $X= \{v,v',v''\}$ be a size $3$ subset of $\FF_3^2$. It is easy to see that 
$X$ is dependent if and only if $X= \{ (i,0), (i,1), (i,2) \}$, or $X= \{ (0,j), (1,j), (2,j) \}$, 
or $X= \{ (i,gi) \mid i\in \FF_3 \}$, for some $g\in \Sigma_3$. 

Now assume that $X= \{ (i,j), (i',j'), (i'',j'') \}$ is independent. Modulo 
$\Sigma_2 \subseteq \GL_2$, we may assume that $\abs{\{ i,i',i'' \}}=2$. 
By $(\Sigma_3 \times \id)$-symmetry, we may normalize this
to $i=i'=0$ and $i''=1$, hence $j\neq j'$. If  $\abs{\{ j,j',j'' \}}=3$, the flat 
$X$ is normalized to $\{ (0,0), (0,1), (1,2) \}$, by $(\id \times \Sigma_3)$-symmetry. 
Otherwise, $X= \{ (0,0), (0,1), v'' \}$, with $v''=(1,0)$ or $v''=(1,1)$, and these 
two cases are $\GL_2$-conjugate, as well as $\{ (0,0), (0,1), (1,0) \}$ and 
$\{ (0,0), (0,1), (1,2) \}$. To sum up, any independent subset of size $3$
can be put in the normal form $\{ (0,0), (0,1), (1,0) \}$, modulo $\Aut (\M(2))$. 

\begin{lemma}
\label{lem=prelm2}
Let $\M \subseteq \M(2)$ be a submatroid with at least $3$ elements. 
\begin{enumerate}
\item \label{prm1}
If $\abs{\M}=3$ and $\M$ is independent, then $\overline{C}\M= \M(2)$.
\item \label{prm2}
If $\abs{\M}\ge 4$, then $\overline{C}\M= \M(2)$.
\end{enumerate}
\end{lemma}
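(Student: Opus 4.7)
My strategy is to reduce both parts to an explicit finite calculation, using the $\Aut(\M(2))$-equivariance of the operators $C$, $C^s$, and $\overline{C}$. Equivariance holds because these operators depend only on the matroid structure, via the identification of rank-$2$ flats given in Lemma \ref{lem=mflats}.

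For part (1), I would exploit the normal-form statement established immediately before the lemma: every independent $3$-subset of $\M(2)$ is conjugate under $\Aut(\M(2))$ to $\M_0 := \{(0,0),(0,1),(1,0)\}$. It therefore suffices to verify $\overline{C}\M_0 = \M(2)$ for this particular triple. Using $q(u,v) = -u-v$ from \eqref{eq=defq}, I expect a direct check to show that
\[
C\M_0 = \{(0,0),(0,1),(0,2),(1,0),(2,0),(2,2)\},
\]
and that one further application of $C$ produces the three missing elements $(1,1)$ (from $q((0,2),(2,0))$), $(2,1)$ (from $q((0,2),(1,0))$), and $(1,2)$ (from $q((0,2),(2,2))$), giving $C^2\M_0 = \M(2)$ and hence $\overline{C}\M_0 = \M(2)$.

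For part (2), I will argue that any $\M \subseteq \M(2)$ with $\abs{\M}\ge 4$ contains an independent size-$3$ subset $\M'$. Indeed, by Lemma \ref{lem=mflats}, every rank-$2$ flat of $\M(2)$ has exactly three elements, so no four points of $\M(2)$ can be collinear. Fixing any four points of $\M$, if all $\binom{4}{3}$ triples among them were dependent, then all four points would lie on a common line, a contradiction; hence at least one triple is independent. Combining part (1) with the monotonicity recorded in Lemma \ref{lem=cprop}\eqref{cp2} then yields $\M(2) = \overline{C}\M' \subseteq \overline{C}\M$, which completes the proof.

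The only slightly delicate point is the bookkeeping in the finite check of part (1) --- keeping the arithmetic in $\FF_3^2$ straight. Once that is done, the reduction in part (2) is essentially immediate, and I do not anticipate any further obstacle.
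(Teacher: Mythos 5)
Your proposal is correct and follows essentially the same route as the paper: reduce part (1) to the normal form $\{(0,0),(0,1),(1,0)\}$ via $\Aut(\M(2))$-equivariance and verify $\overline{C}\M=\M(2)$ by an explicit computation with $q(u,v)=-u-v$ (your arithmetic checks out), then for part (2) observe that among any four points some triple must be independent and invoke monotonicity from Lemma \ref{lem=cprop}\eqref{cp2}. Your justification of the independent triple (four collinear points would contradict every $2$-flat having exactly three elements) is just a mild rephrasing of the paper's remark that two triples sharing two points cannot both be dependent.
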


\begin{proof}
Part \eqref{prm1}. First, put $\M$ in normal form, as explained above.
Then compute 
\begin{align*}
&q((0,0), (0,1))= (0,2), & q((0,0), (1,0))= (2,0), && q((0,2), (2,0))= (1,1),\\[-2pt]
& q((1,0), (1,1))= (1,2), & q((0,1), (1,1))= (2,1), && q((0,0), (1,1))= (2,2),
\end{align*}
and note that all the resulting values of $q$ belong to $\overline{C}\M$.

Part \eqref{prm2}. Pick a size $4$ subset $\{ v_1,v_2,v_3,v_4 \} \subseteq \M$. 
Then $\{ v_1,v_2,v_3\}$ and $\{ v_1,v_2,v_4\}$ cannot be both dependent. 
Our claim follows from part \eqref{prm1} and Lemma \ref{lem=cprop}\eqref{cp2}.
\end{proof}

We will need to know the behavior of the map
$\lambda_{\FF_3} \colon \{\text{$3$-nets on $\A$}\} \to Z_{\FF_3}(\A)\setminus B_{\FF_3}(\A)$
with respect to the natural $\Sigma_3$-action on $3$-nets. The description below does not require 
the realizability of the matroid $\A$.

Denote by $\sigma \in B_{\FF_3}(\A)$ the constant cocycle equal to $1$ on $\A$, as usual.
Let $g\in \Sigma_3$ be the $3$-cycle 
$(1,2,0)$ and let $h\in \Sigma_3$ be the transposition $(1,2)$, both acting on $\FF_3$. 
It is readily checked that, for any  $3$-net $\NN$ on $\A$,
\begin{equation}
\label{eq=actions}
\lambda_{\FF_3} (g\cdot \NN)= \sigma+ \lambda_{\FF_3} (\NN) \quad\text{and}\quad 
\lambda_{\FF_3} (h\cdot \NN)= - \lambda_{\FF_3} (\NN).
\end{equation}

We may now settle the case $m=2$. 

\begin{prop}
\label{prop=indint2}
If $\NN$ and $\NN'$ are independent $3$-nets on $\A$, then the pair 
$\{ \NN,\NN'\}$ has the intersection property.
\end{prop}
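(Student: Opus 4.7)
The plan is to apply Corollary~\ref{cor=cprop}, which translates the intersection property into the assertion $\overline{C}\M = \M(2)$, where $\M = \{v \in \M(2) \mid \A_v \ne \emptyset\}$. By Lemma~\ref{lem=prelm2}, this reduces to ruling out two bad cases: $\abs{\M} \le 2$, and $\abs{\M} = 3$ with $\M$ dependent (i.e., $\M$ is a rank-$2$ flat of $\M(2)$).

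First I would dispose of the case $\abs{\M} \le 2$. Here at least one of the two coordinate projections $\pi_i \colon \M \to \FF_3$ is not surjective: missing a value in $\pi_1$ forces an empty class in $\NN$, while missing a value in $\pi_2$ forces an empty class in $\NN'$. Either conclusion contradicts the $3$-net axiom (\S\ref{subsec:multinets}) that all classes be non-empty. Hence $\abs{\M} \ge 3$.

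Next, suppose $\abs{\M} = 3$ and $\M$ is dependent. Using the normal form analysis preceding Lemma~\ref{lem=prelm2}, flats in $L_2(\M(2))$ come in three flavors: three ``vertical'' lines $\{(i,0),(i,1),(i,2)\}$, three ``horizontal'' lines $\{(0,j),(1,j),(2,j)\}$, and six ``diagonal'' lines $\{(i,g(i)) \mid i \in \FF_3\}$ indexed by permutations $g \in \Sigma_3$. The vertical and horizontal cases are eliminated exactly as above, since they force non-surjectivity of one of the coordinate projections.

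The crux, and what I expect to be the main obstacle, is the diagonal case: here each of $\NN, \NN'$ does have three non-empty classes, so no direct emptiness contradiction is available. My plan is to observe that $\NN_i = \A_{(i,g(i))}$ and $\NN'_j = \A_{(g^{-1}(j),j)}$, so that $\NN$ and $\NN'$ induce the \emph{same} partition of $\A$ into the three non-empty blocks $\{\A_v\}_{v\in \M}$, differing only in how these blocks are labeled by $\FF_3$. Thus $\NN' = \sigma_0 \cdot \NN$ for some $\sigma_0 \in \Sigma_3$. Since $\Sigma_3$ is generated by the $3$-cycle and the transposition appearing in \eqref{eq=actions}, iterated application of those two formulas yields $\lambda_{\FF_3}(\NN') = a\, \lambda_{\FF_3}(\NN) + b\sigma$ for some $a \in \FF_3^*$ and $b \in \FF_3$. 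Consequently $[\lambda_{\FF_3}(\NN')]$ is a non-zero scalar multiple of $[\lambda_{\FF_3}(\NN)]$ in $Z_{\FF_3}(\A)/B_{\FF_3}(\A)$, contradicting the assumed independence of $\NN$ and $\NN'$. This exhausts all cases and will complete the proof.
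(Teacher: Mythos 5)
Your proof is correct and takes essentially the same approach as the paper: reduce to $\overline{C}\M=\M(2)$ via Corollary \ref{cor=cprop} and Lemma \ref{lem=prelm2}, use non-emptiness of net classes to force $\abs{\M}\ge 3$, and in the dependent size-$3$ case identify $\NN'$ with a $\Sigma_3$-translate of $\NN$ and invoke \eqref{eq=actions} to contradict independence. The only difference is organizational: you explicitly run through the vertical/horizontal/diagonal flavors and dispatch the first two by the empty-class argument, whereas the paper establishes surjectivity of both coordinate projections at the outset, which silently rules out the vertical and horizontal flats and leaves only the diagonal case to treat.
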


\begin{proof}
Plainly, for any $i\in \FF_3$ there is a $j\in \FF_3$ such that 
$\A_{(i,j)}= \NN_i \cap \NN'_j \neq \emptyset$ and similarly, 
for any $j\in \FF_3$ there is an $i\in \FF_3$ such that 
$\A_{(i,j)} \neq \emptyset$. In particular, $\abs{\M}\ge 3$. If
either $\abs{\M}\ge 4$, or $\abs{\M}= 3$ and $\M$ is independent,
we are done, in view of Lemma \ref{lem=prelm2} and Corollary \ref{cor=cprop}.

Assume then that $\abs{\M}= 3$ and $\M$ is dependent. According to 
a previous remark, $\M= \{ (i,gi) \mid i\in \FF_3 \}$, for some $g\in \Sigma_3$. 
For any $i\in \FF_3$, we infer that $\NN_i= \coprod_j \NN_i \cap \NN'_j \subseteq \NN'_{gi}$. 
Therefore, $\NN= g \cdot \NN'$. It follows from \eqref{eq=actions} 
that $[\lambda_{\FF_3}(\NN)]= \pm [\lambda_{\FF_3}(\NN')]$,
in contradiction with our independence assumption. 
\end{proof}

\begin{corollary}
\label{cor=indint2}
Assume $L_2(\A)$ contains no flats of multiplicity $3r$, with $r>1$. Then 
$\beta_3(\A)\ge 2$ if and only if there exist two $3$-nets on $\A$,
with parts $\{ \NN_i \}$ and $\{ \NN'_j \}$, respectively, 
such that $\NN_i \cap \NN'_j \neq \emptyset$, for all $i,j\in \FF_3$. 
\end{corollary}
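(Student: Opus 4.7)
The plan is to assemble the corollary from the bijective correspondence between $3$-nets and modular cocycles (under the stated multiplicity hypothesis) together with the two implications relating independence and the intersection property that have already been established for pairs of $3$-nets.

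First I would unpack what $\beta_3(\A) \ge 2$ means under the multiplicity assumption. By Lemma \ref{lem:lambda}, the hypothesis that $L_2(\A)$ has no flats of multiplicity $3r$ with $r>1$ forces the equality $Z'_{\FF_3}(\A) = Z_{\FF_3}(\A)$. Combining this with the bijection $\lambda_{\FF_3}$ from Theorem \ref{teo=lambdaintro}\eqref{li1}, every non-constant element of $Z_{\FF_3}(\A)$ corresponds to a $3$-net on $\A$. The condition $\beta_3(\A) \ge 2$ then means exactly that the quotient $Z_{\FF_3}(\A)/B_{\FF_3}(\A)$ has dimension at least $2$, which is equivalent to the existence of two $3$-nets $\NN$ and $\NN'$ whose classes $[\lambda_{\FF_3}(\NN)]$ and $[\lambda_{\FF_3}(\NN')]$ are linearly independent in $Z_{\FF_3}(\A)/B_{\FF_3}(\A)$, i.e., to $\NN$ and $\NN'$ being \emph{independent} in the sense defined in \S\ref{ss72}.

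For the forward direction, given two independent $3$-nets $\NN,\NN'$ on $\A$, I would invoke Proposition \ref{prop=indint2} directly: it asserts precisely that an independent pair has the intersection property, which unwraps to $\NN_i \cap \NN'_j \ne \emptyset$ for every $i,j\in \FF_3$. For the reverse direction, given two $3$-nets with this nonemptiness property (i.e., the intersection property), I would apply Lemma \ref{lem=intind}, which says the intersection property implies independence of the collection; by the discussion above, this in turn yields $\beta_3(\A)\ge 2$.

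There is essentially no obstacle here, since the corollary is a clean packaging of results proved earlier. The one thing to double-check when writing out the proof is that the multiplicity hypothesis is used only to go from ``cocycles modulo constants'' to ``$3$-nets''---namely, to invoke Lemma \ref{lem:lambda} so that $Z'_{\FF_3}(\A)$ and $Z_{\FF_3}(\A)$ coincide; neither Proposition \ref{prop=indint2} nor Lemma \ref{lem=intind} requires any restriction on multiplicities, so the rest of the argument goes through verbatim for arbitrary $\A$. This makes the equivalence both sharp and transparent.
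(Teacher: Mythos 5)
Your proposal is correct and matches the paper's own argument, which simply cites Lemma \ref{lema=lambdabij} (the bijection $\lambda_{\FF_3}$, i.e., Theorem \ref{teo=lambdaintro}\eqref{li1}), Lemma \ref{lem:lambda}, Lemma \ref{lem=intind}, and Proposition \ref{prop=indint2} — exactly the four ingredients you assemble, with the multiplicity hypothesis entering only through Lemma \ref{lem:lambda} as you note. Your write-up is just a more explicit unwinding of the same proof.
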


\begin{proof}
Follows from Lemma \ref{lema=lambdabij}, Lemma \ref{lem:lambda}, 
Lemma \ref{lem=intind} and Proposition \ref{prop=indint2}. 
\end{proof}

\subsection{A bound of $\beta_3(\A)$}
\label{ss74}

In this last subsection, we complete the proof of Theorem \ref{thm:main1} 
from the Introduction. We start by analyzing the critical case, $m=3$. 
Let $\{ \NN,\NN', \NN''\}$ be a triple of $3$-nets on an arrangement $\A$, 
with parts $\{ \NN_i \}$, $\{ \NN'_j \}$, and $\{ \NN''_k \}$. 

For a fixed $k\in \FF_3$, set $\M_k= \{ u\in \M(2) \mid (u,k)\in \M \}$. Hence, 
$\M = \coprod_{k\in \FF_3} \M_k \times \{k \}$, where $\M_k$ 
is identified with $\M \cap (\M (2) \times \{k \})$, and all submatroids 
$\M (2) \times \{k \}$ are line-closed in $\M(3)$ and isomorphic to $\M(2)$,
as follows from Lemma \ref{lem=mnets}. 

We first exploit the independence property.

\begin{lemma}
\label{lem=ind3}
If $\NN$, $\NN'$, and $\NN''$ are independent $3$-nets, then there is no 
$k\in \FF_3$ such that $\M_k$ is a size $3$ dependent subset of $\M(2)$. 
\end{lemma}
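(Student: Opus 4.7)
The plan is to argue by contradiction: assume some $\M_k$ is a size-$3$ dependent subset of $\M(2)$, and produce a nontrivial linear dependence among the classes $[\tau]$, $[\tau']$, $[\tau'']$ in $Z_{\FF_3}(\A)/B_{\FF_3}(\A)$, where I write $\tau=\lambda_{\FF_3}(\NN)$, $\tau'=\lambda_{\FF_3}(\NN')$, $\tau''=\lambda_{\FF_3}(\NN'')$. By the normal-form classification of size-$3$ dependent subsets of $\M(2)$ recalled just before Lemma \ref{lem=prelm2}, $\M_k$ is of one of three types: (a) $\{(i,0),(i,1),(i,2)\}$, (b) $\{(0,j),(1,j),(2,j)\}$, or (c) $\{(i,g(i)) : i \in \FF_3\}$ for some permutation $g$ of $\FF_3$. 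Crucially, $\Sigma_3=\mathrm{AGL}_1(\FF_3)$, so any such $g$ has the affine form $g(x)=ax+b$ with $a\in \FF_3^*$ and $b\in \FF_3$.

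In each case, the hypothesis on $\M_k$ translates into a pointwise identity on the block $\NN''_k$: in (a), $\tau|_{\NN''_k}\equiv i$; in (b), $\tau'|_{\NN''_k}\equiv j$; in (c), $\tau'(H)=a\tau(H)+b$ for all $H\in \NN''_k$. Correspondingly I form the $\FF_3$-linear combinations $\eta:=\tau-i\sigma$, $\eta:=\tau'-j\sigma$, and $\eta:=\tau'-a\tau-b\sigma$, respectively. Each such $\eta$ lies in $Z_{\FF_3}(\A)$, as a combination of cocycles, and by construction vanishes identically on $\NN''_k$.

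I would then apply Proposition \ref{prop:ker1} to the $3$-net $\NN''$ with distinguished block $\NN''_k$. Its proof shows that any element of $Z_{\FF_3}(\A)$ which vanishes on $\NN''_k$ is an $\FF_3$-multiple of the canonical kernel generator $\tau_k$, and a short check of values on the three parts of $\NN''$ identifies $[\tau_k]=\pm[\tau'']$ in $Z_{\FF_3}(\A)/B_{\FF_3}(\A)$. Hence $[\eta]=c[\tau'']$ for some $c\in \FF_3$. Unwinding the definition of $\eta$ in each case yields the desired relation: $[\tau]-c[\tau'']=0$ in (a), $[\tau']-c[\tau'']=0$ in (b), and $a[\tau]-[\tau']+c[\tau'']=0$ with $a\ne 0$ in (c). Each contradicts the independence of $\NN$, $\NN'$, $\NN''$.

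The main source of care is case (c): it is essential that $\Sigma_3=\mathrm{AGL}_1(\FF_3)$, so that $g$ admits an $\FF_3$-affine form and $\eta$ can be assembled as a genuine $\FF_3$-linear combination of cocycles (and hence remains in $Z_{\FF_3}(\A)$); over a larger field this coincidence would fail. The remaining checks — membership of $\eta$ in $\ker(h_k)$, the identification $[\tau_k]=\pm[\tau'']$, and the sign bookkeeping — are routine applications of Proposition \ref{prop:ker1} together with the explicit form of $\lambda_{\FF_3}$.
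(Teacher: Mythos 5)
Your proof is correct, and it rests on the same two pillars as the paper's: the restriction homomorphism $h''_k\colon Z_{\FF_3}(\A)\to Z_{\FF_3}(\NN''_k)$ of Lemma \ref{lem:proj}, and the fact (from Proposition \ref{prop:ker1}) that its kernel is the line spanned by $\tau_k$, whose class is $\pm[\lambda_{\FF_3}(\NN'')]$. Where you diverge is in how the contradiction with independence is extracted. You invoke the normal-form classification of dependent triples in $\M(2)$ (the remark before Lemma \ref{lem=prelm2}) and split into three cases, using in the last case that $\Sigma_3$ coincides with the affine group of $\FF_3$ so that the permutation relating $\tau$ and $\tau'$ on $\NN''_k$ can be written $\FF_3$-linearly; this hands you an explicit nonzero combination of $\lambda_{\FF_3}(\NN)$, $\lambda_{\FF_3}(\NN')$, $\sigma$ lying in $\ker(h''_k)$, hence a nontrivial relation among the three classes. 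The paper instead argues uniformly, with no case analysis and no appeal to affinity: all four vectors $\lambda_{\FF_3}(\NN)$, $\lambda_{\FF_3}(\NN')$, $\lambda_{\FF_3}(\NN'')$, $\sigma$ are constant on each block $\A_{(u_t,k)}$, so $h''_k$ restricted to their ($4$-dimensional, by independence) span factors through a map $r\colon Z\to\FF_3^3$ whose matrix has columns $(u_t,k,1)$; the single relation $u_1+u_2+u_3=0$ forces $\rank r\le 2$, so the kernel of the restriction has dimension at least $2$, contradicting $\dim\ker(h''_k)=1$. Your route is more hands-on and makes the offending linear relation explicit (which is pleasant), at the cost of a case split and the $\FF_3$-specific coincidence $\Sigma_3=\operatorname{AGL}_1(\FF_3)$; the paper's rank count is shorter and uses only the dependence relation itself. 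Both are complete proofs, and your sign and kernel bookkeeping (membership of $\eta$ in $\ker(h''_k)$, $[\tau_k]=\pm[\lambda_{\FF_3}(\NN'')]$, nontriviality of the resulting relations) checks out.
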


\begin{proof}
Assuming the contrary, let $h''_k \colon Z_{\FF_3}(\A) \to Z_{\FF_3}(\A''_k)$ 
be the canonical homomorphism associated to the net $\NN''$, 
as in Lemma \ref{lem:proj}.  We know from  Proposition \ref{prop:ker1} 
that $\ker(h''_k)$ is $1$-dimensional. Let $h$ be 
the restriction of $h''_k$ to the $4$-dimensional subspace of $Z_{\FF_3}(\A)$ 
spanned by $\lambda_{\FF_3}(\NN)$, $\lambda_{\FF_3}(\NN')$, $\lambda_{\FF_3}(\NN'')$
and $\sigma$, a subspace we shall denote by $Z$. 

Note that all these $4$ elements of $\FF_3^{\A}$ are constant on $\A_v$, 
for any $v\in \M$, by construction. We deduce from our assumption on 
$\M_k$ that $\A''_k$ is of the form 
\[
\A''_k= \A_{(u_1,k)} \coprod \A_{(u_2,k)} \coprod \A_{(u_3,k)},
\]
with $u_1+u_2+u_3=0 \in \FF_3^2$.  As noted before, composing $h$ with 
the restriction maps from $\A''_k$ to $\A_{(u_i,k)}$ gives three linear maps, 
denoted $r_i \colon Z\to \FF_3$. Moreover, $\ker (h)=\ker (r)$, where
$r=(r_1\: r_2\: r_3)\colon Z \to \FF_3^3$. 

Since $\lambda_{\FF_3} (\NN)\equiv i$ on $\NN_i$, and similarly for $\NN'$ and $\NN''$, 
we infer that the matrix of $r$ is $\left( \begin{smallmatrix}
u_1 & u_2 & u_3 \\ k & k & k\\ 1 & 1 & 1 \end{smallmatrix}\right)$.
The fact that $u_1+u_2+u_3=0$ implies that $\dim \ker (r)\ge 2$.  
Therefore, $\dim \ker (h''_k)\ge 2$, a contradiction.
\end{proof}

Here is the analog of Proposition \ref{prop=indint2}.

\begin{prop}
\label{prop=indint3}
If $\NN$, $\NN'$, $\NN''$ are independent $3$-nets on $\A$, then this triple 
of nets has the intersection property.
\end{prop}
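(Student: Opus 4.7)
The plan is to verify, via Corollary \ref{cor=cprop}, that $\overline{C}\M = \M(3)$, where $\M = \{v \in \M(3) : \A_v \neq \emptyset\}$. Since linear independence of the three classes $[\lambda_{\FF_3}(\NN^a)]$ in $Z_{\FF_3}(\A)/B_{\FF_3}(\A)$ immediately passes to any sub-pair, Proposition \ref{prop=indint2} applies to each of $(\NN,\NN')$, $(\NN,\NN'')$, $(\NN',\NN'')$ and yields the corresponding pairwise intersection properties. Writing $\M_k := \{u \in \M(2) : (u,k) \in \M\}$ for each $k \in \FF_3$, these three pairwise properties translate respectively into $\bigcup_{k\in\FF_3} \M_k = \FF_3^2$, each $\M_k$ surjecting onto the first factor of $\FF_3 \times \FF_3$, and each $\M_k$ surjecting onto the second factor. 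In particular, $|\M_k| \geq 3$ for every $k$.

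I now distinguish two cases based on the slice sizes. Suppose first that some slice satisfies $|\M_{k_0}| \geq 4$. By Lemma \ref{lem=mnets}, the submatroid $\M(2) \times \{k_0\} \subseteq \M(3)$ is line-closed and isomorphic to $\M(2)$; applying Lemma \ref{lem=cprop}\eqref{cp3} together with Lemma \ref{lem=prelm2}\eqref{prm2} inside this submatroid, I obtain $\M(2) \times \{k_0\} \subseteq \overline{C}\M$. To propagate to the remaining two slices, I use that $\M_k \neq \emptyset$ for $k \neq k_0$: picking any $(u_k, k) \in \M$, a direct computation shows, for every $u \in \M(2)$,
\[
q\bigl((-u-u_k,\, k_0),\, (u_k,\, k)\bigr) = (u,\, -k_0 - k) \,\in\, \overline{C}\M.
\]
As $k$ ranges over $\FF_3 \setminus \{k_0\}$, the value $-k_0 - k$ takes exactly the two values in $\FF_3 \setminus \{k_0\}$, placing both remaining slices entirely in $\overline{C}\M$. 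Hence $\overline{C}\M = \M(3)$, as desired.

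It remains to rule out the alternative $|\M_k| \leq 3$ for all $k$. Combined with the lower bound $|\M_k| \geq 3$, this forces $|\M_k| = 3$, and since $|\bigcup_k \M_k| = |\FF_3^2| = 9 = \sum_k |\M_k|$, the three slices form a disjoint partition of $\FF_3^2$. A size-three subset of $\FF_3^2$ that surjects onto both coordinates must be the graph of a permutation $g_k \in \Sigma_3$, so $\M_k = \{(i, g_k(i)) : i \in \FF_3\}$; summing coordinates yields $\sum_{u \in \M_k} u = (0,0)$ in $\FF_3^2$, exhibiting $\M_k$ as a size-three dependent subset of $\M(2)$. This directly contradicts Lemma \ref{lem=ind3}, completing the argument. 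The main obstacle throughout is precisely this last step: without the combinatorial rigidity supplied by Lemma \ref{lem=ind3}, the partition of $\FF_3^2$ into graphs of permutations would be consistent with all the other hypotheses, and $\overline{C}\M$ could fall strictly short of $\M(3)$; once this minimal-size configuration is excluded, the propagation in the first case is essentially mechanical.
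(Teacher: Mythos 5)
Your proof is correct and takes essentially the same route as the paper's: reduce to showing $\overline{C}\M=\M(3)$ via Corollary \ref{cor=cprop}, apply Proposition \ref{prop=indint2} to pairs, handle a slice of size at least $4$ with Lemma \ref{lem=prelm2} and Lemma \ref{lem=cprop} and propagate to the other slices by explicit $q$-computations, and kill the minimal case with Lemma \ref{lem=ind3}. The only (harmless) difference is that by using all three pairwise intersection properties you force every size-$3$ slice to surject onto both coordinates, hence to be a permutation graph and thus dependent, which lets you bypass the paper's subcase of an independent size-$3$ slice, handled there via Lemma \ref{lem=prelm2}\eqref{prm1}.
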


\begin{proof}
By Corollary \ref{cor=cprop}, we have to show that $\overline{C}\M= \M(3)$.
Due to Proposition \ref{prop=indint2}, we know that 
$\NN_i \cap \NN'_j \neq \emptyset$, for all $i,j\in \FF_3$. We deduce that,
for any $u\in \M(2)$, there is $k\in \FF_3$ such that $(u,k)\in \M$. 
In particular, $\abs{\M}\ge 9$. 

Similarly, for any $u\in \M(2)$, there is an element $k\in \FF_3$ 
such that $(k,u)\in \M$, by using $\NN'$ and $\NN''$. This shows 
that we cannot have $\M \subseteq \M(2) \times \{ k\}$, for some 
$k$, by taking $u=(j',k')$, with $k'\neq k$. 

We claim that if $\M(2) \times \{ k\} \subseteq \overline{C}\M$, 
for some $k\in \FF_3$, then we are done. Indeed, pick  $(u',k')\in \M$ 
with $k'\neq k$, take an arbitrary element $u\in \M(2)$, and compute 
$q((u,k),(u',k'))= (-u-u', k'')\in \overline{C}\M$, where $k''$ is the 
third element of $\FF_3$. This shows that 
$\M(2) \times \{ k''\} \subseteq \overline{C}\M$. Again,
$q((u,k''),(u',k))= (-u-u', k')\in \overline{C}\M$, for all $u,u' \in \M(2)$.
Hence, $\M(2) \times \{ k'\} \subseteq \overline{C}\M$, and consequently
$\overline{C}\M= \M(3)$, as claimed. 

If $\abs{\M_k}\ge 4$ for some $k\in \FF_3$, then $\M(2) \times \{ k\} 
\subseteq \overline{C}\M$, by Lemma \ref{lem=prelm2}\eqref{prm2} 
and  Lemma \ref{lem=cprop}\eqref{cp2}--\eqref{cp3}.
Otherwise, $\abs{\M_k}=3$, for all $k$. If $\M_k$ is 
independent in $\M(2)$ for some $k$, 
Lemma \ref{lem=prelm2}\eqref{prm1} implies as before that 
$\M(2) \times \{ k\} \subseteq \overline{C}\M$. The case when each 
$\M_k$ is dependent in $\M(2)$ is ruled out by Lemma \ref{lem=ind3}. 
Our proof is now complete.
\end{proof}

Putting together Proposition \ref{prop=indint3}, Lemma \ref{lem=upgr1}, 
Theorem \ref{thm=3mnets}, and Theorem \ref{thm=mnotr}, we obtain 
the following corollary.

\begin{corollary}
\label{cor=t16gral}
No arrangement $\A$ supports a triple of $3$-nets $\NN,\NN',\NN''$ such that 
$[\lambda_{\FF_3}(\NN)]$, $[\lambda_{\FF_3}(\NN')]$,  and $[\lambda_{\FF_3}(\NN'')]$ 
are independent in $Z_{\FF_3}(\A)/B_{\FF_3}(\A)$. 
\end{corollary}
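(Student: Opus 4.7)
The plan is to argue by contradiction, using the four preceding results as a chain of implications. Suppose that some arrangement $\A$ does support a triple of $3$-nets $\NN, \NN', \NN''$ with $[\lambda_{\FF_3}(\NN)]$, $[\lambda_{\FF_3}(\NN')]$, $[\lambda_{\FF_3}(\NN'')]$ independent in $Z_{\FF_3}(\A)/B_{\FF_3}(\A)$. The goal is to deduce that the non-realizable poset $L_{\le 2}(\M(3))$ is realized by some line arrangement in $\CP^2$, contradicting Theorem \ref{thm=mnotr}.

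First, I would invoke Proposition \ref{prop=indint3} on the independent triple $\{\NN, \NN', \NN''\}$ to conclude that it has the intersection property, i.e.\ $\A_v=\NN_{v^1}\cap \NN'_{v^2}\cap \NN''_{v^3}\ne \emptyset$ for every $v=(v^1,v^2,v^3) \in \M(3)$. Next, I would apply Lemma \ref{lem=upgr1} with $m=3$ to upgrade this to both the strong intersection property (all $\abs{\A_v}$ equal a common positive integer $d$) and the net property (for each flat $\{v,v',v''\}\in L_2(\M(3))$, the partition $\A_v \coprod \A_{v'} \coprod \A_{v''}$ of the subarrangement $\A_v \cup \A_{v'} \cup \A_{v''}$ is a $3$-net).

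With both properties in hand, Theorem \ref{thm=3mnets} applies directly: the prescription $v \mapsto Q_v = \prod_{H \in \A_v} f_H$ produces a family of nonzero degree-$d$ polynomials in three variables whose projectivizations in $\PP(\Sym^d(\C^3)^*)$ realize the lattice $L_{\le 2}(\M(3))$. This contradicts Theorem \ref{thm=mnotr}, which asserts that $L_{\le 2}(\M(3))$ admits no realization by a hyperplane arrangement. Hence no such triple of $3$-nets can exist.

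There is essentially no novel obstacle at this stage, since all three intermediate results have been proved earlier in the section: Proposition \ref{prop=indint3} is where the real combinatorial work sits (the $\overline{C}$-closure argument together with the case analysis in $\M(2)$, ruling out the degenerate dependent configurations via \eqref{eq=actions}), Lemma \ref{lem=upgr1} is the routine promotion step, and Theorem \ref{thm=3mnets} is the geometric realization step built on \cite[Theorem 3.11]{FY}. The only content of the corollary's proof is to string these together and invoke the non-realizability Theorem \ref{thm=mnotr}.
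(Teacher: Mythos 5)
Your proposal is correct and follows exactly the paper's argument: the paper obtains the corollary by chaining Proposition \ref{prop=indint3} (independence implies the intersection property), Lemma \ref{lem=upgr1} (upgrade to the strong intersection and net properties), Theorem \ref{thm=3mnets} (realizability of $L_{\le 2}(\M(3))$), and the contradiction with Theorem \ref{thm=mnotr}. No discrepancies to report.
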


We are finally in a position to prove Theorem \ref{thm:main4}, and 
thus complete the proof of Theorem \ref{thm:main1} in the Introduction.

\begin{proof}[Proof of Theorem \ref{thm:main4}]
By assumption, $L_2(\A)$ has no flats of multiplicity properly 
divisible by $3$. Hence, by Lemma \ref{lema=lambdabij} and 
Lemma \ref{lem:lambda}, the image of the map 
$\lambda_{\FF_3}$ is $Z_{\FF_3}(\A) \setminus B_{\FF_3}(\A)$.  
Therefore, by Corollary \ref{cor=t16gral}, 
we must have $\beta_3(\A)\le 2$. 
\end{proof}

\begin{ack}
The foundational work for this paper was started while the two authors 
visited the Max Planck Institute for Mathematics in Bonn in April--May 2012. 
The work was pursued while the second author visited the Institute of 
Mathematics of the Romanian Academy in June, 2012 and 
June, 2013, and MPIM Bonn in September--October 2013.  
Thanks are due to both institutions for their hospitality, 
support, and excellent research atmosphere. 

A preliminary version of some of the results in this paper was presented 
by the first author in an invited address, titled  {\em Geometry of homology 
jump loci and topology}, and delivered at the Joint International Meeting of 
the American Mathematical Society and the Romanian Mathematical 
Society, held in Alba Iulia, Romania, in June 2013.  He thanks the 
organizers for the opportunity.

The construction of the matroid family from Section \ref{sec:matr}
emerged from conversations with Anca M\u{a}cinic, to whom we are grateful.
We also thank Masahiko Yoshinaga, for a useful discussion related to 
Lemma \ref{lem:lambda}, which inspired us to obtain Theorem \ref{teo=lambdaintro}.
Finally, we thank the referees for their useful suggestions.  
\end{ack}

\newcommand{\arxiv}[1]
{\texttt{\href{http://arxiv.org/abs/#1}{arxiv:#1}}}
\newcommand{\arx}[1]
{\texttt{\href{http://arxiv.org/abs/#1}{arXiv:}}
\texttt{\href{http://arxiv.org/abs/#1}{#1}}}
\newcommand{\doi}[1]
{\texttt{\href{http://dx.doi.org/#1}{doi:#1}}}
\renewcommand{\MR}[1]
{\href{http://www.ams.org/mathscinet-getitem?mr=#1}{MR#1}}

\end{document}